\RequirePackage{rotating}
\documentclass[a4paper,twopage,reqno,12pt]{amsart}
\usepackage{eurosym}
\usepackage[top=30mm,right=30mm,bottom=30mm,left=30mm]{geometry}
\usepackage{tabularx}
\usepackage{graphicx}
\usepackage{longtable}
\usepackage{makecell}
\usepackage{booktabs}
\usepackage[flushleft]{threeparttable}
\usepackage{array}
\usepackage{amsmath}
\usepackage{amssymb}
\usepackage{amsfonts}
\usepackage{amsthm}
\usepackage{amstext}
\usepackage{amsbsy}
\usepackage{amsopn}
\usepackage{amscd}
\usepackage{enumerate}
\usepackage[caption=false]{subfig}
\usepackage{listings}
\usepackage{seqsplit}
\usepackage{color}
\usepackage{cancel}
\usepackage{xcolor}
\usepackage[colorlinks]{hyperref}
\usepackage[hyperpageref]{backref}
\usepackage{multirow}
\usepackage{longtable}
\usepackage{float}

\usepackage{lscape}
\usepackage{pdflscape}
\usepackage{url}
\usepackage{adjustbox}
\usepackage{rotating}
\usepackage{etoolbox}
\usepackage[dvipsnames] {xcolor}
\usepackage{comment}
\usepackage{booktabs}
\newtheorem{theorem}{Theorem}[section]
\newtheorem{corollary}[theorem]{Corollary}
\newtheorem{lemma}[theorem]{Lemma}
\newtheorem{proposition}[theorem]{Proposition}

\AtEndEnvironment{proof}{\setcounter{claim}{0}}
\theoremstyle{definition}
\newtheorem{construction}[theorem]{Construction}

\newtheorem{example}[theorem]{Example}

\numberwithin{equation}{section}

\newcommand{\GL}{\mathrm{GL}}

\newcommand{\PSL}{\mathrm{PSL}}

\newcommand{\PGL}{\mathrm{PGL}}
\newcommand{\PGaL}{\mathrm{P\Gamma L}}

\newcommand{\PSiL}{\mathrm{P \Sigma L}}

\newcommand{\PSigL}{\mathrm{P\Sigma L}}

\newcommand{\A}{\mathrm{A}}

\renewcommand{\S}{\mathrm{S}}

\newcommand{\C}{\mathrm{C}}
\newcommand{\D}{\mathrm{D}}

\newcommand{\Aut}{\mathrm{Aut}}

\newcommand{\Out}{\mathrm{Out}}

\newcommand{\PG}{\mathrm{PG}}

\newcommand{\Dmc}{\mathcal{D}}

\renewcommand{\leq}{\leqslant}
\renewcommand{\geq}{\geqslant}

\begin{document}
\title[Flag-transitive $2$-designs] {$2$-designs admitting a flag-transitive automorphism group with socle $\PSL(2,q)$}
\author[Liang]{Hongxue Liang}
\address{School of Mathematics, Foshan University, Foshan 528000, P.R. China}
\email{hongxueliang@fosu.edu.cn}
\author[Galici]{Mario Galici}
\address{Dipartimento di Matematica e Fisica “E. De Giorgi”, University of Salento, Lecce, Italy}
\email{mario.galici@unisalento.it}
\author[Liu]{Zhihui Liu}
\address{School of Mathematics, Foshan University, Foshan 528000, P.R. China}
\email{zhihuiliu889@gmail.com}
\author[Martinovi\'{c}]{Filip Martinovi\'{c}}
\address{University of Zagreb, Faculty of Electrical Engineering and Computing}
\email{filip.martinovic3@fer.unizg.hr}
\author[Montinaro]{Alessandro Montinaro}
\thanks{Corresponding author: Alessandro Montinaro}
\address{Dipartimento di Matematica e Fisica “E. De Giorgi”, University of Salento, Lecce, Italy}
\email{alessandro.montinaro@unisalento.it}
\author[Romano]{Eleonora Romano}
\address{Dipartimento di Matematica e Fisica “E. De Giorgi”, University of Salento, Lecce, Italy}
\email{eleonora.romano@unisalento.it}
\subjclass[MSC 2020:]{05B05; 05B25; 20B25}
\keywords{$2$-design; automorphism group; flag-transitive; conic; hyperoval.}
\date{\today }

\begin{abstract}
$2$-designs admitting a flag-transitive automorphism group $G$ with socle $\PSL(2,q)$, where $q=p^{f}\geq 4$, are investigated in both the point-primitive and point-imprimitive cases. In the latter case, a complete classification is achieved, and three known examples occur, namely: the complementary designs of $\PG(3,2)$ and $\PG(3,4)$, and the $2$-$(36,8,4)$ design constructed by Devillers and Praeger in \cite{DP}. In the point-primitive case, apart from the Witt-Bose-Shrikhande linear spaces of even order $q$, $48$ sporadic examples are classified. Surprisingly, one of these numerical examples is the linear space with $v=496$ and $k=4$ admitting $\PGaL(2,2^{5})$ as a flag-transitive automorphism group, which was missing in the 1990 classification by Buekenhout et al. \cite{BDDKLS,Saxl,De}.
\end{abstract}

\maketitle

\section{Introduction and Main Result } \label{Intro}
A $2$-$(v,k,\lambda )$ design $\Dmc$ is a pair $(\mathcal{P},%
\mathcal{B})$ with a set $\mathcal{P}$ of $v$ points and a set $\mathcal{B}$
of $b$ blocks such that each block is a $k$-subset of $\mathcal{P}$ and each two distinct points are contained in exactly $\lambda $ blocks. The \emph{replication number} $r$ of $\mathcal{D}$ is the number of blocks containing a given point.
We say $\mathcal{D}$ is \emph{non-trivial} if $2<k<v-1$, and \emph{symmetric} if $v=b$. Unless otherwise specified, differently specified, all $2$-$(v,k,\lambda
)$ designs in this paper are assumed to be non-trivial.

An automorphism of $\mathcal{D}$ is a permutation of the point set which preserves the block
set. The set of all automorphisms of $\mathcal{D}$ with the composition of
permutations forms a group, denoted by $\mathrm{Aut(\mathcal{D})}$. For a
subgroup $G$ of $\mathrm{Aut(\mathcal{D})}$ acting point-transitively on $\mathcal{D}$, $G$ is said to be \emph{%
point-primitive} if $G$ acts primitively on $\mathcal{P}$, and said to be 
\emph{point-imprimitive} otherwise. In this setting, we also say that $%
\mathcal{D}$ is either \emph{point-primitive} or \emph{point-imprimitive}, respectively. A \emph{flag} of $\mathcal{D}$ is a pair $(x,B)$ where $x$ is
a point and $B$ is a block containing $x$. If $G\leq \mathrm{Aut(\mathcal{D})%
}$ acts transitively on the set of flags of $\mathcal{D}$, then we say that $%
G$ is \emph{flag-transitive} and that $\mathcal{D}$ is a \emph{flag-transitive design}.

There is a vast literature focusing on $2$-designs $\mathcal{D}$ admitting a
flag-transitive automorphism group $G$, with satisfactory classification results achieved 
in certain cases (for instance, see \cite{BDDKLS,United,DP,LM,MS,Mo,GM}).

Our paper is a contribution to the aforementioned research topic. We focus on $2$-designs admitting a flag-transitive automorphism group $G$ with socle isomorphic to $\PSL(2,q)$, where $q=p^{f}\geq 4$, in both the point-primitive and point-imprimitive cases. 
The motivation for analyzing such specific family of groups is manifold, as briefly outlined below:

\noindent \textbf{(1).} $\PSL(2,q) \leq G \leq \PGaL(2,q)$ is a \emph{spring} of examples of flag-transitive designs, most of which arise from remarkable geometric objects such as projective lines, conics, or hyperovals. Indeed, this is confirmed by the large number of non-isomorphic examples listed in this paper (almost fifty). Usually, but not always, most of the examples admitting $\PSL(n,q) \leq G \leq \PGaL(n,q)$ as a flag-transitive automorphism group predominantly occur when $n=2$.

\noindent \textbf{(2).} A surprisingly missing example in the existing classification of flag-transitive linear spaces
\cite{BDDKLS, Saxl,De}. Specifically, we identify a linear space with $v=496$ and $k=4$ admitting $G\cong \PGaL(2,2^{5})$ as a flag-transitive automorphism group, not contained in \cite{BDDKLS, Saxl,De}. Due to its significance, we provide two proofs, one theoretical and the other \texttt{GAP}-based, to demonstrate its existence and uniqueness up to isomorphism (note that this example is not the Witt-Bose-Shrikhande linear space of order $2^{5}$). 

\noindent \textbf{(3).} Our paper is meant to be a natural prosecution of the analysis of
$2$-designs admitting $\PSL(2,q)\trianglelefteq G\leq \mathrm{\PGaL(2,q)}$ as a flag-transitive automorphism group. This analysis was previously carried out by Delandtsheer \cite{De} and Saxl \cite{Saxl} for $\lambda =1$;
by Zhang and Zhou \cite{ZZ} when $\gcd(r,\lambda)=1$;
by Alavi, Bayat and Daneshkhah \cite{AlaviSymPSL2} for any value of $\lambda $ when $\mathcal{D}$ is symmetric; 
by Alavi et al. \cite{ABDT} and by Montinaro et al \cite{MZZZ} for $\lambda =2$;
and more recently by Zhang and Zhou \cite{ZZ2} when $\gcd(r,\lambda)=2$ and
by Chen et al. \cite{CWX} when $G$ is locally transitive and point-primitive with dihedral point-stabilizer.

\smallskip
Our method and result do not rely on any of the results listed in (3). 
In fact, when $\PSL(2,q)\trianglelefteq G\leq \mathrm{\PGaL(2,q)}$, 
all the aforementioned results can be deduced from our theorem by imposing their respective additional constraints on the parameters of $\mathcal{D}$ and/or on $G$. More precisely, we prove the following result.

\begin{theorem}\label{main}
Let $\mathcal{D}$ be a $2$-$(v,k,\lambda)$ design admitting $\PSL(2,q) \unlhd G\leq \PGaL(2,q)$ as a flag-transitive automorphism group. Then one of the following holds:
\begin{enumerate}
    \item $G$ acts point-primitively on $\mathcal{D}$, and one of the following holds:
\begin{enumerate}
    \item $\mathcal{D}$ is a $2$-$(q+1,k,\lambda)$ design, with the projective line $\PG(1,q)$ as a point set, as in Construction \ref{Build}, and $G$ acts point-$2$-transitively on $\mathcal{D}$; 
    \item $\mathcal{D}$ and $G$ are as in Table \ref{tab:alldesigns};
    \item$\left(v,b,k,r\right)=\left(2^{f-1}(2^{f}-1),(2^{2f}-1)\lambda,2^{f-1},(2^{f}+1)\lambda\right)$ with $\lambda \mid 2f$. Moreover, $X_{\alpha}\cong D_{2(2^f+1)}$ and $X_{B}\leq Z^{f}_{2}$, and either $\mathcal{D}$ is the Witt-Bose-Shrirkhande linear space of order $2^{f}$ with $f\geq 3$, or $\lambda>1$ and $f \geq 8$;
    \item $\left(v,b,k,r\right)=\left(\frac{q(q-1)}{2},\frac{q(q+1)\lambda}{4},q-1,\frac{(q+1)\lambda}{2}\right)$ with $q>50$ and $\lambda \mid 4f$. Moreover, $X_{\alpha}\cong \D_{\frac{2(q+1)}{\gcd(2,q-1)}}$ and $X_{B}\leq \D_{\frac{2(q-1)}{\gcd(2,q-1)}}$.
\end{enumerate}
\item $G$ acts point-imprimitively on $\mathcal{D}$, and one of the following holds:
\begin{enumerate}
\item $\mathcal{D}$ is the $2$-$(15,8,4)$ design complementary to $\PG(3,2)$, and $ G\cong
\PGL(2,5)$;
\item $\mathcal{D}$ is the $2$-$(36,8,4)$ design as in \cite[Construction 9]{DP}, and $G\cong \PSigL(2,9)$;
\item $\mathcal{D}$ is the symmetric $2$-$(85,64,48)$ design complementary to $\PG(3,4)$ and $G\cong \PGaL (2,2^4)$.
\end{enumerate}
\end{enumerate}
\end{theorem}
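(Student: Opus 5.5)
The plan follows the standard framework for flag-transitive $2$-designs. Write $X=\PSL(2,q)=\mathrm{Soc}(G)$, fix a point $\alpha$ and a block $B\ni\alpha$. The basic tools are the flag-transitivity relations. First, $r=|G_\alpha:G_{\alpha,B}|$. Second, $r\mid \lambda(v-1)$, with $r(k-1)=\lambda(v-1)$ and $vr=bk$. Third, $r\mid \lambda d$ for every non-trivial subdegree $d$ of $G$ on points. Together these give $\lambda v<r^2$, which bounds $|G_\alpha|$ from below in terms of $|G|$: roughly $|G|<|G_\alpha|^3/\lambda$, or after refinement $|X:X_\alpha|<|\mathrm{Out}|^2\,|X_\alpha|^2$.

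\textbf{Primitive case.} Here $G_\alpha$ is maximal in $G$, so $X_\alpha$ is one of the subgroups in Dickson's list: a Borel subgroup $E_q\rtimes C_{(q-1)/\gcd(2,q-1)}$, $D_{2(q\pm1)/\gcd(2,q-1)}$, a subfield group $\PSL(2,q_0)$ or $\PGL(2,q_0)$, or $A_4$, $S_4$ or $A_5$.

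\begin{enumerate}
\item \emph{Borel case.} Then $v=q+1$ and $G$ is $2$-transitive on points. Every $G$-orbit of $k$-subsets is then a flag-transitive $2$-design, which yields (1)(a) via Construction~\ref{Build}.
\item \emph{$A_4$, $S_4$, $A_5$ cases.} The inequality $|X|<c\,|X_\alpha|^2 f^2$ forces $q$ to be small. These cases are settled by computer search and contribute entries of Table~\ref{tab:alldesigns}.
\item \emph{Subfield case.} With $q=q_0^m$, the bound gives $q_0^{3m}\lesssim q_0^{6}f^2$, so $m=2$ essentially. Then $v=q_0(q_0^2+1)/\gcd(\cdot)$, and the known subdegrees give $r\mid\lambda\gcd(v-1,d)$. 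This should leave only small cases, which again go into the table.
\item \emph{Dihedral cases.} This is the main obstacle, since $|X_\alpha|\approx 2q$ makes $r^2>\lambda v$ automatically plausible. I would use the explicit suborbit structure: for $X_\alpha=D_{2(q+1)}$ the action is on pairs of conjugate points of $\PG(1,q^2)\setminus\PG(1,q)$, or equivalently on external/internal points of a conic. The subdegrees are mostly $q+1$ or $(q+1)/2$, with a few exceptional ones. This gives $r\mid \lambda(q+1)\gcd$-type constraints. Combined with $r\mid|G_\alpha|$ and $r(k-1)=\lambda(v-1)$, this yields $r=(q+1)\lambda/2$ or $(2^f+1)\lambda$, and then $k=q-1$ or $2^{f-1}$. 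Next I would identify $G_B$ through its order $|G|/b$ and show that $X_B$ lies in $D_{2(q-1)}$ or in $E_q$, the latter giving the hyperoval/WBS-type structure. Comparing $|G_B|$ with $|G:X|$ forces $\lambda\mid 4f$ or $\lambda\mid 2f$. Small $q$ (for example $q\le 50$, or $f<8$ when $\lambda>1$) are disposed of by \texttt{GAP}; this is where the sporadic table entries, including the $v=496$ linear space, arise. The $D_{2(q-1)}$ case is handled analogously and lands in the table or in (1)(c)/(d).
\end{enumerate}

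\textbf{Imprimitive case.} Let $\Sigma$ be a system of imprimitivity with classes of size $c$, so that $v=cd$. Since $G_\alpha<G_\Delta<G$ with $G_\Delta$ maximal, $d=|G:G_\Delta|$ comes from the list above. The standard imprimitive flag-transitive bound (Davies/Devillers–Praeger) gives $k\le \lambda(c-1)+1$-type inequalities, and $r\mid\lambda(c-1)$ via the within-class subdegrees. Together with $\lambda v<r^2$, this forces $|G_\alpha|$ to be large while having index $c$ in a maximal subgroup. Running through chains $X_\alpha<X_\Delta$ in Dickson's list, only small $q$ survive, namely $q\in\{4,5,9,16\}$. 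Explicit computation then yields the complements of $\PG(3,2)$ and $\PG(3,4)$ and the Devillers–Praeger $2$-$(36,8,4)$ design.

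The hardest part will be the dihedral primitive case. There the arithmetic alone does not bound $q$, so one must determine exactly the possible $G_B$ and the resulting block structure.
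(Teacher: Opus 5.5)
\textbf{Gap 1: the primitive dihedral case.} Your plan breaks down where you expect the difficulty, namely the primitive case $X_\alpha\cong \D_{2(q+1)/\gcd(2,q-1)}$. The subdegrees give only $r\mid\lambda(q+1)/2$ for $q$ odd and $r\mid\lambda f(q+1)$ for $q=2^f$. So $r=\lambda(q+1)\theta/(ex)$ and $k=\frac{(q-2)ex}{2\theta}+1$, with $(e,\theta)=(2,1)$ or $(1,f)$ and a free positive integer $x$ (Lemma~\ref{case8}). Nothing in the arithmetic forces $r\in\{(q+1)\lambda/2,\,(2^f+1)\lambda\}$.

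For $q$ even, three values of $x$ pass every numerical test for infinitely many $f$: $x=2f$, $x=f$ and $x=2f/3$. You have lost the third. It gives $k=(2^f+1)/3$ and $r=3(2^f+1)\lambda/2$ with $f$ odd and $\lambda$ even, an infinite parameter family that is absent from the statement. The paper removes it only in Proposition~\ref{Ifin}, by a geometric argument. Points are identified with lines external to the hyperoval. The relation $r\,|B\cap x^{G_\alpha}|=\lambda\,|x^{G_\alpha}|$ is applied to a line $x$ fixed by a power of the Frobenius collineation (found via its fixed subplane); this forces $G=\PGaL(2,2^f)$ and $f=3^u$. A congruence modulo $9$ on the $G_\alpha$-orbit decomposition of a block then forces $u=1$.

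The remaining values of $x$ are also not reduced to a computer-checkable list by arithmetic alone. The paper does this by taking a maximal subgroup $M$ of $G$ with $G_B\le M$ and $X\not\le M$. It then uses that $k$ divides $|G_B|$, which divides $|M\cap X|\cdot|\Out(X)|$, for each possible $M\cap X$ (Lemmas~\ref{MintX456}--\ref{MintX9}). This is where the sporadic examples with $v=q(q-1)/2$ come from. For instance, the $2$-$(496,4,1)$ space has $k=4$ and $r=f(q+1)$, which matches neither of your two forms.

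\textbf{Gap 2: the imprimitive case with Borel block stabiliser.} From $r\mid\lambda(c-1)$ and $\lambda v<r^2$ one gets $d<r\le|G_\alpha|=|G_\Delta|/c$. With a little extra work this bounds $q$ when $G_\Delta$ is dihedral, of subfield type, or one of $A_4,S_4,A_5$. When $G_\Delta$ is a Borel subgroup, so that $d=q+1$, it only yields $c<qf$, and no bound on $q$ follows. This is exactly where the examples with $q=4$ and $q=16$ live.

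The paper treats this case structurally. By Theorem~\ref{CamZie}, the design $\mathcal{D}_0$ induced on a class is flag-transitive under $G_\Delta^\Delta$, a quotient of $G_\Delta\le A\Gamma L(1,q)$, and $(v-1)/(k-1)=(v_0-1)/(k_0-1)$. From this it shows $G_\Delta^\Delta=Z_{v_0}:Z_d$ with $d\mid f$, $v_0$ an odd prime and $k_0$ a power of $p$. It then derives an explicit formula for $k_1=k/k_0$ and eliminates all but the known examples using $k_1\mid |M|\,(2,q-1)f$ over the maximal subgroups $M$ of $X$. Without an argument of this kind, your conclusion that only $q\in\{4,5,9,16\}$ survive is unsupported.

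\textbf{Smaller points.} First, $G_\alpha$ maximal in $G$ does not imply $X_\alpha$ maximal in $X$. The novelties, including the family $G=\PGL(2,p)$ with $G_\alpha\cong S_4$, must be excluded separately, as in Lemma~\ref{lem:NotNov}. Second, in the Borel point-stabiliser case a $G$-orbit of $k$-subsets is always a $2$-design, but it is flag-transitive only when $B$ is an orbit of $G_B$; this is what Construction~\ref{Build} records.
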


\noindent \textbf{Outline of Theorem \ref{main}}. Theorem \ref{main} provides a complete classification in the point-imprimitive case. In the point-primitive case, two infinite families of parameters occur, namely (1.c) and (1.d), and \emph{currently are the subject of study by the authors via geometric methods}. In case (1.a), $G$ acts point-$2$-transitively on the $2$-$(q+1,k\lambda)$ design $\mathcal{D}$. It is a well known fact that $G$ has a unique $2$-transitive permutation representation of degree $q+1$, namely the one on the projective line $\PG(1,q)$, where $q=p^{f}$ and $f \geq 1$. Hence, we may identify the point set of $\mathcal{D}$ with $\PG(1,q)$. Then $\mathcal{D}$ is obtained as follows:
\begin{construction} \label{Build}
Let $H$ be any subgroup of $G$ of order greater than $2$ and $Z_{q}\neq H$, and hence let $B$ be any $H$-orbit of length $2<k<q+1$. Then $\mathcal{D}=(\PG(1,q),B^{G})$ is a $2$-design since $G$ acts point-$2$-transitively on $\PG(1,q)$. Moreover, it is flag-transitive since it is block-transitive and $H\leq G_{B}$ and $B$ is an $H$-orbit. It is immediate to see that $\mathcal{D}$ arises from this construction, if $v=q+1$ and $G$ acts flag-transitively and point-$2$-transitively on $\mathcal{D}$.      
\end{construction}

All cases listed in Theorem 1.1 occur. Moreover, apart from Lines 3--4 and 5--6, different lines in Table \ref{tab:alldesigns} correspond to non-isomorphic designs. Due to the constraints on the designs parameters, there is no overlapping between $2$-designs in Table \ref{tab:alldesigns} and those in (1.a), (1.c) or (1.d), except for the examples in Lines 2 and 4, since $\PSL(2,2^{2})\cong \PSL(2,5)$ and $\PGaL(2,2^{2})\cong \PGL(2,5)$. 
One can see that, for the cases in Lines 1--35 of Table \ref{tab:alldesigns},
the group acts point-$2$-transitively; hence, these designs are obtained in a 
way similar to Construction \ref{Build},
but without the assumption $v=q+1$. The examples in Lines 36--39 were constructed in \cite{MF}. The designs in Lines40--49 are presumably new.

We provided hypertext links in Table \ref{tab:alldesigns} for the sporadic $\mathcal{D}$ and $G$, so that, clicking on the line of the table, the reader is redirected to the corresponding base block of $\mathcal{D}$ and generators of $G$ presented in Section \ref{Appendix}.

\noindent \textbf{Outline of the proof strategy}. Firstly, we analyze the case where $G$ acts flag-transitively and point-primitively on $\mathcal{D}$. We prove that socle $X\cong \PSL(2,q)$ acts point-primitively on $\mathcal{D}$ by using \cite{gmaxi} and then we use \cite{Di, COT} to determine the structure of $X_{\alpha}$, where $\alpha$ is a point of $\mathcal{D}$, thereby we derive the admissible structure of $G_{\alpha}$. The admissible subdegrees of $G$ are then determined by using \cite{Ka,FI}. We use this information to determine the replication number $r$ of $\mathcal{D}$ since $r$ must divide any integral linear combination of these subdegrees and satisfies $r>(\lambda \cdot v)^{1/2}$. The numerical set of admissible parameters designs are settled by using the package \texttt{Design} \cite{Design} of \texttt{GAP} \cite{GAP}; the remaining cases are analyzed by using some remarkable geometry of the group $G$, namely the fact that $G$ preserves a conic or a hyperoval in $\PG(2,q)$ depending on whether $q$ is odd or even, respectively; hence, the point set of $\mathcal{D}$ can be identified with the set of external lines of the conic or the hyperoval, respectively.

Once the case of flag-transitive and point-primitive $2$-design is settled, we use this result to completely determine $\mathcal{D}$ when the design admits a flag-transitive and point-imprimitive automorphism group $G$. Here, $G$ preserves a non-trivial partition $\Sigma$ of the point set, and we apply the Camina-Zieschang Theorem \cite{CZ} to 'factorize' $\mathcal{D}$ into two flag-transitive $2$-designs: one induced on $\Delta$, where $\Delta \in \Sigma$, say $\mathcal{D}_{0}$, the other on $\Sigma$, say $\mathcal{D}_{1}$. Choosing a maximal $G$-invariant partition with respect to set-theoretic containment, it implies that $\mathcal{D}_{1}$ is flag-transitive and point-primitive, hence it is known thanks to previous investigation. 
Combining this information together with some  constraints between the parameter of $\mathcal{D}_{1}$ and $\mathcal{D}_{0}$, derived from the Camina-Zieschang Theorem and the structure of $G$, we completely determine $\mathcal{D}$ and $G$. 

\noindent \textbf{Outline of the structure of the paper}. The paper consists of six sections, which are briefly described below. Section \ref{Intro} is devoted to the introduction and the establishment of Theorem \ref{main}, our main result. Section \ref{Examples} is dedicated to the description and proof of the linear space with $v=496$ and $k=4$ admitting $G\cong \PGaL(2,2^{5})$ as a flag-transitive automorphism group missed in \cite{BDDKLS, Saxl,De}.
For the other sporadic examples, their base blocks and group generators can be found in Section \ref{Appendix}. In the first part of Section \ref{FT-PP-Section}, we provide some useful tools from Design theory, Group theory and Finite Geometry; in the second part, we prove Theorem \ref{FT-PP-reduction}, a reduction theorem for the case where $G$ acts point-primitively on $\mathcal{D}$. In Section \ref{FTPI}, we state the Camina-Zieschang Theorem and apply it together with Theorem \ref{FT-PP-reduction} to prove Theorem \ref{FTPI}, which provides a classification of $\mathcal{D}$ when $G$ is flag-transitive point-imprimitive on $\mathcal{D}$. Section \ref{FT-PP-Geometry-Section} is dedicated to the completion of the proof of Theorem \ref{main}. In particular, we settle one of the infinite families of $2$-design parameters arising from Theorem \ref{FT-PP-reduction}.
Finally, in Section \ref{Appendix}, the last section, we provide a base block of $\mathcal{D}$ and generators of $G$ for each of the $2$-designs and their corresponding groups $G$ recorded in Table \ref{tab:alldesigns}.

\begin{table} 
	\centering
	\caption{Final table of all non-isomorphic designs}
	\scalebox{0.7}{
		\begin{tabular}{ccccccccccccc}
 Line  & $   q$ & $v$   & $b$    & $r$   & $k$  & $\lambda$ & $G$   &          
			$G_\alpha$         & $G_B$            & $\Aut(\mathcal{D})$ & location \\
            \toprule
 \ref{533} & $5$ & $5$   & $10$   & $6$   & $3$  & $3$       & $\PSL(2,5)$             & 
			$A_{4}$               & $S_{3}$             & $\PGL(2,5)$                & \ref{case4} \\
\ref{632} & $2^2$ & $6$   & $10$   & $5$  & $3$  & $2$       & $\PSL(2,2^2)$  & 
			$D_{10}$               & $S_{3}$ & $\PSL(2,2^{2})$ & \ref{MintX7Cor}\\
 \ref{634_1} & $2^2$ & $6$   & $20$   & $10$  & $3$  & $4$       & $\PGaL(2,2^2)$  & 
			$Z_{5}: Z_{4}$               & $S_{3}$ & $\PSigL(2,3^{2})$                &  \ref{MintX7Cor}\\            
\ref{634_2} & $3 ^2$ &    &    &   &  &       & $\PSL(2,3^{2})$  & 
			$A_{5}$               & $Z_{2}^3 : Z_{2}$ &                 & \ref{case6} \\ 
\ref{646_1} & $2^2$ & $6$   & $15$   & $10$  & $4$  & $6$       & $\PGaL(2,2^2)$  &      
			$Z_{5} : Z_{4}$          & $D_{8}$             & $\PSigL(2,3^{2})$                & \ref{MintX7}, \ref{MinTX8}, \ref{MintX9} \\
 \ref{646_2}& $3^2$ &   &   &   &   &        & $\PSL(2,3^2)$  &      
			$A_{5}$          & $Z_{2}^3:Z_2$             &                 & \ref{case6} \\ \ref{731}& 7& 7& 7& 3 & 3 & 1 & $\PSL(2,7)$ & $S_4$ & $S_4$ & $\PSL(2,7)$ & \ref{case5} \\
 \ref{742}& 7& 7& 7& 4 & 4 & 2 & $\PSL(2,7)$ & $S_4$ & $S_3$ & $\PSL(2,7)$ & \ref{case5} \\ \ref{734}& $7$ & $7$   & $28$   & $12$  & $3$  & $4$       & $\PSL(2,7)$           & 
			$S_{4}$               & $S_{3}$             & $\PSL(2,7)$          & \ref{case5} \\
 \ref{1042}& $2^2$ & $10$  & $15$   & $6$   & $4$  & $2$       & $\PGaL(2,2^2)$       &
			$D_{12}$              & $D_{8}$             & $\PSigL(2,3^{2})$                & \ref{case7}, \ref{MintX456}, \ref{MintX7Cor}, \ref{MinTX8} \\ 
\ref{1133}& $11$ & $11$  & $55$   & $15$  & $3$  & $3$       & $\PSL(2,11)$      &
			$A_{5}$               & $D_{12}$            & $\PSL(2,11)$         & \ref{case6} \\
 \ref{1136}& $11$ & $11$  & $110$  & $30$  & $3$  & $6$       & $\PSL(2,11)$      &
			$A_{5}$               & $S_{3}$             & $\PSL(2,11)$         & \ref{case6} \\
 \ref{1146}& $11$ & $11$  & $55$   & $20$  & $4$  & $6$       & $\PSL(2,11)$    &  
			$A_{5}$               & $A_{4}$             & $\PSL(2,11)$         & \ref{case6} \\
 \ref{1152}& $11$ & $11$  & $11$   & $5$  & $5$  & $2$       & $\PSL(2,11)$      &
			$A_{5}$               & $A_{5}$            & $\PSL(2,11)$         & \ref{case6} \\
 \ref{11512}& $11$ & $11$  & $66$   & $30$  & $5$  & $12$      & $\PSL(2,11)$     & 
			$A_{5}$               & $D_{10}$            & $\PSL(2,11)$         & \ref{case6} \\
 \ref{1163}& $11$ & $11$  & $11$   & $6$   & $6$  & $3$       & $\PSL(2,11)$      & 
			$A_{5}$               & $A_{5}$             & $\PSL(2,11)$         & \ref{case6} \\
 \ref{11615}& $11$ & $11$  & $55$   & $30$  & $6$  & $15$      & $\PSL(2,11)$     & 
			$A_{5}$               & $D_{12}$            & $\PSL(2,11)$         & \ref{case6} \\
 \ref{1584_2}& $3^2$ & $15$  & $15$   & $8$  & $8$  & $4$      & $\PSL(2,3^2)$             & 
			$S_4$               & $S_4$             & $A_{8}$                & \ref{case1} \\
 \ref{2832_1}& $2 ^3$ & $28$  & $252$  & $27$  & $3$  & $2$       & $\PGaL(2,2^{3}) $  &
			$Z_{9} : Z_{6}$          & $Z_{6}$             & $\PGaL(2,2^{3}) $     & \ref{MintX7}, \ref{MintX9}, \ref{Ifin} \\
 \ref{2832_2}& $2 ^3$ & $28$  & $252$  & $27$  & $3$  & $2$       & $\PGaL(2,2^{3}) $  & 
			$Z_{9} : Z_{6}$          & $S_{3}$             & $\PGaL(2,2^{3}) $     & \ref{MintX7}, \ref{MintX9}, \ref{Ifin}\\
 \ref{2834}& $2 ^3$ & $28$  & $504$  & $54$  & $3$  & $4$       & $\PGaL(2,2^{3}) $  &
			$Z_{9} : Z_{6}$          & $Z_{3}$             & $\PGaL(2,2^{3}) $     & \ref{MintX7}, \ref{Ifin} \\
 \ref{2865}& $2 ^3$ & $28$  & $126$  & $27$  & $6$  & $5$       & $\PGaL(2,2^{3}) $  &
			$Z_{9} : Z_{6}$          & $A_{4}$             & $\PGaL(2,2^{3}) $     & \ref{MintX7}, \ref{MinTX8} \\
 \ref{28610_1}& $2 ^3$ & $28$  & $252$  & $54$  & $6$  & $10$      & $\PGaL(2,2^{3}) $  &
			$Z_{9} : Z_{6}$          & $S_{3}$             & $\PGaL(2,2^{3}) $     & \ref{MintX7}, \ref{MinTX8}, \ref{MintX9} \\
 \ref{28610_2} & $2 ^3$ & $28$  & $252$  & $54$  & $6$  & $10$      & $\PGaL(2,2^{3}) $  &
			$Z_{9} : Z_{6}$          & $Z_{6}$             & $\PGaL(2,2^{3}) $     & \ref{MintX7}, \ref{MinTX8}, \ref{MintX9}\\
 \ref{28610_3}& $2 ^3$ & $28$  & $252$  & $54$  & $6$  & $10$      & $\PGaL(2,2^{3}) $  &
			$Z_{9} : Z_{6}$          & $Z_{6}$             & $\PGaL(2,2^{3}) $     & \ref{MintX7}, \ref{MinTX8}, \ref{MintX9} \\
 \ref{28610_4}& $2 ^3$ & $28$  & $252$  & $54$  & $6$  & $10$      & $\PGaL(2,2^{3}) $  &
			$Z_{9} : Z_{6}$          & $Z_{6}$             & $\PGaL(2,2^{3}) $     & \ref{MintX7}, \ref{MinTX8}, \ref{MintX9} \\
 \ref{28610_5}& $2 ^3$ & $28$  & $252$  & $54$  & $6$  & $10$      & $\PGaL(2,2^{3}) $  &
			$Z_{9} : Z_{6}$          & $Z_{6}$             & $\PGaL(2,2^{3}) $     & \ref{MintX7}, \ref{MinTX8}, \ref{MintX9} \\
 \ref{2872}& $2 ^3$ & $28$  & $36$  & $9$  & $7$  & $2$      & $\PSL(2,2^{3}) $  &
			$D_{18}$          & $D_{14}$             & $\PGaL(2,2^{3}) $     & \ref{MintX7Cor}\\     
\ref{2876}& $2 ^3$ & $28$  & $108$  & $27$  & $7$  & $6$      & $\PGaL(2,2^{3}) $  &
			$Z_{9}:Z_{6}$          & $D_{14}$             & $\PGaL(2,2^{3}) $     & \ref{MintX7Cor}\\    
 \ref{2898_1}& $2 ^3$ & $28$  & $84$   & $27$  & $9$  & $8$       & $\PGaL(2,2^{3}) $  &
			$Z_{9} : Z_{6}$          & $D_{18}$            & $\PGaL(2,2^{3}) $     & \ref{MinTX8} \\
 \ref{2898_2}& $2 ^3$ & $28$  & $84$   & $27$  & $9$  & $8$       & $\PGaL(2,2^{3}) $  &
			$Z_{9} : Z_{6}$          & $Z_{3} \times S_{3}$        & $\PGaL(2,2^{3}) $     & \ref{MinTX8} \\
 \ref{28916}& $2 ^3$ & $28$  & $168$  & $54$  & $9$  & $16$      & $\PGaL(2,2^{3}) $  & 
			$Z_{9} : Z_{6}$          & $Z_{9}$             & $\PGaL(2,2^{3}) $     & \ref{MinTX8} \\ \ref{281211_1}& $2 ^3$ & $28$  & $63$   & $27$  & $12$ & $11$      & $\PGaL(2,2^{3}) $  &
			$Z_{9} : Z_{6}$          & $Z_{2} \times A_{4}$        & $Sp(6,2)$            & \ref{MintX9} \\
 \ref{281211_2}& $2 ^3$ & $28$  & $63$   & $27$  & $12$ & $11$      & $\PGaL(2,2^{3}) $  &
			$Z_{9} : Z_{6}$          & $Z_{2} \times A_{4}$        & $\PGaL(2,2^{3}) $     & \ref{MintX9} \\
 \ref{281834}& $2 ^3$ & $28$  & $84$   & $54$  & $18$ & $34$      & $\PGaL(2,2^{3}) $  &
			$Z_{9} : Z_{6}$          & $Z_{3} \times S_{3}$        & $\PGaL(2,2^{3}) $     & \ref{MinTX8} \\
 \ref{3662}& $2 ^3$ & $36$  & $84$   & $14$  & $6$  & $2$       & $\PSL(2,2^{3})$       & 
			$D_{14}$              & $S_{3}$             & $\PGaL(2,2^{3}) $     & \ref{case7} \\
 \ref{3666_1} & $2 ^3$ & $36$  & $252$  & $42$  & $6$  & $6$       & $\PGaL(2,2^{3}) $  & 
			$Z_{7} : Z_{6}$          & $Z_{6}$             & $\PGaL(2,2^{3}) $     & \ref{case7}\\
 \ref{3666_2}& $2 ^3$ & $36$  & $252$  & $42$  & $6$  & $6$       & $\PGaL(2,2^{3}) $  & 
			$Z_{7} : Z_{6}$          & $Z_{6}$             & $\PGaL(2,2^{3}) $     & \ref{case7} \\
 \ref{3666_3}& $2 ^3$ & $36$  & $252$  & $42$  & $6$  & $6$       & $\PGaL(2,2^{3}) $  &
			$Z_{7} : Z_{6}$          & $Z_{6}$             & $\PGaL(2,2^{3}) $     & \ref{case7} \\
 \ref{3688}& $3 ^2$ & $36$  & $180$  & $40$  & $8$  & $8$       & $\PGaL(2,3^{2}) $ &
			$D_{10} : Z_{4}$   & $D_{8}$             & $\PGaL(2,3^{2}) $    & \ref{case8}, \ref{MintX456}, \ref{MintX7Cor}, \ref{MinTX8}, \ref{MintX123} \\ 
\ref{12084_1}& $2 ^4$ & $120$ & $1020$ & $68$  & $8$  & $4$       & $\PSL(2,2^{4}):Z_2 $ &
			$Z_{17} : Z_{4}$         & $D_{8}$             & $\PSL(2,2^{4}):Z_2 $    & \ref{MintX7}, \ref{MinTX8}, \ref{MintX123}, \ref{MintX9}, \ref{MintX9Cor} \\
 \ref{12084_2}& $2 ^4$ & $120$ & $1020$ & $68$  & $8$  & $4$       & $\PGaL(2,2^{4}) $ &
			$Z_{17} : Z_{8}$         & $(Z_{4} \times Z_{2}) : Z_{2}$ & $\PGaL(2,2^{4}) $    & \ref{MintX7}, \ref{MinTX8}, \ref{MintX123}, \ref{MintX9}, \ref{MintX9Cor} \\
 \ref{12088}& $2 ^4$ & $120$ & $2040$ & $136$ & $8$  & $8$       & $\PGaL(2,2^{4}) $ & 
			$Z_{17} : Z_{8}$         & $D_{8}$             & $\PGaL(2,2^{4}) $    & \ref{MintX7}, \ref{MinTX8}, \ref{MintX123}, \ref{MintX9}, \ref{MintX9Cor} \\
 \ref{49641}& $2^5$& $496$& $20460$& $165$ & $4$ & $1$ & $\PGaL(2,2^5)$ & $D_{66}:Z_{5}$ & $Z_{2}^3$ & $\PGaL(2,2^5)$ & \ref{MintX9} \\
 \ref{4962214}& $2 ^5$ & $496$ & $7440$ & $330$ & $22$ & $14$      & $\PGaL(2,2^{5})$ & 
			$D_{66}:Z_{5}$ & $D_{22}$            & $\PGaL(2,2^{5})$    & \ref{MinTX8} \\
\ref{2016324_1}& $2 ^6$ & $2016$ & $16380$ & $260$ & $32$ & $4$      & $\PSL(2,2^{6}):Z_{2}$ & 
			$D_{66}:Z_{2}$ & $Z_{2}^{4}:Z_{2}$           & $\PSL(2,2^{6}):Z_{2}$    & \ref{MintX9Cor} \\
\ref{2016324_2}& $2 ^6$ & $2016$ & $16380$ & $260$ & $32$ & $4$      & $\PGaL(2,2^{6})$ & 
			$D_{130}:Z_{6}$ & $Z_{2}^{4}:Z_{6}$            & $\PGaL(2,2^{6})$    & \ref{MintX9Cor} \\
\ref{20163212_1}& $2 ^6$ & $2016$ & $49140$ & $780$ & $32$ & $12$      & $\PGaL(2,2^{6})$ & 
			$D_{130}:Z_{6}$ & $Z_{2}^{4}:Z_{2}$            & $\PGaL(2,2^{6})$    & \ref{MintX9Cor} \\
\ref{20163212_2}& $2 ^6$ & $2016$ & $49140$ & $780$ & $32$ & $12$      & $\PGaL(2,2^{6})$ & 
          $D_{130}:Z_{6}$ & $Z_{2}^{4}:Z_{2}$            & $\PGaL(2,2^{6})$    & \ref{MintX9Cor} \\
        \bottomrule
		\end{tabular}
	}
	\label{tab:alldesigns}
 \end{table} 

\section{The new flag-transitive linear space with $v=496$ and $k=4$}\label{Examples}
In this section, we analyze the linear space \textcolor{green}{listed} in Line 44 of Table \ref{tab:alldesigns}. Before proceeding, some research history and preliminary remarks are necessary.

In 1986, Delandtsheer \cite{De} proved that the Witt-Bose-Shrikhande space is the unique linear space admitting $\PSL(2,q)$ as a flag-transitive automorphism group. Later, in 1990, Buekenhout et al. \cite{BDDKLS} announced the classification of linear spaces admitting a flag-transitive automorphism group, except when the automorphism group is a semilinear $1$-dimensional group. Nevertheless, it was only in 2002 that Saxl \cite{Saxl} provided a proof of the classification when the socle of the group is almost simple. In particular, it is proven in \cite[Lemma 7.1]{Saxl} that, if the socle of the automorphism group is $\PSL(2,q)$,
then the group must be flag-transitive, and hence the conclusion of \cite{De} holds. However, in \cite[Lemma 7.1]{Saxl}, for $q$ is even and $r\neq q+1$, it was asserted that no example occur for $\log_{2}q \leq 17$. We show below that this minor oversight led to a missing case in the aforementioned classification, namely, a $2$-$(496,4,1)$ linear space admitting $\PGaL(2,2^5)$ as a flag-transitive automorphism group.

We provide two proofs of the existence and uniqueness (up to isomorphism) of the flag-transitive $2$-$(496,4,1)$ linear space: one theoretical and the other by means of the package \texttt{Design} of \texttt{GAP}. For the theoretical proof, we first recall the following facts related to $\PSL(2,q)$, $q$ even, viewed as a collineation group of $\PG(2,q)$:

\noindent \textbf{(1)} An irreducible conic $\mathcal{C}$ of $\PG(2,q)$, $q=2^{f}$, is a $%
(q+1)$\emph{-arc}, namely a set of $q+1$ points no three of them collinear, by 
\cite[Lemma 7.7]{Hir}. Any line of $\PG(2,q)$ is either \emph{secant}, \emph{%
tangent} or \emph{external} according as it has $2$, $1$ or $0$ points in
common with $\mathcal{C}$, respectively. The set of secant, tangent or external lines has size $\frac{q(q+1)}{2}$, $q+1$, or $\frac{q(q-1)}{2}$, respectively, by \cite[Corollary 8.2]{Hir}. The tangent lines to $\mathcal{C}$
are all concurrent to a point $N$ called \emph{nucleus} of $\mathcal{C}$ by 
\cite[Corollary 7.11]{Hir}, and $\mathcal{J}=\mathcal{C}\cup \left\{
N\right\} $ is a $(q+2)$-arc called \emph{regular} \emph{hyperoval }(see \cite[%
Section 8.4]{Hir}). Clearly, the lines of $\PG(2,q)$ are either secants or external to $\mathcal{J}$, that is they have either $2$ or $0$ points in common with $%
\mathcal{J}$. The set $\mathcal{E}$ of the external lines to $\mathcal{J}$ coincides with the set of the external lines to $\mathcal{C}$, and hence it has size $q(q-1)/2$. The number of
points of $\PG(2,q)\setminus \mathcal{J}$ is $q^{2}-1$ and through each point
there are exactly $\frac{q}{2}+1$ secant lines to $\mathcal{J}$ and $q/2$ external lines to $\mathcal{J}$ by \cite[Corollary 8.8]{Hir}.

\noindent \textbf{(2)} $\PGL(3,q)$ has a unique conjugacy class of subgroups isomorphic
to $\PSL(2,q)$ by \cite[Table 8.3]{BHRD} (note that $\PSL(2,q) \cong \Omega (3,q)$ by \cite[Corollary 7.14]{Hir} is reducible
and not maximal in $\PGL(3,q)$ when $q$ is even. Further, the irreducible conics do not arise from
polarities in this case), and each of these groups is the stabilizer in $%
\PGL(3,q)$ of a suitable regular hyperoval of $\PG(2,q)$. The converse is also
true as a consequence of \cite[Theorem 7.4]{Hir}. In particular, each $\PGaL(2,q)$ fixes the nucleus of its invariant hyperoval and acts $2$-transitively on the remaining $q+1$ points of this one  by \cite[Corollary 7.15]{Hir}. 

\noindent \textbf{(3)} $\PGL(3,q)$ has a unique conjugacy class of elements of order $2$, and if $%
\tau $ is any of these then $\tau $ is a $(P_{\tau },m_{\tau })$%
-elation of $\PG(2,q)$. That is, $\tau $ fixes each of the $q+1$ points of $%
m_{\tau }$ including $P_{\tau }$ and fixes setwise each of the $q+1$ lines
of $\PG(2,q)$ containing $P_{\tau }$ by \cite[Exercise IV.4.6]{HP}. No
other points or lines of $\PG(2,q)$ are fixed by $\tau $.

\noindent \textbf{(4)} If $T$ is any Sylow $2$-subgroup of $\PSL(2,q)$, then the following hold by \cite[Lemma 2.3]{MZZZ}:
\begin{enumerate}
    \item[(i)] $T$ fixes the nucleus $N$, a unique point $Q$ of $\mathcal{C}$ and acts regularly on $\mathcal{C}\setminus \left\{ Q\right\}$;
    \item[(ii)] $T$ fixes $m$ pointwise, where $m=NQ$, and acts semiregularly on $\PG(2,q)\setminus m$;
    \item[(iii)]  $\tau_{a}$, with $a \in \mathbb{F}_{q}$ is a $(P_{\tau_{a}},m)$-elation of $\PG(2,q)$;
    \item[(iv)]  Any cyclic subgroup of $\PSL(2,q)$ normalizing $T$ acts regularly on the set $m\setminus \left\{ N,Q\right\}=\left\{ P_{\tau _{a}}:a \in \mathbb{F}^{\ast}_{q}\right\}$;
    \item[(v)] If $E_{a}$ is the set of $q/2$ lines through $P_{\tau_{a}}$ which are external to $\mathcal{J}$ (see (2)), then $T$ acts transitively on $E_{a}$ with action kernel $\left\langle \tau _{a} \right\rangle$. Moreover, $\mathcal{E}$ is partitioned into $q-1$ orbits under $T$, namely, $E_{a}$ with $a\in \mathbb{F}^{\ast}_{q}$.
\end{enumerate}

\begin{example}
There is a unique $2$-$(496,4,1)$ linear space admitting $\PGaL(2,2^{5})$ as a flag-transitive automorphism group. Moreover, $\PGaL(2,2^{5})$ is its full automorphism group.    
\end{example}
\begin{proof}
After fixing a projective frame in $\PG(2,2^{5})$ with coordinates $%
(Y_{0}:Y_{1}:Y_{2})$, according to (1),  we may assume that 
\[
\mathcal{C}:Y_{0}Y_{2}+Y_{1}^{2}=0\text{.}
\]%
Hence, $\mathcal{J}=\mathcal{C}\cup \left\{ N\right\} $, with $N=Y_{\infty}=(0:1:0)$ by 
\cite[Corollary 7.12]{Hir}, is a regular hyperoval of  $\PG(2,2^{5})$. Again
by (1), there is a copy of $\PSL(2,2^{5})$ inside $\PSL(3,2^{5})$
preserving $\mathcal{J}$, say $X$. Note that, the collineation $\varsigma
:(Y_{0}:Y_{1}:Y_{2})\rightarrow (Y_{0}^{2}:Y_{1}^{2}:Y_{2}^{2})$ of $%
\PG(2,2^{5})$ induced by the automorphism of $\mathbb{F}_{2^{5}}$ preserves $%
\mathcal{C}$, $N$ and hence $\mathcal{J}$. Therefore $\mathcal{C}$, $N$ and $%
\mathcal{I}$ are left invariant by $G=X:\left\langle \varsigma \right\rangle
\cong \PGaL(2,2^{5})$.

For $a\in \mathbb{F}_{2^{5}}$, let $\tau _{a}$ be the collineation of $\PG(2,2^{5})$ represented by the matrix 
\[
\left( 
\begin{array}{ccc}
1 & 0 & 0 \\ 
0 & 1 & 0 \\ 
a^{2} & a & 1%
\end{array}%
\right) \text{,}
\]%
and let $T=\left\langle \tau _{a}:a\in \mathbb{F}_{2^{5}}\right\rangle $ be
a Sylow $2$-subgroup of $X$, and hence of $G$. Then $m=\ell_{\infty}=\left[0:0:1\right]$, $Q=(1:0:0)
$ by (4.i) and (4.ii). Let $P_{1}=(1:1:0)$ on $m$, then $\tau _{1}$ is a $(P_{1},m)$%
-elation of $\PG(2,2^{5})$ (see (4.iii)). Moreover, $T/\left\langle \tau _{1}\right\rangle $
acts regularly on $E_{1}$, the set of $2^{4}$ lines incident with $P_{1}$
and external to $\mathcal{J}$, which therefore can be endowed with the
structure of a $4$-dimensional $\mathbb{F}_{2}$-space. Now, $\left\langle
\tau_{1} \right\rangle $ fixes $P_{1}$ and permutes the $2^{4}$ lines in $%
E_{1}$. Moreover, $\left\langle \varsigma \right\rangle $ normalizes $%
T/\left\langle \tau_{1} \right\rangle $ since $\left\langle \varsigma
\right\rangle $ normalizes $T$ and centralizes $\tau _{1}$. Indeed, $\tau
_{a}^{\varsigma }=\tau _{a^{2}}$. Hence, $\left\langle \varsigma
\right\rangle $ induces a group of linear maps on the $4$-dimensional $%
\mathbb{F}_{2}$-space $E_{1}$ by \cite[Proposition 4.2]{Pass}. Actually, $%
\left\langle \varsigma \right\rangle $ induces a Sylow $5$-subgroup of $%
\GL(4,2)\cong A_{8}$, which also is a subgroup of a Singer cycle (see \cite[%
Section 4.2]{Hir}). Then there is a unique $\left\langle \varsigma
\right\rangle $-orbit on the set of the $2$-subspaces of $E_{1}$ which is a
(regular) $2$-spread $\mathcal{S}$ of $E_{1}$ by \cite[Lemma 2.1]{Dru}.

Let $\mathcal{D}$ be the incidence structure $(\mathcal{E},B^{G})$, where $B$
is any of the $2$-subspaces of $E_{1}$ belonging to the $\left\langle
\varsigma \right\rangle $-invariant $2$-spread of $E_{1}$. It has parameters 
$v=496$ and $k=4$. Moreover, $G_{B}\leq G_{P_{1}}=T:\left\langle \varsigma
\right\rangle $ since $B$ consists of $4$ lines of $PG(2,2^{5})$ concurrent in $P_{1}$. Now, since $T/\left\langle \tau _{1}\right\rangle $ and $%
\left\langle \varsigma \right\rangle $ act regularly and irreducibly on $%
E_{1}$, respectively, it follows that $G_{B}\cong Z_{2}^{3}$, with $\tau_{1}\in G_{B}$, and hence $%
b=20460$. Consequently, $r=165$ since $\mathcal{D}$ is a $1$-design by \cite[%
1.2.6]{Demb}.

Finally, let $\ell _{1},\ell _{2}\in \mathcal{E}$ with $\ell _{1}\neq \ell
_{2}$. Clearly, $\ell _{1}\cap \ell _{2}$ is a point of $\PG(2,2^{5})%
\setminus \mathcal{J}$. Since $G$ acts transitively on $\PG(2,2^{5})\setminus 
\mathcal{J}$, we have $\ell _{1}\cap \ell _{2}$ $=\left\{ P_{1}^{g}\right\} $ for some $g \in G$. Hence, $\ell _{1}^{g^{-1}},\ell _{2}^{g^{-1}}\in E_{1}$. Then there is $%
\tau _{a_{0}}\in T$ such that $\ell _{1}^{g^{-1}\tau _{a_{0}}}$ corresponds
to the null vector of $E_{1}$ (recall that $4$-dimensional $\mathbb{F}_{2}$-space on which $T$ induces the translation group). So, there is a unique element of $\mathcal{S}$
containing both $\ell _{1}^{g^{-1}\tau _{a_{0}}}$ and $\ell _{2}^{g^{-1}\tau
_{a_{0}}}$, that is to say $\ell _{1}^{g^{-1}\tau _{a_{0}}},\ell
_{2}^{g^{-1}\tau _{a_{0}}}\in B^{\varsigma ^{i_{0}}}$ for a unique $i_{0}\in
\left\{ 0,1,2,3,4\right\} $. Hence, $B^{\varsigma ^{i_{0}}\tau _{a_{0}}g}$
is the unique element of $B^{G}$ containing both $\ell _{1}$ and $\ell _{2}$%
. Thus, $\mathcal{D}$ is a $2$-$(496,4,1)$ design, that is a linear space.

The group $G$ acts flag-transitively on $\mathcal{D}$ since $G$ acts
block-transitively on $\mathcal{D}$ and $G_{B}$ acts transitively on $%
\mathcal{D}$. Moreover, since $\Aut(\mathcal{D})$ preserves $\mathcal{E}$, $%
\mathcal{J}$ and hence $\mathcal{C}$, it follows that $\Aut(\mathcal{D}%
)=G\cong \PGaL(2,2^{5})$.

Let $\mathcal{D}^{\prime}$ be any $2$-$(496,4,1)$ linear space admitting $G$ as a flag-transitive automorphism group. Then $G_{\alpha} \cong D_{66}:Z_{5}$, where $\alpha$ is any point of $\mathcal{D}^{\prime}$. Since $G$ has a unique conjugacy class of subgroups isomorphic to $D_{66}:Z_{5}$, and this is the stabilizer of a line of $\PG(2,2^{5})$ external to $\mathcal{C}$, we may identify the point set of $\mathcal{D}^{\prime}$ with $\mathcal{E}$, the set of external lines of $\mathcal{C}$ (or, equivalently to $\mathcal{H}$). Now, if $C$ is any block of $\mathcal{D}^{\prime}$, since $G_{C}\cong Z^{3}_{2}$, there is an element of $G_{C}$ fixing each line in $C$, and hence $C$ consists of $4$ concurrent lines of $\PG(2,2^{5})$ by (4.iii). Since $G$ acts transitively on $\PG(2,2^{5})\setminus \mathcal{J}$, we may assume that the concurrency point is $P_{1}$, therefore $C \subset E_{1}$. Now, since $\mathcal{D}^{\prime}$ is a linear space, then $C^{\left\langle \varsigma \right\rangle}$ is forced to be a $2$-spread of $E_{1}$, and hence $C^{\left\langle \varsigma \right\rangle}=B^{\left\langle \varsigma \right\rangle}$ by \cite[Lemma 2.1]{Dru}. Thus $C \in B^{G}$, and hence $\mathcal{D}^{\prime}=\mathcal{D}$ since $G$ is block-transitive. This proves the existence and the (up to isomorphism) uniqueness of $\mathcal{D}$.
\end{proof}

\bigskip

A coordinate description of $B$ is as follows. Let $E_{1}=\left\{\left[1:1:\omega ^{n}\right]:n\in \mathcal{N}\right\}$, where 
\[
\mathcal{N}=\{0,3,5,6,9,10,11,12,13,17,18,20,21,22,24,26\}\text{.}
\]%
Here, $\omega $ is a a primitive element of the copy of $\mathbb{F}_{2^{5}}$ obtained as the
splitting field of $z^{5}+z^{2}+1\in \mathbb{F}_{2}[z]$. The elements $\omega ^{n}$
with $n\in \mathcal{N}$ are all the elements $\mathbb{F}_{2^{5}}$ with
absolute trace equal to $1$. Furthermore, $\tau _{a}$ acts on on $E_{1}$ as 
$[1:1:\omega ^{n}]\rightarrow \lbrack 1:1:\omega ^{n}+a^{2}+a]$ since $T_{%
\mathbb{F}_{2^{5}}/\mathbb{F}_{2}}(\omega ^{n}+a^{2}+a)=T_{\mathbb{F}%
_{2^{5}}/\mathbb{F}_{2}}(\omega ^{n})=1$ for any $a \in \mathbb{F}_{2^{5}}$, and $\varsigma $ acts on $E_{1}$
as $[1:1:\omega ^{n}]\rightarrow $ $[1:1:\omega ^{2n}]$. Then%
\[
B=\left\{ \left[1:1:1\right], \left[1:1:\omega^{6}\right], \left[1:1:\omega^{11}\right],\left[1:1:\omega^{20}\right]\right\} 
\]%
and $\mathcal{S}=B^{\left\langle \varsigma \right\rangle} $.

\bigskip 

Alternatively, the ingredients for a \texttt{GAP}-aided construction, via the package \texttt{Design}, are provided in Section \ref{Appendix}.

\section{Reductions for the case $G$ acting point-primitively on $\mathcal{D}$}\label{FT-PP-Section}
In this section, we assume that $G$ acts point-primitively on $\mathcal{D}$. Our aim is to prove the following reduction theorem.

\bigskip
\begin{theorem}\label{FT-PP-reduction}
Let $\mathcal{D}$ be a $2$-$(v,k,\lambda)$ design admitting $\PSL(2,q) \unlhd G\leq \PGaL(2,q)$ as a flag-transitive and point-primtitive automorphism group. Then one of the following holds:
\begin{enumerate}
    \item $\mathcal{D}$ is a $2$-$(q+1,k,\lambda)$ design, with the projective line $\PG(1,q)$ as a point set, as in Construction \ref{Build}, and $G$ acts point-$2$-transitively on $\mathcal{D}$; 
    \item $\mathcal{D}$ and $G$ are as in Table \ref{tab:alldesigns};
    \item $\left(v,b,k,r\right)=\left(2^{f-1}(2^{f}-1),2^{f-1}3^{2}(2^{f}-1)\frac{\lambda }{2},\frac{2^{f}+1}{3},3(2^{f}+1)\frac{\lambda }{2}\right)$ with $f$ odd and $\lambda$ even. Moreover, $X_{\alpha}\cong D_{2(2^f+1)}$ and $X_{B}\leq D_{2(2^f+1)}$; 
    \item$\left(v,b,k,r\right)=\left(2^{f-1}(2^{f}-1),(2^{2f}-1)\lambda,2^{f-1},(2^{f}+1)\lambda\right)$ with $\lambda \mid 2f$. Moreover, $X_{\alpha}\cong D_{2(2^f+1)}$ and $X_{B}\leq Z^{f}_{2}$, and either $\mathcal{D}$ is the Witt-Bose-Shrirkhande linear space of order $2^{f}$ with $f\geq 3$, or $\lambda>1$ and $f \geq 8$;
    \item $\left(v,b,k,r\right)=\left(\frac{q(q-1)}{2},\frac{q(q+1)\lambda}{4},q-1,\frac{(q+1)\lambda}{2}\right)$ with $q>50$ and $\lambda \mid 4f$. Moreover, $X_{\alpha}\cong D_{\frac{2(q+1)}{(2,q-1)}}$ and $X_{B}\leq D_{\frac{2(q-1)}{(2,q-1)}}$.
\end{enumerate}
\end{theorem}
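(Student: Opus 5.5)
The plan follows the strategy announced in the introduction. Set $X=\PSL(2,q)$, the socle of $G$, and fix a point $\alpha$. First I show that $X$ is itself point-primitive. Since $G_\alpha$ is maximal in $G$ and $G=XG_\alpha$, the subgroup $X_\alpha=X\cap G_\alpha$ is a $G_\alpha$-invariant subgroup of $X$. The list of maximal subgroups of almost simple groups with socle $\PSL(2,q)$ in \cite{gmaxi} then shows that $X_\alpha$ is maximal in $X$. The novelty cases, where $G_\alpha\cap X$ is not maximal in $X$, would be handled separately. These are $D_{2(q\pm1)}$ for $q\in\{7,9\}$, and the cases $G=\PGL(2,7)$, $\PGL(2,9)$, $M_{10}$, $\PGaL(2,9)$. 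They have small degree and can be checked directly.

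By \cite{Di,COT}, $X_\alpha$ is then one of the following:
\begin{itemize}
\item a Borel subgroup $E_q:Z_{(q-1)/(2,q-1)}$, giving $v=q+1$;
\item $D_{2(q-1)/(2,q-1)}$, giving $v=q(q+1)/2$;
\item $D_{2(q+1)/(2,q-1)}$, giving $v=q(q-1)/2$;
\item a subfield group $\PSL(2,q_0)$ or $\PGL(2,q_0)$;
\item one of $A_4$, $S_4$, $A_5$.
\end{itemize}
In the Borel case $G$ is $2$-transitive on $\PG(1,q)$, and Construction \ref{Build} gives conclusion (1).

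For every other case the main numerical tool is the pair of standard flag-transitivity inequalities. First, $r\mid \lambda(v-1)$ and $r \mid |G_\alpha|$. Second, $r$ divides $\lambda d$ for every nontrivial subdegree $d$ of $G$, so $r\mid\lambda\gcd(v-1,d_1,d_2,\dots)$. Combined with $\lambda v<r^2$ (from $r(k-1)=\lambda(v-1)$ and $k\le v/2$ after complementing if needed), this gives $|G_\alpha|^2\ge |G_\alpha|\cdot r>\lambda v$. It then suffices to bound $r$ by a small multiple of a suitable subdegree.

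\begin{itemize}
\item \textbf{Small $X_\alpha$ and subfield cases.} For $A_4$, $S_4$, $A_5$ the inequality $|G_\alpha|^2>v$ bounds $q$ explicitly. For subfield subgroups with $q=q_0^m$, $m\ge3$ is impossible once $q$ is large, and $m=2$ needs $|\PGL(2,q_0)|$-divisibility against a subdegree of order about $q_0^2$. The finitely many surviving parameter sets are then settled with the \texttt{GAP} package \texttt{Design}; these produce Table \ref{tab:alldesigns}.
\item \textbf{Dihedral cases.} Here I use the subdegrees from \cite{Ka,FI}, which are mostly $|X_\alpha|$ or $|X_\alpha|/2$ up to the field automorphism factor $f$, together with $v-1$. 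For $X_\alpha=D_{2(q-1)/(2,q-1)}$ one gets $r\mid \lambda\gcd(v-1,\,2f(q-1))$ with $v-1=(q-1)(q+2)/2$. This forces $r$ to be roughly $\le 4f(q-1)$, hence $r^2>\lambda v$ bounds $\lambda$ and $k$ and leaves a finite list. For $X_\alpha=D_{2(q+1)/(2,q-1)}$ one gets $r\mid\lambda\gcd((q+1)(q-2)/2,\,2f(q+1))$, and $r$ must be a divisor of order about $(q+1)\cdot c$ with $c$ dividing $3$ or $2$.
\end{itemize}

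Writing $r=(q+1)\lambda c/\delta$, the flag-transitivity equation $k=\lambda(v-1)/r+1$ yields the three families. These are $k=q-1$ (conclusion (5)); $k=2^{f-1}$ for $q$ even (conclusion (4)); and $k=(2^f+1)/3$ with $f$ odd (conclusion (3)). In each family I determine $X_B$ by computing $|G_B|=|G|/b$ and comparing it with subgroups of $X$ that have orbits of length $k$. This gives $X_B\le Z_2^f$ or a dihedral group as stated. The divisibility $\lambda\mid 2f$ or $\lambda\mid 4f$ comes from $|G_B:X_B|\le f$ and $|X_B|$. In the case $\lambda=1$ of (4), I identify the design with the Witt-Bose-Shrikhande space via \cite{De}. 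The bounds $f\ge 8$ and $q>50$ come from running \texttt{Design} on all small $q$; this computation also finds Lines 44--49.

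The main obstacle is the dihedral case $X_\alpha\cong D_{2(q+1)/(2,q-1)}$. Its subdegrees depend delicately on $q$ modulo $4$ and on the part of $G$ contained in $\PGaL$ beyond $\PGL$. To get the sharp divisibility that pins $r$ down to a bounded multiple of $q+1$, I would first work with the conic/hyperoval model: identify points with external lines, and read off suborbits from the intersection points $\ell\cap\ell'$ and their $G_\ell$-orbits.
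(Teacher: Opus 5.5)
Your skeleton matches the paper: maximality of $X_\alpha$ via \cite{gmaxi}, the case split along Dickson's list, the Borel case giving (1), the conditions $r\mid\lambda\gcd(v-1,|G_\alpha|)$ and $r\mid\lambda d$, and \texttt{GAP} for the residual parameters. However, the step that actually produces conclusions (3)--(5) is missing.

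For $X_\alpha\cong D_{2(q+1)/(2,q-1)}$, the subdegree conditions together with $k\le r$ and $r\mid|G_\alpha|$ give only
\[
r=\frac{\lambda(q+1)\theta}{ex},\qquad k=\frac{(q-2)ex}{2\theta}+1,
\]
where $(e,\theta)=(2,1)$ or $(1,f)$ according as $q$ is odd or even, and $x$ is a positive integer bounded only by $x<10\theta f/e$.

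\begin{itemize}
\item Your description of $r$ as about $(q+1)c$ with $c$ dividing $2$ or $3$ is wrong. The linear space of Line 44 ($q=32$, $\lambda=1$) has $r=165=5(q+1)$, and Line 45 has $x=7$.
\item The relation $k=\lambda(v-1)/r+1$ is an identity that returns a value of $k$ for every $x$. It therefore cannot single out $k\in\{q/2,\,q-1,\,(q+1)/3\}$, i.e.\ $ex\in\{\theta,2\theta,2\theta/3\}$.
\item Your proposed remedy, sharper suborbit information from the conic/hyperoval model, cannot help. For instance, take $q=2^f$ with $f$ an odd prime and $G=\PGaL(2,q)$. Every nontrivial subdegree equals $f(q+1)$ and $f\mid v-1$, so subdegree divisibility yields nothing beyond $r\mid\lambda f(q+1)$. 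In the paper the hyperoval model enters only later, to settle family (3).
\item The same defect occurs for $X_\alpha\cong D_{2(q-1)/(2,q-1)}$. For $q$ even, with $r=\lambda(q-1)/m$ and $r\le 2f(q-1)$, the inequality $r^2>\lambda v$ bounds $\lambda$ and $m$ but not $q$. So no finite list results.
\end{itemize}

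The missing idea is divisibility on the block side. The paper takes a maximal subgroup $M$ of $G$ containing $G_B$ but not $X$. Then $k\mid|G_B|\mid|M|$, with $M\cap X$ in Dickson's list or a novelty, and the paper uses
\[
\gcd((q-2)ex+2\theta,\,q+1)=\gcd(3ex-2\theta,\,q+1),\qquad \gcd((q-2)ex+2\theta,\,q-1)=\gcd(ex-2\theta,\,q-1),
\]
together with $\gcd((q-2)ex+2\theta,\,q)\mid 2|ex-\theta|$. Unless $ex\in\{\theta,2\theta,2\theta/3\}$, these bound $q$ polynomially in $f$. The bounded cases are the finite lists sent to \texttt{GAP}, and this is where sporadic examples such as Lines 44 and 45 come from. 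Even the weaker condition $k\mid vr$, i.e.\ $k\mid q(q^2-1)f$, combined with these gcds suffices for the numerical reduction. The analogous computation with $\gcd(mq+2m+2,\,q\pm1)$ is what makes the $D_{2(q-1)}$ case finite.

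The type of $M$ in the three surviving cases is then what yields the statements on $X_B$. This requires extra case analysis, for example excluding $X_B=Z_2^t:Z_m$ with $m>1$ when $k=q/2$. Your step of ``comparing $|G|/b$ with subgroups having an orbit of length $k$'' does not supply this by itself.

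Two smaller points:
\begin{itemize}
\item $r^2>\lambda v$ follows directly from $r\ge k$ and $r>\lambda$. You may not complement, since complementation need not preserve flag-transitivity.
\item Your list of novelties omits the infinite family $G=\PGL(2,p)$, $G_\alpha\cong S_4$, $p\equiv\pm11,\pm19\pmod{40}$. This family is disposed of at once by $r\mid 24$ and $r^2>\lambda v$.
\end{itemize}
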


Before proceeding, we provide some useful facts in both design theory and group theory.
If a group $G$ acts transitively on a set $\mathcal{P}$ and $\alpha\in \mathcal{P}$, then the
\emph{subdegree} $d$ of $G$ is the length of some $G_\alpha$-orbit.  We say that $d$ is non-trivial if the orbit is not $\{\alpha\}$. 

\bigskip

\begin{lemma} \label{lem:basic-params}
Let $\mathcal{D}$ be a $2$-$(v,k,\lambda)$ design admitting a flag-transitive
automorphism group $G$. Let $\alpha \in \mathcal{P}$, then
\begin{enumerate}
\item[\rm(i)]   $r(k-1)=\lambda(v-1)$, $vr=bk$ and $b \geq v$;
\item[\rm(ii)]  $r\mid \lambda \gcd\left(\left\vert G_\alpha\right\vert, v-1\right)$, and $r^{2}>\lambda v$;
\item[\rm(iii)]  $r\mid \lambda d$ for every nontrivial subdegree $d$ of $G$.
\end{enumerate}
\end{lemma}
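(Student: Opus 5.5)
The plan is to derive the three parts from double counting and from the action of $G$ on flags.

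For (i), I count in two ways the pairs $(\beta,B)$ with $\beta\neq\alpha$ and $\{\alpha,\beta\}\subseteq B$, where $\alpha$ is a fixed point. This gives $r(k-1)=\lambda(v-1)$. Counting all flags gives $vr=bk$. The inequality $b\geq v$ is Fisher's inequality for a non-trivial $2$-design. I will quote it, for instance through the standard rank argument on the incidence matrix $N$: since $NN^{T}=(r-\lambda)I+\lambda J$ is nonsingular (as $r>\lambda$ because $k<v$), we get $v\leq b$.

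For (ii), flag-transitivity makes $G_\alpha$ transitive on the $r$ blocks through $\alpha$, so $r\mid |G_\alpha|$. Combined with $r\mid \lambda(v-1)$ from (i), this gives $r\mid \lambda\gcd(|G_\alpha|,v-1)$; indeed $r$ divides both $\lambda|G_\alpha|$ and $\lambda(v-1)$, hence divides $\lambda$ times their gcd. For $r^2>\lambda v$, I use $b\geq v$ together with $bk=vr$ to get $r\geq k$. Then
\[
r^{2}\geq rk>r(k-1)=\lambda(v-1).
\]
To sharpen this to $r^2>\lambda v$, I note that $r(r-1)\geq r(k-1)=\lambda(v-1)$, so $r^2\geq \lambda v-\lambda+r>\lambda v$, because $r>\lambda$.

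For (iii), let $\Delta$ be a nontrivial $G_\alpha$-orbit of length $d$. I count the pairs $(\beta,B)$ with $\beta\in\Delta$ and $\alpha,\beta\in B$; there are $d\lambda$ of them. Since $G_\alpha$ acts transitively on the $r$ blocks through $\alpha$, each such block $B$ meets $\Delta$ in the same number $c$ of points, because $G_\alpha$ preserves $\Delta$. Hence $rc=\lambda d$, and so $r\mid\lambda d$.

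None of the steps is a real obstacle. The only points needing care are that $G_\alpha$ preserves $\Delta$, which makes $|B\cap\Delta|$ constant on the blocks through $\alpha$, and the strict inequality in (ii), which relies on $r>\lambda$.
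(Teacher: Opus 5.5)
Your proof is correct and follows the standard route. The paper gives no argument, only calling these well-known facts, and the facts it has in mind are exactly yours: the two double counts, Fisher's inequality, $r\mid|G_\alpha|$ from flag-transitivity, and the constant intersection size $|B\cap\Delta|$ for a $G_\alpha$-orbit $\Delta$.
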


\begin{proof}
These are well-known facts
\end{proof}

\begin{lemma}\label{lem:NotNov}
 $X_\alpha$ is a maximal subgroup of $X$.
\end{lemma}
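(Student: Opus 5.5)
Here $X=\PSL(2,q)$ is the socle of $G$, $\alpha$ is a point, and $G_\alpha$ is maximal in $G$ because $G$ is point-primitive. I will argue by contradiction: assume $X_\alpha$ is not maximal in $X$. Since $G=XG_\alpha$ ($X$ is transitive, being a nontrivial normal subgroup of a primitive group), $G_\alpha$ is then a novelty maximal subgroup of $G$, i.e. $G_\alpha\cap X$ is not maximal in $X$.

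The novelties of almost simple groups with socle $\PSL(2,q)$ are classified by Giudici \cite{gmaxi}, and they occur only for small $q$:
\begin{itemize}
\item $q=7$ with $G=\PGL(2,7)$ and $G_\alpha\cap X=D_6$ or $D_8$;
\item $q=9$ with $G=\PGL(2,9)$, $M_{10}$ or $\PGaL(2,9)$, and $X_\alpha$ one of $D_{10}$, $D_8$ or $Z_5{:}Z_4$-type subgroups;
\item $q=11$ with $G=\PGL(2,11)$ and $X_\alpha=D_{10}$ or $D_{12}$;
\item $q=p$ prime, $q\equiv\pm 11,\pm 19 \pmod{40}$, with $G=\PGL(2,q)$ and $X_\alpha\cong A_4$ (so $G_\alpha\cong S_4$);
\item $q=2^f$ with $f$ odd prime, $G=\PGaL(2,q)$ and $X_\alpha$ a Borel-type normaliser such as $D_{2(q\pm1)}$ or $Z_2^f{:}Z_{q-1}$ combined with the field automorphism, if such cases appear in the list.
\end{itemize}
I would first extract this finite list, together with $v=|G:G_\alpha|$ and $|G_\alpha|$ in each case.

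For each case I would then apply Lemma~\ref{lem:basic-params}(ii). Since $r\mid\lambda\gcd(|G_\alpha|,v-1)$ and $r^2>\lambda v$, we get $\lambda v< r^2\le \lambda^2\gcd(|G_\alpha|,v-1)^2$.

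\textbf{Sharper bound.} To remove $\lambda$, I would combine this with $r\mid \lambda d$ for the nontrivial subdegrees $d$, as in Lemma~\ref{lem:basic-params}(iii). Flag-transitivity gives $r\mid |G_\alpha|$, and $r$ divides $\lambda(v-1)$. Writing $r=\lambda(v-1)/(k-1)$ and using $r\mid |G_\alpha|$ bounds $\lambda(v-1)/(k-1)$ above, while $r^2>\lambda v$ forces $k-1<\lambda(v-1)/\sqrt{\lambda v}$. For the $\PGL(2,q)$ case with $G_\alpha\cong S_4$, we have $|G_\alpha|=24$ and $v=q(q^2-1)/24$. Hence $r\le 24$ while $r(k-1)=\lambda(v-1)$ and $k\le v$, so $\lambda v<r^2\le 576$, which bounds $q$. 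Here I also need that $\lambda$ is bounded, via $\lambda<r$ (since $k<v$); then $v<r^2\le 576$ gives $q$ tiny, i.e. $q\le 11$ or so, and the congruence kills these cases.

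\textbf{Remaining finite cases.} The remaining cases are finitely many with explicit $(v,|G_\alpha|)$, e.g. $v=28,21,36,45,55,66,\dots$. For each I would compute the subdegrees of $G$ on $G/G_\alpha$ and check that no $r$ satisfies $r\mid\gcd$ of the $\lambda d$'s, $r\mid\lambda(v-1)$ and $r^2>\lambda v$ simultaneously. Failing that, a \texttt{GAP} search for flag-transitive designs with that point stabiliser shows there are none; equivalently, any design found would be flag-transitive under $X$ as well, contradicting nonmaximality.

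The main obstacle is the subdegree analysis for the small novelty cases ($q=7,9,11$, and possibly $q=2^f$ with $G_\alpha$ the normaliser of a nonmaximal subgroup). There the crude bound does not finish the argument, and the suborbit lengths must be known explicitly. I would compute them directly in \texttt{GAP}, or via the Frobenius–Kantor style counts \cite{Ka,FI}, and show the gcd condition forces $r^2\le\lambda v$.
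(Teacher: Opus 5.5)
Your route is essentially the paper's: Giudici's list of novelties, then the arithmetic of Lemma~\ref{lem:basic-params}, subdegrees, and a computer check. The last step cannot succeed, however, because the statement fails in one of the novelty cases. Take $G=\PGaL(2,9)$ and $G_\alpha=N_G(D_{10})\cong Z_{10}{:}Z_4$. This is a novelty of index $v=36$, so $X_\alpha\cong D_{10}<A_5<A_6\cong X$ is not maximal. Here $\gcd(|G_\alpha|,v-1)=\gcd(40,35)=5$. The $X_\alpha$-orbits have lengths $1,5,5,5,10,10$ (Table~\ref{tab:subdegrees}, row $D_{q+1}$, $q\equiv 1\pmod 4$), so every nontrivial subdegree of $G$ is a multiple of $5$. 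Hence $(v,b,r,k,\lambda)=(36,180,40,8,8)$ passes every test you list. Indeed $r=40=5\lambda$ divides $|G_\alpha|$, $\lambda(v-1)$ and $\lambda d$ for every nontrivial subdegree $d$, and $r^2=1600>288=\lambda v$. A design with these parameters exists: it is the flag-transitive, point-primitive $2$-$(36,8,8)$ design of Line~40 of Table~\ref{tab:alldesigns}, with $G_\alpha\cong D_{10}{:}Z_4$, which the paper itself obtains in Lemma~\ref{case8}.

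So your \texttt{GAP} search would return this design rather than nothing, and the lemma can only hold with this exception added. The other small novelties also need a genuine check, since $\lambda>2$ is not excluded there by the arithmetic alone; for example $(21,56,16,6,4)$ for $\PGL(2,7)$ passes (i)--(ii). The paper's own proof breaks at the same spot. For Line~7 of Table~\ref{Tabb1} it uses $R=5\cdot(4,\lambda)$, whereas $|G_\alpha|=40$ gives $R=\gcd(40,35\lambda)=5\cdot(8,\lambda)$. Similarly, Line~8, where $|G_\alpha|=32$, gives $R=4\cdot(8,\lambda)$.

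Independently of this, several steps of your sketch are wrong as written.

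\begin{itemize}
\item The novelty list is inaccurate. $D_{12}$ is maximal in $\PSL(2,11)$; the novelties of $\PGL(2,11)$ are $D_{20}$ and $S_4$. There are no novelties at all for $q=2^f$.
\item In the $S_4$ family, $v<r^2\le 576$ leaves $p=11$ and $p=19$. Both satisfy $p\equiv\pm 11,\pm 19\pmod{40}$, so the congruence kills nothing.
\item $p=19$ does die arithmetically: $r=24$ and $\lambda\le 2$, but $24\nmid 284\lambda$.
\item For $p=11$ the parameters $(55,66,12,10,2)$ and $(55,132,24,10,4)$ survive (i)--(ii). You need the subdegrees of $\PGL(2,11)$ on the cosets of $S_4$, namely $1,4,6,8,12,24$. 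These force $r=2\lambda$ and $k=28>24\ge r$, a contradiction.
\item Your remark that a design found by \texttt{GAP} ``would be flag-transitive under $X$ as well, contradicting nonmaximality'' is a non sequitur. Flag-transitivity of $G$ does not descend to $X$, and $X$ acting point-imprimitively contradicts nothing. The example above is exactly a design such a search finds.
\end{itemize}
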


\begin{proof}
Suppose that $X_\alpha$ is not a maximal subgroup of $X$. Then$(G,G_\alpha,v,R)$, where $R=(\left\vert G_{\alpha}\right\vert,\lambda (v-1))$, is as in Table \ref{Tabb1} by \cite{gmaxi} since $G$ acts point-primitively on $\mathcal{D}$.

In Lines 1--9 of Table \ref{Tabb1}, one has $R=\theta \cdot(4,\lambda)$, with $\theta=3,4,5$, and $\theta ^{2}\cdot(4,\lambda)^{2} >\lambda v$ forces $(4,\lambda)=2$ or $4$. Moreover, $r=\frac{\theta \cdot(4,\lambda)}{x}$ with $x<(4,\lambda)$. Therefore, $r(k-1)=(v-1)\lambda$ and $bk=vr$ lead only to cases as in Lines 2, or 3,5 or 7 with $v=21$ or $v=36$, and all of them with $\lambda=2$. However, none of these cases occurs by \cite[Theorem 1.1]{MZZZ}. 
Finally, assume that Line 10 of Table \ref{Tabb1} occurs. Then

\[
R=\left( 24,\lambda \left( \frac{p(p^{2}-1)}{24}-1\right) \right) 
\]%
Then $576>\lambda \frac{p(p^{2}-1)}{24}$ and since  $p\equiv \pm 11,19\pmod{40}$, it follows that $p\geq 11$. Therefore $576>\lambda 55$ and so
either $p=11$ $\lambda \leq 10$ or $(p,\lambda )=(19,2)$. The latter is
ruled out $R^{2}=64<\lambda v$, whereas in the former one has $R=8(3,\lambda
)$. If $(3,\lambda )=1$, then  $R^{2}=64<\lambda 55$ for $\lambda >1$. Then $%
\lambda =3,6$ or $9$ as $\lambda \leq 10$, and hence $r=R=24$. Now, $r=24=%
\frac{(55-1)\lambda }{k-1}$ implies $(k-1)4=9\lambda $ and so $4$ divides $%
\lambda $, a contradiction. Thus $X_\alpha$ is maximal in $X$.    
\end{proof}

\begin{table}[h!]
\footnotesize
\caption{The possibilities of  $(G,G_{\alpha},v,R)$.}
\label{Tabb1}
\tiny
\begin{tabular}{lllccc}
\toprule
Line & $G$ & $G_\alpha$ & $v$ & $R$ & $(v,k,\lambda,r,b)$ \\
\midrule
1 & $\mathrm{PGL}(2,7)$         & $\mathrm{D_{12}}$                & 28 &  $3\cdot(4,\lambda)$ & -\\  
2 & $\mathrm{PGL}(2,7)$         & $\mathrm{D_{16}}$                & 21 &  $4\cdot(4,\lambda)$ & $(21, 6, 2, 8, 28)$\\  
3 & $\mathrm{PGL}(2,3^{2})$         & $\mathrm{D_{20}}$                & 36 &  $5\cdot(4,\lambda)$ & $(36, 8, 2, 10, 45)$\\  
4 & $\mathrm{PGL}(2,3^{2})$         & $\mathrm{D_{16}}$                & 45 &  $4\cdot(4,\lambda)$ & -\\  
5 & $\mathrm{M_{10}}$           & $C_{5}\rtimes C_4$               & 36 &  $5\cdot(4,\lambda)$ & $(36, 8, 2, 10, 45)$\\  
6 & $\mathrm{M_{10}}$           & $C_{8}\rtimes C_{2}$             & 45 &  $4\cdot(4,\lambda)$ & -\\  
7 & $\mathrm{\PGaL}(2,3^{2})$   & $C_{10}\rtimes C_{4}$            & 36 &  $5\cdot(4,\lambda)$ & $(36, 8, 2, 10, 45)$\\   
8 & $\mathrm{\PGaL}(2,3^{2})$   & $C_{8}\cdot \Aut(C_8)$            & 45 &  $4\cdot(4,\lambda)$ & -\\  
9 & $\mathrm{PGL}(2,11)$        & $\mathrm{D_{20}}$                & 66 &  $5\cdot(4,\lambda)$ & -\\ 
10& $\mathrm{PGL}(2,p),p\equiv \pm 11,19(\text{mod}~ 40)$ & $\mathrm{S_{4}}$  &$\frac{p(p^2-1)}{24}$  & $\left( 24,\lambda \left( \frac{p(p^{2}-1)}{24}-1\right) \right)$ &  \\
\bottomrule
\end{tabular}
\end{table}

\bigskip

It follows from Lemma \ref{lem:NotNov} that $X_\alpha$ is the maximal subgroup of $X$. Hence, by \cite{Di, Hup}, $X_\alpha$ is one of the following groups:

\begin{table}[H]
\tiny
    \centering
    \caption{Maximal subgroups of $\PSL(2,q)$}\label{MaxDickson}
    \begin{tabular}{cccl}
        \toprule
        Line & Maximal subgroup & & cond. \\
        \midrule
        1 & $\mathrm{PGL}(2,q_0)$ & &$q=q_0^2$ odd \\
        2 & $\mathrm{PSL}(2,q_0)$ & &$q=q_0^t$ odd, $t$ odd prime \\
        3 & $\mathrm{PGL}(2,q_0)$ & &$q=2^f=q_0^t$, $t$ prime and $q_0\neq 2$ \\
        4 & $\mathrm{A_4}$ & &$q=p\equiv \pm 3 \text{(mod 8)}$ and $q\not\equiv \pm 1\text{(mod 10)}$ \\
        5 & $\mathrm{S_4}$ & &$q=p\equiv \pm 1\text{(mod 8)}$ \\
        6 & $\mathrm{A_5}$ & &$q\equiv \pm 1\text{(mod 10)}$, and either \\
        & & &$q=p$ or $q=p^2$ and $p\equiv \pm 3\text{(mod 10)}$ \\
        7 & $\mathrm{D}_{2(q-1)/\gcd(2,q-1)}$ & & - \\
        8 & $\mathrm{D}_{2(q+1)/\gcd(2,q-1)}$ & &- \\
        9 & $Z_p^f:Z_{(q-1)/\gcd(2,q-1)}$ & & -\\ 
        \bottomrule
    \end{tabular}
\end{table}

In what follows, we will analyze each of these cases separately.

\begin{table}
\tiny
\centering
\renewcommand{\arraystretch}{1.4} 
\begin{threeparttable}
\caption{Subdegrees of $\PSL(2,q)$ on the set of cosets of some of its subgroups}
\label{tab:subdegrees}
\begin{tabular}{
  >{\raggedright\arraybackslash}p{0.20\textwidth}
  >{\raggedright\arraybackslash}p{0.59\textwidth}
  >{\raggedright\arraybackslash}p{0.24\textwidth}
}
\toprule
$H$ & Subdegrees & Cond. \\ 
\midrule

$\PGL(2,q_0)$
& $1,\ \frac{q_0(q_0-1)}{2},\ q_0^2-1,\ (q_0(q_0-1))^{\frac{q_{0}-5}{4}},$ \newline
  \vspace*{1.5ex}
  $(q_0(q_0+1))^{\frac{q_0-1}{4}}$
& $q=q_0^2$, \newline $q_0\equiv 1 \pmod{4}$ \\  
\addlinespace[0.8ex] 

$\PGL(2,q_0)$
& $1,\ \frac{q_0(q_0+1)}{2},\ q_0^2-1,\ (q_0(q_0-1))^{\frac{q_{0}-3}{4}},$ \newline
  \vspace*{1.5ex}
  $(q_0(q_0+1))^{\frac{q_0-3}{4}}$
& $q=q_0^2$, \newline $q_0\equiv -1\pmod{4}$ \\ 
\midrule

$\PSL(2,q_0)$
& $1,\ \left(\frac{q_0^2-1}{2}\right)^{2a_1},\ (q_0(q_0-1))^{a_2},\ (q_0(q_0+1))^{a_3},$ \newline
  \vspace*{1.5ex}
  $\left(\frac{q_0(q_0^2-1)}{2}\right)^{2a_4}$
& $q=q_0^t$ odd, \newline $t$ odd and prime \\ 
\midrule

$\PGL(2,q_0)$
& $1,\  q_0^2-1,\  (q_0(q_0-1))^{\frac{q_0-2}{2}},\ (q_0(q_0+1))^{\frac{q_0}{2}}$ 
& $q=2^f=q_0^2$ \\ 
\addlinespace[0.8ex] 
$\PGL(2,q_0)$
& $1,\ (q_0^2-1)^{a_1},\ (q_0(q_0-1))^{a_2},\ (q_0(q_0+1))^{a_3},$ \newline
  \vspace*{0.8ex}
  $ (q_0(q_0^2-1))^{a_4}$
& $q=2^f=q_0^t$, \newline $t$ odd and prime \\ 
\midrule

$\mbox{$C_p^f : C_{\frac{q-1}{\gcd(2,q-1)}}$}$
& $1,\ q$
& always \\
\midrule

$D_{2(q+1)}$ & $1,\ (q+1)^{\frac{q-2}{2}}$ & $q$ even \\ 
\addlinespace[0.8ex] 
$D_{q+1}$    & $1,\ (\frac{q+1}{2})^{\frac{q-3}{2}},\ (q+1)^{\frac{q-1}{4}}$ & $q \equiv 1 \pmod{4}$ \\ 
\addlinespace[0.8ex] 
$D_{q+1}$    & $1,\ (\frac{q+1}{4})^{2},\ (\frac{q+1}{2})^{\frac{q-3}{2}},\ (q+1)^{\frac{q-3}{4}}$ & $q \equiv -1 \pmod{4}$ \\ 
\midrule

$D_{2(q-1)}$ & $1,\ (q-1)^{\frac{q-2}{2}},\ 2(q-1)$ & $q$ even \\ 
\addlinespace[0.8ex] 
$D_{q-1}$    & $1,\ (\frac{q-1}{4})^{2},\ (\frac{q-1}{2})^{\frac{q-5}{2}},\ (q-1)^{\frac{q+7}{4}} $ & $q \equiv 1 \pmod{4}$ \\ 
\addlinespace[0.8ex] 
$D_{q-1}$    & $1,\ (\frac{q-1}{2})^{\frac{q-1}{2}},\ (q-1)^{\frac{q+5}{4}}$ & $q \equiv -1 \pmod{4}$\\
\bottomrule
\end{tabular}

\begin{tablenotes}
  \small
  \item \textit{Notes.} The multiplicities are defined as follows:
  \\[1.2ex] 
  $$\displaystyle a_1 = \frac{q_0^{t-1}-1}{q_0-1}, \quad a_2 = \frac{q_0^t-q_0}{2(q_0+1)}, \quad a_3 = \frac{q_0^t-q_0}{2(q_0-1)}, \quad \text{and} $$
  $$\displaystyle a_4 = \frac{q_0^{3t-2} - q_0^{t-2}(q_0^4+q_0^3+2q_0^2-q_0) + (q_0^3+q_0^2+q_0-1)}{(q_0^2-1)^2}.$$
\end{tablenotes}
\end{threeparttable}
\end{table}

\begin{lemma}\label{case1}

If $X_{\alpha}= \mathrm{PGL}(2,q_0)$, for $q=q_0^2$ odd, then $q_0=3$ and $\mathcal{D}$ is as in Line 18 of Table \ref{tab:alldesigns}, namely $\mathcal{D}$ is the symmetric $2$-$(15,8,4)$ complementary design of $\PG(3,2)$ and $\PSL(2,3^{2})\unlhd G \leq \PSiL(2,3^{2})$. 
\end{lemma}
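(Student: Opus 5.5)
We need to show that $q_0=3$ and identify the design. The plan is to use the subdegrees in Table \ref{tab:subdegrees} together with Lemma \ref{lem:basic-params}. Here $v=|X:X_\alpha|=\frac{q_0(q_0^2+1)}{2}$. Since $X_\alpha=\PGL(2,q_0)$ and $G_\alpha$ is its normalizer in $G$, we have $|G_\alpha|$ dividing $q_0(q_0^2-1)\cdot 2\cdot 2f$, where $q=p^f$. The $G$-subdegrees are unions of $X$-subdegrees of equal length, fused by at most $|G_\alpha:X_\alpha|\le 4f_0$ with $q_0=p^{f_0}$. Hence each nontrivial $G$-subdegree $d$ divides $4f_0$ times one of the $X$-subdegrees.

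\textbf{Bounding $q_0$ via $r$.} The two subdegrees are $\frac{q_0(q_0\mp1)}{2}$ and $q_0^2-1$, according as $q_0\equiv\pm1\pmod 4$. By Lemma \ref{lem:basic-params}(iii), $r$ divides $\lambda$ times each $G$-subdegree. I would also use $v-1=\frac{(q_0-1)(q_0^2+q_0+2)}{2}$ and note that $\gcd(v-1,q_0)=1$. Combining (ii) and (iii), $r$ divides $\lambda\gcd(v-1, c\,(q_0^2-1), c\,q_0(q_0\mp1)/2)$ with $c\mid 4f_0$. The $p$-part drops out, and
\[
\gcd\Bigl(\tfrac{(q_0-1)(q_0^2+q_0+2)}{2},\,q_0^2-1\Bigr)
\]
is $(q_0-1)$ times a bounded constant, since $\gcd(q_0+1,q_0^2+q_0+2)\mid 2$. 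So $r\le C f_0 (q_0-1)\lambda$ for a small absolute $C$. Writing $r=\lambda\cdot m$ with $m \mid C' f_0(q_0-1)$, we get $k-1=(v-1)/m$. The condition $r^2>\lambda v$ gives $\lambda m^2>v$, which is not immediately bounded because $\lambda$ is free. I would instead use $k\le v$ together with integrality: $r(k-1)=\lambda(v-1)$, so $k-1=\frac{v-1}{m}\ge \frac{q_0^2+q_0+2}{2C'f_0}$. Then use $b=vr/k$ being an integer, and the fact that $G_B$ has index $b$, i.e. $|G_B|=|G|k/(vr)$, which must be the order of a subgroup of $\PGaL(2,q)$ having an orbit of length $k$. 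The cleaner route is the standard flag-transitive bound: $r$ divides $\lambda d$, so $\lambda(v-1)/(k-1)\mid \lambda d$. Hence $(v-1)/(k-1)$ divides $\gcd$(subdegrees), giving $k-1\ge (v-1)/\gcd$, and $k<v$ then forces the gcd to be at least $2$. This alone does not bound $q_0$, so the real constraint must come from $G_B$.

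\textbf{Main step (the expected obstacle).} I need to pin down $k$ using the subgroup structure of $\PSL(2,q_0^2)$ (Table \ref{MaxDickson}). The block stabilizer $X_B$ has index in $X$ equal to $b/|G:X|$-ish, with $b\ge v$. Moreover $k=|G_B:G_{\alpha B}|$, and $|G_{\alpha B}|=|G_\alpha|/r\ge |G_\alpha|/(\lambda\cdot C f_0(q_0-1))$, which is roughly $q_0(q_0+1)/C''$ once $\lambda$ is absorbed via $\lambda\mid r$-constraints. So $G_{\alpha B}$ is a large subgroup of $\PGL(2,q_0)$, namely containing a Borel-like part or $\PSL(2,q_0)$ itself. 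I would then go through Dickson's list for $G_B$ as an overgroup of $G_{\alpha B}$ in $G$. Either $G_B\le N_G(\PGL(2,q_0))$ (another conjugate), or $G_B$ is contained in a parabolic or a dihedral subgroup of $X$. In each case, count $k$ as an orbit length of $G_B$ on the cosets, that is, as a union of $G_{\alpha B}$-cosets, and impose $r(k-1)=\lambda(v-1)$. I expect this to yield contradictions for $q_0\ge5$, leaving $q_0=3$.

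\textbf{Small case $q_0=3$.} For $q_0=3$ we have $X=\PSL(2,9)\cong A_6$, $X_\alpha\cong S_4$, $v=15$, with subdegrees $1,6,8$. Then $r\mid\gcd(6\lambda,8\lambda,14\lambda)$, and running the finitely many parameter sets (or \texttt{Design} in \texttt{GAP}) for $G\le\PSigL(2,9)$ gives only the $2$-$(15,8,4)$ design with $G_B\cong S_4$. This design is the complement of $\PG(3,2)$, via $A_6\le A_8\cong\PSL(4,2)$, as in Line 18 of Table \ref{tab:alldesigns}. Note that $\PGL(2,9)$ and $M_{10}$ are excluded because $X_\alpha$ must extend within $G$ for primitivity on $15$ points; only $\PSigL(2,9)\cong S_6$ preserves this class.
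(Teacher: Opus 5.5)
\textbf{There is a genuine gap.} You never bound $q_0$. Your ``main step'' is only an expectation, and its premise is unjustified. There is no relation of the form $\lambda\mid r$. Since $|G_{\alpha B}|=|G_\alpha|/r$ and $\lambda$ is unbounded, $G_{\alpha B}$ can be very small, so nothing forces it to contain a Borel-type subgroup or $\PSL(2,q_0)$. A case analysis of overgroups of $G_{\alpha B}$ in Dickson's list therefore has no foothold. As written, the proposal establishes nothing beyond the case $q_0=3$.

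\textbf{The missing idea is already inside your own gcd.} You wrote $r\mid\lambda\gcd\bigl(v-1,\,c(q_0^2-1),\,c\,q_0(q_0\mp1)/2\bigr)$, but then evaluated only $\gcd(v-1,q_0^2-1)$. The subdegrees $\frac{q_0(q_0\mp1)}{2}$ and $q_0^2-1$ occur with multiplicity one. Since $G_\alpha$ normalizes $X_\alpha$, they are genuine $G$-subdegrees, so no fusion factor $c$ is needed.

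\emph{Case $q_0\equiv-1\pmod 4$.} Here $\gcd\bigl(\frac{q_0(q_0+1)}{2},q_0^2-1\bigr)=\frac{q_0+1}{2}$ and $\gcd\bigl(\frac{q_0+1}{2},v-1\bigr)=2$. So $r=2\lambda$ for every such $q_0$, which is exactly what you found numerically for $q_0=3$. Then $k=\frac{v+1}{2}$ is coprime to $v$, so $k\mid r\mid 2fq_0(q_0^2-1)$. This gives $(q_0+1)(q_0^2-q_0+2)\mid 8f(q_0^2-1)$, hence $q_0^2-q_0+2\mid 16f$, and so $q_0=3$.

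\emph{Case $q_0\equiv1\pmod4$.} The first two subdegrees only give $r\mid\lambda\frac{q_0-1}{2}$, which is essentially your insufficient bound. The paper also uses a subdegree $q_0(q_0+1)$, possibly fused under $G_\alpha$ by a factor $t\mid 2f$, to obtain $r\mid 2t\lambda\mid 4f\lambda$. Write $r=4f\lambda/x$. Then $k=\frac{x}{4f}(v-1)+1>\frac{v-1}{4f}$. On the other hand, $k\mid vr$ gives $k\mid (k,v)(k,r)$, where $(k,v)<4f$. Since $r\mid 2fq_0(q_0^2-1)$, the gcds of $x(v-1)+4f$ with $q_0$ and $q_0\pm1$ are each at most $4f$. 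Hence $k<512f^5$, so $q_0(q_0^2+1)\le 4096f^6$, leaving a finite list that is settled by computer.

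\textbf{Why this works.} The subdegrees must force $r/\lambda$ to be bounded by a function of $f$ alone, so that $k$ is of order $v/f$. The $\lambda$-free bound $r\mid|G_\alpha|$, via $k\mid(k,v)(k,r)$, then closes the argument.

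Your treatment of $q_0=3$ (subdegrees $1,6,8$, $r=2\lambda$, $k=8$, $G\le\PSigL(2,9)$, then a computer check) agrees with the paper.
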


\begin{proof}
Let $X_{\alpha}=\mathrm{PGL}(2,q_0)$, for $q=q_0^2$ odd, then $v=\frac{q_0(q_0^2+1)}{2}$, and hence $v-1=\frac{q_0^3+q_0-2}{2}$.
Moreover, we have that $r\mid |X_{\alpha}||\Out(X)|=2fq_0(q_0^2-1)$.
\\
Suppose $q_0\equiv -1 \pmod{4}$. It follows from Table \ref{tab:subdegrees} that $X$ has unique subdegree equal to
$\frac{q_0(q_0+1)}{2}$ and $q_0^2-1$.
Then by Lemma \ref{lem:basic-params}\,(iv) and $r\mid \lambda (v-1)$, we obtain 
\[
r\mid \lambda\left(\frac{q_0(q_0+1)}{2}, \ q_0^2-1, \ \frac{q_0^3+q_0-2}{2} \right).
\]
Since $\left(\frac{q_0(q_0+1)}{2}, \ q_0^2-1\right)=\frac{q_0+1}{2}$ and $\left(\frac{q_0+1}{2}, \ \frac{q_0^3+q_0-2}{2}\right)=2$, we get $r|2\lambda$. Actually, we get $r=2\lambda$ since $r>\lambda$.
Since $r(k-1)=\lambda(v-1)$, we have $k=\frac{v+1}{2}=\frac{q_0(q_0^2+1)+2}{4}$, implying that $k$ and $v$ are co-prime,
and so $(k,q_0)=1$ and $k\mid r$.
It follows from $r\mid 2fq_0(q_0^2-1)$ that $k\mid 2f(q_0^2-1)$, implying
\[
q_0(q_0^2+1)+2=(q_0+1)(q_0^2-q_0+2)\mid 8f(q_0^2-1),
\]
and so $(q_0^2-q_0+2)\mid 8f(q_0-1)$.
As $(q_0^2-q_0+2,q_0-1)=2$, we obtain $(q_0^2-q_0+2) \mid 16f$, forcing $f<4$.
Hence, $f=2$, $q_0=p$ and $p^2-p+2\mid 32$, 
implying $p=3$, as $q_0\equiv-1\bmod 4$. 
Thus, $q=9$, $v=15$, $k=8$, and hence $G$ has primitive permutation representation of degree $15$, forcing $\PSL(2,3^{2})\unlhd G \leq \PSigL(2,3^{2})$ by \cite{At}. Moreover, $k\le r$, $k\mid r$ and $r\mid 2fq_0(q_0^2-1)$,
we obtain $r=8, 16, 24, 32, 48$ or $96$. Actually, by \texttt{GAP}, only $r=k=8$ yields to an example: the $2$-$(15,8,4)$ complementary design of $\PG(3,2)$ and $\PSL(2,3^{2})\unlhd G \leq \PSigL(2,3^{2})$. 

Suppose $q_0\equiv 1 \pmod{4}$. 
Then, by Lemma \ref{lem:basic-params}\,(iv) and  Table \ref{tab:subdegrees}, we obtain that
\begin{equation}\label{eq:r.div.gcd.1}
    r\mid \lambda \left(\frac{q_0(q_0-1)}{2}, \ q_0^2-1, \ tq_0(q_0+1) \right),
\end{equation}
where $1\le t \le \tfrac{1}{4}(q_0-1)$ and $t\mid |\Out(X)|=2f$. 
Since $\left(\frac{q_0(q_0-1)}{2}, \ q_0^2-1\right)=\frac{q_0-1}{2}$ 
and $\left(\frac{q_0-1}{2}, tq_0(q_0+1)\right)$ divides $t(q_0-1,q_0(q_0+1))=2t$, 
we deduce $r|2t\lambda$,  and so $r\mid4f\lambda$.
Let $r=4f\lambda/x$, for some positive integer $x$, where $x<4f$ as $r>\lambda$.
If $x=2f$, then $r=2\lambda$.
A similarly argument to that of case $q_0\equiv 1 \pmod{4}$ leads to $(q_0^2-q_0+2)\mid 8f(q_0-1)$,
and so $f=2$, $q_0=p$ and $p^2-p+2\le 32$, 
implying $q_0=5$, as $q_0\equiv1\pmod{4}$. This however is ruled out since it does not satisfy $(q_0^2-q_0+2)\mid 8f(q_0-1)$.
Thus, $x<4f$ and $x\neq 2f$.
From $r(k-1)=\lambda(v-1)$, we have 
$k=\frac{x}{4f}(v-1)+1$.
It follows from $k\mid vr$ that $k\mid (k,v)(k,r)$.
On one hand, we have 
\[
(k,v) \mid (4fk,v)=(x(v-1)+4f,v)=(4f-x,v)\le 4f-x<4f.
\]
On the other hand, from $r\mid 2fq_0(q_0^2-1)$ we obtain 
$(k,r) \mid (4fk,2fq_0(q_0^2-1))$ and so 
$(k,r)$ divides $2f\left(x(v-1)+4f,q_0(q_0^2-1)\right)$.
Note that 
\begin{align*}
(x(v-1)+4f,q_0)&=\left(x\frac{q_0(q_0^2+1)}{2}-x+4f,q_0\right)=\gcd(4f-x,q_0)<4f,\\
(x(v-1)+4f, q_0-1)&=\left(x(q_0-1)\left(\frac{q_0^2+q_0+2}{2}\right)+4f,q_0-1\right)
        =(4f,q_0-1)\le 4f,\\
(x(v-1)+4f, q_0+1)&=\left(x(q_0+1)\left(\frac{q_0^2-q_0+2}{2}\right)-2x+4f,q_0+1\right)\\
         &=(4f-2x,q_0+1)\le |4f-2x|<4f,           
\end{align*}
so that $(k,r)<2f\cdot (4f)^3$.
Therefore, $k<2f\cdot (4f)^4=512f^5$.
Combining this with $k=\frac{x}{4f}(v-1)+1>\frac{1}{4f}(v-1)$, 
we have $q_0(q_0^2+1)\le 4096f^6$, and so we get the admissible values as in Table \ref{tab:C2}.

\begin{table}[H]
\centering 
\caption{admissible $f$ and $q_0$} 
\label{tab:C2} 
\begin{tabular}{ccr}
\toprule
Line & $f$ & $q_0$ \\ 
\midrule 
1&  $2$&  $5,13,17,29,37,41,53,61$ \\
2&  $4$ & $3^{2},5^{2},7^{2},11^{2},13^{2}$ \\
3 & $6$ & $5^{3}$ \\
4 & $8$ & $3^{4},5^{4}$ \\
5 & $12$ & $3^{6}$\\
\bottomrule 
\end{tabular}
\end{table}
However, only $(f,q_0)=(2,5)$ with $x=3$ or $(4,3^{2})$ with $x=1,7$ are the unique cases fulfilling $k \mid \left\vert X \right\vert \cdot 2f$. They lead to $(k,v)=(25,65)$ and $\PSL(2,5^{2})\unlhd G\leq \PSigL(2,5^{2})$ or $(k,v)=(24,369),(162,369)$ and $\PSL(2,3^{4})\unlhd G\leq \PSigL(2,3^{4})$, respectively, by \cite[§260]{Di} and \cite[Theorem 1.3(5)]{gmaxi}. However, by using \texttt{GAP}, none of the previous cases leads to an example. This completes the proof.
\end{proof}

\begin{lemma}\label{case2}
$X_{\alpha}\neq \mathrm{PSL}(2,q_0)$, for $q=q_0^t$ odd, where $t$ is an odd prime.
\end{lemma}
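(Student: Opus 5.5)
We argue by contradiction, following the pattern of Lemma \ref{case1}. Suppose $X_{\alpha}=\PSL(2,q_0)$ with $q=q_0^t$ odd and $t$ an odd prime. Then
\[
v=\frac{|X|}{|X_\alpha|}=\frac{q_0^{t-1}(q_0^{2t}-1)}{q_0^2-1}.
\]
Since $|G:X|$ divides $|\Out(X)|=2f$, Lemma \ref{lem:basic-params}(ii) gives $r\mid 2f\lambda\,|X_\alpha| = f\lambda q_0(q_0^2-1)$.

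The first step uses the subdegrees in Table \ref{tab:subdegrees}. The group $X$, and hence $G$ whose suborbits are unions of at most $|G:X|$ suborbits of $X$, has subdegrees built from $\frac{q_0^2-1}{2}$, $q_0(q_0-1)$ and $q_0(q_0+1)$. By Lemma \ref{lem:basic-params}(iii), $r$ divides $\lambda\, s_i d_i$, where each $d_i$ is one of these values and $s_i\mid 2f$. Combining this with $r\mid\lambda(v-1)$, I would compute gcd's as in Lemma \ref{case1}. Write
\[
v-1=q_0^{t-1}\bigl(q_0^{2t-2}+\dots+q_0^2+1\bigr)-1.
\]
Modulo $q_0\pm1$, the bracket is $\equiv t$. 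So $\gcd\bigl(v-1,\,q_0(q_0^2-1)\bigr)$ divides $(q_0^{t-1}t-1,\,q_0^2-1)$ times a small factor, and $(v-1,q_0)=1$. Because the multiplicities $a_2,a_3$ may exceed $|\Out(X)|$, $G_\alpha$ need not fuse them, so I expect to get only $r\mid \lambda\cdot 2f\cdot\gcd\bigl(q_0^2-1,\,tq_0^{t-1}-1\bigr)$. More simply, using the subdegree $\frac{q_0^2-1}{2}$, we get $r\le 2f\lambda(q_0^2-1)$.

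The second step is the Fisher-type inequality $r^2>\lambda v$. It gives $4f^2\lambda (q_0^2-1)^2>v>q_0^{3t-3}$ for the crude bound. With $q_0^{3t-3}\ge q_0^6$ for $t\ge3$, this already forces small $f$ unless $\lambda$ is large. The main obstacle is $\lambda$: it is unbounded a priori. So I will instead use $r\mid\lambda d$, $r>\lambda$ and $r(k-1)=\lambda(v-1)$, writing $r=\lambda d/x$. Then $k-1=x(v-1)/d$. The divisibility $k\mid vr$ then yields $k\mid (k,v)(k,r)$. As in the second half of Lemma \ref{case1}, both gcd's are bounded by polynomials in $f$ and $q_0$ of degree much smaller than $v$. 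This gives $q_0^{3t-3}\lesssim k\cdot d \ll f^{c}q_0^{4}$, a contradiction for $t\ge 5$. For $t=3$ we get a finite list of pairs $(f,q_0)$.

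Finally, I would run through the finitely many surviving $(q_0,t)$, for instance $q=3^3,5^3,3^9$. For each, I would list the admissible $(x,k,r)$ satisfying $k\mid 2f|X|$ and check with \texttt{GAP} and the \texttt{Design} package that no flag-transitive design arises, exactly as done for $(f,q_0)=(4,3^2)$ in Lemma \ref{case1}. I expect the sharp gcd estimate in the $t=3$ case to be the delicate part.
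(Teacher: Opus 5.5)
There is a genuine gap, and it is where you place your ``main obstacle''. You record only $r\mid f\lambda q_0(q_0^2-1)$ and then treat $\lambda$ as unbounded. But flag-transitivity gives the $\lambda$-free divisibility $r\mid |G_\alpha|$, because $G_\alpha$ is transitive on the $r$ blocks through $\alpha$. Hence $r\le |G_\alpha|\le fq_0(q_0^2-1)$. Without this fact your fallback step fails. In Lemma \ref{case1}, the bound on $(k,r)$ inside $k\mid (k,v)(k,r)$ comes precisely from $r\mid 2fq_0(q_0^2-1)$. If all you know is $r\mid\lambda d$, $r>\lambda$, $r(k-1)=\lambda(v-1)$ and $k\mid vr$, then $(k,r)$ is not controlled independently of $\lambda$, and no contradiction can follow. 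Indeed, the complete design on $v$ points with $k-1=x(v-1)/d$ satisfies all of these conditions. The reason $\lambda$ seemed to survive $r^2>\lambda v$ is that you bounded both factors of $r^2$ by $\lambda\cdot(\cdots)$; one factor has to be bounded by $|G_\alpha|$ instead.

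With $r\le fq_0(q_0^2-1)$ added, the obstacle disappears at once and the argument becomes the paper's. The paper's first step is
\[
f^2q_0^2(q_0^2-1)^2\ge r^2>\lambda v\ge v .
\]
Combined with $f^2<q=q_0^t$, this gives $q_0^{t+8}>q_0^{t-1}(q_0^{2t}-1)$, so $t=3$. The paper then proceeds as follows:
\begin{enumerate}
\item $\gcd\bigl(q_0(q_0^2-1)/2,\,v-1\bigr)=2$, which gives $r\mid 4f\lambda$.
\item Your $k\mid(k,v)(k,r)$ estimate, now legitimate, reduces to $q_0\in\{3,5,7,9\}$.
\item These cases are excluded by the purely arithmetic test $(v-1)/(v-1,r)\le k-1\le r-1$.
\end{enumerate}
No \texttt{GAP} computation is needed. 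That matters, since a search for $q=3^9$, where $v\approx 3.9\cdot 10^8$, would be impractical.

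Your own gcd computation gives an even shorter finish. Since $\gcd(|G_\alpha|,v-1)\mid f\gcd(t-1,q_0^2-1)$, we get $\lambda\ge r/\bigl(f(t-1)\bigr)$. Then $r^2>\lambda v$ yields $f(t-1)r>v$, so $f^2(t-1)q_0(q_0^2-1)>v>q_0^{3t-3}$. Using $f^2<q_0^t$, this is false for $t\ge 5$. For $t=3$ it forces $2f^2>q_0^3=p^f\ge 3^f$, which is false for $f\ge 3$.
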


\begin{proof}
Assume that $X_\alpha=\PSL(2,q_0)$, $q=q_0^t$ odd prime power with $t$ odd prime.
Then $|X_\alpha|=q_0(q_0^2-1)/2$ and $|\Out(X)|=2f$, 
implying that $v=q_0^{t-1}(q_0^{2t}-1)/(q_0^2-1)$ and $r$ divides $|X_\alpha||\Out(X)|=fq_0(q_0^2-1).$
Let $q_0=p^s$, then $f=st\ge 3$, $s\in\mathbb N$.
From Lemma \ref{lem:basic-params} (iii), we obtain 
\[
f^2q_0^2(q_0^2-1)^2\ge r^2>\lambda v \ge\frac{q_0^{t-1}(q_0^{2t}-1)}{q_0^2-1}.
\]
Note that $q>3$ and $f\ge 3$, then we have $q=q_0^t=p^f>f^2$.
Thus, we deduce 
\[
q_0^{t+8}>f^2q_0^2(q_0^2-1)^3> q_0^{t-1}(q_0^{2t}-1)\text{,}     
\]
and so $t=3$, as $t$ is odd.
Hence $f=3s$ and $v=q_0^2(q_0^4+q_0^2+1).$ 
By Lemma \ref{lem:basic-params}\,(ii) and  Table \ref{tab:subdegrees}, we obtain
\[
    r\mid \lambda \left(2fq_0\frac{q_0^2 -1}{2}, \ v-1 \right)\text{.}
\]
 
Since $\left(q_0\frac{q_0^2-1}{2},v-1\right)=2$,
we have $r\mid 4f\lambda=12s\lambda$, as $f=3s$.
Let $r=12s\lambda/x$,
for some positive integer $x$ with $x<12s$, as $r>\lambda$.
From $r(k-1)=\lambda (v-1)$,  we have $k=\frac{x}{12s}(v-1)+1$.
It follows from $k\mid vr$ that $k\mid (k,v)(k,r)$.
On one hand, we have 
\[
(k,v)\mid (12sk,v)=(x(v-1)+12s,v)=(12s-x,v)\le 12s-x<12s.
\]
On the other hand, from $r\mid fq_0(q_0^2-1)$ we obtain 
$(k,r)\mid (12sk,3sq_0(q_0^2-1))$, and so $(k,r)\mid (x(v-1)+12s,3sq_0(q_0^2-1))$.
Note that 
\begin{align*}
(x(v-1)+12s,q_0)     &=\left(xq_0^2(q_0^4+q_0^2+1)-x+12s,q_0\right)=(12s-x,q_0)<12s,\\
(x(v-1)+12s, q_0^2-1)&=\left(x(q_0^2-1)(q_0^4+2q_0^2+3)+2x+12s,q_0^2-1\right)\\
                     &=(12s+2x,q_0-1)\le 12s+2x<36s,          
\end{align*}
so that $(k,r)<3s\cdot 12s\cdot 36s$.
Therefore, $k<\frac{3(12s)^4}{4}$.
Combining this with $k=\frac{x}{12s}(v-1)+1\ge \frac{1}{12s}(v-1)+1$,
we have $v\le 12sk-12s+1< \frac{3(12s)^5}{4}-12s+1$.
As $v=q_0^2(q_0^4+q_0^2+1) = p^{6s} + p^{4s} + p^{2s}$,
we deduce
\[
p^{6s}+p^{4s}+p^{2s}<\frac{3(12s)^5}{4}-12s+1,
\]
implying $(p,s)=(3,1),(5,1),(7,1)$, or $(3,2)$. Recalling that $r\mid fq_0(q_0^2-1)$, $f=3s$, $q_0=p^s$ and $r^2>v$, 
we list all the possible $r$ in Table \ref{tab:lemma3.3}:
\begin{table}[H]
\tiny
\centering 
\caption{Admissible $q_0$, $v$ and $r$} 
\label{tab:lemma3.3} 
\begin{tabular}{cccc}
\toprule
Line & $q_0$ & $v$ & $r$ \\ 
\midrule 
1 &   $3$&    $819$ &      $36$     \\
2  &     &          &      $72$     \\
3 &   $5$&    $16275$ &      $180$     \\
4 &      &            &     $360$     \\
5 &   $7$&    $120099$ &      $504$    \\
6 &     &            &    $1008$   \\
7 &   $9$&    $538083$ &      $864$    \\
8 &      &             &      $1080$   \\
9&       &             &    $1440$   \\
10&      &           &    $2160 $   \\
11 &     &           &    $4320$  \\
\bottomrule
\end{tabular}
\end{table}
Since $r(k-1)=\lambda(v-1)$,
we have $(v-1)\mid r(k-1)$,
and so $(v-1)/(v-1,r)$ divides $k-1$,
implying $(v-1)/(v-1,r)\le k-1\le r-1$.
However, none of the cases in Table \ref{tab:lemma3.3} satisfies the condition.
Hence, $X_{\alpha}\neq \mathrm{PSL}(2,q_0)$.
\end{proof}


\begin{lemma}\label{case3}
$X_{\alpha}\neq \mathrm{PGL}(2,q_0)$, for $q=2^f=q_0^t$, where $t$ is prime and $q_0\neq 2$.
\end{lemma}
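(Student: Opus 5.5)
We argue by contradiction, following the template of Lemmas \ref{case1} and \ref{case2}. Assume $X_\alpha=\PGL(2,q_0)$ with $q=2^f=q_0^t$, $t$ prime and $q_0\neq 2$, and write $q_0=2^s$, so $f=st$. Then $|X_\alpha|=q_0(q_0^2-1)$ and $v=q_0^{t-1}(q_0^{2t}-1)/(q_0^2-1)$. Since $|\Out(X)|=f$, Lemma \ref{lem:basic-params}(ii) gives $r\mid \lambda\,(fq_0(q_0^2-1),v-1)$, and also $r^2>\lambda v$.

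The first step is to bound $t$. From $f^2q_0^2(q_0^2-1)^2\geq r^2/\lambda^{\,}\cdot\lambda>\lambda v\geq v\approx q_0^{3t-3}$ and $f<q_0^t$, we obtain $3t-3<t+6$ roughly, so $t\le 3$. This leaves two cases, $t=2$ and $t=3$.

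\textbf{Case $t=2$.} Here $v=q_0(q_0^2+1)/2$. Table \ref{tab:subdegrees} provides the nontrivial subdegrees $q_0^2-1$, $q_0(q_0-1)$ and $q_0(q_0+1)$ of $X$. The corresponding $G$-subdegrees are unions of at most $f$ such $X$-suborbits. Hence Lemma \ref{lem:basic-params}(iii), combined with $r\mid\lambda(v-1)$, shows that $r/\lambda$ divides $f\cdot\gcd(q_0^2-1,\,q_0(q_0\pm1),\,v-1)$. Since $\gcd(q_0^2-1,q_0(q_0-1))=q_0-1$ and $v-1\equiv -1 \pmod{q_0-1}$ (because $q_0^2+1\equiv 2$ and $q_0$ is even), this gcd is tiny. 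We therefore get $r\mid c f\lambda$ for a small constant $c$.

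\textbf{Case $t=3$.} The $X$-subdegrees $(q_0^2-1)^{a_1}$, $(q_0(q_0-1))^{a_2}$, and so on, likewise give $r\mid c'f\lambda$. Writing $v-1$ modulo $q_0-1$ and $q_0+1$ shows that $\gcd(v-1,q_0^2-1)$ is bounded by a small constant.

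The concluding argument is the one used in Lemmas \ref{case1} and \ref{case2}. Put $r=cf\lambda/x$ with $x<cf$, so that $k=\frac{x}{cf}(v-1)+1$. From $k\mid (k,v)(k,r)$ we bound $(k,v)\le cf-x$. Using $r\mid fq_0(q_0^2-1)$ and reducing $x(v-1)+cf$ modulo $q_0$, $q_0-1$ and $q_0+1$, we bound $(k,r)$ by a polynomial in $f$. This yields $k<\mathrm{poly}(f)$, while also $k>(v-1)/(cf)$. The resulting inequality $v<\mathrm{poly}(f)$ leaves only finitely many pairs $(s,t)$, for instance $q_0=4$ with $q=16$ or $64$, and $q_0=8$ with $q=64$. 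For each of these we list the divisors $r$ of $\lambda(fq_0(q_0^2-1),v-1)$ and check that none satisfies both $r^2>\lambda v$ and $(v-1)/(v-1,r)\le r-1$. Any survivor would be ruled out with \texttt{GAP}.

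The main obstacle is the $t=2$ case in step two. Here $v$ is only about $q_0^3/2$ while $|G_\alpha|$ is about $q_0^3$, so the crude bound $r^2>\lambda v$ alone does not kill the case. The subdegree gcd must genuinely be made small, which requires handling the possible fusion of $X$-suborbits under field automorphisms. I would first try to prove that $\gcd(q_0^2-1,\,q_0(q_0-1),\,v-1)$ divides $1$ or $2$, and then absorb the fusion factor into $f$.
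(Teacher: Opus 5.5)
The bound $t\le 3$ and the use of subdegrees match the paper. The case $t=2$, which you yourself leave open, has a genuine gap.

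First, your value of $v$ is wrong there. Since $q$ is even, $|X|=q(q^2-1)$, so $v=q_0(q_0^2+1)$ (as your general formula gives), not half of it. Hence $v-1=q_0^3+q_0-1\equiv 1\pmod{q_0-1}$ and $v-1\equiv -3\pmod{q_0+1}$.

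Second, and decisively, you miss the observation that removes the fusion problem. In Table~\ref{tab:subdegrees} the length $q_0^2-1$ occurs for exactly one $X_\alpha$-orbit. That orbit is therefore $G_\alpha$-invariant (because $G_\alpha$ normalizes $X_\alpha$), so $q_0^2-1$ is itself a subdegree of $G$. Hence
\[
r\mid \lambda\gcd(q_0^2-1,\,v-1)=\lambda\gcd(q_0^2-1,\,2q_0-1)=\lambda\gcd(3,\,q_0+1),
\]
with no factor $f$ at all. This forces $s$ odd and $r\in\{3\lambda,3\lambda/2\}$, that is, $k=(v+2)/3$ or $k=(2v+1)/3$. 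In the first case $(k,v)\mid 2$ gives $k\mid 2r$ and then $q_0^2-q_0+2\mid 24s$. In the second case $(k,v)=1$ gives $k\mid r$ and then $2v+1\mid 18s$. Both are impossible.

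Absorbing the fusion into $f$, as you propose, does not close the case as written. With $r=f\lambda/x$ you get $k=x(v-1)/f+1$ and $x(v-1)+f\equiv f-3x\pmod{q_0+1}$. So for $x=f/3$, i.e. precisely $r=3\lambda$ and $k=(v+2)/3$, one has $(k,q_0+1)\ge (q_0+1)/3$. Your step ``bound $(k,r)$ by a polynomial in $f$'' then fails, and this subcase needs separate treatment such as the one above. Even away from it, the bound you get is of the shape $v<Cf^6$. That leaves $q_0$ up to roughly $2^9$, not just $q_0\in\{4,8\}$.

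For $t=3$ the template is unnecessary. From $r\mid f\lambda$ and $r^2>\lambda v$ you get $rf>v$. Combined with $r\le fq_0(q_0^2-1)$ this gives $f^2>q_0^3=2^f$, which is impossible for $f\ge 6$.

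Two small corrections:
\begin{itemize}
\item Bounding $t$ needs $f^2\le 2^f=q_0^t$, which holds since $f=st\ge 4$. The weaker $f<q_0^t$ only yields $t\le 7$.
\item The number of fused $X_\alpha$-orbits divides $|G_\alpha:X_\alpha|$, a divisor of $f$. This divisibility, not ``at most $f$'', is what $r\mid \lambda f d$ requires.
\end{itemize}
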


\begin{proof}
Assume that $X_\alpha=\PGL(2,q_0)$, where $q=2^f=q_0^t$ with $t$ prime and $q_0\neq 2$.
Then $v=q_0^{t-1}(q_0^{2t}-1)/(q_0^2-1)$ and $|\Out(X)|=f$,
implying $r\mid |X_\alpha||\Out(X)|=fq_0(q_0^2-1)$.
Let $q_0=2^s$, for some integer $s\ge 2$, so $f=st\ge 4$,
implying $f^2\le 2^f=q_0^t$.
Combining this with $r^2 >\lambda v$
we obtain 
\[
q_0^{t+8}>f^2q_0^2(q_0^2-1)^3>\lambda q_0^{t-1}(q_0^{2t}-1)\ge q_0^{t-1}(q_0^{2t}-1)\text{,}    
\]
and so $t=2$ or 3, as $t$ prime.

First assume that $t=2$. 
Then $q=q_0^2$, $f=2s$ and $v=q_0(q_0^2+1)$. 
It follows from Table \ref{tab:subdegrees} that $X$ has a unique subdegree $q_0^2-1$.
Then by Lemma \ref{lem:basic-params} (iv) we have 
\[
r\mid \lambda (q_0^2-1,q_0^3+q_0-1).
\]
Note that $(q_0^2-1,q_0^3+q_0-1)=(q_0^2-1,2q_0-1)$,
$(q_0^2-1,2q_0-1)=(q_0+1,3)$.
If $(q_0+1,3)=1$, then $r\mid \lambda$, contradicting $r>\lambda$.
Hence, $(q_0+1,3)=3$, implying $s$ odd, and so $s\ge3$.
Moreover, we deduce $r\mid 3\lambda$.
Let $r=3\lambda/x$. Then $x=1$ or $2$.
If $r=3\lambda$, then $k=\frac{v+2}{3}$, and so $(k,v)=(k,3k-2)=2$.
It follows from $k\mid vr$ that $k\mid 2r$.
Combining this with $r\mid 2s q_0(q_0^2-1)$, we obtain $v+2\mid 12sq_0(q_0^2-1)$,
and so $(q_0^2-q_0+2)\mid 12sq_0(q_0-1)$, implying $(q_0^2-q_0+2)\mid 24s$.
Therefore, $2^{2s}-2^s+2\le 24s$,  implying $s=3$.
However, if $s=3$ then $q_0=8$ and $q_0^2-q_0+2=58\nmid72=24s$, a contradiction.
Thus, $r=3\lambda/2$ and $k=\frac{2v+1}{3}$, so $(k,v)=1$, implying $k\mid r$.
So $2v+1\mid 6sq_0(q_0^2-1)$.
Since $(2v+1,q_0)=1$, $(2v+1,q_0-1)=(q_0-1,5)=(2^s-1,5)=1$ 
and $(2v+1,q_0+1)=(q_0+1,3)=3$, as $s$ is odd,
we have $2v+1\mid 18s$.
Hence, $2^{3s+1}+2^{s+1}+1 \le 18s$, a contradiction. 

Thus, $t=3$, $f=3s$, $v=q_0^2(q_0^4+q_0^2+1)$.
Similarly, from Table \ref{tab:subdegrees}, we obtain that
\[
 r\mid f\lambda \left((q_0^2-1), q_0(q_0-1), q_0(q_0+1)\right).
\]
Since $((q_0^2-1), q_0(q_0-1))=q_0-1$ and $(q_0-1,q_0(q_0+1))=(q_0-1,2)=1$,
we have $r\mid f\lambda$, and so $rf\lambda\ge r^2>\lambda v$, implying $rf>v$.
Combining this with $r\mid fq_0(q_0^2-1)$,
we have 
\[
f^2q_0^3>f^2q_0(q_0^2-1)>q_0^2(q_0^4+q_0^2+1)>q_0^6,
\]
so $f^2>q_0^3=q_0^{t}$, a contradiction.
\end{proof}





\begin{lemma}\label{case4}
Suppose that $X_{\alpha}=\A_4$, for $q=p\equiv \pm 3 \pmod 8$ and $q\not\equiv \pm 1\pmod{10}$. Then $\PSL(2,5)\leq G\leq \PGL(2,5)$ and $\mathcal{D}$ is the complete $2$-$(5,3,3)$ design as in Line 1 of Table \ref{tab:alldesigns}.
\end{lemma}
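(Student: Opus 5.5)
We begin by computing the parameters. Here $q=p$ is an odd prime, $X=\PSL(2,p)$ and $X_\alpha\cong \A_4$, so
\[
v=\frac{p(p^2-1)}{24}.
\]
Since $|\Out(X)|=2$ and $p$ is prime, $G\leq \PGL(2,p)$. Moreover, $G_\alpha$ is either $\A_4$ or a subgroup of order $24$ (namely $\S_4$ when $G=\PGL(2,p)$), so $|G_\alpha|\mid 24$. By Lemma \ref{lem:basic-params}(ii),
\[
r\mid \lambda\,(24,v-1)\quad\text{and}\quad r^2>\lambda v .
\]
These give $\lambda^2\cdot 24^2\geq r^2>\lambda v$, that is, $v<576\lambda$. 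This alone does not bound $p$, so we need a sharper divisibility.

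The plan is to write $r=\lambda\,(24,v-1)/x$ for a positive integer $x$. The condition $r>\lambda$ (equivalently $k<v$) forces $(24,v-1)/x\geq 2$. Then $r^2>\lambda v$ becomes
\[
\lambda\,(24,v-1)^2>x^2 v .
\]
This still leaves $\lambda$ free. To control $\lambda$ we use $r\mid |G_\alpha|$: indeed $r$ divides $|G_\alpha|$ because $G_\alpha$ is transitive on the $r$ blocks through $\alpha$, so $r\mid 24$. Combining with $r^2>\lambda v\geq v$ gives $576>v=p(p^2-1)/24$. Hence $p^3<13824+p$, which forces $p<25$.

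Next we run through the primes $p<25$ with $p\equiv\pm3\pmod 8$ and $p\not\equiv\pm1\pmod{10}$. These are $p=3,5,13$; the value $p=3$ is excluded since $q\geq 4$, and $p=11,19$ are excluded by the congruence mod $10$. For $p=13$ we have $v=91$ and $v-1=90$, so $(24,90)=6$ and $r\mid 6\lambda$. Together with $r\mid 24$ and $r^2>91\lambda$, we test $r\in\{12,24\}$ against $r(k-1)=90\lambda$ and $k\mid vr$ with $(v,k)$ restricted. For instance, $r=12$ gives $\lambda<2$, so $\lambda=1$, and then $12(k-1)=90$ has no integer solution. The case $r=24$ gives $\lambda\leq 6$ and $4(k-1)=15\lambda$, which is impossible for $\lambda\leq 6$. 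So $p=13$ is ruled out.

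This leaves $p=5$, with $v=5$ and $X=\PSL(2,5)\cong \A_5$ acting $2$-transitively on $5$ points. Since $2<k<v-1=4$, we get $k=3$. With $r\mid 24$ and $r\cdot 2=4\lambda$, and the fact that the blocks form an orbit of $3$-subsets, the block set is all of $\binom{5}{3}$, giving the complete $2$-$(5,3,3)$ design with $b=10$, $r=6$. A direct check shows it is flag-transitive under $\PSL(2,5)$ (block stabilizer $S_3$), as in Line 1 of Table \ref{tab:alldesigns}.

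The main obstacle is making sure the bound $r\mid |G_\alpha|$ is combined correctly with $r^2>\lambda v$ to kill large $p$ independently of $\lambda$. That combination is what bounds $p$; the rest is a short finite check.
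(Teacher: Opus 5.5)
Your strategy is the paper's: $r$ divides $|G_\alpha|$, hence $24$, and $r^2>\lambda v\ge v$ gives $p(p^2-1)<24^3$, which leaves only $p=5$ and $p=13$. There is, however, a false step in the $p=13$, $r=24$ subcase.

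\textbf{The gap at $p=13$, $r=24$.} You claim that $4(k-1)=15\lambda$ is impossible for $\lambda\le 6$. It is not: $\lambda=4$ gives $k=16$. This candidate passes every test you actually applied:
\begin{itemize}
\item $k\le r$ and $k<v$;
\item $r^2=576>364=\lambda v$;
\item $r=24$ divides $\lambda\,(24,v-1)=6\lambda=24$.
\end{itemize}
You must exclude it with the condition $k\mid vr$, which you announced but did not use. Here $vr=91\cdot 24=2184=2^3\cdot 3\cdot 7\cdot 13$ is not divisible by $16$, so $b=vr/k$ is not an integer. This is exactly how the paper disposes of the case (``ruled out since $k\mid vr$ and $k\le r$'').

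\textbf{The case $p=5$.} Here your argument differs from, and improves on, the paper's. The paper lists $r\in\{6,12,24\}$ and rules out $r=12,24$ (that is, $b=20,40$) by a \texttt{GAP} computation. You instead observe that $G\ge \PSL(2,5)\cong \A_5$ is transitive on the ten $3$-subsets of the $5$ points. Hence the single block orbit consists of all of them, which forces $b=10$, $r=6$ and $\lambda=3$. Since blocks are distinct $k$-subsets, the bound $b\le\binom{5}{3}=10$ already makes the paper's computer check unnecessary.

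With the $k\mid vr$ fix at $p=13$, your proof is complete.
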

\begin{proof}
Here, $v=q(q^2-1)/24$ and $|\Out(X)|=2$, and so $r\mid 24$.
By $r^2>v$ and $q>3$, we obtain $24^3>q(q^2-1)$, and hence$q=5$, $13$, and $v=5$, $91$, respectively.

If $(q,v)=(13,91)$, then from $r^2>v$ and $r\mid 24$, 
we get $r=12,24$, and by $r(k-1)=\lambda(v-1)$ we get $k=15\lambda/2+1, 15\lambda/4+1$, respectively. However, both cases are ruled out since $k \mid vr$ and $k \leq r$.

If $(q,v)=(5,5)$, then $k=3$ as $2<k<v-1$.
From $k\mid vr$, $r^2>v$ and $r\mid 24$, 
we obtain $r\in \{3,4,6,8,12,24\}$. If $r=3$ then $\lambda$ is not an integer, while if $r=4,8$ then $k\nmid vr$. Hence $r\in \{6,12,24\}$, and accordingly $\lambda\in \{3,6,12\}$.
If $r=6$ then $b=vr/k=10$, hence we have the complete $2$-$(5,3,3)$ design with $\PSL(2,5)\leq G\leq \PGL(2,5)$.
If $r=12$ or 24, then $b=20$ or 40.
Recall that $G$ is flag-transitive, so there exists a subgroup $G_B$ of index $b$, which has an orbit $\mathcal{O}$ of size $k=3$. Moreover, from the block-transitivity of $G$, we have $\left\vert\mathcal{O}^{G}\right\vert=b$.
However, by \texttt{GAP}, in both of these two cases there is no such $\mathcal{O}$ with $|\mathcal{O}^{G}|=b$.
\end{proof}

\begin{lemma}\label{case5}
If $X_\alpha = \S_4$ for $q=p\equiv \pm 1\text{(mod 8)}$, then 
$G = \PSL(2,7)$, and one of the following holds:
\begin{enumerate}
    \item $\mathcal D$ is either $\PG(2,2)$ or its complementary design as in Lines 7 and 8 of Table \ref{tab:alldesigns}, respectively;
    \item $\mathcal D$ is the $2$-$(7,3,4)$ design as in Line 9 of Table \ref{tab:alldesigns}.
 \end{enumerate}
\end{lemma}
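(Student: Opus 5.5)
We follow the pattern of Lemma \ref{case4}. Here $v=|X:X_\alpha|=\frac{p(p^2-1)}{48}$ and $|\Out(X)|=2$. Since $\S_4$ is maximal in $\PSL(2,p)$ but not in $\PGL(2,p)$ when $p\equiv\pm1\pmod 8$ (it lies in an $\S_4$ of $\PSL(2,p)$ extended by nothing; the normalizer of $\S_4$ in $\PGL(2,p)$ is $\S_4$ itself), point-primitivity of $G$ together with the structure recorded in Table \ref{Tabb1} should force $G=X=\PSL(2,p)$ and $G_\alpha\cong \S_4$. At the very least $|G_\alpha|\mid 48$. By Lemma \ref{lem:basic-params}(ii), $r\mid \lambda\gcd(|G_\alpha|,v-1)$. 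Combined with $r^2>\lambda v$, this gives a bound that restricts $p$ to a short list.

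More precisely, we will bound $r/\lambda$ rather than $r$. Write $r=\lambda d/x$ with $d=\gcd(|G_\alpha|,v-1)\le 48$. Then $r^2>\lambda v$ gives $\lambda d^2/x^2>v$, and hence $v<\lambda d^2$. This is not yet a bound, because $\lambda$ is free. To remove $\lambda$, we use $r\mid |G_\alpha|$, which holds since $G_\alpha$ is transitive on the $r$ blocks through $\alpha$. This gives $r\le 48$, and then $v<r^2\le 48^2$, so $p(p^2-1)<48^3$. This leaves $p\in\{7,17,23,31,41,47\}$, with $v=7,102,253,620,1435,2162$ respectively.

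We then treat each prime in turn.
\begin{enumerate}
\item For $p\ge 17$, we combine $r\mid 48$ and $r^2>v$ with $r\mid\lambda(v-1)$ and with $(v-1)/\gcd(v-1,r)\le k-1<r$. The last condition comes from $r(k-1)=\lambda(v-1)$ and $k\le r$. We expect it to kill $p=17,\dots,47$ immediately: for example, $v-1=101$ is prime, and for $v=253$ we have $\gcd(252,48)=12$ while $r\ge 16$, so $21\le k-1<r$ forces $r\in\{24,48\}$ and fails the divisibility requirements. The remaining primes are handled similarly by short checks. We also use the subdegrees of $\PSL(2,p)$ on the cosets of $\S_4$, via Lemma \ref{lem:basic-params}(iii), wherever a sharper gcd is needed.
\item For $p=7$ we have $v=7$ and $G=\PSL(2,7)$ acting $2$-transitively on the $7$ points of $\PG(2,2)$, with $G_\alpha\cong\S_4$. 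Here $r\mid 24$ and $3\le k\le 4$ (the complementary values $k=4,3$ are also covered). We enumerate the possible blocks as orbits of subgroups $G_B$ of index $b=7r/k$ having an orbit of length $k$, as in Construction \ref{Build}.
\end{enumerate}
A \texttt{GAP} check (or a direct argument with the subgroups $\S_4$, $\A_4$, $\S_3$, $\D_8$, $Z_7:Z_3$ of $\PSL(2,7)$) should leave exactly three designs. The first is $k=3$ with $G_B\cong\S_4$ (the lines stabilizer), giving $\PG(2,2)$. The second is $k=4$ with $G_B\cong\S_4$, giving its complement with blocks the complements of lines; the table lists $G_B=\S_3$ for this line, but we will verify which subgroup actually occurs. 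The third is $k=3$ with $G_B\cong\S_3$ and $b=28$, giving the $2$-$(7,3,4)$ design of all non-lines. All other $(r,k)$ are ruled out by the absence of a suitable orbit.

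The main obstacle is the first step, namely justifying $G=X$ and obtaining the uniform bound $r\le|G_\alpha|$ independently of $\lambda$. Once that is in place, the remaining casework is routine arithmetic plus a small computer check for $p=7$.
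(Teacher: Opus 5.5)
You follow the paper's route: $r\mid |G_\alpha|\mid 48$ and $r^2>\lambda v$ give a short list of primes, most are killed by arithmetic, and $p=7$ is a finite check. One step, however, fails as stated. At $p=23$ the case does \emph{not} ``fail the divisibility requirements''. With $G=\PSL(2,23)$ (so $r\mid 24$), $r=24$ forces $k-1=21$, hence $(v,b,r,k,\lambda)=(253,276,24,22,2)$. This tuple satisfies every condition you list:
\begin{itemize}
\item $r\mid\lambda(v-1)$;
\item $(v-1)/\gcd(v-1,r)=21\le k-1<r$;
\item $k\mid vr$ and $k\le r$;
\item $r^2=576>506=\lambda v$.
\end{itemize}
The group does not rule it out either, since $|G_B|=6072/276=22$ is the order of $D_{22}\le\PSL(2,23)$. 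The paper disposes of this case only by a \texttt{GAP} computation, so your proposal, as written, leaves it open.

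You can close it by hand with the subdegree idea you mention only in passing. Table \ref{tab:subdegrees} does not cover $S_4$, so you must produce the subdegree yourself.
\begin{itemize}
\item An element $g\in G_\alpha\cong S_4$ of order $3$ has $|C_G(g)|=12$, and all $8$ elements of order $3$ in $S_4$ are $G$-conjugate to $g$. Hence $g$ fixes $12\cdot 8/24=4$ points.
\item Any fixed point $\beta\neq\alpha$ has $3\mid |G_{\alpha\beta}|$, giving a nontrivial subdegree $d\mid 8$.
\item Then $r\mid 8\lambda$ and $r\mid\lambda\gcd(24,252)=12\lambda$ give $r\mid 4\lambda$.
\item So $r^2>253\lambda\ge 253r/4$, i.e.\ $r>63$, contradicting $r\mid 24$.
\end{itemize}

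The rest of your plan is sound and matches the paper. A few further remarks:
\begin{itemize}
\item Your list of primes is more complete than the paper's. $p=47$ also satisfies $p(p^2-1)<48^3$, and it is eliminated because $v-1=2161$ is prime.
\item Your normalizer argument for $G=X$ is correct and is not really an obstacle. $X$ is transitive, so $|G_\alpha:X_\alpha|=|G:X|$, while $G_\alpha\le N_G(X_\alpha)=X_\alpha$. With $r\mid 24$ this cuts the list to $p\in\{7,17,23\}$.
\item You are right that Line 8 must have $G_B\cong S_4$, since $b=7$.
\item For $p=7$ you should also exclude $k=5$, which is immediate from $5\nmid 168$.
\end{itemize}
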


\begin{proof}
Suppose that $X_{\alpha}=\S_4$, for $q=p\equiv \pm 1 \pmod{8}$.
Then $v=q(q^2-1)/48$ and $\left\vert\Out(X)\right\vert=2$, and so $r\mid 48$.
From $r^2> v$, we obtain $48^3>q(q^2-1)$, 
implying $q=7$, $17$, $23$, $31$, $41$ and so $v=7$, $102$, $253$, $620$, $1435$ respectively. 

If $v=102$ or $620$, then $v-1$ is a prime.
From $r\mid \lambda(v-1)$ and $r\mid 48$,
we deduce $r\mid (\lambda, 48)$,  and so $r\le \lambda$, a contradiction.
If $(q,v)=(23,253)$, then $G=\PSL(2,23)$ by \cite{At}. Moreover, $r^2>v$ and $r\mid 48$ imply $r=24,48$. Actually, $r=24$ since $vr\mid \left\vert G\right\vert $, and hence $(b,k,\lambda)=(276,22,2)$. However, no examples occur by \texttt{GAP}. Similarly, we get no examples for $(q,v)=(41,1435)$.

Finally, consider $(q,v)=(7,7)$. Then $G=\PSL(2,7)$. Moreover, from $vr\mid \left \vert G \right\vert $ and $r^2>v$, we have $r=3,4,6,8,12$ or $24$ and hence the admissible $2$-design parameters $(v,b,r,k,\lambda)$ 
listed in Table \ref{tab:S4}:
\begin{table}[H]
\tiny
\centering 
\caption{$2$-design parameters for $v=7$ and $G=\PSL(2,7)$} 
\label{tab:S4} 
\begin{tabular}{cccccc}
\toprule
Line &  $v$ & $b$ & $r$ & $k$ & $\lambda$ \\ 
\midrule
1 &  $7$ & $7$ & $3$ & $3$ & $1$\\
2 &    &      & $4$ & $4$ & $2$\\
3 &    & $14$ & $6$ & $3$ & $2$\\
4 &   &       & $8$ & $4$ & $4$  \\
5 &  & $28$ & $12$ & $3$ & $4$   \\
6 &  & $21$ & $12$ & $4$ & $6$  \\  
7 &  & $56$ & $24$ & $3$ & $8$  \\ 
8 &  & $42$ & $24$ & $4$ & $12$  \\
\bottomrule
\end{tabular}
\end{table}
The $2$-designs arising from Lines 1 and 2 are $\PG(2,2)$ and its complementary design respectively, that in Line 3 is the $2$-$(7,3,4)$ design as in Line 8 of Table \ref{tab:alldesigns}. Easy computations show that the remaining parameters in Lines 4--8 lead to no examples.
\end{proof}

\begin{lemma}\label{case6}
Suppose that $X_{\alpha}=\mathrm{A_5}$, for $q\equiv \pm 1\text{(mod 10)}$, where either $q=p$ or $q=p^2$ and $p\equiv \pm 3\text{(mod 10)}$. Then one of the following holds:
\begin{enumerate}
    \item  $\PSL(2,3^{2})\unlhd G \leq \PSiL(2,3^{2})$ and $\mathcal{D}$ is a $2$-design as in Lines 4, 6 of Table \ref{tab:alldesigns};
    \item $G=\PSL(2,11)$ and $\mathcal{D}$ is a $2$-design as in Lines 11--17 of Table \ref{tab:alldesigns}.
\end{enumerate}
\end{lemma}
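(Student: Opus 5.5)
We follow the same pattern as Lemmas \ref{case4} and \ref{case5}. With $X_{\alpha}=\A_5$ we have $v=q(q^2-1)/120$. Moreover $|G_\alpha|$ divides $60\cdot|\Out(X)|$, where $|\Out(X)|=2$ if $q=p$ and $|\Out(X)|=4$ if $q=p^2$. Hence Lemma \ref{lem:basic-params}(ii) gives $r\mid 120\lambda$ or $r\mid 240\lambda$, and more precisely $r\mid\lambda\gcd(|G_\alpha|,v-1)$.

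\textbf{Bounding $q$.} Since $r^2>\lambda v$ and $r\mid\lambda\gcd(|G_\alpha|,v-1)$, we get
\[
\lambda\gcd(|G_\alpha|,v-1)^2 \geq r^2/\lambda>v .
\]
This alone does not bound $q$, because $\lambda$ is unbounded. So we first use $\gcd(|G_\alpha|,v-1)\leq 240$, which gives $r\leq 240\lambda$. To get a real bound, we write $r=\lambda R/x$ with $R=\gcd(|G_\alpha|,v-1)$. The condition $r^2>\lambda v$ then reads $\lambda R^2/x^2>v$, which is still not enough by itself.

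The better route mirrors Lemma \ref{case2}. From $r(k-1)=\lambda(v-1)$ we get $k=\frac{x}{R}(v-1)+1$. Then $k\mid (k,v)(k,r)$, and both factors are bounded by small multiples of $R\leq 240$. Indeed, $(k,v)\mid (R-x,v)$, while $(k,r)\mid(Rk,|G_\alpha|)\leq 240$. Hence $k\leq 240^2$, and so $v\leq Rk/x<240^3$. This yields $q(q^2-1)<120\cdot 240^3$, so $q$ is at most a few thousand.

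\textbf{Sieving the admissible $q$.} We then list the admissible $q$: those with $q\equiv\pm1\pmod{10}$, and either $q=p$, or $q=p^2$ with $p\equiv\pm3\pmod{10}$. For each, we compute $R=\gcd(|G_\alpha|,v-1)$. For most $q$, $R$ is tiny compared with $\sqrt v$; this contradicts $r\leq R$-type constraints combined with $k\mid(k,v)(k,r)$ and $k\geq 3$. More precisely, we require $(v-1)/\gcd(v-1,r)\leq k-1\leq r-1$, as at the end of Lemma \ref{case2}.

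I expect only $q=9$ (with $v=6$) and $q=11$ (with $v=11$) to survive, and possibly a few small cases such as $q=19,29,31,41,49$. These extra cases should be eliminated by the same divisibility tests, or by a \texttt{GAP} check with the package \texttt{Design} as in Lemma \ref{case5}.

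\textbf{The surviving cases.} For $q=9$, $G$ lies between $\PSL(2,9)$ and $\PSigL(2,9)$ acting on $6$ points; here $G_\alpha$ must contain $\A_5$ with $G$ not containing $\PGL(2,9)$, since that group is not primitive of degree $6$ in the right way. We enumerate $k\in\{3,4\}$, the values of $r$ dividing $|G_\alpha|$ with $r^2>6\lambda$, and the subgroups $G_B$ of index $b$ with an orbit of size $k$. I expect this to produce exactly the $2$-$(6,3,4)$ and $2$-$(6,4,6)$ designs of Lines 4 and 6 of Table \ref{tab:alldesigns}.

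For $q=11$, $G=\PSL(2,11)$ acting $2$-transitively on $11$ points, since $\PGL(2,11)$ has no subgroup of index $11$. We list all $k$ with $3\leq k\leq 8$ and all $r\mid 60$ with $r^2>11\lambda$. Via \texttt{GAP}, we check which orbits of subgroups $G_B\leq G$ of length $k$ give $|B^G|=b$. This should yield exactly Lines 11--17. Complementary pairs such as $(11,5,2)$ and $(11,6,3)$ appear naturally, and $k=7,8$ should give nothing new.

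The main obstacle is the first step: obtaining a clean upper bound on $q$ despite $\lambda$ being unrestricted. The $(k,v)(k,r)$ trick from Lemma \ref{case2} is what I would try first. The remaining work is a finite but somewhat lengthy computer enumeration.
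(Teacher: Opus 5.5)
Your overall plan is the paper's: bound $q$, sieve the finitely many admissible $q$ with the arithmetic conditions of Lemma \ref{lem:basic-params}, dispose of the leftovers by group structure or \texttt{GAP}, and treat $q=9$ and $q=11$ explicitly. However, the step you call the main obstacle contains an unjustified claim, and the fact needed to justify it removes the obstacle altogether.

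You bound $(k,r)$ by $240$ via $(k,r)\mid(Rk,|G_\alpha|)$. This requires $r\mid|G_\alpha|$, but the only divisibility you state is $r\mid\lambda\gcd(|G_\alpha|,v-1)$. From that alone, $(k,r)$ is not bounded independently of $\lambda$, so as written your bound $k<240^2$ is unsupported. The missing fact is standard. By flag-transitivity, $G_\alpha$ is transitive on the $r$ blocks through $\alpha$, so $r=|G_\alpha:G_{\alpha,B}|$ divides $|G_\alpha|$, and hence $r\mid 120f$.

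Combined with $r^2>\lambda v\ge v$, this gives $q(q^2-1)<120^3f^2$ in one line. So $q\le 120$ when $f=1$, and $q\in\{9,49,169\}$ when $f=2$: fifteen values in all. This is exactly how the paper proceeds. Your $(k,v)(k,r)$ detour is then superfluous, and it only yields roughly $q<1200$, which means a much longer sieve. The same fact is what makes your test $(v-1)/\gcd(v-1,r)\le r-1$ effective. Note that an inequality of the form $r\le R$ is not available to you; you only have $r\le\lambda R$.

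Two smaller points.

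\begin{enumerate}
\item For $q=11$ the admissible range is $3\le k\le 9$, since $k<v-1=10$. The case $k=9$ dies at once because $9\nmid 11r$ for $r\mid 60$, but it has to be considered.
\item Arithmetic alone does not clear all the extra $q$.
\begin{itemize}
\item For $q=19$ (where $G=\PSL(2,19)$), the parameters $(v,b,r,k,\lambda)=(57,76,20,15,5)$ and $(57,228,60,15,15)$ pass every divisibility test. The paper excludes them because $\PSL(2,19)$ has no subgroups of index $76$ or $228$.
\item For $q=59$ one gets $G=\PSL(2,59)$ and $(1711,1770,60,58,2)$, which also passes every test. The paper settles it by a \texttt{GAP} computation.
\end{itemize}
Your plan allows for such checks, but $q=59$ is missing from the cases you anticipate.
\end{enumerate}
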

\begin{proof} 
Let $X_{\alpha}=\mathrm{A_5}$ and $q$ be as in the statement.
Then $v=q(q^2-1)/120$ and $\left\vert \Out(X)\right\vert =2f$, so $r\mid 120f$, with $f=1$ or $2$.
It follows from $r^2> v$ that 
$ 120^3\cdot f^2>q(q^2-1)$.
If $f=1$, then $q=p\le 120$.
Since $v=q(q^2-1)/120$ and $q \equiv \pm 1 \pmod{10}$, 
we obtain $(q, v)=(11, 11), (19, 57), (29, 203), (31, 248), (41, 574)$, 
$(59, 1711), (61, 1891), (71, 2982)$ $(79, 4108)$, $(89,5874)$, $(101,8585)$ or $(109,10791)$. 
If $f=2$, then $q=p^2\le 132$, and similarly we have 
$(q,v)=(9, 6)$, $(49, 980)$ or $(169,40222)$.


If $(q,v)=(31,248),(71,2982),(49,980)$, or $(89,5874)$, then from $r|120f$ and $r|\lambda(v-1)$, we get that $r\mid(\lambda,120f)$ since $(120f,v-1)=1$. Then $r\le \lambda$, a contradiction. 

If $(q,v)=(29,203)$, then from $r\mid 120$ and $r\mid \lambda(v-1)$, we get that $r\mid 2(\lambda,120)$ since $(120,v-1)=2$. Then $r=2 \lambda$ and $\lambda\mid60$. From $\lambda(v-1)=r(k-1)$ we get $k=(v-1)/2+1=102$. Since $r\ge k$, we get $\lambda\ge 51$, hence $\lambda=60$ and $r=120$. However, $k=102$ does not divide $vr=203\cdot120$, a contradiction. 

If $(q,v)=(41,574)$, then from $r\mid 120$ and $r\mid \lambda(v-1)$, we get that $r\mid 3(\lambda,120)$ since $(120,v-1)=3$. Hence, $r\mid 3\lambda$, that is $r=3\lambda/x$ with $1\le x\le 2$. From $\lambda(v-1)=r(k-1)$ we get $k=x(v-1)/3+1\ge192$. However, $r \le 120 < k$, a contradiction.

If $(q,v)=(79, 4108)$, then similarly we get $k=x(v-1)/3+1$, with $1\le x \le 2$, hence $k> r$, a contradiction.

If $(q,v)=(101, 8585)$ then, since $(120,v-1)=8$, similarly we get $k=x(v-1)/8+1=1073x+1$, with $1\le x \le 7$, hence $k>r$, a contradiction.

If $(q,v)=(109, 10791)$ then, since $(120,v-1)=10$, similarly we get $k=x(v-1)/10+1=1079x+1$, with $1\le x \le 7$, hence $k> r$, a contradiction.

If $(q,v)=(169, 40222)$ then, since $(240,v-1)=3$, similarly we get $k=x(v-1)/3+1=13407x+1$, with $1\le x \le 2$, hence $k> r$, a contradiction.

If $(q,v)=(59, 1711)$, then similarly we get $r=30\lambda/x$ and $k=x(v-1)/30+1=57x+1$, with $1\le x< 30$. Since $k \le r$ and $r\le 120$, from $k\le 120$ we have $x=1,2$. If $x=2$, then $r=15\lambda$, $\lambda\mid8$ and $k=115$. From $k\mid vr$ we get that $115\mid1711\cdot 15$ as $k$ is odd, a contradiction. Hence, $x=1$, $r=30\lambda$, $\lambda\mid4$ and $k=58$. From $k\mid vr$ we get $k\mid 29\cdot 30\cdot\lambda$. Since $r^2>\lambda v$ we get $30^2\lambda> 1711$, that is $\lambda \ge 2$. Hence, $(r,\lambda)=(60,2),(120,4)$. Moreover, $G=\PSL(2,59)$ by \cite[§259]{Di} since $G$ must have a subgroup  $G_\alpha$ of index $v=1711$, hence $G_\alpha=A_{5}$. Then $(r,\lambda)=(60,2)$ since $r \mid G_{\alpha}$. However, no examples occur by \texttt{GAP}.

If $(q,v)=(61, 1891)$, then similarly we get $r=30\lambda/x$ and $k=x(v-1)/30+1$, with $1\le x< 30$. Since $k \le r$ and $r\le 120$, form $k\le 120$ we have $x=1$. Then, $r=30\lambda$, $\lambda\mid 4$ and $k=64$. From $k\mid vr$ we get $k\mid r$ as $v$ is odd, hence $64\mid 30\cdot 4$, a contradiction.

If $(q,v)=(19,57)$, then $G=\PSL(2,19)$ since $\PGL(2,19)$ does not have representation of degree $57$ by \cite[§259]{Di}. From $r\mid 120$ and $r\mid \lambda(v-1)$ we get that $r\mid 8(\lambda,120)$ since $(120,v-1)=8$. Hence, $r\mid 8\lambda$, that is $r=8\lambda/x$ with $1\le x \le 7$. Since $r\mid120$ we have that $\lambda\mid15x$. From $\lambda(v-1)=r(k-1)$ we get $k=7x+1$. With these conditions, we obtain $x=2$ and $\lambda=5,15$, hence $r=20,60$ and $b=76,228$, respectively. However, $\PSL(2,19)$ does not have subgroups of index $76$ or $228$, a contradiction.

If $(q,v)=(9, 6)$ then $G=\PSL(2,9)$ or $\PSigL(2,9)$ \cite{At}. From $r\mid240$ and $r\mid\lambda(v-1)$, we get that $r\mid 5(\lambda,240)$ since $(240,v-1)=5$. Hence, $r\mid 5\lambda$, that is $r=5\lambda/x$ with $1\le x\le 4$. From $\lambda(v-1)=r(k-1)$ we get $k=x+1$. Since $2<k<v-1=5$ we get $x=2,3$. The stabilizer $G_B$ of any block $B$ has index $30\lambda/(x(x+1))$ in $G$. By computing $G_B$ and verifying that it coincides with the stabilizer of one of its obits of size $k$, we obtain the $2$-designs with parameters as in Table \ref{tab:lem3.7_q9} since the permutation representation of $G$ of degree $6$ is $2$-transitive. 
\begin{table}[H]
\tiny
\centering
 \caption{Admissible parameters for $X_\alpha=\A_5$ and $q=9$}
    \label{tab:lem3.7_q9}
    \centering
    \begin{tabular}{cccccccccc}
    \toprule
         Line & $x$ & $v$ & $b$   & $r$   & $k$ & $\lambda$  & $G$ & $G_{\alpha}$ & $G_B$ \\
     \midrule    
         1 & $2$ & $6$ & $20$ & $10$ & $3$ & $4$ & $\PSL(2,9)$ &  $\A_5$ & $(\mathbb{Z}_3 \times \mathbb{Z}_3) \colon \mathbb{Z}_2$ \\
         2 &     &     &      &      &     &     & $\PSigL(2,9)$ & $\S_5$ & $\S_3\times\S_3$ \\
         3 & $3$ & $6$ & $15$ & $10$ & $4$ & $6$  & $\PSL(2,9)$ &  $\A_5$ & $\S_4$ \\
         4 &     &     &      &      &     &     & $\PSigL(2,9)$ & $\S_5$ & $\mathbb{Z}_2\times \S_4$\\
         \bottomrule
    \end{tabular}
\end{table}
These cases give Lines 3, 5 of Table \ref{tab:alldesigns}.

If $(q,v)=(11,11)$, then $G=\PSL(2,11)$ by \cite{At}. From $r\mid 120$ and $r\mid \lambda(v-1)$ we get that $r\mid 10(120,\lambda)$ as $(120,v-1)=10$. Hence, $r\mid 10\lambda$, that is $r=10\lambda/x$ with $1\le x \le 9$. Since $r\mid 120$, we get that $\lambda\mid 12x$. From $\lambda(v-1)=r(k-1)$ we find $k=x+1$, and since $2<k<v-1$, we get $2\le x \le 8$. With these conditions, we get the parameters in Table \ref{tab:lem3.7_q11}.
\begin{table}[H]
\tiny
\centering
    \caption{\small Admissible parameters for $G=\PSL(2,11)$ and $G_\alpha=\A_5$}
    \label{tab:lem3.7_q11}
    \centering
    \begin{tabular}{ccccccc}
    \toprule
         Line& $x$ & $v$  & $b$   & $r$  & $k$  & $\lambda$ \\
         \midrule
        1 & $2$ $ $&$11$ & $55$  & $15$ & $3$  & $3$  \\
        2& &  & $110$ & $30$ &  & $6$          \\
        3& &   & $220$ & $60$ &   & $12$         \\

        4& $3$ &  & $55$  & $20$ & $4$  & $6$          \\
        5 & & & $165$ & $60$ &   & $18$         \\

        6& $4$ &  & $11$ & $5$ & $5$  & $2$         \\
        7 & & & $22$  & $10$ &   & $4$          \\
        8 & & & $33$  & $15$ &   & $6$          \\
        9 & & & $44$  & $20$ &   & $8$          \\
        10 & & & $66$  & $30$ &   & $12$         \\
        11 & & & $132$ & $60$ &   & $24$         \\

        12& $5$ &  & $11$  & $6$  & $6$  & $3$          \\
        13& &  & $22$  & $12$ &   & $6$          \\
        14& &  & $55$  & $30$ &   & $15$         \\
        15& &  & $110$ & $60$ &   & $30$         \\
         \bottomrule
    \end{tabular}    
\end{table}
\normalsize
From lines 1, 2, 4, 6, 10, 12, and 14 of Table \ref{tab:lem3.7_q11} we get assertion (2). In particular, Lines 6 and 12 of Table \ref{tab:lem3.7_q11} yields, respectively, the $(11,5,2)$ Paley design and its complement, that is the symmetric $(11,6,3)$ design. The remaining cases are ruled out by using \texttt{GAP}.
\end{proof}

\begin{lemma}\label{case7}
Suppose that $X_{\alpha}=\mathrm{D}_{2(q-1)/(2,q-1)}$, 
the one of the following holds:
\begin{enumerate}
\item $G\cong \PGaL(2,2^2)$ and $\mathcal{D}$ is the $2$-$(10,4,2)$ design as in Line 10 of Table \ref{tab:alldesigns};
\item $G\cong\PSL(2,2^3)$ and $\mathcal{D}$ is the $2$-$(36,6,2)$ design as in Line 36 of Table \ref{tab:alldesigns};
\item $G\cong \PGaL(2,2^3)$ and $\mathcal{D}$ is one of the three $2$-$(36,6,6)$ designs as in Lines 37--39 of Table \ref{tab:alldesigns}.
\end{enumerate}
\end{lemma}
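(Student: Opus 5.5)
We follow the pattern of Lemmas \ref{case1}--\ref{case6}. Set $X_\alpha=\D_{2(q-1)/(2,q-1)}$. Then $v=q(q+1)/2$, so $v-1=(q+2)(q-1)/2$, and $r\mid |X_\alpha|\,|\Out(X)|$, that is, $r \mid 2f(q-1)$ for $q$ even and $r\mid 2f(q-1)$ for $q$ odd. We split according to the parity of $q$, and for $q$ odd according to $q\equiv \pm1\pmod 4$, using the subdegrees of Table \ref{tab:subdegrees}. Since these are subdegrees of $X$, a $G$-suborbit is a union of at most $|G:X|$ $X$-suborbits of equal length. Hence Lemma \ref{lem:basic-params}(iii) gives $r\mid \lambda\, t\,d$, where $d$ is an $X$-subdegree and $t\mid |G:X|$.

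\textbf{$q$ even.} The subdegrees are $q-1$ (with multiplicity $(q-2)/2$) and $2(q-1)$. Combining with $r\mid\lambda(v-1)=\lambda(q+2)(q-1)/2$, we get $r\mid \lambda(q-1)$, and also $r\mid \lambda\cdot\gcd(2f(q-1),\ldots)$. Write $r=\lambda(q-1)/x$. Then $r(k-1)=\lambda(v-1)$ gives $k-1=x(q+2)/2$, i.e.\ $k=x(q+2)/2+1$. Next impose $k\le r$, $k\mid vr$ (so $k\mid (k,v)(k,r)$), $r\mid 2f(q-1)$ and $r^2>\lambda v$. Note that $(k,q+1)$ and $(k,q)$ are small in terms of $x$: indeed $2k=x(q+2)+2\equiv x+2 \pmod{q+1}$ and $\equiv 2x+2\pmod q$. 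The same holds for $(k,q-1)$, where $2k\equiv 3x+2$. These bound $k$ by a polynomial in $x$ and $f$, while $x\le 2f$ because $r\ge \lambda(q-1)/(2f)$ is forced by $r\mid 2f(q-1)$ together with $r\mid\lambda(q-1)$. This yields $q\le$ an explicit bound like $2^{6}$. We then enumerate the finitely many parameter tuples and check them with \texttt{GAP}/\texttt{Design}. We expect survivors only at $q=4$ (giving the $2$-$(10,4,2)$ design, with $G=\PGaL(2,4)$) and $q=8$ (giving the $2$-$(36,6,2)$ design for $\PSL(2,8)$ and the three $2$-$(36,6,6)$ designs for $\PGaL(2,8)$).

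\textbf{$q$ odd.} Here $v=q(q+1)/2$ and $X_\alpha=\D_{q-1}$. The subdegrees $(q-1)/2$ (for $q\equiv -1 \pmod 4$) or $(q-1)/4$ (for $q\equiv 1 \pmod 4$), together with $v-1=(q+2)(q-1)/2$, give $r\mid 2f\lambda(q-1)/(2\text{ or }4)$ up to a factor dividing $\gcd(q+2,q-1)\mid 3$. Writing $r=\lambda c(q-1)/x$ for a bounded constant $c$, we get $k-1=x(q+2)/(2c)$. The same gcd analysis of $k$ against $q$, $q\pm1$ and $|\Out(X)|=2f$ bounds $q$. The remaining small $q$ (e.g.\ $q=5,7,9,11,13$) are then either excluded by divisibility ($k\mid vr$, $k\le r$, $\PSL(2,q)$ having no subgroup of index $b$) or by \texttt{GAP}. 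In particular, $q=9$ must not produce anything new beyond the isomorphism $\PSL(2,4)\cong\PSL(2,5)$ already covered.

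\textbf{Main obstacle.} The main obstacle is the even case. There the arithmetic does not immediately give $r\mid c\lambda$ for a constant $c$; one only has $r\mid\lambda(q-1)$, and the flag-transitivity must be exploited via $x\le 2f$ and the coprimality estimates for $k$ to get a finite list. If these estimates leave a large residual family, we would analyse $G_B$ directly. Since $k$ is roughly $x q/2$, $G_B$ has order roughly $4f(q-1)/(x(q+2))\cdot$ small, so $G_B$ is tiny. An orbit of length $k$ then forces $k\le |G_B|\le 2f\cdot$small, which contradicts $k\ge (q+2)/2+1$ for $q>8$. This last inequality is the step we would try first, as it likely disposes of all $q\ge 16$ in one stroke.
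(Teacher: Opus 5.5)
Your route is the paper's. In the even case the suborbit of length $2(q-1)$ is the only one of that length, hence $G_\alpha$-invariant. So $r\mid 2\lambda(q-1)$, and together with $r\mid\lambda(q+2)(q-1)/2$ this gives $r\mid\lambda(q-1)$. Then $k=x(q+2)/2+1$, and $k\mid 2fq(q^2-1)$ combined with the residues of $2k$ modulo $q,\ q-1,\ q+1$ bounds $q$; enumeration and \texttt{GAP} finish.

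\textbf{First error: the bound on $x$.} Your bound $x\le 2f$ is not justified. The conditions $r\mid 2f(q-1)$ and $r\mid\lambda(q-1)$ only bound $r$ from above; they give $\lambda\mid 2fx$, never $r\ge\lambda(q-1)/(2f)$. What you actually get is $x<4f$, from $x(q+2)/2<k\le r\le 2f(q-1)$. The range $2f<x<4f$ is not empty a priori. For example, $q=16$, $x=9$, $\lambda=72$ gives $r=120=2f(q-1)$, $k=82\le r$ and $r^2>\lambda v$, and this tuple is excluded only because $k\nmid vr$. So your enumeration must run over $x<4f$, as the paper's does. Writing $m$ for your $x$, the paper shows $mq+2m+2$ divides $2(3m/2+1)(m/2+1)f$, $4(3m+2)(m+2)f$ or $4(m+1)(3m+2)(m+2)f$ for $m$ even, $m\equiv 1$, $m\equiv 3\pmod 4$ respectively, whence $f\le 21$.

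\textbf{Second error: the $G_B$ shortcut.} The shortcut you plan to try first is wrong. In the even case $|G_{\alpha,B}|=|G_\alpha|/r=2x|G:X|/\lambda$, so $|G_B|=k\,|G_{\alpha,B}|\ge k$ automatically, since $G_B$ is transitive on $B$. Thus $G_B$ is not of order about $4f(q-1)/(x(q+2))$, and comparing $k$ with $|G_B|$ can never give a contradiction. The values $q\ge 16$ must be eliminated by the arithmetic above.

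\textbf{Odd case.} Your looser divisibility still leads to finitely many cases, but the paper's inputs are sharper and worth using. For $q\equiv 1\pmod 4$, the two suborbits of length $(q-1)/4$ form a $G_\alpha$-invariant set, so $r\mid\lambda(q-1)/2$ and $k=m(q+2)+1$. For $q\equiv -1\pmod 4$, the $G_\alpha$-invariant unions of suborbits of lengths $(q-1)/2$ and $q-1$ have sizes $(q-1)^2/4$ and $(q-1)(q+5)/4$, so $r\mid 3\lambda(q-1)/2$ and $k=(m(q+2)+3)/3$. The bounding then leaves $q\in\{5,9\}$ and $q\in\{7,19,27,31,67\}$, not only $q\le 13$. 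These are removed by $3\lambda\mid 4mf$, by \texttt{GAP}, and, for $q=5$ ($v=15$, $k=8$, $\lambda=4$), by point-primitivity. That last design does exist with $G\cong\PGL(2,5)$ but is point-imprimitive, so a bare \texttt{GAP} existence search would wrongly keep it.
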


\begin{proof}
Suppose $X_{\alpha}=\mathrm{D}_{2(q-1)/(2,q-1)}$. 
Then $v=\frac{q(q+1)}{2}$ and $|\Out(X)|=(2,q-1)f$.
From $r\mid |X_{\alpha}||\Out(X)|$ and 
$r\mid \lambda(v-1)$, we have $r\mid 2(q-1)f$ and $r\mid \frac{\lambda(q+2)(q-1)}{2}$.

(1) If $q=2^f$, $f \geq 2$, from Table \ref{tab:subdegrees} we obtain that $X$ has a unique subdegree $2(q-1)$, 
then $r$ divides $2\lambda(q-1)$.
Combining this with $r\mid \frac{\lambda(q-2)(q-1)}{2}$,
we obtain that $r$ divides $\lambda\gcd(2(q-1), \frac{(q+2)(q-1)}{2})=\lambda(q-1)\gcd(2,\frac{q+2}{2})=\lambda(q-1)$,
for $q=2^f\ge 4$.

Let $r=\frac{\lambda(q-1)}{m}$, where $m$ is a positive integer.
From $r(k-1)=\lambda(v-1)$, we have $k=\frac{mq+2m+2}{2}$.
As $k\mid vr$ and $r\mid 2(q-1)f$,  we deduce that 
\[
(mq+2m+2)\mid 2q(q-1)(q+1)f.
\]
If $q=4$, then $r\mid 12$ and $k=3m+1$, and since $k\leq r$, we obtain $v=10$ and $(r,k,\lambda,m)=(6,4,2,1),(12,4,4,1)$ since $k$ divides the order of $G$ and hence that of $\PGaL(2,4)$ and $k<v$. Only the former occurs yielding a $2$-design as in (1). Hence, we may assume that $q>4$. Then $(mq+2m+2,2q)=2(mq/2+m+1,q)$ divides $2$, $4$, or $4(m+1,q)$ according as $m$ is even, or $m \equiv 1 \pmod{4}$ or $m \equiv 3 \pmod{4}$, respectively. 
Furthermore, $(mq+2m+2,q-1)=(3m+2,q-1)$ and $(mq+2m+2,q+1)=(m+2,q+1)$. 

If $m$ is even, then 
$$(mq+2m+2)\mid 2(3m/2+1)(m/2+1)f$$
and hence $$q+2 < 2(3m/2+1)(1/2+1/2)f=2(3m/2+1)f\text{.}$$
As $r\ge k$, we have
\[
2qf> r\ge k=\frac{mq+2m+2}{2}>\frac{mq}{2}
\]
 and so $4f>m$. Therefore, $2^{f}+2<2(6f+1)f$ and so $3\leq f \leq 10$.

If $m \equiv 3 \pmod{4}$, then 
$$mq+2m+2 \mid 4(m+1)(3m+2)(m+2)f\text{,}$$
and hence
$2^{f}+2 < 4(1+1/m)(3m+2)(m+2)f\leq 16(3m+2)(m+2)f/3 < 16(12f+2)(4f+2)f/3$ and $3\leq f\leq 21$.

If $m \equiv 1 \pmod{4}$, then $$mq+2m+2 \mid 4(3m+2)(m+2)f\text{,}$$ and hence $2^f+2<12(3m+2)f<12(12f+2)f$ and $3\le f \le 14$.

We give all the possibilities of $f,q,v,m$ and $k$,
satisfying $v>k$ in Table \ref{tab:Diedr_q_even}.
\begin{table}[htbp]
\tiny
  \centering
  \caption{\small Possible parameter for $X_\alpha \cong D_{2(q-1)}$ with $q$ even }\label{tab:Diedr_q_even}
  \label{tab:d2q-1_parameters}
  \begin{tabular}{ccccc}
    \toprule
    Line &$q$ & $v$ & $m$ &  $k$ \\
    \midrule
    1 &$8$  & $36$   & $1$  & $6$   \\
    2 &    &        & $4$  & $21$  \\
    \addlinespace
    3& $16$ & $136$  & $1$  & $10$  \\
     4 &      &        & $7$  & $64$  \\
    \addlinespace
   5 & $32$ & $528$  & $7$  & $120$ \\
    \addlinespace
    6 & $64$ & $2080$ & $11$ & $364$ \\
    \bottomrule
  \end{tabular}
\end{table}
\normalsize
 Finally, \texttt{GAP} aided computations show that only the case as in Line 1 occurs leading to cases (2) and (3) as in the Lemma statement.  

(2) If $q\equiv 1 \pmod{4}$, from Tab \ref{tab:subdegrees} we obtain that $X$ has two subdegrees $(q-1)/4$, 
then $r$ divides $\lambda(q-1)/2$. Let $r=\frac{\lambda(q-1)}{2m}$, where $m$ is a positive integer.
From $r(k-1)=\lambda(v-1)$, we have $k=mq+2m+1$.
By $k\mid vr$ and $r\mid 2(q-1)f$,  we have 
\[
(mq+2m+1)\mid q(q-1)(q+1)f.
\]
Arguing as above, we obtain 
$$(mq+2m+1) \mid (m+1)(2m+1)(3m+1)f\text{.}$$
Therefore, $mq+2m+1 \leq (m+1)(2m+1)(3m+1)f$, and so  
\begin{equation}\label{equaliz}
q+2 <2(2m+1)(3m+1)f\text{.}    
\end{equation}
As $r\geq k$, we have
$2(q-1)f\geq r \geq k=m(q+2)+1$ so  
$2f>m.$
Combining this with (\ref{equaliz}), we obtain $5^{f}+2 \leq q+2 <2(4f+1)(6f+1)f$
implying $1\le f\le 5$.
Note that $(mq+2m+1)\mid q(q-1)(q+1)f$, $m<2f$ and $q\equiv 1 \pmod 4$,  
so we have $(f,q,m)=(1,5,1)$ or $(2,9,1)$.
In the former case, one has $v=q(q+1)/2=15$ and $k=mq+2m+1=8$, and $G \cong \PGL(2,5)$ since $G$ acts flag-transitively on $\mathcal{D}$. However, $\PGL(2,5)$ does not have primitve permutation representations of degree $15$ by \cite{At}, and hence this case is ruled out.

In the latter case, $v=45$, $k=12$.
As $r=\lambda (q-1)/2m$, we get $r=4\lambda$.
Combining this with $r\ge k$ and $r\mid 2(q-1)f$, we get $\lambda \ge3$ and $\lambda\mid 8$,
so $\lambda =4$ or $8$, then $r=16$ or $32$, with $b=60$ or $120$, respectively.
Therefore, $(v,b,r,k,\lambda)=(45,60,16,12,4), (45,120,32,12,8)$. However, by using \texttt{GAP}, no examples occur.

(3) If $q\equiv -1 \pmod{4}$, from Tab \ref{tab:subdegrees}, 
we have that $X$ has subdegree $\frac{q-1}{2}$ (with multiplicity $\frac{q-1}{2}$) and $q-1$ (with multiplicity $\frac{q+5}{4}$). It follows that $r$ divides $x:=\lambda \frac{(q-1)^2}{4}$ and $y:=\lambda \frac{(q-1)(q+5)}{4}$,
and so $r$ divides $y-x=\frac{3\lambda(q-1)}{2}$.
Let $r=\frac{3\lambda(q-1)}{2m}$, where $m$ is a positive integer.
From $r(k-1)=\lambda(v-1)$, we have $k=(mq+2m+3)/3$.
By $k\mid vr$ and $r\mid 2(q-1)f$,  we have 
\[
(mq+2m+3)\mid 3q(q-1)(q+1)f.
\]
As above,
we obtain $$(mq+2m+3) \mid 9f(2m+3)(m+1)(m+3)\text{.}$$
Therefore, $mq+2m+3 \le 9f(2m+3)(m+1)(m+3)$, and so  
\begin{equation}\label{equaliz2}
q+2 < 18f(2m+3)(m+3)\text{.}    
\end{equation}
As $r\geq k$, we have
$6(q-1)f\geq 3r\geq 3k=m(q+2)+3$ 
so $6f>m$.
Combining this with (\ref{equaliz2}), we obtain $p^{f}+2 = q+2 < 18f(12f+3)(6f+3)$ with $f$ odd since $q=p^{f}$ and $q\equiv -1 \pmod{4}$. This implies $q\leq 3^{13}$. In Table \ref{tab:q_mod_3-} are listed give all the possibilities of $f,q,v,m$ and $k$, bearing in mind that $mq+2m+3\mid 3q(q-1)(q+1)f$ with $m<6f$, $q\equiv -1 (\mod 4)$ and $v>k$.
\begin{table}[htbp]
\tiny
    \centering
    \caption{Possible parameters for $X_\alpha \cong D_{q-1}$ when $q \equiv -1 \pmod 4$}
    \label{tab:q_mod_3-}
    \begin{tabular}{ccccc}
        \toprule
        $f$ & $q$ & $v$ & $m$ & $k$ \\
        \midrule
        1 & 7 & 28 & 1 & 4 \\
          &   &    & 2 & 7 \\
         &   &    & 5 & 16 \\         
        [1ex]
        1 & 19 & 190 & 1 & 8 \\
          &    &     & 2 & 15 \\
          &   &    & 5 & 36 \\         
        [1ex]
        1 & 31 & 496 & 1 & 12 \\
        [1ex]
        1 & 67 & 2278 & 1 & 24 \\
        [1ex]
        3 & 27 & 378 & 12 & 117 \\
        \bottomrule
    \end{tabular}
\end{table}
Recall that $r=\frac{3\lambda(q-1)}{2m}$ and $r\mid 2(q-1)f$,
we have $3\lambda \mid 4mf$. If $f=1$ then $3\lambda \mid 4m$, and so $3\mid m$, contradiction.
Thus, $f=3$ and $m=12$, and so $\lambda \mid 48$.
Combining this with $r= \frac{3\lambda(26)}{24} = \frac{13\lambda}{4}$ and $r>k=117$,
we have $\lambda=48$, and so $r=156$, $b=504$.
Therefore, $(v,b,r,k,\lambda)=(378,504,156,117,48)$. However, by using \texttt{GAP} we see that this case leads to no examples.
\end{proof}

\bigskip

\begin{lemma}\label{case8} If $X_{\alpha }$\bigskip $\cong \D_{\frac{2(q+1)}{(2,q-1)}}$, then there is a positive integer $x$ such that
$$
\left(v,k,r\right)=\left(\frac{q(q-1)}{2},\frac{(q-2)ex}{2\theta}+1,\frac{\lambda (q+1)\theta }{ex}\right)\text{,}  
$$
with $\lambda <20f^{2}$ and $x<\frac{10 \theta f}{e}$, where $(e,\theta )=(2,1)$ or $(1,f)$ according as $q$ is odd or not. Moreover, if $B$ is any block of $\mathcal{D}$ and 
$M$ is a maximal subgroup of $G$ containing $G_{B}$ and not $X$, then one of the following holds:
\begin{enumerate}
    \item $M\cap X$ is non-maximal in $X$, $G \cong \PGaL(2,3^{2})$ and $\mathcal{D}$ is the $2$-$(36,8,8)$ design as in Line 40 of Table \ref{tab:alldesigns}.
    \item $M\cap X$ is one of the subgroups of $\PSL(2,q)$ listed in Table \ref{MaxDickson}.
\end{enumerate}
\end{lemma}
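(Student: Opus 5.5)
Assume $X_{\alpha}\cong \D_{2(q+1)/(2,q-1)}$. Then $v=|X:X_\alpha|=q(q-1)/2$ and $|\Out(X)|=(2,q-1)f$, so by Lemma \ref{lem:basic-params}(ii) we have $r\mid \lambda(v-1)$ and $r\mid |G_\alpha|$, which divides $2(q+1)f$. We need a nontrivial subdegree to sharpen this, and Table \ref{tab:subdegrees} supplies one.

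For $q$ even, every nontrivial subdegree of $X$ is $q+1$. A suborbit of $G$ is a union of at most $f$ suborbits of $X$, so $G$ has a subdegree dividing $f(q+1)$. Lemma \ref{lem:basic-params}(iii) then gives $r\mid \lambda f(q+1)$, and we write $r=\lambda(q+1)f/x$. For $q$ odd, the subdegrees $(q+1)/2$ (or $(q+1)/4$ when $q\equiv -1 \pmod 4$) appear with large multiplicity. A $G$-suborbit fused from $X$-suborbits of length $(q+1)/2$ has length dividing $2f(q+1)/2$. Combining this with $r\mid\lambda(v-1)=\lambda(q-2)(q+1)/2$ and $\gcd(q+1,q-2)\mid 3$ should give $r\mid \lambda(q+1)/2$ up to a factor coming from $\Out(X)$; this is where $(e,\theta)=(2,1)$ enters, and we write $r=\lambda(q+1)\theta/(ex)$. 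In both cases, substituting into $r(k-1)=\lambda(v-1)$ gives $k=(q-2)ex/(2\theta)+1$.

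For the bounds, we would use $k\le r$ together with $r\mid 2(q+1)f$. The inequality $k\le r$ gives $x^2\lesssim \lambda(q+1)\theta^2/(e^2(q-2))$. The divisibility $r\mid 2(q+1)f$ gives $\lambda\theta\mid 2efx$, hence $\lambda\le 2efx/\theta$. Feeding this back yields $x\lesssim 2f\theta(q+1)/(e(q-2))<10\theta f/e$, and then $\lambda\le 2efx/\theta<20f^2$. The constants will need to be checked carefully for small $q$.

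For the second part, take $M$ maximal in $G$ with $G_B\le M$ and $X\not\le M$. Then $G=XM$, so $M\cap X$ is a $G$-invariant-type novelty or a maximal subgroup of $X$. If it is maximal, Table \ref{MaxDickson} gives case (2). If it is non-maximal (a novelty), we apply \cite{gmaxi}, which limits us to the short list of Table \ref{Tabb1} ($q\in\{7,9,11\}$ and $\PGL(2,p)$ with $\S_4$). Comparing with $v=q(q-1)/2$, $r$, $k$ and the bound $\lambda<20f^2$ leaves finitely many parameter sets. We expect these to be settled by \texttt{GAP}, with only $q=9$, $G=\PGaL(2,9)$ surviving and giving the $2$-$(36,8,8)$ design. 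The main obstacle is controlling the $\PGL(2,p)$, $\S_4$ novelty family, where $|G_B|\le 24$ forces $b\ge|G|/24$. Since $b=vr/k$ is roughly $\lambda q^2/2$ while $|G|/24$ is roughly $q^3/24$, this should bound $q$ and reduce the family to a finite check.
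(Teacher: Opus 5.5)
There is a genuine gap in the first assertion for $q$ odd. Fusing $X$-suborbits of length $(q+1)/2$ into one $G$-suborbit only gives $r\mid\lambda f(q+1)$. Intersecting this with $r\mid\lambda(v-1)=\lambda(q+1)(q-2)/2$ leaves
\[
r\mid\lambda\tfrac{q+1}{2}\gcd(f,q-2),
\]
and the fact $\gcd(q+1,q-2)\mid 3$ does not help. The factor $\gcd(f,q-2)$ can be nontrivial. For $q=5^{3}$ it equals $3$, and your constraints (together with $r\mid 2(q+1)f$) still allow $r=3\lambda(q+1)/2=189\lambda$ with $\lambda\mid 4$ and $k=(q+1)/3=42$. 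That $k$ is not of the form $(q-2)x+1$ with $x$ a positive integer. So saying this residual factor is ``where $(e,\theta)=(2,1)$ enters'' gets it backwards: $\theta=1$ says exactly that no factor from $\Out(X)$ survives. The paper relies on this sharp divisibility later; in Lemma \ref{MinTX8} it is what excludes $k=(q+1)/3$ for odd $q$.

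The missing idea is to use $G_\alpha$-invariant unions of $X_\alpha$-orbits instead of single $G$-suborbits. Since $G_\alpha$ normalizes $X_\alpha$, it permutes the $X_\alpha$-orbits preserving their lengths. Hence the union $S_d$ of all $X_\alpha$-orbits of a fixed length $d$ is $G_\alpha$-invariant, and summing Lemma \ref{lem:basic-params}(iii) over the $G_\alpha$-orbits in $S_d$ gives $r\mid\lambda|S_d|$.

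For $q\equiv 1\pmod 4$ this gives $r\mid\lambda(q+1)(q-3)/4$ and $r\mid\lambda(q+1)(q-1)/4$, hence $r\mid\lambda(q+1)/2$ by subtraction. For $q\equiv -1\pmod 4$ the subdegree $(q+1)/4$ has multiplicity only $2$, not a large one. The union of these two orbits has size $(q+1)/2$, so again $r\mid\lambda(q+1)/2$.

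With this repaired, the rest of your argument goes through.
\begin{itemize}
\item Your $q$-even step is the paper's.
\item Your bound computation is the paper's with the order reversed ($x$ first, then $\lambda$). Restore the factor $2$ in $x^2<2\lambda(q+1)\theta^2/(e^2(q-2))$ and use $(q+1)/(q-2)\le 5/2$ for $q\ge 4$.
\item For the $S_4$ novelty family, the paper uses the quicker $k\mid|M|=24$, i.e.\ $(p-2)x+1\mid 24$. Your estimate $b\ge|G|/24$ instead of this also bounds $q$, though more coarsely.
\end{itemize}
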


\begin{proof} 
Suppose $X_{\alpha}=\mathrm{D}_{2(q+1)/(2,q-1)}$. 
Then $v=\frac{q(q-1)}{2}$ and $|\Out(X)|=(2,q-1)f$.
From $r\mid |X_{\alpha}||\Out(X)|$, we have $r\mid 2(q+1)f$. 

Moreover, from Table \ref{tab:subdegrees}, 
if $q\equiv 1 \pmod{4}$, then $X$ has subdegree
$(q+1)/2$ (with multiplicity $(q-3)/2$) and 
$q+1$ (with multiplicity $(q-1)/4$), and so 
$$ r\mid \lambda\cdot \left(\frac{(q+1)(q-1)}{4}-\frac{(q+1)(q-3)}{4} 
  \right)\text{,}$$
that is, $r\mid \lambda (q+1)/2$;
if $q\equiv -1 \pmod{4}$, then $X$ has subdegree $(q+1)/4$ (with multiplicity $2$), and so $r\mid \lambda (q+1)/2$;
if $q=2^f$, then $X$ has subdegree
$q+1$ (with multiplicity $(q-2)/2$), 
and so $r\mid \lambda f(q+1)$.
Hence, we may denote 
\[
r=\frac{\lambda (q+1)\theta }{ex},
\]
with $(e,\theta )=(2,1)$ or $(1,f) $
according as $q$ is odd or not. 
Furthermore, from $r(k-1)=\lambda (v-1)$, we have
\[
k=\frac{(q-2)ex}{2\theta}+1.
\]
Since $k\le r$, one obtains $\frac{(q-2)ex}{2\theta}<  \frac{\lambda (q+1)\theta }{ex}$ and hence $\left(\frac{ex}{\theta}\right)^{2}< 5\lambda$. On the other hand, $\frac{\lambda (q+1)\theta }{ex} \mid 2(q+1)f$ implies $\frac{\lambda}{2f}\leq \frac{ex}{\theta} $. Therefore, $\left(\frac{\lambda}{2f}\right)^{2}\leq \left(\frac{ex}{\theta}\right)^{2}< 5\lambda$ and hence $\lambda <20f^{2}$, implying $x<\frac{10 \theta f}{e}$.  

Let $B$ be any block of $\mathcal{D}$ and $M$ a maximal subgroup of $G$ containing $G_{B}$ but not $X$. As it can be easily deduced by \cite[Tables 8.2--8.3]{BHRD}, such a maximal group $M$ exists. If $M\cap X$ is non-maximal in $X$. Then $(G,X,M,M\cap X,k)$ is listed in Table \ref{Tab:Giudici} by \cite[Theorem 1.1]{gmaxi} and bearing in mind that $k \mid \left\vert M \right \vert$. By using \texttt{GAP}, we see that the only line that yields a flag-transitive, point-primtive design is Line 8 of Table \ref{Tab:Giudici}, and the design is the one in statement (1). 
\begin{table}[h!]
\footnotesize
\caption{The possibilities for $(G,X,M,M\cap X,k)$.}
\label{Tab:Giudici}
\tiny
\begin{tabular}{cllllll}
\toprule
Line & $G$ & $X$ & $M$ & $M\cap X$ & $v$ & $k$ \\
\midrule
1 & $\mathrm{PGL}(2,7)$         & $\mathrm{PSL}(2,7)$          & $\mathrm{D_{12}}$               &  $\mathrm{D_{6}}$ & $21$  & $6$\\  
2 & $\mathrm{PGL}(2,7)$         &                              & $\mathrm{D_{16}}$               &  $\mathrm{D_{8}}$ &  $21$ & $16$\\  
3 & $\mathrm{PGL}(2,3^{2})$         &  $\mathrm{PSL}(2,3^{2})$         & $\mathrm{D_{20}}$               &  $\mathrm{D_{10}}$ & $36$ & -\\  
4 & $\mathrm{PGL}(2,3^{2})$         &                              & $\mathrm{D_{16}}$               &  $\mathrm{D_{8}}$ & $36$ & $8$\\  
5 & $\mathrm{M_{10}}$           &                              & $C_{5}\rtimes C_4$              &  $\mathrm{D_{10}}$ & $36$ & -\\  
6 & $\mathrm{M_{10}}$           &                              & $C_{8}\rtimes C_{2}$            &  $\mathrm{D_{8}}$&  $36$ & $8$\\   
7 & $\mathrm{\PGaL}(2,3^{2})$   &                              & $C_{10}\rtimes C_{4}$           &  $\mathrm{D_{10}}$ & $36$ & $8$\\   
8 & $\mathrm{\PGaL}(2,3^{2})$   &                              & $C_{8}\cdot \Aut(C_8)$           &  $\mathrm{D_{8}}$ & $36$ & $8$\\ 
9 & $\mathrm{PGL}(2,11)$        &  $\mathrm{PSL}(2,11)$        & $\mathrm{D_{20}}$               &  $\mathrm{D_{10}}$ & $55$ &$10$\\
10 & $\mathrm{PGL}(2,11)$       &                              & $\mathrm{S_{4}}$                &  $\mathrm{A_{4}}$ & $55$ & -\\
11& $\mathrm{PGL}(2,19)$        &  $\mathrm{PSL}(2,19)$        & $\mathrm{S_{4}}$                &  $\mathrm{A_{4}}$ & $171$& -\\
\bottomrule
\end{tabular}
\end{table}
If $M\cap X$ is maximal in $X$, then $M\cap X$ is one of the subgroups of $\PSL(2,q)$ groups listed in Table \ref{MaxDickson}.
\end{proof}

\bigskip

Throughout the remainder of this section $M$ will have the same meaning as in Lemma \ref{case8}.

\bigskip

\begin{lemma}\label{MintX456}
$M\cap X$ cannot be isomorphic to $\S_{4}$. If $M\cap X \cong \A_{4}$ or $\A_{5}$, then one of the following occurs:
\begin{enumerate}
    \item $G\cong \PGL(2,5)$ and $\mathcal{D}$ is a $2$-$(10,4,2)$ design as in Line 10 of Table \ref{tab:alldesigns}; 
    \item $G \cong \PGaL(2,3^{2})$ and $\mathcal{D}$ is a $2$-$(36,8,8)$ design as in Line 40 of Table \ref{tab:alldesigns}.
\end{enumerate}
\end{lemma}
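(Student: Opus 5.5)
We work in the setting of Lemma \ref{case8}: $X_\alpha\cong \D_{2(q+1)/(2,q-1)}$, $v=q(q-1)/2$, $k=\frac{(q-2)ex}{2\theta}+1$, $r=\frac{\lambda(q+1)\theta}{ex}$, with $\lambda<20f^2$ and $x<10\theta f/e$. Take $G_B\leq M$ with $M\cap X\cong \A_4,\S_4$ or $\A_5$. The point is that $|M|$ is bounded independently of $q$ while $k$ grows linearly with $q$. The strategy is to squeeze $q$ between two bounds, then finish the few surviving cases by computation, as in Lemmas \ref{case4}--\ref{case6}.

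\textbf{Bounding $q$.} By flag-transitivity, $G_B$ is transitive on $B$, so $k\mid |G_B|$. Hence
\[
k \mid |M| \quad\text{and}\quad |M|\le |M\cap X|\cdot|\Out(X)| \le 60\cdot 2f .
\]
The cases $\S_4$ and $\A_4$ need $q=p$, so $f=1$ and $|M|\le 48$. The case $\A_5$ needs $q=p$ or $q=p^2$, so $f\le 2$ and $|M|\le 240$. Since $q$ is odd in all three cases, $(e,\theta)=(2,1)$, and therefore $k=(q-2)x+1\ge q-1$. This gives $q-1\le 48$ for $\S_4,\A_4$ and $q-1\le 240$ for $\A_5$. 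The congruence conditions of Table \ref{MaxDickson} then leave only a short list:
\begin{enumerate}
\item $\S_4$: $q\in\{7,17,23,31,41,47\}$;
\item $\A_4$: $q\in\{5,13,29,37\}$, together with the small exceptional cases;
\item $\A_5$: $q\in\{11,19,29,\dots\}$ up to about $241$, and $q\in\{9,49,169\}$.
\end{enumerate}

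\textbf{Arithmetic filtering.} For each surviving $q$, the first test is whether some $x\ge 1$ makes $(q-2)x+1$ a divisor of $|M|$ (so a divisor of $48$, $24$, $120$ or $240$). This already kills almost everything.
\begin{enumerate}
\item $\S_4$: $q=7$ gives $k=5x+1\in\{6,16\}$, but then $k\ge v-1$ or $k\nmid 48$. $q=17$ gives $k=16$, which fails $r\ge k$ and $r\mid 2(q+1)f$ once $\lambda<20$ is imposed. Larger $q$ give no divisor. So $M\cap X\cong \S_4$ is excluded.
\item $\A_4$: $q=5$ gives $k=4$, $v=10$. Combined with $r=\lambda\cdot 3/x$ and $r\mid 12$, this forces $\lambda=2$, $r=6$, $b=15$, with $G\cong\PGL(2,5)$. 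This is the $2$-$(10,4,2)$ design of Line 10.
\item $\A_5$: $q=9$ gives $k=7x+1=8$, $v=36$. Then $r=5\lambda/x$ and $r\mid 40$, so $(\lambda,r)=(8,40)$ and $G$ is the group containing $\PSigL(2,9)$, namely $\PGaL(2,3^2)$. This is Line 40. The other $\A_5$ values of $q$, such as $q=11$ ($k=10$) and $q=29$ ($k=28$), are removed by combining $k\mid |M|$, $r\mid 2(q+1)f$, $r\ge k$ and $k\mid vr$.
\end{enumerate}

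\textbf{Main obstacle.} The difficult part is the handful of $\A_5$ cases with $q\le 241$ that survive divisibility. These include, for example, $q=11$ with $k=10$ and $v=55$, and possibly $q=19$, $29$, $49$, $61$. For these I would list the admissible $(\lambda,r,b)$ and then check with \texttt{GAP} whether $M$ (equivalently a subgroup $G_B\leq M$ of index $b/|G:M|$) has an orbit $B$ of length $k$ on the $v$ points with $B^G$ a $2$-design. The expected outcome is that none survives except $q=9$ and $q=5$. The $q=5$ case also needs a check that $\PGL(2,5)$ acting on $10$ points indeed realizes Line 10, which is consistent with the entry \ref{MintX456} recorded there.
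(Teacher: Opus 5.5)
Your overall strategy is the same as the paper's: use $k\mid |G_B|$ with $G_B\leq M$, bound $|M|$, reduce to finitely many pairs $(q,x)$, then compute. However, several of your arithmetic eliminations are false, so as written the lemma is not proved.

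\textbf{The steps that fail.}
\begin{enumerate}
\item In the $\S_4$ case with $q=7$ you have $v-1=20$. Both $k=6$ and $k=16$ divide $48$ and are smaller than $20$, so ``$k\geq v-1$ or $k\nmid 48$'' excludes neither.
\item For $q=17$, $k=16$, one has $r=9\lambda$ with $r\mid 36$. Taking $\lambda=2$ gives $r=18\geq k$ and $b=153$, so the claimed failure of $r\geq k$, $r\mid 2(q+1)f$ does not occur. Together with item 1, the first assertion of the lemma is not established.
\item In the $\A_4$ case you list $q=13$ but never treat it. Yet $(q,x)=(13,1)$ gives $(v,k)=(78,12)$ with $12\mid 24$, and $\lambda\in\{2,4\}$ is arithmetically admissible. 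Separately, $q=29$ is not an $\A_4$ case, since $29\equiv -1 \pmod{10}$.
\item For $q=5$, the condition $r=3\lambda\mid 12$ also allows $\lambda=4$ (with $r=12$, $b=30$), so $\lambda=2$ is not forced.
\item For $q=9$, the condition $r=5\lambda\mid 40$ allows $\lambda\in\{2,4,8\}$, so $\lambda=8$ is not forced. Moreover $x=2$ gives $k=15\mid 240$ with $\lambda\in\{8,16\}$, a case you omit.
\end{enumerate}

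\textbf{The missing ingredient.} You need the normalizer structure that the paper uses.
\begin{enumerate}
\item The two $X$-classes of $\S_4$, and likewise of $\A_5$, fuse in $\PGL(2,q)$. Hence $N_{\PGL(2,q)}(\S_4)=\S_4$ and $N_{\PGL(2,q)}(\A_5)=\A_5$, while $N_{\PGL(2,q)}(\A_4)\cong \S_4$.
\item $M$ normalizes $M\cap X$, and $MX=G$.
\item Therefore $|M|$ divides $24$ in the $\A_4$ and $\S_4$ cases, and in the $\S_4$ case $G=X$ is forced. In the $\A_5$ case $|M|$ divides $60\log_p q$.
\end{enumerate}
These replace your cruder bound $|M\cap X|\cdot|\Out(X)|$, and they give the following short list.
\begin{enumerate}
\item $(q-2)x+1\mid 24$ leaves only $(7,1)$ for $\S_4$, and $(5,1),(13,1)$ for $\A_4$.
\item $(q-2)x+1\mid 60\log_p q$ with $x<5f$ leaves $(9,1),(9,2),(11,1),(31,1),(61,1)$.
\end{enumerate}
Your weaker bound lets through extra cases that you then either mis-eliminate or skip: $k=16$ at $q=7$ and $q=17$, $q=41$ with $k=40$, and $q=49$ with $k=48$.

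Even after this reduction, the paper settles the surviving cases by computer, including the values of $\lambda$ and $G$ for $q=5$ and $q=9$. One exception can be handled by hand: $(7,1)$ is impossible, because then $G=X=\PSL(2,7)$ and $G_\alpha=X_\alpha\cong \D_8<\S_4$ is not maximal, contradicting point-primitivity.
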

\begin{proof}
Assume that $M\cap X=\A_{4}$ with 
$q=p\equiv \pm 3\pmod{8}$ and $q\not\equiv \pm 1%
\pmod{10}$. There is one conjugacy class of subgroups of $X$ isomorphic to $\A_{4}$, and the normalizer of $\A_{4}$ in $\PGL(2,q)$ is $\S_{4}$ by \cite[§ 257]{Di}. Then $\frac{\left( q-2\right) ex%
}{2\theta }+1\mid 24$, thus $\left( q-2\right) x+1\mid 24$ since $(e,\theta)=(2,1)$; the only possibilities are $(q,x)=(5,1)$ or $(13,1)$, and hence $(v,k)=(10,4)$ or $(78,12)$, respectively. The former implies that $G\cong \PGL(2,5)$ and $\mathcal{D}$ is a $2$-$(10,4,2)$ design as in Line 9 of Table \ref{tab:alldesigns}, that is the assertion $(1)$. For the latter case no designs exist.

If $M\cap X=\S_{4}$, then $q=p\equiv \pm 1\pmod{8}$ and there are
two conjugacy classes of subgroups of $X$ isomorphic to $\S_{4}$, and these are fused in $\PGL(2,q)$ by \cite[§ 257]{Di}. Thus $\frac{\left( q-2\right) ex}{%
2\theta }+1\mid 24$, thus $\left( q-2\right) x+1\mid 24$ since $(e,\theta)=(2,1)$; the only possibility is $(q,x)=(7,1)$, and hence $v=21$ and $k=6$, but this case is ruled out by using \texttt{GAP}.

If $M\cap X=\A_{5}$ then $q\equiv \pm 1\pmod{10}$, where either $q=p$ or $q=p^2$ and $p\equiv \pm 3\text{(mod 10)}$ . There are
two conjugacy classes of subgroups of $X$ isomorphic to $\A_{5}$, and these are fused in $\PGL(2,q)$ by \cite[§ 259]{Di}. Thus $\frac{%
\left( q-2\right) ex}{2\theta }+1\mid 60\log _{p}q$, that is $\left( q-2\right) x+1\mid 60 \cdot \log_{p}q$ since $(e,\theta)=(2,1)$. 
Recalling that $x<10\theta f/e=5f$, we get $(q,x)=(9,1),(9,2), (11,1),(31,1), (61,1)$ 
but, by using \texttt{GAP}, only $(q,x)=(9,1)$ yields a design, and we obtain the assertion $(2)$.
\end{proof}

\begin{lemma}\label{MintX7}
If $M\cap X=\D_{\frac{2(q-1)}{(2,q-1)}} $, then one of the  following holds: 
\begin{enumerate}
    \item $\left(v,b,k,r\right)=\left(\frac{q(q-1)}{2},\frac{q(q+1)\lambda}{4},q-1,\frac{(q+1)\lambda}{2}\right)$ with $\lambda \mid 4f$;
    \item $G\cong \PGaL(2,2^{2})$ and $\mathcal{D}$ is the (complete) $2$-$(6,4,6)$ design as in Line 4 of Table \ref{tab:alldesigns};
    \item $G\cong \PGaL(2,2^{3})$ and one of the following holds:
    \begin{enumerate}
        \item $\mathcal{D}$ is one of the two $2$-$(28,3,2)$ designs as in Lines 19, 20 of Table \ref{tab:alldesigns}; 
        \item $\mathcal{D}$ is the $2$-$(28,3,4)$ design as in Line 21 of Table \ref{tab:alldesigns};
        \item $\mathcal{D}$ is the $2$-$(28,6,5)$ design as in Line 22 of Table \ref{tab:alldesigns};
        \item $\mathcal{D}$ is one of the five $2$-$(28,6,10)$ designs as in Lines 23--27 of Table \ref{tab:alldesigns}. 
    \end{enumerate}
    \item $G\cong \PSL(2,2^{4})$ and $\mathcal{D}$ is the Witt-Bose-Shrikhande linear space of order $2^4$;
    \item $G\cong \PSL(2,2^{4}):Z_{2}$ and $\mathcal{D}$ is the $2$-$(120,8,4)$ design as in Line 41 of Table \ref{tab:alldesigns};
     \item $G\cong \PGaL(2,2^{4})$ and one of the following holds:
     \begin{enumerate}
         \item $\mathcal{D}$ is the $2$-$(120,8,4)$ design as in Line 42 of Table \ref{tab:alldesigns};

         \item $\mathcal{D}$ is the $2$-$(120,8,8)$ design as in Line 43 of Table \ref{tab:alldesigns};
     \end{enumerate}
 \end{enumerate}
\end{lemma}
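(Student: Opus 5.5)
We are in the setting of Lemma \ref{case8}: $X_\alpha\cong \D_{2(q+1)/(2,q-1)}$, $v=q(q-1)/2$, $k=\frac{(q-2)ex}{2\theta}+1$, $r=\frac{\lambda(q+1)\theta}{ex}$, with $\lambda<20f^2$ and $x<10\theta f/e$. Now $G_B\le M$ with $M\cap X\cong \D_{2(q-1)/(2,q-1)}$. The plan is to use that $k$ is the length of a $G_B$-orbit, so $k$ divides $|G_B|$, which divides $|M|=\frac{2(q-1)}{(2,q-1)}\cdot |G:X|$, and hence divides $2(q-1)f$.

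\textbf{Step 1: reduce to $ex/\theta\in\{1,2\}$.} Write $k-1=\frac{(q-2)ex}{2\theta}$. Since $(q-2,q-1)=1$, reducing $k$ modulo $q-1$ gives $k\equiv 1-\frac{ex}{2\theta}\pmod{q-1}$, up to the denominators $2\theta$. Combined with $k\mid 2(q-1)f$ and the bound $x<10\theta f/e$, this shows $k\le 2(q-1)f$. It also forces $\frac{ex}{2\theta}(q-2)+1$ to be a divisor of $2(q-1)f$ close to a multiple of $q-1$. I would write $2(q-1)f=k\cdot c$ and compare: $c\,\frac{ex}{2\theta}(q-2)+c=2(q-1)f$. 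This gives $c\,\frac{ex}{2\theta}\equiv$ a small quantity, and hence $c\,\frac{ex}{2\theta}=2f$ together with $c(1-\tfrac{ex}{2\theta}) \cdot$(correction)$=2f$ for $q$ large relative to $f$. Generically this yields $\frac{ex}{2\theta}=1$, that is $k=q-1$. Then $r=\lambda(q+1)/2$ and $b=vr/k=q(q+1)\lambda/4$. Moreover $r\mid 2(q+1)f$ gives $\lambda\mid 4f$, which is conclusion (1).

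\textbf{Step 2: small $q$.} The comparison in Step 1 fails only when $q-2$ is bounded by a polynomial in $f$ (roughly $q<c f^3$, using $\lambda<20f^2$ and $x<10\theta f/e$). For those finitely many $q$ (e.g. $q\le 2^{10}$ or so), I will enumerate all $(q,x,\lambda)$ satisfying the following:
\begin{itemize}
\item $k\mid 2(q-1)f$,
\item $r\mid 2(q+1)f$,
\item $r\ge k$, $r^2>\lambda v$,
\item $b$ integral,
\item $b$ a multiple of the index $|G:M|$ for some admissible $G$.
\end{itemize}
I expect the survivors to be exactly $q\in\{4,8,16\}$ (plus possibly $q=32$ or $64$ cases with $k=q-1$, which already fall in (1)). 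In particular $q=4$ gives $v=6$, $q=8$ gives $v=28$ with $k\in\{3,6\}$, and $q=16$ gives $v=120$ with $k=8$, or $k=15$ for the Witt--Bose--Shrikhande space.

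\textbf{Step 3: computer check.} For each surviving parameter set and each $G$ with $X\unlhd G\le\PGaL(2,q)$, I will take $M=N_G(\D_{2(q-1)})$. I will list the subgroups $K\le M$ of index $b/|G:M|$, and test via \texttt{GAP}/\texttt{Design} whether some $K$-orbit of length $k$ gives a $2$-design on which $G$ is flag-transitive. I expect this to return exactly the designs in (2)--(6). The Witt--Bose--Shrikhande space for $q=16$ has $k=q/2=8$, which is why it appears separately.

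The main obstacle is Step 1. It requires making the divisibility argument $k\mid 2(q-1)f$ sharp enough to force $ex=2\theta$ for all large $q$, while handling the $q$ even case carefully, where $\theta=f$ and $k=\frac{(q-2)x}{2f}+1$. There $x$ ranges up to $10f^2$, so I would bound the cofactor $c=2(q-1)f/k<4f^2/x$ and then solve the resulting congruence $c\equiv 2f\cdot\frac{2f}{x}\pmod{q-1}$ up to small terms explicitly.
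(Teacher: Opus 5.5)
Your plan is essentially the paper's proof. The paper likewise uses $k\mid |G_B|\mid 2(q-1)f$ together with the congruence $2\theta k=(q-2)ex+2\theta\equiv 2\theta-ex\pmod{q-1}$ to get $k\mid 2f(ex-2\theta)$; this forces $ex=2\theta$ (case (1), with $\lambda\mid 4f$ from $r\mid 2(q+1)f$) unless $q$ is bounded polynomially in $f$, and the finitely many remaining parameter sets are then settled with \texttt{GAP}.

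Your cofactor formulation is equivalent once it is written cleanly as $(q-1)(cy-2f)=c(y-1)$ with $y=ex/(2\theta)$. Two corrections to your estimates: the search range this produces for $q=2^f$ is $2\le f\le 14$, not $q\le 2^{10}$; and the Witt--Bose--Shrikhande space has $k=q/2=8$, not $15$.
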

\begin{proof} If $M\cap X=\D_{\frac{2(q-1)}{(2,q-1)}} $, then $\frac{\left( q-2\right) ex}{2\theta }%
+1\mid 2(q-1)f$. Since $\gcd \left( \frac{\left( q-2\right) ex}{%
2\theta }+1,q-1\right) $ divides $\gcd \left( \left( q-2\right) ex+2\theta
,q-1\right) $, which in turn divides $ex-2\theta $, one has 
\begin{equation}\label{arch}
\frac{\left( q-2\right) ex}{2\theta }+1\mid 2(ex-2\theta )f\text{.} 
\end{equation} Assume that $ex\neq 2\theta $. Then 
\[
\left( q-2\right) ex+2\theta \leq 4\theta \left\vert ex-2\theta \right\vert f\text{.} 
\]%
If $q$ is odd, then $(e,\theta )=(2,1)$ and hence $x > 1$. Moreover, one has 
$$\left( q-2\right) 2+\frac{2}{x}\leq 8\frac{%
x-1 }{x}f\text{.}$$

Hence $q-2<q-2+\frac{1}{x}\leq 4%
\frac{x-1}{x}f\leq 4f$ and so $q=5$ or $9$. 
However, from \ref{arch} we have $(q-2)x+1\mid 4(x-1)f$, implying $x=q$, and so $k=(q-2)x+1=q(q-2)+1>v=q(q-1)/2$, a contradiction.

If $q=2^{f}$ is even, then $(e,\theta )=\left(1,f
\right) $ and $x \neq 2f$. Therefore,  
\begin{equation*}
\left( 2^{f}-2\right)x\leq 4\left\vert x-2f\right\vert f^{2} -2f=4\left( \left\vert x-2f\right\vert -1/(2f)\right) f^{2}
\end{equation*}%
Then $\left\vert x-2f\right\vert >1/(2f)$, as $f\geq 2 $ and $x \geq 1$, and
hence either $x<2f-1/(2f)$ or $x>2f+1/(2f)$.
In the latter case, one has 
\begin{equation*}
2^{f}-2<4\left( 1-\frac{2f+1/(2f)}{x}\right) f^{2}<4f^{2}\text{,} 
\end{equation*}
and hence $2^{f}-2-4f^{2}<0$, which implies $2\le f\le 8$. Now it easy to check that the admissible solutions of (\ref{arch}) with $2f+1/(2f)<x<\left(2^{f}+1\right)f$ lead to $(q,x)=(2^2,6),(2^3,13),(2^3,20),(2^5,103)$. These led to the admissible parameters of $\mathcal{D}$ as in Lines 1,2,4--7,19 of Table \ref{tab:q_even+/-}.

If $x<2f-1/(2f)$, then 
\begin{equation*}
2^{f}-2 \leq 4\left( 2f/x-1-1/(2xf)%
\right) f^{2}<4 \left( 2f-1\right) f^{2} \text{.} 
\end{equation*}
and hence $2\le f \le {14}$. By using (\ref{arch}) with $x<2f-1/(2f)$, 
and $\frac{\left( q-2\right) ex}{2\theta }+1\mid 2(q-1)f$ and $k\geq 3$, we obtain Lines 3, 8--18 of Table \ref{tab:q_even+/-}. From Table \ref{tab:q_even+/-}, using \texttt{GAP} we obtain assertions (2)--(6).
\begin{table}[htbp]
\tiny
    \centering
    \caption{\tiny Admissible numerical parameters for $\mathcal{D}$ when  $M\cap X=D_{\frac{2(q-1)}{(2,q-1)}} $}
    \label{tab:q_even+/-}
    \begin{tabular}{ccccccc}
    \toprule
    Line & $q$ & $v$ & $b$ & $r$ & $k$ & $\lambda$ \\
       \midrule
       1 & $2^2$ & $6$ & $15$ & $10$ & $4$ & $6$ \\
       2 & &  & $30$ & $20$ & & $12$ \\
        3 & $2^3$ & 28 & 126 & 27 & 6 & 5\\
        4 &  &  & $54$  &  &  $14$ & $13$  \\
        5 & & & $36$  &   & $21$ & $20$  \\
        6 & & & $72$  &  $54$ &  & $40$  \\
        7 &  &  & $108$ &  & $14$ & $26$ \\
       8 &  &  & $252$ & $27$ & $3$ & $2$ \\
       9 &  &  & $252$ & $54$ & $6$ & $10$ \\
       10 &         &   &   $504$ &       & $3$ & $4$ \\ 
        11 & $2^4$&  $120$    & $255$ & $17$ &   $8$  & $1$ \\
        12& &      & $510$ & $34$ &    & $2$ \\
       13 & &     & $1020$ & $68$ &    & $4$ \\
       14 &           &      & $2040$ & $136$ &     & $8$ \\
       15 & $2^5$ & 496 & 2728 & 55 & 10 & 1\\
       16 &  &  & 5456 & 110 & & 2\\
       17 & & & 8184 & 165 &  & 3\\
       18 &  &  & 
       $16368$ & $330$ &  & $6$ \\
       19 &  &  & $528$ &  & $310$ & $206$ \\
        \bottomrule
    \end{tabular}
\end{table}
\normalsize

Finally, if $ex=2\theta $, then $k=q-1$, $r=\frac{(q+1)\lambda}{2}$ and $b=\frac{q(q+1)\lambda}{4}$. Moreover, $\frac{\lambda (q+1)}{2}\mid 2(q+1)f$ and so $\lambda
\mid 4f$, and we obtain assertion (1).
\end{proof}

\begin{corollary}\label{MintX7Cor} In case (1) of Lemma \ref{MintX7}, if $q<50$ then one of the following holds:
\begin{enumerate}
    \item $\mathcal{D}$ is the $2$-$(6,3,2)$ and $G\cong \PSL(2,2^{2})$ as in Line 2 of Table \ref{tab:alldesigns};
    \item $\mathcal{D}$ is the $2$-$(6,3,4)$ and $G\cong \PGaL(2,2^{2})$ as in Line 3 of Table \ref{tab:alldesigns};
    \item $\mathcal{D}$ is the $2$-$(10,4,2)$ and $G\cong \PGL(2,5)$ as in Line 10 of Table \ref{tab:alldesigns};
    \item $\mathcal{D}$ is the $2$-$(28,7,2)$ and $G\cong \PSL(2,2^{3}),\PGaL(2,2^{3})$ as in Line 28 of Table \ref{tab:alldesigns};
    \item $\mathcal{D}$ is the $2$-$(28,7,6)$ and $G\cong \PGaL(2,2^{3})$ as in Line 29 of Table \ref{tab:alldesigns};
    \item $\mathcal{D}$ is the $2$-$(36,8,8)$ and $G\cong \PGaL(2,3^{2})$ as in Line 40 of Table \ref{tab:alldesigns}.
\end{enumerate}    
\end{corollary}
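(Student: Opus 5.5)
The plan is to enumerate every prime power $q<50$ with $q\geq 4$ and, for each, list the admissible parameters from Lemma \ref{MintX7}(1). These are $v=\frac{q(q-1)}{2}$, $k=q-1$, $r=\frac{(q+1)\lambda}{2}$ and $b=\frac{q(q+1)\lambda}{4}$, with $\lambda\mid 4f$. For odd $q$ we have $(e,\theta)=(2,1)$, and for even $q$ we have $(1,f)$.

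\textbf{Step 1: integrality and Fisher filters.} First impose that $b$ is an integer. This requires $4\mid q(q+1)\lambda$, which rules out several pairs $(q,\lambda)$. For instance, when $q\equiv 1\pmod 4$ we need $4\mid\lambda$ or $2\mid \lambda$ suitably, and when $q$ is even we need $2\mid\lambda$ unless $4\mid q$. Also impose $b\geq v$, which is equivalent to $(q+1)\lambda\geq 2(q-1)$ and so forces $\lambda\geq 2$. Finally impose $r\mid |G_\alpha|$, where $G_\alpha$ is the normaliser in $G$ of $X_\alpha\cong \D_{2(q+1)/(2,q-1)}$. This bounds $\lambda$ by the part of $|G:X|$ that actually lies in $G_\alpha$. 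Together these leave a finite list of tuples $(q,\lambda,G)$ with $q\in\{4,5,7,8,9,11,13,16,17,19,23,25,27,29,31,32,37,41,43,47,49\}$.

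\textbf{Step 2: locate the block stabiliser.} In this case $M\cap X=\D_{2(q-1)/(2,q-1)}$ and $k=q-1$. So $G_B$ lies in the normaliser of a split torus, and $G_B\cap X$ has index in $\D_{2(q-1)/(2,q-1)}$ equal to $|G:M|/b$ up to the outer part. Since $B$ is a $G_B$-orbit of size $q-1$ on the external lines of the conic (or hyperoval), we want to use the geometry. The torus $\C_{(q-1)/(2,q-1)}$ fixes a secant line, meeting the conic in two points, together with its pole. Its orbits on the points of $\PG(2,q)$ off the conic give candidate blocks, and each such orbit has size $(q-1)$ or $(q-1)/2$. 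This shows that $B$ is the set of external lines through a point $P$ of the secant, or something similar. Hence for each admissible $q$ the candidate base blocks are orbits of explicit subgroups of $N_G(\D_{2(q-1)/(2,q-1)})$ of size $k=q-1$. This gives only a few choices per $q$.

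\textbf{Step 3: computer check.} For each surviving $(q,\lambda,G)$, use \texttt{GAP} with the package \texttt{Design} to do the following. Run over the conjugacy classes of subgroups $H\leq M$ of index $|M|\,|G:M|/b$ in $M$ (equivalently $|G:H|=b$). For each such $H$, compute its orbits of length $q-1$ on the coset space $G/G_\alpha$. Then test whether the orbit's $G$-image is a $2$-design with the correct $\lambda$ and whether $G$ is flag-transitive. The survivors should be exactly the six listed cases, with isomorphisms identified via \texttt{Design}. The case $q=4$ is viewed via $\PSL(2,4)\cong\PSL(2,5)$ for the $v=10$ example, and $q=9$ gives the $(36,8,8)$ design.

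\textbf{Main obstacle.} The hardest part will be the larger $q$ (e.g. $q=47,49,43$), where $v$ exceeds $1000$ and the subgroup enumeration is costly. There I would first use the strong arithmetic constraint $k\mid vr$, which reads $(q-1)\mid \frac{q(q-1)}{2}\cdot\frac{(q+1)\lambda}{2}$ and is automatic. Instead, I would use the orbit-counting condition that each pair of points lies in exactly $\lambda$ blocks, computed from the $G_\alpha$-subdegrees in Table \ref{tab:subdegrees}. This will kill most large $q$ by hand before resorting to \texttt{GAP}.
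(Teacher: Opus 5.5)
Your main line is the paper's proof. You reduce to finitely many $(q,\lambda,G)$ with $q<50$ and let \texttt{GAP}/\texttt{Design} test the orbits of length $q-1$ of candidate block stabilisers; the paper's proof is just the output of such an exhaustive search.

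\textbf{Missing primitivity filter.} You never impose point-primitivity, and the paper has to. Take $G=\PSigL(2,3^{2})\cong S_6$ and $G_\alpha=N_G(\D_{10})\cong Z_5:Z_4$, of index $36$. The $2$-$(36,8,4)$ design of Theorem \ref{main}(2)(b) has exactly the parameters of case (1): $\lambda=4\mid 4f$, $r=20=|G_\alpha|$, $b=90$. So it passes every test in your Step 1. If Step 3 runs over all $H$ with $|G:H|=b$, as your parenthetical says, it survives and your list gets a seventh entry. The paper discards it only because $Z_5:Z_4<S_5<S_6$ is not maximal, i.e.\ $G$ is point-imprimitive, contrary to the standing hypothesis of Section \ref{FT-PP-Section}. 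You have two ways to fix this. You can test maximality of $N_G(X_\alpha)$ for each $G$. Or you can genuinely restrict to $H\leq M$ with $M$ maximal in $G$ and $M\cap X\cong \D_{2(q-1)/(2,q-1)}$; no such $M$ exists in $S_6$. With that restriction you may recover fewer than the six listed cases (e.g.\ $S_5$ has no maximal $M$ with $M\cap A_5\cong Z_2^2$). That is harmless, since the statement only lists possibilities. Also, $|M:H|=b/|G:M|$; the quantity $|M|\,|G:M|/b$ you wrote is $|H|$.

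\textbf{False side claims.} Your other additions are not load-bearing, but they are false as stated and should be dropped.
\begin{itemize}
\item In Step 2, $B$ cannot be the set of external lines through a point. For $q$ odd, a point off $\mathcal{C}$ lies on $(q-1)/2$ or $(q+1)/2$ external lines; for $q$ even, a point off $\mathcal{H}$ lies on $q/2$. Never $q-1$. All Step 3 needs is that $B$ is an orbit of $G_B\leq M$.
\item The counting shortcut in your last paragraph cannot eliminate $q=43,47,49$. For $\alpha\in B$ and a nontrivial $G_\alpha$-orbit $O$, the identity $r\,|B\cap O|=\lambda|O|$ reads $|B\cap O|=2|O|/(q+1)$. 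These values sum automatically to $2(v-1)/(q+1)=q-2=k-1$.
\item For the subdegrees in Table \ref{tab:subdegrees}, integrality of $2|O|/(q+1)$ holds for every $q$. The only exception is $q\equiv 3\pmod 4$, where it forces the two $X_\alpha$-suborbits of length $(q+1)/4$ to fuse in $G$.
\end{itemize}
So, as in the paper, the whole range $q<50$ has to be settled by the computer search.
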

\begin{proof}
The result is simply the output of an exhaustive search of the $2$-designs with parameters case (1) of Lemma \ref{MintX7} and $\PSL(2,q)\leq G \leq \PGaL(2,q)$ for $q<50$. Note that, the $2$-$(36,8,4)$ design with $G\cong \PSigL(2,3^{2})$ is ruled out at this stage as we are assuming that $G$ acts point-primitively on $\mathcal{D}$.       
\end{proof}

\begin{lemma}\label{MinTX8} If $M\cap X=D_{\frac{2(q+1)}{(2,q-1)}}$, then one of the following holds:
\begin{enumerate}
    \item $\left(v,b,k,r\right)=\left(2^{f-1}(2^{f}-1),2^{f-1}3^{2}(2^{f}-1)\frac{\lambda }{2},\frac{2^{f}+1}{3},3(2^{f}+1)\frac{\lambda }{2}\right)$ with $f$ odd and $\lambda$ even. Moreover, $X_{\alpha}\cong D_{2(2^f+1)}$ and $X_{B}\leq D_{2(2^f+1)}$;
    \item $G\cong \PGaL(2,3^2)$ and $\mathcal{D}$
        is the $2$-$(36,8,8)$ design as in Line 40 of Table \ref{tab:alldesigns};
    \item $G\cong \PGL(2,5)$ and $\mathcal{D}$ is the $2$-$(10,4,2)$ design as in Line 10 of Table \ref{tab:alldesigns};
    \item $G\cong \PGaL(2,2^2)$ and $\mathcal{D}$ is the is the (complete) $2$-$(6,4,6)$ design as in Line 5 of Table \ref{tab:alldesigns};
    \item $G\cong \PGaL(2,2^3)$ and one of the following holds:
    \begin{enumerate}
        \item $\mathcal{D}$ is the $2$-$(28,6,5)$ design as in Line 22 of Table \ref{tab:alldesigns};
        \item $\mathcal{D}$ is one of the five $2$-$(28,6,10)$ designs as in Lines 23--27 of Table \ref{tab:alldesigns};
        \item $\mathcal{D}$ is one of the two $2$-$(28,9,8)$ designs as in Lines 30, 31 of Table \ref{tab:alldesigns};
        \item $\mathcal{D}$ is the $2$-$(28,9,16)$ design as in Line 32 of Table \ref{tab:alldesigns};
        \item $\mathcal{D}$ is the $2$-$(28,18,34)$ design as in Line 35 of Table \ref{tab:alldesigns};
    \end{enumerate}
    \item $G\cong \PSL(2,2^{4})$ and $\mathcal{D}$ is the Witt-Bose-Shrikhande $2$-$(120,8,1)$ design of order $2^4$;
    \item $G\cong \PSL(2,2^4):Z_{2}$ and $\mathcal{D}$ is the $2$-$(120,8,4)$ design as in Line 41 of Table \ref{tab:alldesigns};
    \item $G\cong \PGaL(2,2^4)$ and one of the following holds:
    \begin{enumerate}
        \item $\mathcal{D}$ is the $2$-$(120,8,4)$ design as in Line 42 of Table \ref{tab:alldesigns};

        \item $\mathcal{D}$ is the $2$-$(120,8,8)$ design as in Line 43 of Table \ref{tab:alldesigns};
    \end{enumerate}
    \item $G\cong \PGaL(2,2^5)$ and $\mathcal{D}$ is the $2$-$(496,22,14)$ design as in Line 45 of Table \ref{tab:alldesigns};
    
\end{enumerate}
\end{lemma}

\begin{proof}
If $M\cap X=D_{\frac{2(q+1)}{(2,q-1)}}$, then $\frac{\left( q-2\right) ex}{2\theta }%
+1\mid 2(q+1)f$. Since 
\[
\gcd \left( \left( q-2\right) ex+2\theta ,q+1\right) =\gcd \left(
3ex-2\theta ,q+1\right) \text{,}
\]%
it follows that 
\begin{equation}\label{friday}
\frac{\left( q-2\right) ex}{2\theta }+1\mid 2(3ex-2\theta )f\text{.} 
\end{equation}
If $3ex\neq 2\theta $, then%
\[
\frac{\left( q-2\right) ex}{2\theta }+1\leq 2\left\vert 3ex-2\theta
\right\vert f\text{.}
\]%
If $q$ is odd, then $(e,\theta)=(2,1)$, then $\left( q-2\right) x+1\leq 2(6x-2)f$
and so $q-2<2\left(6-2/x\right) f<12f$. Then $q=3^{2},3^{3},5,5^{2},7,11,13$. 
Actually, only $(q,x)=(3^2,1),(5,1),(7,3)$ are admissible by (\ref{friday}), $x<\frac{10\theta f}{e}=5f$ and Lemma \ref{lem:basic-params}. These values lead to $(v,k)=(36,8),(10,4), (21,16)$, respectively, and \texttt{GAP}-aided computations show that we obtain assertions (2)--(3).

If $q=2^{f}$ is even, then $(e,\theta )=\left( 1,f\right) $. Then 
\begin{equation*}
\left( q-2\right) x \leq 4\left\vert 3x-2f\right\vert
f^{2}-2f
\end{equation*}%
Then $\left\vert 3x-2f\right\vert >1/2f$ since $f\ge 2$, and
hence $x<\frac{2}{3}f-\frac{1}{6f}$ or $x>\frac{2}{3}f+\frac{1}{6f}$. In the latter case,%
\[
2^{f}-2 < \left( 1-\frac{2f}{3x}\right) \cdot 12f^{2}<12f^{2}
\]
and so $2^{f}-2-12f^{2}<0$, which implies $2 \leq f \leq 10$. Now, it easy to check that the admissible solutions of (\ref{friday}) with $\frac{2}{3}f+\frac{1}{6f}<x<\left(2^{f}+1\right)f$, $2 \leq f\leq 10$ and Lemma \ref{lem:basic-params}, lead to the admissible parameters listed in Lines 1--12, 17--27 of Table \ref{tab:parameters.3.9.5}, and using \texttt{GAP} we obtain assertions (4)--(9).

\begin{table}[H]
\tiny
    \centering 
    \caption{\tiny Admissible numerical parameters for $\mathcal{D}$ when  $M\cap X=D_{\frac{2(q+1)}{(2,q-1)}} $, $q$ even
    }%
     \begin{tabular}{ccccccc}
     \toprule
    Line & $q$ & $v$ & $b$ & $r$ & $k$ & $\lambda$ \\
    \midrule
    1 & $2^2$ & $6$ & $15$ & $10$ & $4$ & $6$ \\
       2 & &  & $30$ & $20$ & & $12$ \\
        3 & $2^3$ & 28 & 126 & 27 & 6 & 5\\
        4 &  &  & $252$ & $54$ & & $10$ \\
       {5} & &  & 84 & 27 & 9 & 8 \\
       {6} &  &  & 168 & 54 &  & 16 \\
        {7} &  &  & 42 & 27 & 18 & 17 \\
       {8} & &  & 84 & 54 & & 34 \\
       9 & $2^4$&  $120$    & $255$ & $17$ &   $8$  & $1$ \\
        10& &      & $510$ & $34$ &    & $2$ \\
       11 & &     & $1020$ & $68$ &    & $4$ \\
       12 &           &      & $2040$ & $136$ &     & $8$ \\
       13 & $2^5$ & 496 & 2728 & 55 & 10 & 1\\
       14 &  &  & 5456 & 110 & & 2\\
       15 & & & 8184 & 165 &  & 3\\
       16 &  &  & 
       $16368$ & $330$ &  & $6$ \\
       {17} &  &  & 3720  &  165 &  22 & 7  \\
        {18} &  &  & 7440  &  330 &   & 14  \\
        {19} &  &  &  496 & 55  & 55  & 6  \\
        {20} &  &  & 992  &  110 &   & 12  \\
        {21} &  &  & 1488  &  165 &   & 18  \\
        {22} &  &  & 2976  &  330 &   & 36  \\
        {23} & $2^6$ & 2016 & 2016  &  156 & 156  & 12  \\
        {24} &  &  & 2520  &  195 &   & 15  \\
        {25} &  &  & 3360  &  260 &   & 20  \\
        {26} &  &  & 5040  &  390 &   & 30  \\
        {27} &  &  & 10080  & 780 &  & 60  \\
        \bottomrule
    \end{tabular}\label{tab:parameters.3.9.5}
\end{table}

If $1 \leq x<\frac{2}{3}f-\frac{1}{6f}$, then%
\begin{equation*}
2^{f}-2 \leq \left(2^{f}-2\right) x \leq 4(2f-3x)
f^{2}-2f \leq 8f^{3}-12f^{2}-2f
\end{equation*}
Then $2\leq f \leq 14$, and we obtain Lines 13--16 of Table \ref{tab:parameters.3.9.5}, and using \texttt{GAP} we see that these sets of parameters do not give any design.

If $3ex=2\theta $ then $k=\frac{q+1}{%
3}$ with $q\equiv 2 \pmod{3}$, and so $r=\frac{3(q+1)\lambda }{2}$ with $r\mid 2(q+1)f$ and
hence $\lambda \mid \frac{4f}{3}$. If $q$ is odd, then $r\mid \lambda \frac{q+1}{2}$ by Table \ref{tab:subdegrees}, whereas $r=\frac{3(q+1)\lambda }{2}$, a contradiction. Therefore, $q$ is even then $\lambda$ is even since $r=\frac{3(q+1)\lambda }{2}$, and hence $f$ is odd since $q\equiv 2 \pmod{3}$, that is to say, the assertion (1).

\end{proof}

\begin{lemma}\label{MintX123}
If either $M\cap X=\PSL(2,q^{1/t})$  with $q$ odd and $t$ odd prime, or $M\cap X=\PGL(2,q^{1/t})$ with $t$ prime and $q^{1/t} \neq 2$ for $q$ even or with $t=2$ for $q$ odd, then $t=2$ and one of the following holds:
\begin{enumerate}
 \item $G\cong \PSL(2,2^{4})$ and 
 $\mathcal{D}$ is the Witt-Bose-Shrikhande $2$-$(120,8,1)$ design;
 
 \item $G\cong \PSL(2,2^4):Z_2$ and $\mathcal{D}$ is the $2$-$(120,8,4)$ design as in Line 41 of Table \ref{tab:alldesigns};
 
 \item $G\cong \PGaL(2,2^4)$ and one of the following holds:
 \begin{enumerate}
     \item $\mathcal{D}$ is the $2$-$(120,8,4)$ design as in Line 42 of Table \ref{tab:alldesigns};

     \item $\mathcal{D}$ is the $2$-$(120,8,8)$ design as in Line 43 of Table \ref{tab:alldesigns};
 \end{enumerate}
 
    \item $G\cong \PGaL(2,3^{2})$ and $\mathcal{D}$ is a $2$-$(36,8,8)$ design as in Line 40 of Table \ref{tab:alldesigns};
      \item $G\cong \PGaL(2,11^{2})$ and $(v,b,r,k,\lambda)=(7260,29524,488,120,8)$;
     \item $G\cong \PGaL(2,2^{8})$ and $(v,b,r,k,\lambda)=(32640,1048560,4112,128,16)$.
\end{enumerate}
\end{lemma}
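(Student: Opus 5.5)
We work in the setting of Lemma \ref{case8}: $X_\alpha\cong \D_{2(q+1)/(2,q-1)}$, $v=q(q-1)/2$, $k=\frac{(q-2)ex}{2\theta}+1$, $r=\frac{\lambda(q+1)\theta}{ex}$, with $x<10\theta f/e$ and $\lambda<20f^2$. Here $G_B\leq M$ and $M\cap X$ is a subfield subgroup $\PSL(2,q_0)$ or $\PGL(2,q_0)$, where $q=q_0^t$. By flag-transitivity $G_B$ is transitive on $B$. Since $X\unlhd G$ and $G/X$ embeds in $\Out(X)$, the orbits of $X_B$ on $B$ all have the same length, and this length divides $|M\cap X|$. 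It follows that $k$ divides $|M\cap X|\cdot|\Out(X)|$. The basic divisibility we will use is therefore
\[
\tfrac{(q-2)ex}{2\theta}+1 \;\Big|\; (2,q-1)f\cdot q_0(q_0^2-1).
\]

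\emph{Step 1: reduce to $t=2$.} The left-hand side is at least about $q/(2f)=q_0^t/(2f)$ when $q$ is even, and at least $q_0^t-1$ when $q$ is odd. The right-hand side is at most $2fq_0^3$. For $t\geq 5$ this is impossible, since $f^2<q_0^{t-3}$ except for tiny cases, which can be checked directly. For $t=3$ we sharpen the estimate using gcd's, as in Lemmas \ref{MintX7} and \ref{MinTX8}. Write $K=(q-2)ex+2\theta$. Because $q=q_0^3\equiv 1\pmod{q_0-1}$ and $q\equiv -1\pmod{q_0+1}$, we get $\gcd(K,q_0-1)\mid ex-2\theta$ and $\gcd(K,q_0+1)\mid 3ex-2\theta$. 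Also, $\gcd(K,q_0)$ divides $2\theta-2ex$ up to powers of $2$. Consequently $k$ divides a product of three quantities, each $O(f)$ since $x=O(f)$. This gives $q_0^3\ll f^4$, which forces $q$ to be small, and a short finite check will kill $t=3$ (and also any residual small $t\geq5$).

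\emph{Step 2: the case $t=2$, $q=q_0^2$.} Now $q\equiv 1\pmod{q_0\pm1}$. Hence $\gcd(K,q_0-1)$ and $\gcd(K,q_0+1)$ both divide $ex-2\theta$, and $\gcd(K,q_0)\mid 2\theta$ up to the factor $2$. If $ex\neq 2\theta$, then $k\le (2,q-1)f\cdot 2\theta\cdot(ex-2\theta)^2$, and with $x<10\theta f/e$ this bounds $q$ polynomially in $f$. That leaves finitely many $(q,x)$, which we enumerate together with the constraints $r\mid 2(q+1)f$, $r^2>\lambda v$, $k\le r$ and $b=vr/k$ dividing $|G|$. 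If $ex=2\theta$, then $k=q-1=(q_0-1)(q_0+1)$. For $q$ odd this divides $|\PGL(2,q_0)|$, and for $q$ even it divides $|\PSL(2,q_0)|$, so $k$ gives no bound here. This is the main obstacle: an apparent infinite family.

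For that family I would argue structurally. $G_B$ lies in $N_G(\PGL(2,q_0))$ and acts transitively on the $q-1$ points of $B$. Each such point is a dihedral subgroup $\D_{2(q+1)/(2,q-1)}$ of $X$, equivalently an external line to the conic or hyperoval. Its stabiliser in $M\cap X$ is $(M\cap X)\cap X_\alpha$, which is a dihedral group of order $2(q_0+1)$ or $2(q_0-1)$ up to the factor $(2,q-1)$. The index of this stabiliser must equal $q-1$ divided by a factor coming from $\Out(X)$. Comparing $q_0(q_0\mp1)$ with $q_0^2-1$ shows that $q_0\mid$ a divisor of $|\Out(X)|$. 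This forces $q_0\le 2f$ up to small factors, so again only finitely many $q$ survive. Via $r=(q+1)\lambda/2$, $\lambda\mid 4f$ and $r^2>\lambda v$, these are $q\in\{3^2,2^4,11^2,2^8,\dots\}$.

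\emph{Step 3: finish.} For the finite lists, we determine the candidates by arithmetic and then check small $q$ with \texttt{GAP}. For $q\le 2^4$ or $3^2$ this produces the $2$-$(120,8,\lambda)$ designs for $\lambda=1,4,8$ and the $2$-$(36,8,8)$ design of Line 40. For $q=11^2$ and $q=2^8$, only the numerical conclusions (5) and (6) will be recorded; we do not expect to settle existence there at this stage.
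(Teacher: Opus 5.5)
Your overall framework matches the paper's: the divisibility $k\mid |M\cap X|\cdot|\Out(X)|$, reduction of $K$ modulo $q_0$ and $q_0\pm1$, then a finite check. However, the case $t=2$ has two genuine gaps, and between them they carry every conclusion of the lemma.

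\emph{First gap: the case $ex=\theta$ is missed.} Your claim $\gcd(K,q_0)\mid 2\theta$ is false. Since $q_0\mid q$, we have $K\equiv 2(\theta-ex)\pmod{q_0}$. So $\gcd(K,q_0)$ divides $2|ex-\theta|$ only when $ex\neq\theta$, and it equals $q_0$ when $ex=\theta$. You therefore have to isolate the degenerate case $ex=\theta$, namely $q$ even, $x=f$, $k=q/2$, $r=\lambda(q+1)$ and $\lambda\mid 2f$.
\begin{itemize}
\item In this case $q/2\mid q_0(q-1)f$ gives $q_0\mid 2f$, so $q\in\{2^4,2^8\}$.
\item For $q=2^4$ one gets the $2$-$(120,8,\lambda)$ designs of (1)--(3).
\item For $q=2^8$ an extra step is needed to reach (6). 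First, $2^7=k$ divides $|G_B|$, which divides $|\PGL(2,2^4)|\cdot 8$; this forces $G=\PGaL(2,2^8)$. Then $3\cdot5\cdot17\mid b$ forces $|G_B|=2^7$, hence $\lambda=16$.
\end{itemize}
In your sketch, $2^4$ and $2^8$ are attributed to the $k=q-1$ family. There they would give $k=15$ and $k=255$, which are not the designs in the statement. With the corrected factor $2|ex-\theta|$, your bound for $ex\neq\theta,2\theta$ is still polynomial in $f$, so that part of your argument survives.

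\emph{Second gap: the structural argument for $ex=2\theta$ fails.}
\begin{itemize}
\item The stabiliser computation is wrong. Since $\gcd(q_0^2+1,q_0\pm1)\mid 2$ and $(q+1)/\gcd(2,q-1)$ is odd and prime to $p$, the cyclic part of $X_\alpha$ meets $\PGL(2,q_0)$ trivially. Hence $(M\cap X)\cap X_\alpha$ has order at most $2$, not $2(q_0\pm1)$.
\item More importantly, you tacitly take $X_B=M\cap X$, which cannot happen. The index $|X_B:X_{\alpha,B}|$ divides $k=q-1$, and $|X_{\alpha,B}|_p\mid\gcd(2,p)$. So $|X_B|_p\mid \gcd(2,p)$, and $X_B$ is a proper subgroup of $\PGL(2,q_0)$.
\item The missing idea is to compute $|G_B|=(q-1)\cdot\frac{4\rho}{\gcd(2,q-1)\lambda}$ with $\rho=|G:X|$. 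This gives $\frac{q-1}{\gcd(2f,q-1)}\mid |X_B|$. Then run through the subgroups of $\PGL(2,q_0)$ (as in \cite[Theorem 2]{COT}). This leaves $X_B\cong A_4$, $S_4$, $A_5$ or a dihedral group, with $q\in\{3^2,5^2,7^2,11^2,3^4\}$.
\item Your bound $q_0\le 2f$ would actually exclude the genuine case (5): there $q=11^2$, $X_B\cong A_5$ and $k=120$, but $q_0=11>2f=4$. So your list of survivors is not produced by your argument.
\item Moreover, $r^2>\lambda v$ holds automatically for $r=(q+1)\lambda/2$ once $\lambda\geq 2$, so it cuts nothing.
\end{itemize}

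A smaller omission: for $t=3$ your product bound is vacuous when $ex\in\{2\theta/3,\theta,2\theta\}$. These cases, giving $k=(q+1)/3$, $q/2$ and $q-1$, need a separate, if easy, treatment.
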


\begin{proof}
Assume that $M\cap X=\PSL(2,q^{1/t})$  with $q$ odd and $t$ odd prime, or $M\cap X=\PGL(2,q^{1/t})$ with $t$ prime and $q^{1/t} \neq 2$ for $q$ even or with $t=2$ for $q$ odd. 
In these cases, there are $1$, $1$ or $2$ conjugacy $\PSL(2,q)$-classes, respectively, and in the latter case the $2$ classes are fused in $\PGL(2,q)$ by \cite[§260]{Di}. In particular, the normalizer in $\PGL(2,q)$ of $M\cap X$ is $\PGL(2,q^{1/t})$. Then $k \mid \left\vert G_B \right\vert$ and $\left\vert G_B \right\vert \mid \left\vert \PGL(2,q^{1/t}) \right\vert \cdot f$, and hence 
\begin{equation}\label{redPSL}
\frac{\left( q-2\right) ex}{2\theta }+1\mid q^{1/t}(q^{2/t}-1)\cdot f\text{.} 
\end{equation}
\noindent \textbf{(i)} Assume that $t\geq 5$. If $q$ is odd, then $(e,\theta )=(2,1)$ and $q-1 \leq q^{1/t}(q^{2/t}-1)\cdot f\text{,}$ a contradiction. Then $q=2^{f}$ with $f>t$ and $(e,\theta )=(1,f)$,
hence
$$(2^{f-1}-1)x \leq 2^{f/5}(2^{2f/5}-1)\cdot f^{2}-f\text{,}$$
and so $x \leq 11$, with either $t=5$ and $f = 10,15,20,25$, or $t=7$ and $f= 14$, as $t|f$. However, none of the cases satisfy (\ref{redPSL}). 


\noindent \textbf{(ii)} Assume that $t=3$. Then $\left( q-2\right) ex+2\theta\leq
q^{1/3}(q^{2/3}-1)\cdot 2 \theta f$ and so we obtain 
\begin{equation}\label{ime}
xe<2\theta f\text{.}    
\end{equation}

\begin{eqnarray*}
\gcd \left( \left( q-2\right) ex+2\theta ,q^{1/3}\right)  &=&\gcd \left(
2ex-2\theta ,q^{1/3}\right) \mid \gcd(2,q)\cdot |ex-\theta | \\
\gcd \left( \left( q-2\right) ex+2\theta ,q^{1/3}-1\right)  &=&\gcd \left(
ex-2\theta ,q^{1/3}-1\right) \mid |ex-2\theta|  \\
\gcd \left( \left( q-2\right) ex+2\theta ,q^{1/3}+1\right)  &=&\gcd \left(
3ex-2\theta ,q^{1/3}+1\right) \mid |3ex-2\theta| 
\end{eqnarray*}%
and from (\ref{redPSL}), we obtain
\begin{equation}\label{stum}
\frac{\left( q-2\right) ex}{2\theta }+1\mid \gcd(2,q)\cdot |ex-\theta |\cdot \left|
ex-2\theta \right| \cdot \left| 3ex-2\theta \right| \cdot f
\end{equation}
Assume that $ex=\theta $. Then $k=\frac{\left( q-2\right) ex}{2\theta }%
+1=\frac{q}{2}$, with $q$ even, divides $q^{1/3}(q^{2/3}-1)f$ by (\ref{redPSL}), and hence $%
q^{2/3}\mid 2f$, a contradiction. Similarly, if $%
ex=2\theta $ then $k=q-1$ divides $q^{1/3}(q^{2/3}-1)f$,
and hence $q^{2/3}+q^{1/3}+1\mid f$, again a contradiction. Finally, if $3ex=2\theta $ then $k=\frac{%
q+1}{3}$ divides $q^{1/3}(q^{2/3}-1)f$ and
hence $q^{2/3}-q^{1/3}+1\mid 3f$, a contradiction. Therefore, $ex\neq 2\theta/3,\theta, 2\theta $.

If $q$ is odd, then $(e,\theta )=(2,1)$, and $2\leq x<f$ by $ex\neq 2\theta $ and (\ref{ime}). 
Furthermore, from (\ref{stum}) we have
\begin{equation}\label{BaptLeonD}
\left( q-2\right) x+1 \mid (2x-1)\cdot \left( 2x-2\right) \cdot \left(
6x-2\right) \cdot f\text{,} 
\end{equation}
implying
\begin{equation*}
q-2 < (2x-1) 
\cdot \left( 1-\frac{1}{x}\right) 
\cdot \left( 3x-1\right) \cdot 4f 
<(2f-1) \cdot \left( 3f-1\right) \cdot 4f. 
\end{equation*}
Thus, we obtain $q=3^{3},3^{6},5^{3}$, or $7^{3}$. 
If $f=3$ then $x=2$;
if $f=6$ then $2\le x\le 5$.
However, none of these cases fulfill (\ref{redPSL}), a contradiction.

If $q=2^{f}$, then $(e,\theta )=(1,f)$ and 
$x\neq 2f/3,f,2f$ by
$ex\neq 2\theta/3,\theta, 2\theta$,
and $x<2f^{2}$ by (\ref{ime}). Then from (\ref{stum}) we obtain 
\begin{equation*}
\frac{\left( 2^{f-1}-1\right) x}{f }+1\mid 2\cdot \left\vert x-f \right\vert\cdot \left\vert
x-2f \right\vert \cdot \left\vert 3x-2f\right\vert \cdot f.
\end{equation*}
Hence, we have 
\begin{equation}\label{q+}
2^{f-1}-1 < \left\vert x-f\right\vert \cdot \left\vert x-2f\right\vert \cdot \left\vert 1-\frac{2f}{3x}\right\vert \cdot  6f^{2}.
\end{equation}%

Assume that $1\leq x<\frac{2}{3}f$. Then
\begin{equation*}
2^{f-1}-1 
<(f-x)\cdot(2f-x)\cdot\left(\frac{2f}{3x}-1\right)\cdot6f^2
\leq \left(f -1\right) \cdot \left(2f-1\right) \cdot\left(\frac{2f}{3}-1\right) \cdot  6f^{2},
\end{equation*}
and hence $f=6,9,12,15,18,21,24,27$ since $f \equiv 0 \pmod{3}$. However, no pair $(q,x)$ fulfills (\ref{redPSL}).

Assume that $\frac{2}{3}f<x<2f^{2}$. Then (\ref{q+}) becomes
\begin{equation*}
2^{f-1}-1 
<2f^2\cdot2f^2\cdot6f^2,
\end{equation*}
hence, $6\le f \le 36$ with $f \equiv 0 \pmod{3}$. However, no pair $(q,x)$ fulfills (\ref{redPSL}). 

\noindent \textbf{(iii)} Assume that $t=2$. Then
\begin{equation}\label{repubblica2}
\frac{\left( q-2\right) ex}{2\theta }+1\mid q^{1/2}(q-1)\cdot f
\end{equation}
\begin{eqnarray*}
\gcd \left( \left( q-2\right) ex+2\theta ,q^{1/2}\right)  &=&\gcd \left(
2ex-2\theta ,q^{1/2}\right) \mid 2|ex-\theta | \\
\gcd \left( \left( q-2\right) ex+2\theta ,q-1\right)  &=&\gcd \left(
ex-2\theta ,q-1\right) \mid |ex-2\theta| \text{.}
\end{eqnarray*}%
therefore, 
\[
\frac{\left( q-2\right) ex}{2\theta }+1\mid \left\vert ex-\theta \right\vert \cdot \left\vert ex-2\theta \right\vert \cdot
2f
\]%

Assume that $ex\neq \theta, 2\theta $. If $q$ is odd, then $(e,\theta )=(2,1)$ and $1<x<5f$ as $x<\frac{10\theta f}{e}$ by Lemma \ref{case8}.
Moreover, we have  
\[
\left( q-2\right)x+1\mid \left( 2x-1 \right) \cdot \left( x-1 \right) \cdot4f,
\]
and hence $q-2<(2x-1)\cdot(1-\frac{1}{x})\cdot4f<(10f-1)4f$, which implies 
$q\in \{3^2,5^2,7^2,11^2,3^4,5^4,3^6\}$. 
However, none of them fulfill (\ref{repubblica2}) given $1<x<5f$.

If $q=2^{f}$, then $(e,\theta )=(1,f)$ with $f>2$,
 $1\le x< 10f^2$ by Lemma \ref{case8}, and $x \neq f,2f$. 
Moreover, we have 
\[
\frac{\left( 2^{f-1}-1\right)x}{f}+1
\mid 
|x-f|\cdot |x-2f| \cdot2f,
\]
and so 
$(2^{f-1}-1)x<|x-f|\cdot |x-2f| \cdot2f^2$.
If $1\le x <f$, then 
$2^{f-1}-1<2f^{2}(f-1)(2f-1)$, implying 
$4\le f\le 20$ with $f$ even;
if $f<x<2f$, then 
$2^{f-1}-1<2f^{2}(1-\frac{f}{x})(2f-x)<2f^3$,  implying 
$4\le f\le 12$ with $f$ even;
if $2f<x<10f^2$, then 
$2^{f-1}-1<2f^{2}(x-f)(1-\frac{2f}{x})<2f^2\cdot 10f^2$,  implying 
$4\le f\le 22$ with $f$ even.
Therefore, we have $4\le f\le 22$ with $f$ even.
However, for each $f$, there is no $x$ satisfying  
(\ref{repubblica2}) with $1\le x<10f^2$, bearing in mind also that $x<(q+1)\theta/e=(q+1)f$ as $r>\lambda$.


If $ex=\theta $, then $k=\frac{q}{2}$, 
and hence $q$ even. By (\ref{repubblica2}),
we have $\frac{q}{2}$ divides $q^{1/2}(q-1)f$, 
and so $q^{1/2}\mid 2f$, that is, $2^{f/2}\mid 2f$,
implying $q=2^4$ or $2^8$.
Since $ex=\theta$ and $(e,\theta)=(1,f)$, we have 
$x=f$. Moreover, from Lemma \ref{case8} we obtain $v=q(q-1)/2$, $r=\lambda(q+1)f/x=\lambda(q+1)$, 
and so $\lambda \mid 2f$ as $r\mid 2(q+1)f$. 
If $q=2^4$, we have $v=120$, $k=8$ and $r=17\lambda$. 
As $\lambda \mid 8$, we deduce the possible parameter $(v,b,r,k,\lambda)$:
\[
(120,255,17,8,1), (120,510,34,8,2), 
(120,1020,68,8,4), (120,2040,136,8,8).
\]
The first one is the Witt-Bose-Shrikhande $2$-$(120,8,1)$ design with $G\cong \PSL(2,2^{4})$.
For the remaining, by Magma, up to isomorphism, 
there exist two designs with 
$(v,b,r,k,\lambda)$ $=(120,1020,68,8,4)$, 
$G\cong \PGaL(2,2^{4})$ or $\PSL(2,2^4):Z_2$, respectively,
and a unique design with 
$(v,b,r,k,\lambda)=(120,2040,136,8,8)$, 
$G\cong \PGaL(2,2^{4})$. Hence, we obtain assertions (1)--(3).

Thus $q=2^{8}$, $q^{1/2}=2f$ and $\left\vert G_{B}\right\vert \mid q^{1/2}(q-1)f$, and hence 
$G\cong \PGaL(2,2^{8})$ and $G_{B}\leq \PGL(2,2^{4}): Z_{2^{3}}$. Moreover, $(v,k)=(32640,2^{7})$ and $r=257\lambda$ with $\lambda \mid 2^{4}$. Then $3\cdot 5 \cdot 17 \mid b$ forcing $\left\vert G_{B}\right\vert =2^{7}$. Thus $b=1048560$, $r=4112$ and so $\lambda=16$, which is the assertion (6). 

If $ex=2\theta $, then $k=q-1$ and $r=\frac{\lambda }{2}(q+1)$. Let  $\rho=\left\vert G:X \right\vert$, where $\rho \mid \gcd(2,q-1)f$. Since $X$ acts transitively on $\mathcal{D}$, it follows that $\left\vert G_{\alpha}:X_{\alpha}\right\vert=\rho$. Then $\left\vert G_{\alpha}\right\vert=\frac{2(q+1)}{\gcd(2,q-1)}\rho$ and $\left\vert G_{\alpha,B}\right\vert=\frac{4\rho}{\gcd(2,q-1)\lambda}$, and hence $\left\vert G_{B}\right\vert=(q-1)\frac{4\rho}{\gcd(2,q-1)\lambda}$. Then 
\begin{equation}\label{icsB}
\frac{q-1}{\gcd(2f,q-1)}\mid \left \vert X_{B}\right\vert
\end{equation}
with $X_{B}\leq \PGL(2,q^{1/2})$. Since $X_{\alpha}\cong D_{\frac{2(q+1)}{\gcd(2,q-1)}}$, it follows that $\left\vert X_{B}\right\vert_{p}\mid \gcd(2,p)$, and since $q^{1/2}\neq 2$, it follows that $X_{B}\neq \PGL(2,q^{1/2})$, and filtering the list of the subgroups of \cite[Theorem 2]{COT} with respect to (\ref{icsB}), then $X_{B}$, $q$, $v$ and $k$ are as in Table \ref{tab:XBinPGL}.
\begin{table}[htbp]
\tiny
    \centering
    \caption{Admissible $X_B\leq PGL(2,q^{1/2})$ for $k=q-1$}\label{tab:XBinPGL}
    \begin{tabular}{clccc}
    Line & $X_{B}$ & $q$ & $v$ & $k$ \\
       \toprule
       1 & $A_{4}$  & $3^{2}$    & $36$ &  $8$    \\
       2  &         & $5^{2}$     & $300$     & $24$ \\
       3  &  $S_{4}$  &  $7^{2}$    &   $1176$  & $48$ \\
       4  &  $A_{5}$       &  $11^{2}$  & $7260$ &  $120$ \\
       5  &  $\leq D_{2(q^{1/2}-1)}$       &  $3^{2}$ &  $36$ &  $8$ \\
       6  &  $\leq D_{2(q^{1/2}+1)}$       &  $3^{2}$  & $36$ &  $8$ \\
       7  &         &  $3^{4}$  & $3240$ &  $80$ \\
       8  &          & $5^{2}$     & $300$  & $24$ \\
        \bottomrule
    \end{tabular}
\end{table}
It is easy to see that in case as in Line 4 of Table \ref{tab:XBinPGL}, one has $G_{B}\cong S_{5}$ since $k=120$, and hence $G=\PGaL(2,11^{2})$. Now, easy computations lead to assertion (5). 

In the remaining cases, by using \texttt{GAP}, we see that only cases as in Line 5 or 6 of Table \ref{tab:XBinPGL} really occur, yielding either to a $2$-$(36,8,4)$ design or to a $2$-$(36,8,8)$ design, according as $G$ is isomorphic to $\PSiL(2,3^{2})$ or $\PGaL(2,3^{2})$, respectively. Thus we obtain assertion (4) since $\PSiL(2,3^2)$ is not primitive on $36$ points. Hence, the proof is completed. 
\end{proof}

\begin{lemma}\label{MintX9} If $M\cap X=Z_{p}^{f}:Z_{\frac{p^{f}-1}{\gcd(2,p^{f}-1)}}$, then one of the following holds:
\begin{enumerate}
   \item  $\left(v,b,k,r\right)=\left(2^{f-1}(2^{f}-1),(2^{2f}-1)\lambda,2^{f-1},(2^{f}+1)\lambda\right)$ with $\lambda \mid 2f$; Moreover, $X_{\alpha}\cong D_{2(2^f+1)}$ and $X_{B}\leq Z^{f}_{2}$; 
    \item $\left(v,b,k,r\right)=\left(\frac{q(q-1)}{2},\frac{q(q+1)\lambda}{4},q-1,\frac{(q+1)\lambda}{2}\right)$ with $\lambda \mid 4f$. Moreover, $X_{\alpha}\cong \D_{\frac{2(q+1)}{(2,q-1)}}$ and $X_{B}\leq \D_{\frac{2(q-1)}{(2,q-1)}}$
    \item $\mathcal{D}$ and $G$ are as in cases (2)--(6) Lemma \ref{MintX7};
        \item $G\cong \PGaL(2,2^{3})$ and $\mathcal{D}$ is one of the two $2$-$(28,12,11)$ designs as in Lines 33, 34 of Table \ref{Intro};
       \item $G\cong \PGaL(2,2^5)$ and $\mathcal{D}$ is the $2$-$(496,4,1)$ design as in Line 44 of Table \ref{Intro}.
       
\end{enumerate}    
\end{lemma}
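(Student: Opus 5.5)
We are in the setting of Lemma \ref{case8}: $X_\alpha\cong D_{2(q+1)/(2,q-1)}$, $v=q(q-1)/2$, $k=\frac{(q-2)ex}{2\theta}+1$, $r=\frac{\lambda(q+1)\theta}{ex}$, and now $G_B\leq M$ with $M\cap X=Z_p^f:Z_{(q-1)/(2,q-1)}$, a Borel subgroup. The plan follows the pattern of Lemmas \ref{MintX7} and \ref{MinTX8}: first reduce to a divisibility condition on $k$, then bound $q$, then treat the residual finitely many cases computationally and the special values of $ex$ separately.

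\textbf{Step 1: the divisibility condition.} Since $k$ divides $|G_B|$ and $|M|$ divides $q(q-1)f$, we get
\[
\tfrac{(q-2)ex}{2\theta}+1 \mid q(q-1)f .
\]
Next I use the gcd tricks from the earlier lemmas. Modulo $q-1$ the quantity $(q-2)ex+2\theta$ reduces to $2\theta-ex$, so its gcd with $q-1$ divides $|ex-2\theta|$. Modulo the $p$-part, $(q-2)ex+2\theta\equiv 2(\theta-ex)$, so its gcd with $q$ divides $2|ex-\theta|$. Combining these gives
\[
\tfrac{(q-2)ex}{2\theta}+1 \mid 2f\,|ex-\theta|\,|ex-2\theta| .
\]

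\textbf{Step 2: the generic case $ex\neq\theta,2\theta$.} Here the right-hand side is nonzero. Using $x<10\theta f/e$ from Lemma \ref{case8}, this yields an inequality of the shape $q-2<c f^{k}$ for a small constant $c$ and small exponent. Hence $q$ is bounded: for $q$ odd only a short list of small $q$ survives, and for $q=2^f$ we get roughly $f\le 20$. I then enumerate the pairs $(q,x)$ satisfying the exact divisibility from Step 1, together with Lemma \ref{lem:basic-params}, namely $k\le r$, $r\mid 2(q+1)f$, and integrality of $b$. I expect the survivors to be small: $q=4,8,16,32$, with parameters such as $(28,12,11)$, $(28,6,\cdot)$, $(120,8,\cdot)$ and $(496,4,1)$. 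These are settled with \texttt{GAP}, giving assertions (3)--(5). The $2$-$(496,4,1)$ case is the example already constructed in Section \ref{Examples}.

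\textbf{Step 3: the special case $ex=2\theta$.} Then $k=q-1$, $r=(q+1)\lambda/2$ and $\lambda\mid 4f$, which is assertion (2). For the structure of $X_B$: since $|G_B|=(q-1)\cdot 4\rho/((2,q-1)\lambda)$ and $X_\alpha\cap X_B$ has order at most $2$, the Borel subgroup forces $X_B$ to be essentially the Cartan part. So $X_B$ lies in a dihedral group $D_{2(q-1)/(2,q-1)}$.

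\textbf{Step 4: the special case $ex=\theta$.} This forces $q$ even and $x=f$, so $k=q/2=2^{f-1}$, $r=(q+1)\lambda$ and $\lambda\mid 2f$. This is assertion (1). Here $k$ is a power of $2$ dividing $|G_B|$, and $X_\alpha\cap X_B$ has order at most $2$. I expect this to push $X_B$ into the unipotent radical $Z_2^f$, using $|G_B|=(q-1)\cdot(\text{divisor of } f)$ computed via $b=vr/k$.

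\textbf{Main obstacle.} The hardest part is Step 2 for $q$ even. The bound on $f$ is around $20$, and $x$ ranges up to about $10f^2$, so the search is sizable. More importantly, the surviving parameter sets must be decided by explicit construction in \texttt{GAP}. In particular, the $2$-$(496,4,1)$ case requires identifying exactly the $\PGaL(2,2^5)$ example with $G_B\cong Z_2^3$ and excluding the other candidates on $496$ points.
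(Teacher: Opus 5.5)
Your route differs from the paper's and can be made to work, but two steps are wrong as written.

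\textbf{How the routes differ.} The paper first applies Passman's description of the subgroups of the Borel group $M\cap X$ and splits into three cases.
\begin{enumerate}
\item $X_B$ is cyclic in a complement. Then $X_B$ lies in a $D_{2(q-1)/(2,q-1)}$, and everything, including (2) and (3), is imported from Lemma~\ref{MintX7}.
\item $X_B\cong Z_p^t$. Then $k\mid p^t(2,q-1)f$, which gives $k\mid 2f|ex-\theta|$. So $ex=\theta$ yields (1), and otherwise one gets finitely many parameters.
\item $X_B\cong Z_p^t:Z_m$ with $m\ge 2$. Here $X_B\unlhd G_B$ gives $k=p^sm_0u$ with $m_0\mid p^{(t,f)}-1$. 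A fixed-point argument for the Frobenius complement then disposes even of $x=f$.
\end{enumerate}
You replace all of this by the single condition $k\mid q(q-1)f$. That is coarser, but it suffices. A direct check shows that for $q$ odd no pair with $ex\neq\theta,2\theta$ survives, and for $q$ even only $f\le 7$ survives.

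What you gain is independence from the subgroup classification and from Lemma~\ref{MintX7}. What you pay is a slightly larger candidate list, and you must prove the statements about $X_B$ in (1) and (2) separately. An example of an extra candidate is $q=32$, $x=41$, $k=124$, $\lambda\in\{41,82\}$, which the relation $m_0\mid 2^{(t,f)}-1$ never produces. It dies at once: $|G_B|\in\{248,124\}$ would force a $Z_{31}$-invariant subgroup of order $8$ or $4$ in $Z_2^5$, on which $Z_{31}$ acts irreducibly.

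\textbf{Error in Step 4.} With $k=2^{f-1}$ you have $b=vr/k=(2^{2f}-1)\lambda$, so $|G_B|=2^f\rho/\lambda$ with $\rho=|G:X|$. The order $(q-1)\cdot(\text{divisor of }f)$ you quote belongs to the case $k=q-1$, and it would point to the torus, not to $Z_2^f$.

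The correct argument, which also makes your Step 3 rigorous, goes as follows. Since $X_B\unlhd G_B$ and $G_B$ is transitive on $B$, all $X_B$-orbits on $B$ have a common length dividing $k$. For $\beta\in B$, $X_{\beta,B}\le X_\beta\cap(M\cap X)$, whose order divides $\gcd\bigl(|X_\beta|,|M\cap X|\bigr)\le 2$. Hence $|X_B|$ divides $2k$.
\begin{enumerate}
\item If $k=2^{f-1}$, then $X_B$ is a $2$-group, so $X_B\le Z_2^f$.
\item If $k=q-1$, then $|X_B|_p\mid\gcd(2,p)$. Since $M\cap X$ is Frobenius with kernel $Z_p^f$, $X_B$ is either a $p'$-group inside a complement or has order $2$. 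In both cases $X_B\le D_{2(q-1)/(2,q-1)}$.
\end{enumerate}

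\textbf{Missing case in Step 2.} Your expectation that only $q\le 32$ survives is false. The choice $q=2^7$, $x=3$ gives $k=28$, $r=301\lambda$ with $\lambda\mid 6$, $v=8128$, and it passes every arithmetic test (Lines 21--24 of Table~\ref{tab:3.9.8}). This case needs a real argument.
\begin{enumerate}
\item $28\mid|G_B|=24\rho/\lambda$ forces $G=\PGaL(2,2^7)$.
\item $|G_B|=168/\lambda$ must divide $|M|=2^7\cdot127\cdot7$, which leaves $\lambda\in\{3,6\}$.
\item The paper then uses $C_G(Z_7)\cong Z_7\times S_3$ to exclude $\lambda=6$ and to force $G_B\cong Z_2^3:Z_7$ when $\lambda=3$.
\item It finishes with \texttt{GAP} on the two conjugacy classes of such subgroups.
\end{enumerate}
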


\begin{proof} If $M\cap X=Z_{p}^{f}:Z_{\frac{p^{f}-1}{\gcd(2,p^{f}-1)}}$, then either $X_{B}=Z_{p}^{t}$ with $t\leq f$ or 
$X_{B}=Z_{m}$ with $m \mid \frac{p^{f}-1}{\gcd(2,p^{f}-1)}$, or $X_{B}=Z_{p}^{t}:Z_{m}$ with $m \mid p^{\gcd(t,f)}-1$ and $m \leq \frac{p^{f}-1}{\gcd(2,p^{f}-1)}$ by \cite[Proposition III.17.3]{Pass}. In the second case, there is a maximal subgroup $N$ of $G$ such that $X_{B}\leq N\cap X=D_{\frac{2(q-1)}{\gcd(2,q-1)}}$, and hence the same conclusions of Lemma \ref{MintX7} hold, which are the assertions (2) and (3).

Assume that $X_{B}=(Z_{p})^{t}$. Then 
\begin{equation}\label{May}
\frac{\left( q-2\right) ex}{2\theta }+1\mid p^{t}\cdot \gcd(2,q-1)\cdot f\text{.}
\end{equation}
Since 
$$\gcd
\left( \left( q-2\right) ex+2\theta ,p^{t}\right) =\gcd \left( 2ex-2\theta
,p^{t}\right)$$
divides $\gcd(2,q)\cdot \left\vert ex-\theta \right\vert$,
it follows that 
\[
\frac{\left( q-2\right) ex}{2\theta }+1\mid |ex-\theta |\cdot 2f
\]
If  $ex=\theta$, then $k=\frac{q}{2}$ with $q$ even, and $k$ divides $2^{t}\cdot f$, 
$r=\lambda (q+1)$, and so $\lambda\mid 2f$ as $r\mid 2(q+1)f$,
and we obtain assertion (1). Hence, suppose that $ex\neq \theta$. If $q$ is odd, then $(e,\theta )=(2,1)$ and $1\le x<5f$. Then $\left( q-2\right) x+1\mid (2x-1)\cdot
2f$ and hence $q-2<(2-\frac{1}{x})\cdot 2f<4f$, and so $q=3^2$ or $5$.
However, by (\ref{May}) and $r=\frac{\lambda(q+1)}{2x}>\lambda$, there no such $x$ exists.
Thus $q=2^{f}$ with $f \geq 2$. Then $(e,\theta )=(1,f)$, $x \neq f$, 
$x<10f^2$ and 
\[
\frac{\left( 2^{f-1}-1\right) x}{f}+1\mid  2f \cdot \left\vert x-f\right\vert. 
\]%
If $1\leq x <f$ then $2^{f-1}-1< 2f^{2}(f-1)$, forcing $2\leq f\leq 12$;
if $f<x<10f^2$ then $2^{f-1}-1<2f^{2}(1-\frac{f}{x})<2f^{2}$, forcing $2\leq f\leq 8$.
Hence, $2\leq f\leq 12$.
Note that $r=\lambda(q+1)f/x$ and $r\mid 2(q+1)f$, so $\lambda\mid 2x$,
and then by (\ref{May}) we get all the possible parameters listed in Table \ref{tab:3.9.8}.
(Note Lines 3, 4, 12--17, 21--24 correspond to $1\leq x <f$, and remaining Lines to $f<x<10f^2$, respectively.) 
By \texttt{GAP}, Lines 2,6,8,10,11,13-20 are ruled out. For Lines 1,3,4,5,7,9 and 12, we obtain assertions (3), (4) and (5).
In Lines 21--24, the order $X_{B}$ is $2^{3}3$, $2^(3)$, $2^{2}3$ or $2^{2}$. However, $X\cong \PSL(2,2^{7})$ has no subgroups of order $2^{3}3$ By \cite{Di}. Moreover, in the third and in the fourth cases there is a $Z_{7}$ centralizing $X_{B}$ since $\left\vert G_{B}\right\vert$, however this is impossible since $C_G(Z_{7})\cong Z_{7} \times S_{3}$. The same reasoning forces $G_{B} \cong Z_{2}^{3}:Z_{7}$ in Line 23. By using \texttt{GAP}, we see that $G$ contains two conjugacy classes of subgroups isomorphic to $Z_{2}^{3}:Z_{7}$. Now, analyzing the orbits of length $56$ for a fixed representative of any of such conjugacy classes classes we see that non of these orbits leads to a design with $\lambda=3$. Thus, there are no example with parameters as in Line 23 of Table \ref{tab:3.9.8}.  
\begin{table}[H]
    \centering
    \tiny
    \caption{\small Table of parameters lemma 3.9 case 8}\label{tab:3.9.8}
    \begin{tabular}{cllllll}
    \toprule
    Line &$q$ &$v$   & $b$    & $r$  & $k$ &$\lambda$\\
    \toprule
    1 &$2^2$&$6   $&$15$   &$10  $&$4  $&$6   $\\
    2 &     &      &$30$   &$20  $&$4  $&$12  $\\
    3 & $2^3$&   $28$&   $252$   & $27$ &$3$ & $2$ \\
    4 & & & $504$   & $54$ &$3$ & $4$ \\
    5 &      &      & $252$ &$54$ &$6$ & $10$\\
    6 &     &      &$189   $&$54  $&$8  $&$14  $\\
    7 &     &      &$126   $&$27  $&$6  $&$5   $\\
    8 &     &      &$126   $&$54  $&$12 $&$22  $\\
    9 &     &      &$63    $&$27  $&$12 $&$11  $\\
    10&     &      &$63    $&$54  $&$24 $&$46  $\\
    11&$2^4$&$120 $&$255   $&$136 $&$64 $&$72  $\\
    12& $2^5$&   $496$&   $20460$   &  $165$   &$4$ & $1$ \\
    13 & & & $40920$   & $330$ &$4$ & $2$ \\  
14&                    &                     &   $2728$   &  $55$  &$10$ & $1$ \\
15 & & & $5456$   & $110$ &$10$ & $2$ \\    
    16&     &      &$8184$& $165$  & $10$ & $3$  \\
    17&     &      &$16368 $&$330 $&$10 $&$6   $\\
    18&     &      &$2046  $&$165 $&$40 $&$13  $\\
    19&     &      &$4092  $&$330 $&$40 $&$26  $\\
    20&     &      &$1023  $&$330 $&$160$&$106 $\\
    21& $2^7$&   $8128$&  $87376$   &  $301$   &$28$ & $1$ \\

22 & & & $174752$  & $602$ &$28$ & $2$ \\
    
    23&                  &      &$262128$&$903 $&$28 $&$3   $\\
    24&     &      &$524256$&$1806$&$28 $&$6   $\\
    \bottomrule
\end{tabular}
\end{table}

If $X_{B}=(Z_{p})^{t}:Z_{m}$ with $m \mid p^{\gcd(t,f)}-1$ and 
$m \leq \frac{p^{f}-1}{\gcd(2,p^{f}-1)}$, then $|X_{B,\alpha}|_{p}$ divides $\gcd(2,p)$,
where $\alpha$ is any point in the block $B$
since $X_{\alpha}\cong D_{\frac{2(q-1)}{\gcd(2,q-1)}}$. Since $X_B\trianglelefteq G_B$ and $G_B$ acts transitively on $B$, we obtain that
\begin{equation}\label{smeagol}
\frac{\left( q-2\right) ex}{2\theta }+1=p^{s}m _{0}u,
\end{equation}
where $\left\vert X_{B}:X_{\alpha,B} \right\vert =p^{s}m _{0}$ with $t+1-\gcd(2,p)\leq s \leq t \leq f$, $m_{0} \mid m$ and $u\mid \gcd(2,p^{f}-1)f$. If $s=0$ then $X_B$ is contained in $Z_{(q-1)/\gcd(2,q-1)}$ and hence in $D_{2(q-1)/(2,q-1)}$,
which has been analyzed in Lemma \ref{MintX7}.
If $m=1$ then $X_{B}=(Z_{p})^{t}$, and this case was settled above in this proof.
Therefore, we may assume that $s\geq 1$ and $m\geq2$.

Assume that $q$ is odd. Then $(e,\theta )=(2,1)$, 
$x<5f$ and $s=t\leq f$.
From (\ref{smeagol}) we have 
\begin{equation}\label{gollum}
(q-2)x+1=p^{s}m_{0}u
\end{equation}
implying $p^{s}\mid 2x-1$.
Let $2x-1=yp^{s}$ for some positive integer $y$.
Then 
\begin{equation}\label{DeathToIDF}
\left( q-2\right) y+\frac{q}{p^{s}} =2m _{0}u
\end{equation}
If $2\leq m \leq \sqrt{q}-1$, then 
from (\ref{DeathToIDF}) we have 
\[
q-1 \leq \left( q-2\right) y+\frac{q}{p^{s}} 
\leq 2mu \leq 2\left( \sqrt{q}-1\right) \cdot 2f
\]
and so $\sqrt{q}+1\leq 4f$, implying 
$q=3^{2},3^{3},3^{4},3^{5},5^{2},7^{2}$.
Combing this with $x<5f$,  from (\ref{gollum}) we have 
$(q,x,s,u,m_{0})=(3^{2},5,2,2,2),(3^{2},5,2,4,1)$.
However, it implies that $k=(q-2)x+1=36=v$, a contradiction.

If $m > \sqrt{q}-1$, then $t=f=s$ since $m \mid p^{\gcd(t,f)}-1$ and $t \leq f$, and so $q\mid 2x-1$.
Combining this with $x<5f$, we have $q=2x-1$ with $q=5,7,9,27$,
forcing $r=\frac{\lambda(q+1)}{2x}=\lambda$, a contradiction.

Finally, assume that $q=2^{f}$, $f\geq 2$. 
Then $(e,\theta)=(1,f)$, $x< 10f^2$ and
\begin{equation}\label{Bilbo}
\left( 2^{f-1}-1\right) x+f =2^{s}m _{0}uf\text{,}
\end{equation}
where $t-1\leq s\leq t\leq f$, $m_{0} \mid m$ and $u \mid f$.

If $x=f$, then $m_{0}=1$, and $s\leq f-1$ and $u=2^{f-1-s}$. Moreover, $k = 2^{f-1}$, $r = (2^f + 1)\lambda$, implying $\lambda \mid 2f$ as $r\mid 2(q+1)f$. Now, $\left\vert X_{\alpha,B} \right\vert=2^{t-s}m$ with $m \mid 2^{t-s}-1$ since $X_{\alpha,B}$ is a Frobenius group. Thus $m\mid 2^{(t-s,f)}-1$, and hence $t=s\leq f-1$ since $m \geq 2$, $t-1\leq s \leq t$ and $s\leq f-1$. Now, the number of $X_{B}$-orbits in $B$ is $2^{f-1-t}$, and $X_{\alpha,B}\cong Z_{m}$, with $m\mid 2^{(t,f)}-1$, fixes a point in each of them. If $t<f-1$, then there is a point $\sigma$ of $\mathcal{D}$, with $\sigma \neq \alpha$, such that $Z_{m} \cong X_{\alpha,B}\leq X_{\alpha, \sigma}$. Then $m\mid 2(2^{f}+1)$ since $ X_{\alpha, \sigma} \cong D_{2(2^{f}+1)}$, and hence $m=1$, a contradiction. Thus $s=t=f-1$, and again $m=1$ since $m\mid 2^{(t,f)}-1$, which is not the case. 

If $x> f$, then $2^{s-1}\mid x-f$ and hence $x=2^{s-1}y+f$ for some positive integer $y$.
From (\ref{Bilbo}) we obtain that 
\[
(2^{f-1}-1)y+2^{f-s}f =2m_{0}uf.
\]
If $m_{0} \leq 2^{f/2}-1$, then 
\[
(2^{f-1}-1)+2\leq 2\left( 2^{f/2}-1\right) u\cdot f\leq 2\left( 2^{f/2}-1\right)f^{2},
\]
which implies $2\leq f \leq 21$. 
Hence, we obtain Lines 1--2, 6, 8--11 and 20 of Table \ref{tab:3.9.8}.


If $m_{0} > 2^{f/2}-1$, then $ 2^{f/2}-1<m\leq 2^{\gcd(t,f)}-1$ and hence $t=f$. Moreover, $X_{B}=(Z_{2})^{f}:Z_{m}$ and so
$b\mid \left( 2^{f}+1\right)
\cdot \frac{2^{f}-1}{m} \cdot f$,
and by Fisher's inequality one has 
\begin{equation}\label{orodruin}
2^{f-1}(2^{f}-1)\leq \left( 2^{f}+1\right)
\cdot \frac{2^{f}-1}{m}\cdot f < \left(2^{f}+1\right)
\cdot \left(2^{f/2}+1\right) \cdot f\text{.}
\end{equation}
Therefore $2\leq f\leq 8$. This case leads to no admissible parameters. 

Finally, assume that $1\leq x < f$. Then $x =f - 2^{s-1}y$ by (\ref{Bilbo}) for some positive integer $y$, and hence $1 \leq x\leq f-2^{s-1}$. Therefore, one has $2^{t-1}\leq 2^{s}<2f$ since $t-1\leq s\leq t$. Moreover, $m_{0}\leq m\leq 2^{\gcd(t,f)}-1\leq 2^{t}-1<4f$. Then (\ref{Bilbo}) implies 
$$2^{f-1}-1\leq (2^{f-1}-1)x=2^{s}\cdot m_{0}\cdot u\cdot f-f<2f\cdot 4f \cdot f \cdot f-f=8f^{4}-f\text{,}$$
since $u \mid f$, and then $2\leq f\leq 21$. However, there is no such $x$ satisfying (\ref{Bilbo}).
This completes the proof.

\end{proof}

\begin{corollary}\label{MintX9Cor} In case (1) of Lemma \ref{MintX9}, if $q<2^{8}$ then one of the following holds:
\begin{enumerate}
    \item $\mathcal{D}$ is the Witt-Bose-Shrikhande linear space of order $2^{3}$;
    \item $\mathcal{D}$ is the Witt-Bose-Shrikhande linear space of order $2^{4}$;
    \item $\mathcal{D}$ is the $2$-$(120,8,4)$ design and $G\cong \PSigL(2,2^{4})$ as in Line 41 of Table \ref{tab:alldesigns};
    \item $\mathcal{D}$ is the $2$-$(120,8,4)$ design and $G\cong \PGaL(2,2^{4})$ as in Line 42 of Table \ref{tab:alldesigns};
    \item $\mathcal{D}$ is the $2$-$(120,8,8)$ design and $G\cong \PGaL(2,2^{4})$ as in Line 43 of Table \ref{tab:alldesigns};
    \item $\mathcal{D}$ is the Witt-Bose-Shrikhande linear space of order $2^{5}$;
    \item $\mathcal{D}$ is the Witt-Bose-Shrikhande linear space of order $2^{6}$;
    \item $\mathcal{D}$ is the $2$-$(2016,32,4)$ as in Line 46 of Table \ref{tab:alldesigns}, and $G\cong \PSL(2,2^{6}):Z_{2}$;
    \item $\mathcal{D}$ is the $2$-$(2016,32,4)$ as in Line 47 of Table \ref{tab:alldesigns}, and $G\cong \PGaL(2,2^{6})$;
    \item $\mathcal{D}$ is one of the $2$-$(2016,32,12)$ designs as in Lines 48, 49 of Table \ref{tab:alldesigns}, and $G\cong \PGaL(2,2^{6})$;
     \item $\mathcal{D}$ is the $2$-$(2016,32,12)$ and $G\cong \PGaL(2,2^{6})$ as in Line X of Table \ref{tab:alldesigns};
\end{enumerate}    
\end{corollary}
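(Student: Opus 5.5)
This corollary is a finite computation once the parameters of case (1) of Lemma \ref{MintX9} are fixed, and I would carry it out in the same way as Corollary \ref{MintX7Cor}. For $q=2^{f}<2^{8}$ we have $2\le f\le 7$. Since $k=2^{f-1}$ must satisfy $2<k<v-1$, this forces $f\ge 3$. For each $f\in\{3,\dots,7\}$ the parameters are
\[
(v,b,k,r)=\left(2^{f-1}(2^{f}-1),(2^{2f}-1)\lambda,2^{f-1},(2^{f}+1)\lambda\right),\qquad \lambda\mid 2f .
\]
Here $X_{\alpha}\cong D_{2(2^{f}+1)}$, so the point set can be identified with the cosets of $N_{G}(D_{2(q+1)})$, that is, with the set $\mathcal{E}$ of external lines to the conic (equivalently, to the hyperoval) as in Section \ref{Examples}. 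We also have $X_{B}\le Z_{2}^{f}$. Since $|X_{B}|$ is divisible by $k/\gcd(k,|G:X|)$, the group $X_{B}$ is an elementary abelian $2$-subgroup of a Sylow $2$-subgroup $T$ of $X$ of order at least $2^{f-1}/f$. Moreover $G_{B}\le N_{G}(X_{B})\le T:\langle\text{diagonal},\text{field}\rangle$.

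First I would prune the list of $(f,\lambda,G)$ using $b=|G:G_{B}|$. Since $|G_{B}|=|G|/((2^{2f}-1)\lambda)$ must divide $|N_{G}(T)|=2^{f}(2^{f}-1)\cdot|G:X|$, the values of $\lambda$ that are compatible with each overgroup $G$ of $X$ in $\PGaL(2,2^{f})$ are restricted. For each surviving triple, I would enumerate the conjugacy classes of subgroups $H\le G$ of index $b$ with $H\cap X\le T$. For each such $H$ I would list its orbits of length $2^{f-1}$ on $\mathcal{E}$, and for each orbit $B$ I would check with the \texttt{Design} package of \texttt{GAP} whether $B^{G}$ is a $2$-design with the required $\lambda$ and whether $G_{B}=H$, so that the design is flag-transitive. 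Geometrically, as in Section \ref{Examples}, $B$ should consist of $2^{f-1}$ external lines through a common point $P$ on the line $m$, forming an $\mathbb{F}_{2}$-hyperplane of the $(f-1)$-dimensional space $E_{a}$ on which $T/\langle\tau_{a}\rangle$ acts regularly. This cuts the search down to hyperplane orbits of the stabilizer in $G_{P}$.

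Next I would identify the outcomes. For $\lambda=1$, in each $f=3,\dots,6$ the design should be the Witt-Bose-Shrikhande space with $X$ acting flag-transitively, which gives cases (1), (2), (6) and (7). For $f=4$ with $\lambda=4,8$, the designs coincide with those already found in Lemma \ref{MintX123}, which gives (3)--(5). For $f=6$ the new designs with $\lambda=4,12$ arise, giving (8)--(11), and I would test pairwise isomorphism via \texttt{GAP} to separate the lines of Table \ref{tab:alldesigns}. For $f=7$ (and for $f=5$ with $\lambda>1$), the search should return nothing. Here $|\Out(X)|=7$, and I would try to exclude these cases by hand first: the field automorphism of order $7$ must normalize $X_{B}$, which forces $X_{B}$ to be a $\langle\varsigma\rangle$-invariant hyperplane of $T$ paired with compatible orbits, and then the condition $\lambda\mid 14$ together with the block count should rule them out.

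The main obstacle is computational. For $f=6,7$ we have $v=2016$ and $v=8128$, and the number of candidate subgroups and orbits is large. The step I expect to be hardest is making the enumeration feasible and certifying non-isomorphism. To handle it I would exploit the geometric reduction to hyperplanes of $E_{a}$ modulo the action of $N_{G_{P}}(X_{B})$, so that only a handful of orbit representatives need the full $2$-design check.
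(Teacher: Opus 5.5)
\textbf{The geometric reduction fails.} Your overall framework matches the paper's proof, which is nothing more than the output of such an exhaustive computer search: enumerate subgroups $H$ of index $b$ with $H\cap X\leq T$, take their orbits of length $k$ on $\mathcal{E}$, and test them with \texttt{Design}. The gap is in the geometric reduction you rely on to make that search feasible.

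Since $|E_a|=2^{f-1}=k$, a hyperplane of $E_a$ has only $2^{f-2}=k/2$ elements, so it can never be a block. More importantly, through a point $P\notin\mathcal{J}$ there are exactly $q/2$ external lines. So a block of $2^{f-1}$ concurrent lines is a whole $E_a$, and then $G_B=G_P\geq T$, $b=q^{2}-1$ and $\lambda=1$: this is precisely the Witt--Bose--Shrikhande block.

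For $\lambda>1$ the blocks are not concurrent at all. Take the $2$-$(120,8,4)$ design with $G\cong\PSL(2,2^4):Z_2$. There $|G_B|=8$ and $X_B\cong Z_2^2$, so $B$ is the union of two regular $X_B$-orbits. These lie in two distinct $T$-orbits $E_a\neq E_{a'}$ and are interchanged by an element of $G_B\setminus X$. Pruning to ``hyperplane orbits of the stabilizer in $G_P$'' would therefore miss every design in the list, including the Witt--Bose--Shrikhande spaces.

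\textbf{The $f=7$ exclusion cannot work.} You expect $f=7$ to give nothing and sketch a hand argument for it; this is false. The Witt--Bose--Shrikhande space of order $2^7$ ($v=8128$, $k=64$, $\lambda=1$, $B=E_a$, $X_B=T$) satisfies every condition of case (1) of Lemma \ref{MintX9} with $q<2^8$. Theorem \ref{main}(1.c) includes it, so the statement's list is incomplete at this point and any correct search must return it. Moreover, nothing forces $G_B$ to contain a field automorphism; for instance, take $G=X$.

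\textbf{A correct replacement reduction.} Since $|G_B|\leq 2^{f}/\lambda\cdot|G:X|$ and $G_B$ is transitive on $2^{f-1}\geq 4$ points, $X_B\neq 1$. Hence $G_B\leq N_G(X_B)\leq N_G(T)$, and $G_B$ permutes the $T$-orbits $E_a$. It follows that $B$ meets some number $m$ of them in equal parts, with $m$ a power of $2$ dividing $|G_B:X_B|$, hence dividing $|G:X|$. The case $m=1$ gives the Witt--Bose--Shrikhande space. Consequently, for $f=3,5,7$ only that space occurs. For $\lambda>1$ one needs $f\in\{4,6\}$, $|G:X|$ even, and blocks spread over $m\geq 2$ of the sets $E_a$; this is the search space you actually have to cover.
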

\begin{proof}
The result is simply the output of an exhaustive search of the $2$-designs with parameters case (1) of Lemma \ref{MintX7} and $\PSL(2,q)\leq G \leq \PGaL(2,q)$ for $q=2^{f}$ with $f<8$.
\end{proof}

\bigskip

\begin{proof}[Proof of the Theorem \ref{FT-PP-reduction}]
Let $\mathcal{D}$ be a $2$-$(v,k,\lambda)$ design admitting a flag-transitive automorphism group $G$ with socle $X \cong \PSL(2,q)$. Then $X$ acts point-primitively on $\mathcal{D}$ by Lemma \ref{lem:NotNov}. Therefore, $X_{\alpha}$ is one of the groups listed in Table \ref{MaxDickson}, and the corresponding $\mathcal{D}$ and $G$ are determined in Lemmas \ref{case1}-\ref{MintX9} and Corollaries \ref{MintX7Cor} and \ref{MintX9Cor}; these are exactly those reported in the Theorem statement.    
\end{proof}

\section{The case $G$ acting point-imprimitively on $\mathcal{D}$}\label{FT-PI-Section}
In this section we suppose that any flag-transitive automorphism $G$ of $\mathcal{D}=\left(\mathcal{P},\mathcal{B}\right)$ acts point-imprimitively on $\mathcal{D}$, that is $G$ preserves a partition $\Sigma $ of $\mathcal{P}$ with $v_{1}>1$ classes each of size $v_{0}>1$. In this context, our aim is to completely determine $\left(\mathcal{D},G\right)$. More precisely we prove the following result:

\bigskip
\begin{theorem}\label{FTPI}
Let $\mathcal{D}$ be a $2$-$(v,k,\lambda)$ design admitting $PSL(2,q) \unlhd G\leq \PGaL(2,q)$ as a flag-transitive and point-imprimitive automorphism group. Then one of the following holds:
\begin{enumerate}
\item $\mathcal{D}$ is the $2$-$(15,8,4)$ design complementary to $\PG(3,2)$, and $ G\cong
\PGL(2,5)$;
\item $\mathcal{D}$ is the $2$-$(36,8,4)$ design as in \cite[Construction 9]{DP}, and $G\cong \PSigL(2,9)$;
\item $\mathcal{D}$ is the symmetric $2$-$(85,64,48)$ design complementary to $\PG(3,4)$ and $G\cong \PGaL (2,2^4)$.
\end{enumerate}
\end{theorem}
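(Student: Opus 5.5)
We take $\Sigma$ to be a $G$-invariant partition of $\mathcal{P}$ that is maximal under refinement. Then $G$ acts primitively on $\Sigma$, with $v_{1}=|\Sigma|$ and $v_{0}=|\Delta|$ for $\Delta\in\Sigma$. We apply the Camina--Zieschang Theorem \cite{CZ}. It gives constants $k_{0}=|B\cap\Delta|$ (for $B\cap\Delta\neq\varnothing$), $k_{1}=k/k_{0}$ and $\lambda_{0},\lambda_{1}$. Moreover, $\mathcal{D}_{1}=(\Sigma,\{B^{\Sigma}\})$ is a flag-transitive $2$-$(v_{1},k_{1},\lambda_{1})$ design, possibly with $k_{1}=2$ or $k_{1}=v_{1}$. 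Also, if $k_{0}\geq 2$, then $\mathcal{D}_{0}=(\Delta,\{B\cap\Delta\})$ is a flag-transitive $2$-$(v_{0},k_{0},\lambda_{0})$ design for $G_{\Delta}^{\Delta}$. We also use the standard identity $(v_{0}-1)\lambda_{1}k_{0}\ldots$, i.e. $\lambda(v_0-1)=r(k_0-1)$ and $\lambda v_0(v_1-1)=r k_0(k_1-1)$, in the form $v=v_0v_1$ with $k_0\mid\ldots$.

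Since $X=\PSL(2,q)$ is simple and $v_{1}>1$, $X$ acts faithfully and primitively on $\Sigma$, and $X_{\Delta}$ is maximal in $X$ by the argument of Lemma \ref{lem:NotNov}. If $2<k_{1}<v_{1}-1$, then $\mathcal{D}_{1}$ is one of the designs of Theorem \ref{FT-PP-reduction}. In the degenerate cases $k_{1}\in\{2,v_{1}-1,v_{1}\}$, $G$ is $2$-homogeneous or, respectively, $2$-transitive on $\Sigma$. So in all cases $(v_{1},X_{\Delta})$ comes from the list in Table \ref{MaxDickson}.

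Next we exploit the constraint that $X_{\alpha}<X_{\Delta}$ is a proper subgroup with $|X_{\Delta}:X_{\alpha}|=v_{0}$, while $r$ divides $\lambda\gcd(|G_{\alpha}|,v-1)$ and $r^{2}>\lambda v$ (Lemma \ref{lem:basic-params}). The key inequality comes from the standard Davies-type bound for imprimitive flag-transitive designs. Since $r\mid\lambda(v_{0}-1)$ via $\mathcal{D}_{0}$, or more precisely since $r(k_{0}-1)=\lambda(v_{0}-1)$, we obtain $v_{0}-1\geq r/\lambda\cdot(k_{0}-1)$. Combined with $r^{2}>\lambda v$, this gives $v_{0}$ roughly $\geq\sqrt{v}$, i.e. $v_{0}^{\,}\gtrsim v_{1}$. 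Hence $|X_{\Delta}:X_{\alpha}|$ is large compared with $|X:X_{\Delta}|$. This rules out all the large-index choices of $X_{\Delta}$ (the dihedral groups and $\A_{4},\S_{4},\A_{5}$ except for small $q$). What survives is essentially $X_{\Delta}$ parabolic (so $v_{1}=q+1$) or subfield-type, together with a finite list of small $q$.

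For $X_{\Delta}=Z_{p}^{f}:Z_{(q-1)/(2,q-1)}$ we have $v_{0}=|X_{\Delta}:X_{\alpha}|$. Subgroups of a Frobenius-like group give $v_{0}\mid q(q-1)$ with few possibilities, and $r\mid\lambda(v-1)$ together with the subdegrees of $G$ on $\mathcal{P}$ yields divisibility conditions. We expect these to force $q\leq 16$ via inequalities of the shape $q+1\leq$ polynomial in $f$. This is the approach that should produce the $2$-$(85,64,48)$ example ($q=16$, $v_{1}=17$, $v_{0}=5$) and the $2$-$(15,8,4)$ example ($q=4$, i.e. $\PGL(2,5)$ acting on $15=5\cdot 3$ or $3\cdot5$). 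The subfield and small sporadic cases ($q=9$ with $v=36$, giving the Devillers--Praeger design with $v_{1}=6$ or $10$) are then handled by a finite \texttt{GAP} search over all admissible $(v_{0},v_{1},k_{0},k_{1},\lambda)$.

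The main obstacle is the parabolic case for general $q$. There the design $\mathcal{D}_{1}$ on $\PG(1,q)$ is not rigid, because Construction \ref{Build} allows many $k_{1}$. So the arithmetic must come from $\mathcal{D}_{0}$ and the structure of $G_{\Delta}^{\Delta}$, an affine-type or cyclic-by-semilinear group of degree $v_{0}$ dividing $q-1$ times a power of $p$. I would first bound $k_{0}$ using flag-transitivity of $G_{\Delta}^{\Delta}$ on $\mathcal{D}_{0}$, then feed $k=k_{0}k_{1}\mid|G_{B}|$ into $r^{2}>\lambda v$ to obtain $q$ bounded by a function of $f$. This should reduce everything to the finite search.
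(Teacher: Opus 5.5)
Your skeleton is the paper's: a maximal $\Sigma$, the Camina--Zieschang factorization, Theorem \ref{FT-PP-reduction} applied to a non-trivial $\mathcal{D}_1$, and reduction to $v_1=q+1$. However, the step you call the key inequality is false, and the reduction you build on it does not go through.

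From $r(k_0-1)=\lambda(v_0-1)$ and $r^2>\lambda v$ you only get $(v_0-1)^2\geq (r/\lambda)^2>v/\lambda$, i.e.\ roughly $v_0>v_1/\lambda$, and $\lambda$ is unbounded. Your claim that $v_0$ is at least of the order of $v_1$ is refuted by the designs in the statement itself: $(v_0,v_1)=(3,5)$ for the complement of $\PG(3,2)$, and $(v_0,v_1)=(5,17)$ for the complement of $\PG(3,4)$, where $r/\lambda=4/3$. So you have no argument disposing of the dihedral, $\A_4$, $\S_4$, $\A_5$ and subfield choices of $X_\Delta$, and no finiteness for your proposed \texttt{GAP} search.

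The inequality that does this job points the other way. Since $2\leq k_0<v_0$, the right-hand side of (\ref{rel2}) lies strictly between $1$ and $2$, so $k_1>(v_1+1)/2$. This is fatal for the infinite families (3)--(5) of Theorem \ref{FT-PP-reduction}, where $(v_1-1)/(k_1-1)$ equals $3(2^f+1)/2$, $2^f+1$ and $(q+1)/2$ respectively. The paper uses exactly this comparison via (\ref{rel2}) in Lemma \ref{tris}. It adds a direct check of Table \ref{tab:alldesigns} and of the small-degree exceptional $2$-transitive actions on $\Sigma$.

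For the parabolic case $v_1=q+1$, which you rightly identify as the core, your plan has no working mechanism. Feeding $k\mid |G_B|$ into $r^2>\lambda v$ cannot bound $q$: with $r=\lambda(v_0-1)/(k_0-1)$ that inequality is only a lower bound on $\lambda$. Also, $G_\Delta^\Delta$ is not of affine type. The proof needs the following chain of steps.
\begin{enumerate}
\item With $v=v_0(q+1)$, (\ref{rel1}) forces $(v_0-1)/\theta$ to divide $v_0q$, where $\theta=\gcd(k_0-1,v_0-1)$. Hence $(v_0-1)/\theta$ is a power $p^a$, and so $p\nmid v_0$.
\item Therefore the normal subgroup $Z_p^f$ of $X_\Delta$ fixes $\Delta$ pointwise.
\item A subdegree argument in $\mathcal{D}_0$ then gives $G_\Delta^\Delta\cong Z_{v_0}:Z_d$ with $d\mid f$ and $Z_{v_0}$ regular.
\item A Camina--Zieschang chain through the prime divisors of $v_0$ gives $v_0$ prime, $k_0=p^e$ and $p^a\mid f$ (Lemma \ref{OndaDobro}, Proposition \ref{v0prime}, Corollary \ref{fdosta}).
\item This yields the explicit value $k_1=q+1-p^{f-e}+\frac{p^e-1}{\theta}p^{f-a-e}$. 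It is coprime to $p$ and exceeds $\frac{2}{3}q+1$ when $k_0>2$.
\item The contradiction comes from $k_1\mid |M|(2,q-1)f$ for a Dickson maximal subgroup $M\geq X_B$, using the gcd estimates of Lemma \ref{Boom}.
\end{enumerate}
Coprimality to $p$ is essential here. The crude bound $k_1>(q+2)/2$ does not exclude a parabolic $M$, since $k_1=q$ divides $|M|$; this is precisely what happens in the $2$-$(15,8,4)$ example.

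Two minor corrections: $k_1=v_1$ cannot occur, since it forces $k_0=v_0$ and hence $k=v$. Also, $v=36$ with $q=9$ forces $v_1=6$, not $10$.
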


\bigskip 
Before proceeding, let us recall the following fact. Let $\alpha$ be any point of $\mathcal{D}$, and let $\Delta$ be the element of the $G$-invariant partition $\Sigma$ of $\mathcal{P}$ containing $x$ \textcolor{green}{$\alpha$}, then $G_{\alpha}<G_{\Delta}<G$. The converse is also true by \cite[Theorem 1.5A]{DM}. Therefore, in the sequel we say that $\Sigma$ is \emph{maximal} if and only if $G_{\Delta}$ is a maximal subgroup of $G$.

Assume that $G$ preserves a further non-trivial partition $\Sigma^{\prime}$ of the point-set of $\mathcal{D}$ in $v_{1}^{\prime}$ classes of size $v_{0}^{\prime}$. We may apply Theorem \ref{CamZie} to the triple $(\mathcal{D},G,\Sigma^{\prime})$ deriving information on the incidence structures $\mathcal{D}_{0}^{\prime}$ and $\mathcal{D}_{1}^{\prime}$ which have the same meaning as $\mathcal{D}_{0}$ and $\mathcal{D}_{1}$, respectively, but they are related to the partition $\Sigma^{\prime}$. In this setting, we say that $\Sigma^{\prime}$ \emph{refines} $\Sigma$, or that $\Sigma^{\prime}$ is \emph{finer} than $\Sigma$, and write $\Sigma^{\prime} \leq \Sigma$, if any element of $\Sigma^{\prime}$ is contained in a (unique) element of $\Sigma$. The binary relation '$\leq$' on the set of the $G$-invariant non-trivial partitions of the point-set of $\mathcal{D}$ is a partial ordering. Therefore, it makes sense to call any such partition \emph{maximal} if it is maximal with respect to ordering '$\leq$'. Clearly, this notion of maximality for a partition are equivalent to that given above. It is worth noting that, for any given non-trivial $G$-invariant partition $\Sigma$ of the point-set of $\mathcal{D}$, there always is a maximal non-trivial $G$-invariant partition $\Sigma^{\prime}$ of the point-set of $\mathcal{D}$ refining $\Sigma$.

\bigskip

A landmark for the determinations of the pairs $\left(\mathcal{D},G\right)$ with $G$ acting flag-transitive and point-imprimitively on $\mathcal{D}$ is the following key result proved by Camina and Zieschang \cite{CZ} in 1990. 

\bigskip

\begin{theorem}[Camina-Zieschang]\label{CamZie}
Let $\mathcal{D}=(\mathcal{P}, \mathcal{B})$ be a $2$-$(v,k,\lambda)$ design admitting a
flag-transitive, point-imprimitive automorphism group $G$ preserving a nontrivial 
partition $\Sigma $ of $\mathcal{P}$ with $v_1$ classes of size $v_0$. Then $v=v_{0}v_{1}$ and the following hold:
\begin{enumerate}
    \item[(1)] There is a constant $k_0 \geq 2$ such that $\left\vert B\cap \Delta \right\vert =0$ or $%
k_0 $ for each $B\in \mathcal{B}$ and $\Delta \in \Sigma $. The parameter $k_0 $ divides $k$. Moreover,
\begin{equation}\label{rel1}
\frac{v-1}{k-1}=\frac{v_{0}-1}{k_{0}-1}\text{,}    
\end{equation}
and the following hold:
\begin{enumerate}
\item[(i)] For each $\Delta \in \Sigma $, let $\mathcal{B}_{\Delta }=\left\{ B\cap \Delta
\neq \varnothing :B\in \mathcal{B}\right\} $. Then the set $$\mathcal{R}_{\Delta}=\{(B,C) \in \mathcal{B}_{\Delta } \times \mathcal{B}_{\Delta }:  B\cap \Delta =\C\cap \Delta \}$$ is an equivalence relation on $\mathcal{B}_{\Delta }$ with each equivalence class of size $\theta$.
\item[(ii)] The incidence structure $\mathcal{D}_{\Delta }=(\Delta ,%
\mathcal{B}_{\Delta })$ is either a symmetric $1$-design with $k_{0}=v_{0}-1$, or a $2$-$(v_0,k_0 ,\lambda
_{0})$ design with $\lambda_{0}=\frac{\lambda}{\theta}$.
\item[(iii)] The group $G_{\Delta}^{\Delta}$ acts flag-transitively on $\mathcal{D}_{\Delta }$.
\end{enumerate}
\item[(2)] For each block $B$ of $\mathcal{D}$ the set $B(\Sigma)=\{\Delta \in \Sigma: B \cap \Delta \neq \varnothing\}$ has a constant size $k_{1}=\frac{k}{k_{0}}$. Moreover, 
\begin{equation}\label{rel2}
\frac{v_{1}-1}{k_{1}-1}=\frac{k_{0}(v_{0}-1)}{v_{0}(k_{0}-1)}    
\end{equation}
and the following hold:
\begin{enumerate}
\item[(i)] The set $$\mathcal{R}=\{(C,C^{\prime}) \in \mathcal{B} \times \mathcal{B}:C(\Sigma)=C^{\prime}(\Sigma)\}$$ is an equivalence relation on $\mathcal{B}$ with each class of size $\mu$;
\item[(ii)] Let $\mathcal{B}^{\Sigma}$ be the quotient set defined by $\mathcal{R}$, and for any block $C$ of $\mathcal{D}$ denote by $C^{\Sigma}$ the $\mathcal{R}$-equivalence class containing $C$. Then the incidence structure $\mathcal{D}^{\Sigma}=\left(\Sigma, \mathcal{B}^{\Sigma}, \mathcal{I} \right)$ with $\mathcal{I}=\{(\Delta,C^{\Sigma}) \in \Sigma \times \mathcal{B}^{\Sigma}: \Delta \in C(\Sigma) \}$ is either a symmetric $1$-design with $k_{1}=v_{1}-1$, or a $2$-$(v_{1},k_{1} ,\lambda_{1})$ design with $\lambda_{1}=\frac{v_{0}^{2}\lambda}{k_{0}^{2}\mu}$.
\item[(iii)] The group $G^{\Sigma}$ acts flag-transitively on $\mathcal{D}^{\Sigma}$.
\end{enumerate}
\end{enumerate}
\end{theorem}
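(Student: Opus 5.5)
Although this is the classical Camina--Zieschang theorem, cited from \cite{CZ}, I would prove it by a double-counting argument organized around the partition. Since $G$ is transitive on $\mathcal{P}$ and preserves $\Sigma$, all classes have the same size $v_0$, so $v=v_0v_1$. Fix a flag $(\alpha,B)$ and the class $\Delta\ni\alpha$. Flag-transitivity makes $G_\alpha$ transitive on the $r$ blocks through $\alpha$. Hence the integer $|B\cap\Delta|$ does not depend on the block $B$ through $\alpha$. Every nonempty intersection $B\cap\Delta'$ contains some point $\beta$, and $(\beta,B)$ is a flag. Therefore every nonempty intersection has the same size $k_0$, and $k_0$ divides $k$ with $k_1=k/k_0$. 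Next I count pairs $(\beta,C)$ with $\beta\in\Delta\setminus\{\alpha\}$ and $\alpha,\beta\in C$. This gives $\lambda(v_0-1)=r(k_0-1)$. Combining it with $\lambda(v-1)=r(k-1)$ yields \eqref{rel1}, and it forces $k_0\ge2$.

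For part (1), I first note that $\mathcal{R}_\Delta$ is clearly an equivalence relation. Its classes all have the same size $\theta$ because $G_\Delta$ permutes $\mathcal{B}_\Delta$ transitively: $G_\Delta\ge G_\alpha$, flags of the form $(\alpha,\cdot)$ are transitively permuted, and $G_\Delta$ is transitive on $\Delta$. I then look at the set of distinct traces $B\cap\Delta$. Any two points of $\Delta$ lie in exactly $\lambda$ blocks, so they lie in $\lambda/\theta$ distinct traces. This makes $\mathcal{D}_\Delta$ a $2$-$(v_0,k_0,\lambda/\theta)$ design, except when $k_0=v_0$ or $k_0=v_0-1$ degenerates it. The case $k_0=v_0$ cannot occur, by \eqref{rel1} and $k<v$. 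The case $k_0=v_0-1$ gives the symmetric $1$-design alternative. Flag-transitivity of $G_\Delta^\Delta$ follows directly from that of $G$, since flags of $\mathcal{D}_\Delta$ lift to flags of $\mathcal{D}$ with the point in $\Delta$.

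For part (2), $|B(\Sigma)|=k/k_0=k_1$ is immediate. Equal block-footprints on $\Sigma$ again define an equivalence relation whose classes have constant size $\mu$, by block-transitivity. The main computational step is \eqref{rel2}. I would count pairs of points in distinct classes that lie on a common block. Counted via $\mathcal{D}$, this gives $\lambda v(v-v_0)$ ordered pairs-with-block. Counted via blocks, it gives $b\,k_0^2k_1(k_1-1)$. Eliminating $b$ via $bk=vr$ and $r$ via \eqref{rel1}, one gets $\lambda(v-v_0)=rk_0(k_1-1)$. Comparing this with $\lambda(v_0-1)=r(k_0-1)$ gives \eqref{rel2}.

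Finally, two classes $\Delta,\Delta'$ occur together in $\lambda v_0^2/k_0^2$ blocks, counting incidences of point pairs. Dividing by $\mu$ gives $\lambda_1$, provided $\mathcal{D}^\Sigma$ is nondegenerate; otherwise $k_1=v_1-1$ gives the symmetric $1$-design alternative. The flag-transitivity of $G^\Sigma$ is inherited from $G$. I expect the main obstacle to be the careful handling of the degenerate cases, namely $k_0=v_0-1$ and $k_1=v_1-1$, where the ``design'' collapses to a $1$-design. One also has to check that $\lambda/\theta$ and $\lambda_1$ really count blocks in the quotient structures, i.e.\ that they are independent of the chosen pair. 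Here I would use the transitivity of $G$ on unordered pairs only through counting, not through $2$-transitivity.
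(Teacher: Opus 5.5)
Your proof is correct. The paper gives no argument of its own for this theorem; it only cites \cite[Propositions 2.1 and 2.3]{CZ}, so what you supply is a self-contained replacement for that citation. Your route is the elementary one behind those propositions: flag-transitivity gives the constant $k_0$ and the transitivity of $G_\Delta$ on flags with point in $\Delta$, and double counting then gives \eqref{rel1}, $\lambda_0=\lambda/\theta$ and $\lambda_1=v_0^2\lambda/(k_0^2\mu)$. What it buys over the citation is that it makes visible that nothing beyond flag-transitivity and counting is used.

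Three small points would tighten it.

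First, your second double count (cross-class pairs) is redundant. Subtracting $\lambda(v_0-1)=r(k_0-1)$ from $\lambda(v-1)=r(k-1)$ gives $\lambda v_0(v_1-1)=rk_0(k_1-1)$ at once. This yields \eqref{rel2} and also shows $k_1\geq 2$.

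Second, in the quotient you only discuss the degenerate case $k_1=v_1-1$. You must also exclude $k_1=v_1$. This is immediate: \eqref{rel2} would then give $k_0(v_0-1)=v_0(k_0-1)$, i.e.\ $k_0=v_0$, hence $k=v$.

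Third, for the symmetric $1$-design alternatives you should say why exactly $v_0$ traces occur when $k_0=v_0-1$, and exactly $v_1$ footprints when $k_1=v_1-1$. The complementary points (resp.\ classes) form a nonempty subset of $\Delta$ (resp.\ $\Sigma$) that is invariant under the transitive group $G_\Delta^\Delta$ (resp.\ $G^\Sigma$). Such a subset must be everything, so $b_0=v_0$ (resp.\ $b_1=v_1$).
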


\bigskip
For a proof see \cite[Propositions 2.1 and 2.3]{CZ}.

\bigskip

Throughout the remainder of the paper the symbols $\Sigma$ and $\Delta$ will the same meaning as in Theorem \ref{CamZie}.

We refer to $\mathcal{D}^{\Sigma}$ simply as $\mathcal{D}_{1}$. Moreover, as it is mentioned in the introduction, the designs corresponding to distinct classes $\Delta,\Delta'\in\Sigma$ are isomorphic under elements of $G$ mapping $\Delta$ to $\Delta'$, hence we will refer to $\mathcal{D}_{\Delta }$ as $\mathcal{D}_{0}$. The parameters of $\mathcal{D}_{0}$ and of $\mathcal{D}_{1}$ will be indexed by $0$ and $1$, respectively. Hence, $v_{i}r_{i}=b_{i}k_{i}$, $\lambda (v_{i}-1)=r_{i}(k-1)$, and $k_{i} \leq r_{i}$, where $r_{i}$ and $b_{i}$ are the replication number and the number of blocks of $\mathcal{D}_{i}$, when $\mathcal{D}_{i}$ is a $2$-$(v_{i},k_{i},\lambda_{i})$ design.

\bigskip

Recall that $X$ denotes $\PSL(2,q)$, the socle of $G$. We need a series of preliminary results.

\begin{lemma}
The following hold:
\begin{enumerate}
\item $G$ acts faithfully on $\Sigma $;
\item $X$ acts point-transitively on $\mathcal{D}$;
\end{enumerate}
\end{lemma}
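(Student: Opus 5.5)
For (1), I would look at the kernel $K$ of the action of $G$ on $\Sigma$. It is a normal subgroup of $G$, and $G$ is almost simple with socle $X\cong\PSL(2,q)$, $q\geq 4$. Every non-trivial normal subgroup of such a group contains $X$. So either $K=1$ or $X\leq K$.

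Suppose $X\leq K$. Then $X$ fixes every class $\Delta\in\Sigma$ setwise, and in particular $X\leq G_{\Delta}$. On the other hand $G=XG_{\alpha}$ fails only if $X$ is intransitive, so I would first check what $X\leq K$ forces. The group $G/X$ embeds in $\Out(X)$, which is soluble of order $\gcd(2,q-1)f$. Since $K\leq G_\Delta$ and $G$ is transitive on $\Sigma$, the quotient $G/K$ would act transitively on $\Sigma$ with $v_1>1$ classes. This is a soluble group acting faithfully and transitively on $\Sigma$.

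Next I would use flag-transitivity through Theorem \ref{CamZie}. The group $G^{\Sigma}$ acts flag-transitively on $\mathcal{D}_1$, which is either a $2$-$(v_1,k_1,\lambda_1)$ design or a symmetric $1$-design with $k_1=v_1-1$. In either case $G^{\Sigma}$ is $2$-homogeneous, or at least transitive with $k_1$ dividing $|G^{\Sigma}_{\Delta}|$. Moreover $|G^{\Sigma}|$ divides $|\Out(X)|$, which is small.

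I would compare this with $v_0$. Since $X\leq G_\Delta$, we get $v_0=|G_\Delta:G_\alpha|=|X:X_\alpha|\cdot(\text{something dividing } |\Out X|)$. Hence $v_0\geq |X:X_\alpha|$, which is at least $q+1$ by Table \ref{MaxDickson}. Then $v_1\leq |\Out(X)|\leq 2f$. Relation (\ref{rel2}) gives $(v_1-1)/(k_1-1)=k_0(v_0-1)/(v_0(k_0-1))$, which is less than $2$ because $k_0\geq2$, so $k_1>(v_1+1)/2$. Also $r=\lambda(v-1)/(k-1)$ together with $r^2>\lambda v$ from Lemma \ref{lem:basic-params} bounds $r$ by a divisor of $\lambda|G_\alpha|$. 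I would aim to contradict $r\mid \lambda d$ for the subdegree-type constraint. The cleaner route is this: every $G_\alpha$-orbit outside $\Delta$ is a union of $X_\alpha$-orbits on other $X$-orbits, and these have length at least $|X_\alpha|/|X_\alpha\cap X_\beta|$.

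Actually a simpler decisive argument for (2) comes first. If $X$ were intransitive on points, its orbits would form a $G$-invariant partition whose blocks are permuted by the soluble group $G/X$. So I would prove (2) before (1), and derive (1) from it.

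For (2), assume $X$ is not point-transitive. The $X$-orbits form a non-trivial $G$-invariant partition $\Sigma'$, with $v_1'\mid |G:X|$ dividing $\gcd(2,q-1)f$. Apply Theorem \ref{CamZie} to $\Sigma'$. Here $X$ lies in the kernel on $\Sigma'$, so $G^{\Sigma'}$ is a cyclic or metacyclic quotient of $\Out(X)$, and it acts flag-transitively on $\mathcal{D}_1'$. Since $k_1'>(v_1'+1)/2$ and $G^{\Sigma'}$ acts regularly on $\Sigma'$ (it is an abelian-by-small transitive group of degree $v_1'$), flag-transitivity requires the stabilizer $G^{\Sigma'}_{\Delta}$ to be transitive on $k_1'-1\geq1$ other classes. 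But a regular action has trivial point stabilizer, so $k_1'-1=1$. Then $k_1'=2$, which forces $v_1'<3$, i.e. $v_1'=2=k_1'$, contradicting $k_1'<v_1'$ (or the symmetric $1$-design case $k_1=v_1-1=1$, contradicting $k_1\geq2$).

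The hard point is justifying regularity. $G/X$ need not be abelian when $q$ is odd ($\Out(X)\cong Z_2\times Z_f$ is abelian, so it is fine). In general $\Out(X)$ is abelian, and a transitive abelian group is regular, which handles the obstacle. Then (1) follows: the kernel $K$ contains $X$ only if $X$ fixes each $\Delta$, contradicting the transitivity of $X$ from (2) since $v_1>1$. Hence $K=1$.
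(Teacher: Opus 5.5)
Your final argument has the same core as the paper's, with the order of (1) and (2) swapped. The paper first shows that $X$ can never lie in the kernel $G_{(\Sigma)}$ of an arbitrary $G$-invariant partition: otherwise $G^{\Sigma}$ is a quotient of the abelian group $\Out(X)\cong Z_{(2,q-1)}\times Z_f$, hence regular on $\Sigma$, which contradicts flag-transitivity on $\mathcal{D}_1$. It then gets (2) by applying this to the partition into $X$-orbits. You run the same abelian-implies-regular argument directly on the $X$-orbit partition $\Sigma'$ to get (2). You then deduce (1) because $X\leq G_{(\Sigma)}$ would make $X$ intransitive. Both orders work, and your deduction of (1) from (2) via the unique minimal normal subgroup is fine. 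The exploratory first half of the proposal is never used and can be dropped.

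The step that fails as written is how you get the contradiction from regularity. Flag-transitivity of $G^{\Sigma'}$ on $\mathcal{D}_1'$ does not say that $G^{\Sigma'}_{\Delta}$ is transitive on the $k_1'-1$ other classes of a block. That would be a much stronger, generally false, property. What it does say is that $G^{\Sigma'}_{\Delta}$ is transitive on the $r_1'$ blocks through $\Delta$, so $r_1'\mid |G^{\Sigma'}_{\Delta}|$.

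With $G^{\Sigma'}_{\Delta}=1$ this forces $r_1'=1$, which is impossible:
\begin{itemize}
\item if $\mathcal{D}_1'$ is a $2$-design, then $r_1'(k_1'-1)=\lambda_1'(v_1'-1)$ with $k_1'<v_1'$ gives $r_1'>\lambda_1'\geq 1$;
\item in the symmetric $1$-design case, $r_1'=k_1'\geq 2$.
\end{itemize}
This is exactly the paper's contradiction ($r_1\mid |G^{\Sigma}_{\Delta}|$ with $r_1>1$). With it, your detour through $k_1'>(v_1'+1)/2$, $k_1'=2$ and $v_1'<3$ becomes unnecessary. Replace that sentence and the proof is complete.
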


\begin{proof}
Since $X$ is the unique minimal normal subgroup of $G$, and $G_{(\Sigma )}\unlhd G$, either $G_{(\Sigma )}=1$ or $X\trianglelefteq G_{(\Sigma )}$. Assume that the latter occurs. Then $G^{\Sigma }\leq
Z_{(2,q-1)}\times Z_{\log _{p}(q)}$ by \cite[Proposition 2.2.3]{KL}. Then $G^{\Sigma }$ is abelian, and
hence $G^{\Sigma }$ is regular since $G^{\Sigma }$ acts point-transitively on $%
\mathcal{D}_{1}$, however this is impossible since $r_{1}\mid \left\vert
G_{\Delta }^{\Sigma }\right\vert $ with $r_{1}>1$, where $\Delta \in \Sigma $. Thus $G_{(\Sigma )}=1$, which is the assertion (1). 

The group $X$ acts point-transitively on $\mathcal{D}$, otherwise the set of $X$-orbits on the point
set of $\mathcal{D}$ is $G$-invariant partition for which $X$ lies in the
kernel, contrary to (1). Thus, $X$ acts point-transitively on $\mathcal{%
D}$, and hence $X$ acts transitively on $\Sigma $, which is the assertion (2).
\end{proof}

\bigskip
The employed method is as follows. The above argument shows that we may always chose $\Sigma$ to be maximal. Therefore $G_{\Delta}$, with $\Delta \in \Sigma$, is a maximal subgroup of $G$. Since $\PSL(2,q) \unlhd G \leq \PGaL(2,q)$, and $G$ acts flag-transitively and point-primtively on $\mathcal{D}_{1}$. Since $\PSL(2,q) \unlhd G \leq \PGaL(2,q)$, then either $k_{1}=2$ or $v_{1}-1$, and hence $G$ acts $2$-transitively on $\mathcal{D}_{1}$, an so $v_{1}$ is known; or $2<k_{1}<v_{1}-1$ and the pair $(\mathcal{D}_{1},G)$ is as in Theorem \ref{FT-PP-reduction}, and again the pair $(v_{1},k_{1})$ is known. Hence, $(v_{1},k_{1})$ is determined in each case, and consequently the structure of $G_{\Delta}$ is known too. Then, in order to gain information $(v_{0},k_{0})$, we use (\ref{rel1}) or (\ref{rel2}) and the fact that $v_{0}r_{0} \mid \left\vert G_{\Delta}^{\Delta}\right\vert$ and $G_{\Delta}^{\Delta}$ has a transitive permutation representation of degree $v_{0}$, where $G_{\Delta}^{\Delta}$ is a quotient group of $G_{\Delta}$, since $G_{\Delta}^{\Delta}$ acts flag-transitively on $\mathcal{D}_{0}$. Thus, apart from two genuine numerical known examples, the group $G$ acts point-$2$-transitively on the $2$-$(q+1,k_{1},\lambda_{1})$ design $\mathcal{D}$. Now we use the structure of the group together with some results contained in \cite{CZ} in order to show that the admissible $k_{1}$ leads to no examples of $\mathcal{D}$.
\bigskip

\begin{lemma}\label{tris}
If $\Sigma $ is any maximal $G$-invariant partition, then $G$ acts
point-primitively on $\mathcal{D}_{1}$ and one of the following holds:
\begin{enumerate}
\item $\mathcal{D}$ is the $2$-$(15,8,4)$ design complementary to $\PG(3,2)$ and $ G \cong \PGL (2,5)$;
\item $\mathcal{D}$ is the $2$-$(36,8,4)$ design of \cite[Construction 9]{DP}, and $G\cong \PSigL(2,9)$;
\item $G$ acts point-$2$-transitively on $\mathcal{D}_{1}$ and $v_{1}=q+1$.
\end{enumerate}
\end{lemma}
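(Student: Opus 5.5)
Since $\Sigma$ is maximal, $G_{\Delta}$ is a maximal subgroup of $G$, so $G^{\Sigma}\cong G$ (faithful by the previous lemma) acts primitively on $\Sigma$. By Theorem \ref{CamZie}(2)(iii), $G$ is then flag-transitive and point-primitive on $\mathcal{D}_{1}$. We split according to $k_{1}$. If $k_{1}=2$ or $k_{1}=v_{1}-1$ ($\mathcal{D}_1$ a trivial design or a symmetric $1$-design with $k_1=v_1-1$), flag-transitivity of $G$ on $\mathcal{D}_{1}$ forces $G$ to be $2$-transitive on $\Sigma$. Then $v_{1}=q+1$, apart from the exceptional $2$-transitive actions of $\PSL(2,q)$-groups of degrees $6$ ($q=9$), $7$ ($q=7$), $11$ ($q=11$), $5$ ($q=5$) and $6$ ($q=4$). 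For the generic case we obtain (3). In the exceptional degrees, $G_{\Delta}$ is $A_4$, $S_4$, $A_5$ (or an overgroup), and $v_0=|G_\Delta:G_\alpha|$ is bounded. We then solve (\ref{rel1}) and (\ref{rel2}) with $v_0r_0\mid |G_\Delta^\Delta|$ and let \texttt{GAP} decide. We expect only the $2$-$(15,8,4)$ design with $G\cong\PGL(2,5)$ ($v_1=5$, $v_0=3$) to survive, giving (1).

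If $2<k_{1}<v_{1}-1$, then $(\mathcal{D}_{1},G)$ is one of the cases of Theorem \ref{FT-PP-reduction}. Case (1) there again gives $v_{1}=q+1$ with $G$ $2$-transitive, hence (3). Otherwise $(v_1,k_1)$ is either one of the sporadic entries of Table \ref{tab:alldesigns} or lies in one of the families (3)--(5). For each such possibility, $G_{\Delta}$ is known: $A_4,S_4,A_5,\PGL(2,q_0)$, or a dihedral group $D_{2(q\pm1)/(2,q-1)}$ extended by field automorphisms. $G_{\Delta}^{\Delta}$ is a transitive quotient of $G_\Delta$ of degree $v_0>1$, and $G_\Delta^\Delta$ is flag-transitive on $\mathcal{D}_0$. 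We combine these facts with (\ref{rel1}), which gives $k_0-1=(v_0-1)(k-1)/(v-1)$, and with (\ref{rel2}), which gives
\[
(v_1-1)v_0(k_0-1)=(k_1-1)k_0(v_0-1).
\]
Since $k_1,v_1$ are known, (\ref{rel2}) determines $k_0$ in terms of $v_0$, so $k_0$ is a divisor-type function of $v_0$ subject to $2\le k_0\le v_0$ and $v_0\mid |G_\Delta|$. For the sporadic lines this is a finite search. We hope it leaves only $v_1=6$, $k_1=4$ (from Lines 4--6 with $q=9$), $v_0=6$, $k_0=2$, i.e.\ the $2$-$(36,8,4)$ design with $G\cong\PSigL(2,9)$. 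This is identified via \texttt{GAP} with \cite[Construction 9]{DP}, giving (2).

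The main obstacle is the infinite families (3)--(5) of Theorem \ref{FT-PP-reduction}. There $G_\Delta\cong D_{2(q+1)/(2,q-1)}.\langle\text{field}\rangle$, and $v_0$ divides $2(q+1)f$. We plan to rewrite (\ref{rel2}) with $v_1=q(q-1)/2$ and $k_1\in\{(q+1)/3,\,q/2,\,q-1\}$. Then $\frac{v_1-1}{k_1-1}$ is roughly $q/2$, $q$, or $3q/2$, while $\frac{k_0(v_0-1)}{v_0(k_0-1)}\le 2$. This inequality should immediately give a contradiction for $q\ge 5$ in each family. If some small $q$ slips through, it is covered by the explicit lists already computed.
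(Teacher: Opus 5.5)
Your proposal is correct and follows the same skeleton as the paper's proof: primitivity of $G$ on $\Sigma$, the split $k_1\in\{2,v_1-1\}$ versus $2<k_1<v_1-1$, and Theorem \ref{FT-PP-reduction} in the latter case. Two of your steps differ from the paper, and both are cleaner.

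\textbf{Infinite families.} The paper clears denominators in (\ref{rel2}) family by family and extracts divisibility contradictions ($v_0\mid 2k_0$, then a non-integral $k_0$). You instead use the uniform bound
\[
1<\frac{v_1-1}{k_1-1}=\frac{k_0}{k_0-1}\cdot\frac{v_0-1}{v_0}<2,
\]
which holds because $2\le k_0<v_0$. In the three families $\frac{v_1-1}{k_1-1}$ equals $\frac{3(q+1)}{2}$, $q+1$ and $\frac{q+1}{2}$ respectively, so the bound gives an immediate contradiction. The same bound also gives $k_1>(v_1+1)/2$ in general. This rules out $k_1=2$ at once and reduces the scan of Table \ref{tab:alldesigns} to Lines 5--6 and 35; Line 35 leads to $v_0(27-10k_0)=17k_0$, which has no solution.

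\textbf{The $2$-$(36,8,4)$ design.} You place it in the non-trivial branch, with $(v_1,k_1,v_0,k_0)=(6,4,6,2)$ over the complete $2$-$(6,4,6)$ design. (That is Lines 5--6, not 4--6; Line 4 has $k=3$.) The paper's proof instead attributes assertion (2) to the trivial branch $(v_1,k_1)=(6,5)$ and states that Table \ref{tab:alldesigns} contributes nothing. Your bookkeeping is the consistent one. The tuple $(6,5,6,3)$ found in the trivial branch gives $k=15$, whereas (\ref{rel1}) with $v=36$, $k=8$ forces $k_0=2$ and $k_1=4$.

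\textbf{One omission, shared with the paper.} Your list of exceptional $2$-transitive degrees misses $v_1=28$ for $G=\PGaL(2,8)\cong{}^2G_2(3)$. This is the Ree-unital action the paper itself uses in Proposition \ref{Ifin}. It is easy to dispose of. Here $k_1=27$, so (\ref{rel2}) reads $v_0(27-k_0)=26k_0$. Also $v_0$ must divide $|G_\Delta|=|Z_9:Z_6|=54$, and none of $v_0\in\{3,6,9,18,27,54\}$ gives an integral $k_0$.
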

\begin{proof}
The maximality of $\Sigma $ implies that $G$ acts point-primitively on $\mathcal{%
D}_{1}$ by \cite[Theorem 1.5A and Corollary 1.5A]{DM}.

If $k_{1}=2$ or $v_{1}-1$, then $G$ acts $2$-transitively on $\mathcal{D}_{1}$. Then either $v_{1}=q+1$ and assertion (3) follows, or one of the following holds:
\begin{enumerate}
    \item[(i)] $v_{1}=5$, $k_{1}=2,4$ and $\PSL(2,5) \unlhd G \leq \PGL (2,5)$;
    \item[(ii)] $v_{1}=6$, $k_{1}=2,5$ and $\PSL(2,9) \unlhd G \leq \PGaL (2,9)$;
    \item[(iii)] $v_{1}=7$, $k_{1}=2,6$ and $G \cong \PSL (2,7)$;
    \item[(iv)] $v_{1}=11$, $k_{1}=2,10$ and $G\cong PSL(2,11)$. 
\end{enumerate}    
Assume that (iv) occurs. Then $v=11v_{0}$ and $G_{\Delta}\cong A_{5}$. Then $G_{\Delta}^{\Delta}\cong A_{5}$ since $G_{\Delta}^{\Delta}$ acts flag-transitively on $\mathcal{D}_{0}$. If $k_{1}=2$, then $\frac{k_{0}(v_{0}-1)}{v_{0}(k_{0}-1)}=10$ by (\ref{rel2}) with $v_{0}r_{0} \mid \left\vert G_{\Delta}^{\Delta}\right\vert=60$ and $G_{\Delta}^{\Delta}\cong A_{5}$ admitting a transitive permutation representation of degree $v_{0}$. However, no admissible $(v_{0},k_{0})$ arise. The case $k_{1}=10$ gives admissible parameters $(v_0,b_0,r_0,k_0,\lambda_0)=(6, 15, 10, 4, 6)$, hence $v=66$ and $k=40$. However, we see that there are no proper subgroups of $G=\PSL(2,11)$ of order divisible by $40$ by \cite{At}. A similar argument rules out (iii) as well and leads to the following admissible cases: $(v_{1},k_{1},v_{0},k_{0})=(5,4,3,2)$ in (i) and $(6,5,6,3),(6,5,16,4)$ in (ii). In the latter case $A_{5}\leq G_{\Delta}^{\Delta}\leq S_{5}$, which is not divisible by $16$, whereas it should since $G_{\Delta}^{\Delta}$ acts transitively on the $v_{0}=16$ points of $\mathcal{D}_{0}$ by Theorem \ref{CamZie}(1.iii). By using \texttt{GAP}, we see that the remaining cases lead to assertions (1) and (2).
    
If $2<k_{1}<v_{1}-1$, then $\mathcal{D}_{1}$ is a non-trivial $2$-design admitting $G$ as a flag-transitive point-primitive automorphism group, and hence $\mathcal{D}_{1}$ and $G$ are as in Theorem \ref{FT-PP-reduction}. However, it is a routine exrcise to check that no admissible $(v_{1},k_{1},v_{0},k_{0})$ arise from (\ref{rel1}) when $\mathcal{D}_{1}$ and $G$ are as in Table \ref{tab:alldesigns}. Therefore, either the parameters of $\mathcal{D}_{1}$ are those as in the infinte families (I), (II) or (III), or $G$ acts $2$-transitively on $\mathcal{D}_{1}$, $v_{1}=q+1$, which is the assertion (3).

Assume that the the parameters of $\mathcal{D}_{1}$ are those of as in the family (I). Then $$\left(v_{1},b_{1},k_{1},r_{1}\right)=\left(2^{f-1}(2^{f}-1),2^{f-1}3^{2}(2^{f}-1)\frac{\lambda_1 }{2},\frac{2^{f}+1}{3},3(2^{f}+1)\frac{\lambda_1 }{2}\right)\text{,}$$ with $f$ odd and $\lambda_1$ even, and hence
$$\frac{k_{0}(v_{0}-1)}{v_{0}(k_{0}-1)} =\frac{3(2^{f}+1)}{2}$$ 
by (\ref{rel2}). Then
\begin{equation}\label{endJune}
(2^{f}3+1)v_{0}k_{0}=3(2^{f}+1)v_{0}-2k_{0}\text{,}    
\end{equation}
forcing $v_{0}=2k_{0}$ since $v_{0}>k_{0}$. Indeed, $v_{0}=k_{0}$ implies $v=k$ by (\ref{rel1}), a contradiction. Substituting $v_{0}=2k_{0}$, we get $k_{0}=\frac{2^{f}3+2}{2^{f}3+1}  \notin \mathbb{Z}$, and hence the parameter of $\mathcal{D}_{1}$ cannot be those as in (I) of Theorem \ref{FT-PP-reduction}.

Assume that the the parameters of $\mathcal{D}_{1}$ are those of as in the family (II).  Then
$$\left(2^{f-1}(2^{f}-1),(2^{2f}-1)\lambda_1,2^{f-1},(2^{f}+1)\lambda_1\right),$$ with $\lambda_1 \mid 2f$, and hence  
$$\frac{k_{0}(v_{0}-1)}{v_{0}(k_{0}-1)} =2^{f}+1$$ 
by (\ref{rel2}), which implies $2^{f}v_{0}k_{0}=(2^{f}+1)v_{0}-k_{0}$. Then $v_{0}\mid k_{0}$, and hence $v_{0}=k_{0}$ since $k_{0}\leq v_{0}$, which implies $v=k$ by (\ref{rel1}), a contradiction. 

Assume that the the parameters of $\mathcal{D}_{1}$ are those of as in the family (III). Then
$$\left(v_{1},b_{1},k_{1},r_{1}\right)=\left(\frac{q(q-1)}{2},\frac{q(q+1)\lambda_1}{4},q-1,\frac{(q+1)\lambda_1}{2}\right)$$ with $\lambda_1 \mid 4f$. Then
\begin{equation}\label{endJuneIII}
\frac{k_{0}(v_{0}-1)}{v_{0}(k_{0}-1)} =\frac{q+1}{2}
\end{equation}
by (\ref{rel2}), and arguing as above $v_{0}=2k_{0}$, which substituted in (\ref{endJuneIII}) leads to $(q-1)k_{0}=q$. Then $k_{0}=q=2$ and $v_{0}=4$, and so $v_1=k_1=1$, a contradiction. This completes the proof.
\end{proof}

\bigskip

\begin{lemma}
\label{SigmaMax1}If $\Sigma $ is any maximal $G$-invariant partition, then

\begin{enumerate}
\item $v_{0}=p^{a}\theta +1$ with $\theta =\left( k_{0}-1,v_{0}-1\right) $
and $1\leq a\leq f$.

\item $r_{0}=\frac{p^{a}\lambda _{0}}{\left( k_{0}-1\right) /\theta }$;

\item $k_{0}\mid v_{0}\left( p^{f-a},\frac{p^{a}\lambda _{0}}{\left(
k_{0}-1\right) /\theta }\right) $;

\item $k_{1}=\frac{k_{0}-1}{\theta }\cdot \frac{p^{f-a}v_{0}}{k_{0}}+1$ and $%
k_{1}>\frac{p^{f}(k_{0}-1)}{k_{0}}+1$.
\end{enumerate}
\end{lemma}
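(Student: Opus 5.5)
Throughout, I work in case (3) of Lemma \ref{tris}, since cases (1) and (2) there are explicit designs already identified. So $v_{1}=q+1$, $G$ acts on $\Sigma$ as on $\PG(1,q)$, and $v-v_{0}=v_{0}q$. The plan is to prove (1) arithmetically, using only the divisibility conditions of Lemma \ref{lem:basic-params} together with (\ref{rel1}) and (\ref{rel2}). Items (2)--(4) then follow by substitution.

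\emph{Step 1: $r\mid \lambda(v_{0}-1,q)$.} Fix $\alpha\in\Delta$. The nontrivial subdegrees of $G$ split into two groups: the $G_{\alpha}$-orbits inside $\Delta\setminus\{\alpha\}$, with total length $v_{0}-1$, and the $G_{\alpha}$-orbits outside $\Delta$ (note $G_\alpha\le G_\Delta$), with total length $v_{0}q$. By Lemma \ref{lem:basic-params}(iii), $r$ divides $\lambda d$ for every such subdegree $d$, hence $r$ divides $\lambda$ times each of the two sums. Therefore
\[
r\mid \lambda\,(v_{0}-1,\,v_{0}q)=\lambda\,(v_{0}-1,q),
\]
which is $\lambda$ times a power of $p$ dividing $p^{f}$.

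\emph{Step 2: $v_{0}-1=p^{a}\theta$.} By (\ref{rel1}), $r=\lambda\frac{v-1}{k-1}=\lambda\frac{v_{0}-1}{k_{0}-1}$. Put $\theta=(k_{0}-1,v_{0}-1)$, so that $r\,\frac{k_{0}-1}{\theta}=\lambda\frac{v_{0}-1}{\theta}$ with coprime quotients. Then $\frac{v_{0}-1}{\theta}$ divides $r$, and hence divides $\lambda p^{f}$ by Step 1. More precisely, from $r\frac{k_0-1}{\theta}\mid \lambda p^f\frac{k_0-1}{\theta}$ we get $\frac{v_{0}-1}{\theta}\mid p^{f}\frac{k_{0}-1}{\theta}$, and coprimality gives $\frac{v_{0}-1}{\theta}=p^{a}$ with $a\le f$. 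If $a=0$, then $v_{0}-1=\theta\le k_{0}-1$, so $k_{0}=v_{0}$, and (\ref{rel1}) forces $k=v$, a contradiction; hence $a\ge1$. This proves (1). The formula in (2) is then immediate from $r_{0}=\lambda_{0}\frac{v_{0}-1}{k_{0}-1}$ (which also holds in the symmetric $1$-design case, via $\lambda(v_0-1)=r(k_0-1)$ and $\lambda_0=\lambda/\theta$).

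\emph{Step 3: items (4) and (3).} I substitute $v_{1}-1=q=p^{f}$ and $v_{0}-1=p^{a}\theta$ into (\ref{rel2}), obtaining
\[
k_{1}-1=\frac{p^{f}v_{0}(k_{0}-1)}{k_{0}p^{a}\theta}=\frac{k_{0}-1}{\theta}\cdot\frac{p^{f-a}v_{0}}{k_{0}}.
\]
The inequality in (4) follows from $v_{0}/\theta>p^{a}$, since $v_{0}=p^{a}\theta+1$. For (3), integrality of $k_{1}$ together with $(k_{0},k_{0}-1)=1$ gives $k_{0}\mid p^{f-a}v_{0}$. On the other hand $b_{0}k_{0}=v_{0}r_{0}$ gives $k_{0}\mid v_{0}r_{0}$. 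Combining the two, $k_{0}\mid v_{0}\,(p^{f-a},r_{0})$, and inserting the value of $r_{0}$ from (2) yields (3).

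The main delicate point is Step 1: one must use the total length of the outside subdegrees, $v_{0}q$, which is correct because $G_\alpha$ fixes $\Delta$ and $|\Sigma\setminus\{\Delta\}|=q$. One must also keep the two gcd sums separate, so that the $p$-power emerges cleanly.
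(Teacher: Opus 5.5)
Your proof is correct. For items (2)--(4) it essentially follows the paper. You read $k_1$ off (\ref{rel2}) instead of re-substituting into the rearranged form (\ref{monsone}) of (\ref{rel1}), but these relations are equivalent given $v=v_0v_1$ and $k=k_0k_1$. Your derivation of (3) from $k_0\mid p^{f-a}v_0$ and $k_0\mid v_0r_0$ is the paper's.

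The genuine difference is in item (1).

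\emph{The paper's route.} It gets (1) purely numerically. With $v=v_0(q+1)$, (\ref{rel1}) becomes $\frac{k_0-1}{\theta}(v_0q+v_0-1)=\frac{v_0-1}{\theta}(k-1)$. Coprimality gives $\frac{v_0-1}{\theta}\mid v_0q+v_0-1$, hence $\frac{v_0-1}{\theta}\mid v_0q$. Then $(v_0-1,v_0)=1$ gives $\frac{v_0-1}{\theta}\mid q$.

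\emph{Your route.} You bring in flag-transitivity through the subdegree condition of Lemma \ref{lem:basic-params}(iii) to get $r\mid\lambda(v_0-1,q)$. You then convert this via $r=\lambda\frac{v_0-1}{k_0-1}$. This is valid, but the group-theoretic input is redundant. Subtracting $r(k_0-1)=\lambda(v_0-1)$ (equivalent to (\ref{rel1})) from $r(k-1)=\lambda(v-1)$ gives $r(k-k_0)=\lambda v_0q$. So $r\mid\lambda(v_0-1,v_0q)=\lambda(v_0-1,q)$ holds for purely arithmetic reasons, and your Step 2 then runs unchanged. The paper's route is therefore shorter, and it shows that (1) needs only the Camina--Zieschang relation together with $v_1=q+1$.

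\emph{In your favour.} You state the standing assumption $v_1=q+1$ (case (3) of Lemma \ref{tris}) explicitly. The paper uses $v=v_0(q+1)$ silently, although the lemma's hypothesis (maximality of $\Sigma$) does not force it. For the $2$-$(36,8,4)$ design of Lemma \ref{tris}(2) one has $v_0=v_1=6$ and $k_0=2$. Then $\theta=1$ and $v_0-1=5$ is not a power of $3$, so item (1) fails there.
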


\begin{proof}
By (\ref{rel1}), we have%
\[
\frac{v_{0}(q+1)-1}{k_{0}k_{1}-1}=\frac{v_{0}-1}{k_{0}-1}\text{.} 
\]%
Set $\theta =(k_{0}-1,v_{0}-1)$, then 
\begin{equation}\label{monsone}
\frac{k_{0}-1}{\theta }(v_{0}q+v_{0}-1)=\frac{v_{0}-1}{\theta }%
(k_{0}k_{1}-1) 
\end{equation}
and so $\frac{v_{0}-1}{\theta }\mid v_{0}q+v_{0}-1$. Then $\frac{v_{0}-1}{%
\theta }\mid v_{0}q$ and hence $\frac{v_{0}-1}{\theta }=p^{a}$ for some $0\leq a\leq f$. If $a=0$, then $v_{0}-1=\theta =(k_{0}-1,v_{0}-1)$ and so $%
v_{0}=k_{0}$, which implies $v=k$, a contradiction. Thus $v_{0}=p^{a}\theta
+1$ with $1\leq a\leq f$ and $r_{0}=\frac{(v_{0}-1)\lambda_{0}}{k_{0}-1}=\frac{p^{a}\lambda _{0}}{\left( k_{0}-1\right) /\theta }$, which are (1) and (2) respectively.

Now, substituting (1) in (\ref{monsone}), we get
\begin{equation*}
\frac{k_{0}-1}{\theta }\cdot \left( p^{a+f}\theta +p^{a}\theta +p^{f}\right)
=p^{a}(k_{0}k_{1}-1)
\end{equation*}%
and so%
\[
k=k_{0}k_{1}=\frac{k_{0}-1}{\theta }\cdot \left( p^{f}\theta
+p^{f-a}\right) +k_{0} 
\]%
and $k_{0}\mid p^{f-a}v_{0}$. 
\[
k_{0}\mid p^{f-a}v_{0}\text{ and }k_{1}=\frac{k_{0}-1}{\theta }\cdot \frac{%
p^{f-a}v_{0}}{k_{0}}+1>\frac{p^{f}(k_{0}-1)}{k_{0}}+1\text{,} 
\]%
which is (4). 
Finally, since $k_{0}\mid v_{0}r_{0}= v_{0}\frac{p^{a}\lambda _{0}}{\left( k_{0}-1\right)
/\theta }$, it follows that $k_{0}\mid v_{0}\left( p^{f-a},\frac{%
p^{a}\lambda _{0}}{\left( k_{0}-1\right) /\theta }\right)$, which is (3).

\end{proof}

\begin{lemma}
\label{OndaDobro}If $\Sigma $ is any maximal $G$-invariant partition, and $\Delta \in \Sigma$, then the following hold:
\begin{enumerate}
    \item $G_{\Delta }^{\Delta }=Z_{v_{0}}:Z_{d}$, with $v_{0}=\frac{p^{f}-1}{%
\left\vert X_{(\Delta )}\right\vert _{p^{\prime }}(2,p^{f}-1)}$ and $d\mid f$;
  \item  $v_{0}=p^{a}\theta +1$ with $\theta =\left( k_{0}-1,v_{0}-1\right) $
and $1\leq a <f$.
\end{enumerate}
\end{lemma}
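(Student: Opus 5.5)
The plan is to take the context as set by Lemma \ref{tris}. There, the two sporadic designs are recognised as point-imprimitive examples, so they are treated separately. In the remaining case, which is the one relevant here, $G$ acts point-$2$-transitively on $\Sigma$ with $v_{1}=q+1$. Then $\Sigma$ can be identified with $\PG(1,q)$, and $G_{\Delta}$ is the stabilizer of a point of the projective line, namely $G_{\Delta}\cap X=X_{\Delta}\cong Z_{p}^{f}:Z_{(q-1)/(2,q-1)}$. By Theorem \ref{CamZie}(1.iii), $G_{\Delta}^{\Delta}$ acts flag-transitively on $\mathcal{D}_{0}$, and in particular transitively on the $v_{0}$ points of $\Delta$. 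Also $X_{\Delta}^{\Delta}$ is transitive on $\Delta$, since $X$ is point-transitive and $X_{\Delta}$ is the stabilizer of the block of imprimitivity.

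The first step determines the kernel $X_{(\Delta)}$. It is normal in $X_{\Delta}$, and the normal subgroups of the Borel subgroup $Z_{p}^{f}:Z_{(q-1)/(2,q-1)}$ either contain the Sylow $p$-subgroup $U\cong Z_{p}^{f}$ or are trivial. More precisely, a normal subgroup not containing $U$ meets $U$ in a $Z_{(q-1)/(2,q-1)}$-invariant subgroup. The cyclic complement acts irreducibly (or at least fixed-point-freely) on $U$, so that intersection is trivial or all of $U$. I would then exclude $X_{(\Delta)}\cap U=1$. If that held, $X_{(\Delta)}$ would centralize $U$, since both are normal and meet trivially. But $C_{X_{\Delta}}(U)=U$, so $X_{(\Delta)}=1$. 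Then $X_{\Delta}$ acts faithfully on $\Delta$, and $v_{0}$ is the index of a subgroup $X_{\alpha}$ of $X_{\Delta}$ containing no nontrivial normal subgroup of $X_{\Delta}$, hence not containing $U$. In that case $p\mid v_{0}$.

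I would instead use a cleaner route for this step. Since $v_{0}=p^{a}\theta+1$ by Lemma \ref{SigmaMax1}(1), we get $p\nmid v_{0}$. Because $|X_{\Delta}|_{p}=q$, the group $X_{\alpha}$ contains a Sylow $p$-subgroup of $X_{\Delta}$, which must be the normal subgroup $U$. Hence $U\le X_{\alpha}$ for every $\alpha\in\Delta$, so $U\le X_{(\Delta)}$. Therefore $X_{\Delta}^{\Delta}$ is a quotient of $Z_{(q-1)/(2,q-1)}$, namely cyclic of order $\frac{q-1}{|X_{(\Delta)}|_{p'}(2,q-1)}$. A transitive abelian group acts regularly, which gives $X_{\Delta}^{\Delta}\cong Z_{v_{0}}$ with $v_{0}$ as claimed.

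Next, $G_{\Delta}=X_{\Delta}\cdot\langle$diagonal and field automorphisms$\rangle$, and $G_{\Delta}/X_{\Delta}$ embeds in $Z_{(2,q-1)}\times Z_{f}$. The diagonal part of $\PGL(2,q)$ acts on $\Delta$ together with the torus, through the cyclic group $Z_{q-1}$ modulo the kernel. I would therefore argue that $G_{\Delta}^{\Delta}=Z_{v_{0}}:Z_{d}$, where $Z_{v_{0}}$ is the image of the full torus of $\PGL(2,q)$ intersected with $G$. Its regularity (being a transitive abelian group) gives the displayed $v_{0}$ after adjusting for the factor $(2,q-1)$ when $G\le\PSigL$. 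The quotient $Z_{d}$ is the image of the field automorphisms, so $d\mid f$. The point I expect to be delicate is the bookkeeping of the diagonal factor when $G\not\le\PSigL(2,q)$: one must check that the image of $\PGL$-torus elements is still cyclic of order $v_{0}$ as stated, since a transitive abelian group cannot be strictly larger than its regular subgroup.

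For (2), it remains only to exclude $a=f$ in Lemma \ref{SigmaMax1}(1). If $a=f$ then $v_{0}=q\theta+1>q$. By (1), however, $v_{0}\le (q-1)/(2,q-1)<q$, which is a contradiction. So $1\le a<f$.
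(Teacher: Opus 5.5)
Your proof is correct, but it reaches the key step, regularity of $X_\Delta^\Delta$, by a different route from the paper.

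\textbf{The paper's route.} The paper does not use point-transitivity of $X$ here; it uses only transitivity of $G_\Delta$ on $\Delta$.
\begin{itemize}
\item It gets $Z_p^f\le X_{(\Delta)}$ from $p\nmid v_0$ and normality in $G_\Delta$.
\item It excludes $X_\Delta^\Delta=1$: otherwise $G_\Delta^\Delta$ would be abelian, hence regular, contradicting $r_0>1$.
\item At that stage it only knows that $X_\Delta^\Delta\cong Z_c$ is semiregular with $c\mid v_0$.
\item It proves $c=v_0$ by a flag-transitivity count. For any union $O$ of $G_x^\Delta$-orbits, $r_0\,|C\cap O|=|O|\lambda_0$. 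Taking $O=x^{X_\Delta^\Delta}\setminus\{x\}$ gives $v_0-1\mid (c-1)(k_0-1)$. Taking $O=y^{X_\Delta^\Delta}$ for another orbit gives $v_0-1\mid c(k_0-1)$.
\item Together these give $v_0-1\mid k_0-1$, which is impossible. So $X_\Delta^\Delta$ is regular and $G_\Delta^\Delta=X_\Delta^\Delta:G_x^\Delta$.
\end{itemize}

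\textbf{Your route.} You invoke part (2) of the preceding lemma, that $X$ is point-transitive. Then $X_\Delta$ is transitive on the block $\Delta$, and ``transitive cyclic implies regular'' gives $X_\Delta^\Delta\cong Z_{v_0}$, with the stated formula, immediately. The same observation shows that the image of $G_\Delta\cap\PGL(2,q)$ cannot be larger than $X_\Delta^\Delta$, which resolves your worry about diagonal automorphisms. Your argument is shorter and avoids the design-theoretic counting. The paper's argument is self-contained within the flag-transitive structure of $\mathcal{D}_0$ and does not depend on the earlier lemma.

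Two points need tightening.
\begin{itemize}
\item $G$ need not contain any pure field automorphism; for example, take $G=\mathrm{M}_{10}$ with $q=9$. So justify $d\mid f$ as follows: $G_\Delta^\Delta/X_\Delta^\Delta$ is a quotient of $G_\Delta/(G_\Delta\cap\PGL(2,q))$, which embeds in $\PGaL(2,q)/\PGL(2,q)\cong Z_f$.
\item The splitting $Z_{v_0}:Z_d$ holds because the point stabilizer $G_x^\Delta$ is a complement to the regular normal subgroup $X_\Delta^\Delta$; say so explicitly.
\end{itemize}

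Your deduction of $a<f$ from $v_0\le (q-1)/(2,q-1)<q$ is exactly what the paper leaves implicit.
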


\begin{proof}
The group $X$ acts $2$-transitively on the $v_{1}=q+1$ points of $\mathcal{D}_{1}$, hence $X_{\Delta }\cong Z^{f}_{p}:Z_{\frac{p^{f}-1}{(2,p^{f}-1)}}$. Furthermore, $X_{\Delta }\trianglelefteq G_{\Delta }\leq A\Gamma L(1,p^{f})$. It follows from Lemma \ref{SigmaMax1}(1) that $(v_{0},p)=1$, hence $(Z_{p})^{f}\leq X_{x}$ for some $x\in \Delta $. Then $(Z_{p})^{f}\leq X_{(\Delta )}$ since $(Z_{p})^{f}\trianglelefteq G_{\Delta }$ and $G_{\Delta }$ acts
transitively on $\Delta $.

If $X_{(\Delta )}=Z_{p}^{f}:Z_{\frac{p^{f}-1}{(2,p^{f}-1)}}$, then $%
G_{\Delta }^{\Delta }$ is isomorphic to a subgroup of $Z_{f}$. Then $%
G_{\Delta }^{\Delta }$ is abelian, and hence $G_{\Delta }^{\Delta }$ is
regular since $G_{\Delta }^{\Delta }$ acts transitively on $\Delta $,
however this is impossible since $r_{0}\mid \left\vert G_{x}^{\Delta
}\right\vert $ with $r_{0}>1$. Thus $Z_{p}^{f}\leq X_{(\Delta )}<
(Z_{p})^{f}:Z_{\frac{p^{f}-1}{(2,p^{f}-1)}}$, and hence $X_{\Delta }^{\Delta
}\cong Z_{c}$, with $c>1$ and $c\left\vert X_{(\Delta )}\right\vert _{p^{\prime }}=%
\frac{p^{f}-1}{(2,p^{f}-1)}$, acts point-semiregularly on $%
\mathcal{D}_{0}$. Since $G_{\Delta }^{\Delta }$ acts transitively on $\Delta 
$ and $1\neq X_{\Delta }^{\Delta }\trianglelefteq G_{\Delta }^{\Delta }$, it
follows that $c\mid v_{0}$. Moreover, $G_{\Delta }^{\Delta }/X_{\Delta
}^{\Delta }\cong Z_{d}$, with $d\mid f$, permutes transitively the $\frac{v_{0}%
}{c}$ orbits on $\Delta $ under $X_{\Delta }^{\Delta }$. Since $G_{\Delta
}^{\Delta }$ acts flag-transitively on $\mathcal{D}_{0}$, it follows that $%
r_{0}\mid \left\vert G_{x}^{\Delta }\right\vert $. On the other hand, $%
G_{x}^{\Delta }\cap X_{\Delta }^{\Delta }=1$ since $X_{\Delta }^{\Delta }$
acts semiregularly on $\Delta $. Therefore, $G_{x}^{\Delta }$ is isomorphic
to a subgroup of $Z_{d}$, and $r_{0}\mid d$.

Note that $X_{\Delta
}^{\Delta }:G_{x}^{\Delta }\trianglelefteq G_{\Delta }^{\Delta }$ since $G_{\Delta }^{\Delta }/X_{\Delta }^{\Delta }\cong Z_{d}$. Then $\Delta $ is a
union of $(X_{\Delta }^{\Delta }:G_{x}^{\Delta })$-orbits of equal lengths.
On the other hand, $x^{X_{\Delta }^{\Delta }}$ is both a $X_{\Delta
}^{\Delta }$-orbit and a $(X_{\Delta }^{\Delta }:G_{x}^{\Delta })$-orbit.
Thus, each $X_{\Delta }^{\Delta }$-orbit on $\Delta $ has length $c$ and is a $(X_{\Delta
}^{\Delta }:G_{x}^{\Delta })$-orbit. In particular, the $x^{X_{\Delta
}^{\Delta }}\setminus \{x\}$ is a union of $G_{x}^{\Delta }$-orbits. If $C$
is any block of $\mathcal{D}_{0}$ containing $x$, then 
\begin{equation*}
\frac{\left( v_{0}-1\right) \lambda _{0}}{k_{0}-1}\left\vert C\cap
(x^{X_{\Delta }^{\Delta }}\setminus \{x\})\right\vert =\left\vert
x^{X_{\Delta }^{\Delta }}\setminus \{x\}\right\vert \lambda _{0}\text{,}
\end{equation*}
by Lemma \ref{lem:basic-params}(iii) with $\mathcal{D}_{0}$ and $G_{\Delta }^{\Delta }$ in the role of $\mathcal{D}$ and $G$, respectively. So, we obtain 
$$\left( v_{0}-1\right) \left\vert C\cap (x^{X_{\Delta }^{\Delta }}\setminus
\{x\})\right\vert =(c-1)(k_{0}-1)\text{,}$$
which implies $v_{0}-1 \mid (c-1)(k_{0}-1)$.

If $G_{\Delta }^{\Delta }\neq X_{\Delta }^{\Delta
}:G_{x}^{\Delta }$, then there is $y\in \Delta \setminus x^{X_{\Delta
}^{\Delta }}$, and hence $y^{X_{\Delta }^{\Delta }}$ is a $(X_{\Delta
}^{\Delta }:G_{x}^{\Delta })$-orbit. Then $y^{X_{\Delta }^{\Delta }}$ is a
union $G_{x}^{\Delta }$-orbits and so%
\begin{equation*}
\frac{\left( v_{0}-1\right) \lambda _{0}}{k_{0}-1}\left\vert C\cap
y^{X_{\Delta }^{\Delta }}\right\vert =\left\vert y^{X_{\Delta }^{\Delta
}}\right\vert \lambda _{0}\text{,}
\end{equation*}%
by Lemma \ref{lem:basic-params}(iii), which implies $v_{0}-1 \mid c(k_{0}-1)$. Therefore $v_{0}-1\mid k_{0}-1$, since we have seen that $v_{0}-1 \mid (c-1)(k_{0}-1)$, which is a contradiction. Thus $%
G_{\Delta }^{\Delta }=X_{\Delta }^{\Delta }:G_{x}^{\Delta }$ and $X_{\Delta
}^{\Delta }$ acts regularly on $\Delta $. In particular $c=v_{0}$ and $%
\left\vert G_{x}^{\Delta }\right\vert \mid f$. Hence, $G_{\Delta }^{\Delta
}=Z_{v_{0}}:Z_{d}$ \ with $d\mid f$, which is the assertion (1).

Thus $v_{0}=\left\vert G_{\Delta }:G_{x}\right\vert=\left\vert X_{\Delta }:X_{(\Delta )}\right\vert $, and hence $v_{0}=\frac{p^{f}-1}{\left\vert X_{(\Delta)}\right\vert _{p^{\prime }}(2,p^{f}-1)}$. Then $a<f$ since $v_{0}=p^{a}\theta+1$ with $\theta =\left( k_{0}-1,v_{0}-1\right)$ and $1\leq a \leq f$ by Lemma \ref{SigmaMax1}(1), and we obtain the asserton (2). 
\end{proof}

\bigskip

\begin{lemma}
\label{Uredu}If $\Sigma $ is any maximal $G$-invariant partition, one of the following holds:
\begin{enumerate}
\item $k_{0}=2$, $\mathcal{D}$ is the $2$-$(15,8,4)$ design complementary to $\PG(3,2)$ and $ G \cong \PGL (2,5)$;
\item $k_{0}>2$, and one of the following holds:
\begin{enumerate}
    \item $\mathcal{D}$ is the symmetric $2$-$(85,64,48)$ design complementary to $\PG(3,4)$ and $G\cong \PGaL (2,2^4)$;
    \item $p\mid \frac{p^{f-a}v_{0}}{k_{0}}$, $%
    (k_{1},p)=1$ and $k_{1}>\frac{2}{3}p^{f}+1$, and one of the following holds:
    \begin{enumerate}
    \item[(i)] $p \nmid \frac{k_{0}-1}{\theta }$ and $p^{a}\mid d$;
    \item[(ii)] $p \mid \frac{k_{0}-1}{\theta }$ and $k_{0}\mid v_{0}$.
    \end{enumerate}
\end{enumerate}

\end{enumerate}
\end{lemma}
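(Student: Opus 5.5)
The plan is to work entirely inside the situation produced by Lemma \ref{tris}(3), Lemma \ref{SigmaMax1} and Lemma \ref{OndaDobro}. There $v_1=q+1$, $G_\Delta^\Delta=Z_{v_0}:Z_d$ with $d\mid f$, and $v_0=p^a\theta+1$ with $1\le a<f$ and $v_0\mid \frac{p^f-1}{(2,p^f-1)}$. Moreover $r_0\mid |G_x^\Delta|$, which divides $d$, as shown in the proof of Lemma \ref{OndaDobro}, and $k_0\mid p^{f-a}v_0$. I would split according to whether $k_0=2$ or $k_0>2$.

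\emph{Case $k_0=2$.} Here $\theta=(1,v_0-1)=1$, so $v_0=p^a+1$, and $\mathcal{D}_0$ is the complete design with $r_0=(v_0-1)\lambda_0=p^a\lambda_0$. Since $r_0\mid d\mid f$, we get $p^a\le f$, so $a$ is tiny compared with $f$. Simultaneously $p^a+1$ must divide $p^f-1$, which forces $a\mid f$ with $f/a$ even. From Lemma \ref{SigmaMax1}(4) we have $k_1=\frac{p^{f-a}(p^a+1)}{2}+1$. Feeding this into $k=2k_1$, $v=(q+1)(p^a+1)$, $r=\lambda(v-1)/(k-1)$ and $r\mid \lambda\gcd(|G_\alpha|,v-1)$, where $|G_\alpha|$ divides $|X_{(\Delta)}|\cdot f\cdot(2,q-1)$, should give $p^a\mid f$ together with a divisibility bound that leaves only very small $q$. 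I expect only $q=4$ (so $G\cong\PGaL(2,4)\cong\PGL(2,5)$, $v_0=3$, $v=15$, $k=8$) to survive. A \texttt{GAP} check then identifies $\mathcal{D}$ as the complement of $\PG(3,2)$, which is assertion (1).

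\emph{Case $k_0>2$.} Put $N=\frac{p^{f-a}v_0}{k_0}\in\mathbb{Z}$, so that $k_1=\frac{k_0-1}{\theta}N+1$.

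First suppose $p\nmid N$. Since $(v_0,p)=1$, this forces $p^{f-a}\mid k_0\le v_0\le\frac{p^f-1}{(2,p^f-1)}$. Combining this with $r_0=\frac{p^a\lambda_0}{(k_0-1)/\theta}\mid d\mid f$ and with $k_0\le r_0$ bounds both $p^a$ and $p^{f-a}$ in terms of $f$. This leaves finitely many $(q,v_0,k_0)$. I expect the only survivor to be $q=16$, $a=2$, $v_0=5$, $k_0=4$, $k_1=16$, which gives $v=85$, $k=64$. There the group is $\PGaL(2,16)$ and \texttt{GAP} yields the complement of $\PG(3,4)$, assertion (2)(a).

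Now suppose $p\mid N$. Then $k_1\equiv1\pmod p$, so $(k_1,p)=1$. Since $k_0\ge3$, Lemma \ref{SigmaMax1}(4) gives $k_1>\frac{p^f(k_0-1)}{k_0}+1\ge\frac{2}{3}p^f+1$. For the subcases I use $r_0\mid d$. If $p\nmid\frac{k_0-1}{\theta}$, then $p^a$ divides $r_0=\frac{p^a\lambda_0}{(k_0-1)/\theta}$, hence $p^a\mid d$; this is (i). If $p\mid\frac{k_0-1}{\theta}$, then $k_0\equiv1\pmod p$, so $(k_0,p)=1$. Lemma \ref{SigmaMax1}(3) then gives $k_0\mid v_0\,(p^{f-a},r_0)$, which reduces to $k_0\mid v_0$; this is (ii).

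The main obstacle will be the finite elimination in the $k_0=2$ case and in the case $p\nmid N$, that is, obtaining sharp enough arithmetic bounds before handing the residual cases to \texttt{GAP}. For this I would lean on $r_0\mid f$ together with $v_0\mid p^f-1$.
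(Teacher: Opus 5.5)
Your treatment of the case $k_0>2$ is correct and is essentially the paper's argument. You split on whether $p\mid N$ rather than on whether $p\mid\frac{k_0-1}{\theta}$, and you use $p^{f-a}\le k_0\le r_0\le d\le f$ together with $p^a\mid r_0\mid f$ instead of $p^{f-a}\mid d$. Both routes give $p^f\le f^2$ and leave only $q=16$, $a=2$, $v_0=5$, $k_0=4$.

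The gap is in the case $k_0=2$. This is the substantive part of the lemma, and you only sketch it. Moreover, the constraints you name cannot give the bound you expect. Since $\frac{v-1}{k-1}=\frac{v_0-1}{k_0-1}=p^a$, you have $r=p^a\lambda$. Now $p^a$ divides $v-1$, and it also divides $|G_\alpha|$, because $G_\alpha\geq X_{(\Delta)}$ contains a Sylow $p$-subgroup of $X$. Hence $r\mid\lambda\gcd(|G_\alpha|,v-1)$ holds automatically. Likewise, $r_0\mid d\mid f$ and $v_0\mid p^f-1$ only give $p^a\mid f$ and $2a\mid f$. These leave infinitely many candidates, for instance $p=2$, $a=1$, $v_0=3$ with every even $f$. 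So the claim that only $q=4$ survives is not supported by anything in your plan.

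You need an ingredient that actually bounds $f$. The paper proceeds in two steps.

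First, when $k_0=2$, the group $G_\Delta^\Delta=Z_{v_0}:Z_d$ is $2$-transitive on $\Delta$. In your own terms: $v_0-1\mid r_0\mid d$, and $Z_d$ embeds in $\Aut(Z_{v_0})$, so $v_0-1\le\varphi(v_0)$. Hence $v_0=p^a+1$ is prime. This forces $p=2$ and $v_0=2^{2^j}+1$ to be a Fermat prime with $2^{2^j}\mid f$.

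Second, the paper uses flag-transitivity of $G$ on $\mathcal{D}_1$. Here $k_1=2^{f-2^j-1}(2^{2^j}+1)+1$ must divide $|M|\cdot f$ for a maximal subgroup $M$ of $X$ containing $X_B$. Running through Table~\ref{MaxDickson} with $\gcd(k_1,2^f+1)=\gcd(2^{2^j}-1,2^f+1)$ and $\gcd(k_1,2^f-1)\mid 3\cdot 2^{2^j}+1$ leaves only $f=2$.

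If you prefer to stay with the parameters of $\mathcal{D}$, you must at least use $b=vr/k\in\mathbb{Z}$ and $r\mid|G_\alpha|$, not merely $r\mid\lambda|G_\alpha|$. With $k=p^{f-a}(p^a+1)+2$ one checks that $\gcd\bigl(k,(p^f+1)(p^a+1)p^a\bigr)\mid 8$, so $k\mid 8\lambda$. On the other hand, $\lambda\mid p^{f-a}(p^f-1)f$. Since $\gcd(k,p^{f-a})\mid 2$ and $\gcd(k,p^f-1)\mid 3p^a+1$, this gives $p^f<k\le 16(3p^a+1)f\le 16(3f+1)f$. That leaves a finite check. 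Without first proving that $v_0$ is prime, this check must also cover odd $p$; for example $q=3^6$, $v_0=4$ has to be excluded separately.
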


\begin{proof}
Assume that $k_{0}=2$, then $\theta =1$ and $v_{0}=p^{a}+1$. Moreover, the flag-transitvity of $G_{\Delta }^{\Delta }=X_{\Delta }^{\Delta }:G_{x}^{\Delta }$ 
on $\mathcal{D}_{0}$ by Theorem \ref{CamZie} (1.iii) implies that
$G_{\Delta }^{\Delta }\cong Z_{v_{0}}:Z_{d}$ 
acts point-$2$-transitively on $%
\Delta $. Thus $v_{0}=p^{a}+1$ is a prime by \cite[Theorem 4.1B]{DM}, and hence $p=2$ and $a=2^{j}$, that is to say $v_{0}$ is a Fermat prime. Thus $v_{0}=2^{2^{j}}+1$ and
$$k_{1}=2^{f-2^{j}-1}(2^{2^{j}}+1)+1\text{.}$$
Again by the $2$-transitivity $G_{\Delta }^{\Delta }$ on $\Delta$, it results that $v_{0}-1=2^{2^{j}}$ divides $d$, and hence $f$, since $d$ divides $f$. Then $f$ is even and either $f>2^{j}+1$, or $j=0$ and $f=2$.

Assume that $f> 2^{j}+1$. Then $f\geq 4$ since $f$ is even, and $k_{1}$ is odd. Since $G$ acts flag-transitively on the $2$-$(2^{f}+1,k_{1},\lambda_{1})$ design $\mathcal{D}_1$, then $k_{1}\mid \left\vert G_{B}\right\vert $, where $B$ is any block of $\mathcal{D}_1$. Hence $k_{1}\mid \left\vert M \right\vert \cdot f$ with $M$ a maximal subgroup of $X$ containing $X_{B}$. Furthermore, $M$ is of the groups listed in Table \ref{MaxDickson}. We are going to prove that the previous divisibility conditions leads to no cases for $f\geq 2^{2^{j}}> 2^{j}+1$. 

Assume that $M$ is one of the groups $A_{4},S_{4}$ or $A_{5}$. Then $f=1,2$ by Table \ref{MaxDickson}, whereas $f\geq 4$, a contradiction.

Assume that $M \cong D_{2(2^{f}+1)}$. Then $k_{1} \mid 2(2^{f}+1)f$, and hence $k_{1}\mid \left( 2^{f}+1\right)\frac{f}{2^{2^j}}$ since $k_{1}$ is odd. Since 
\begin{equation}\label{gcd1}
(k_{1},2^{f}+1)=(2k_{1},2^{f}+1)=(2^{f-2^{j}}+1,2^{f}+1)=(2^{2^{j}}-1,2^{f}+1)\text{,}   
\end{equation}
it follows that $2^{f-2^{j}-1}(2^{2^{j}}+1)<k_{1}\leq (2^{2^{j}}+1)\frac{f}{2^{2^j}}$, and so $2^{f-1}<f$ which does not have admissible solutions.

Assume that $M \cong D_{2(2^{f}-1)}$ or $M \cong Z_{2}^{f}:Z_{2^{f}-1}$ or $M \cong PGL(2,2^{f/2})$. Then $k_{1}\mid \left( 2^{f}-1\right)\frac{f}{2^{2^j}}$ since $k_{1}$ is odd. Since 
\begin{equation}\label{gcd2}
(k_{1},2^{f}-1)=(2k_{1},2^{f}-1)=(2^{f-2^{j}}+3,2^{f}-1)=(3\cdot 2^{2^{j}}+1,2^{f}-1)\text{,}
\end{equation}
it follows that $2^{f-2^{j}-1}(2^{2^{j}}+1)<k_{1}<3 (2^{2^{j}}+1)\frac{f}{2^{2^j}}$, and so $2^{f-1} < 3f$ forcing $f=4$, and hence $j=0$ or $1$, since $2^{2^j} \mid f$. Therefore, $k_1=25$ or $11$, respectively,. However, both cases are excluded since the none of the values of $k_{1}$ divides  $\left\vert G^{\Sigma
}\right\vert $ with $G^{\Sigma }=G\leq \PGaL(2,2^{4})$.

Assume that $M \cong PGL(2,2^{f/t})$ with $t$ an prime. Then $k_{1}\mid \left( 2^{2f/t}-1\right)\frac{f}{2^{2^j}}$  and by using (\ref{gcd1}) and (\ref{gcd2}), we get 
$$2^{f-2^{j}-1}(2^{2^{j}}+1)<3(2^{2^{j}}+1)^{2}\frac{f}{2^{2^j}}$$
which implies 
$$2^{f-1}<3(2^{2^{j}}+1)f\leq 3(f+1)f,$$
and we get $(f,j,t)=(4,0,2)$, $(4,1,2)$, $(6,0,3)$, $(8,0,2)$, or $(8,1,2)$. Then $k_1=13,11,49,193$ or $161$, respectively. However, in both cases we reach a contradiction since $k_1$ does not divide $\left\vert G^{\Sigma
}\right\vert $ with $G^{\Sigma }=G\leq \PGaL(2,2^{f})$.

Finally, assume that $j=0$ and $f=2$. Then $(v_{0},k_{0},v_{1},k_{1})=(3,2,5,4)$ and
hence $\mathcal{D}$ is the complementary design of $\PG(3,2)$ and $G\cong \PGaL(2,4)\cong \PGL(2,5)$. Thus, we obtain (1).

Assume that $k_{0}>2$. Then $k_{1}>\frac{2}{3}p^{f}+1$. Also, $%
r_{0}\left\vert B\cap y^{G_{x}^{\Delta }}\right\vert =\left\vert
y^{G_{x}^{\Delta }}\right\vert \lambda _{0}$ for any $y\in \Delta \setminus
\left\{ x\right\} $ by Lemma \ref{lem:basic-params}(iii) with $\mathcal{D}_{0}$ and $G_{\Delta }^{\Delta }$ in the role of $\mathcal{D}$ and $G$, respectively. Thus 
\[
\frac{p^{a}\theta \lambda _{0}}{k_{0}-1}\left\vert B\cap y^{G_{x}^{\Delta
}}\right\vert =\left\vert y^{G_{x}^{\Delta }}\right\vert \lambda _{0} 
\]%
by Lemma \ref{SigmaMax1}(2), and hence we get $p^{a}\mid \left\vert y^{G_{x}^{\Delta }}\right\vert \frac{k_{0}-1}{%
\theta }$. 

If $p \nmid \frac{k_{0}-1}{%
\theta }$, then $p^{a}\mid \left\vert y^{G_{x}^{\Delta }}\right\vert $ and
so $p^{a}\mid d$. Therefore, $p^{a}\mid f$. \ If $p^{f-a}\mid k_{0}$, then $%
p^{f-a}\mid \left\vert G_{\Delta }^{\Delta }\right\vert $ and so $%
p^{f-a}\mid d$ and hence $p^{f-a}\mid f$. Then $p^{f}\mid f^{2}$, and so $%
(p,f,a)=(2,2,1)$ or $(2,4,2)$. The former gives $v_1=4$) and $3<k_{1}<4$, a contradiction. The latter, gives $v_1=17$ and $12\le k_1\le 16$, $v_0=4\theta+1$, and $v_0\mid \lvert G_\Delta^\Delta\rvert \mid \lvert A\Gamma L(1,2^4)\rvert$. Using \ref{rel2}, we find the admissible parameters $(v_1,k_1,v_0,k_0)=(17, 16, 5, 4)$. Hence, $v=85$ and $k=64$. Using \texttt{GAP}, we obtain (2).
Thus, we can now assume $%
p^{f-a}\nmid k_{0}$ and hence $p\mid \frac{p^{f-a}v_{0}}{k_{0}}$. If $p\mid k_{0}-1$, then $k_{0} \mid v_{0}$ by Lemma \ref{SigmaMax1}(3), and hence $k_{0} \mid p^{f}-1$, and again $p\mid \frac{p^{f-a}v_{0}}{k_{0}}$. Therefore $p\mid \frac{p^{f-a}v_{0}}{k_{0}}$ in any case, and hence 
$(k_{1},p)=1$ since $k_{1}=\frac{k_{0}-1}{\theta }\cdot \frac{p^{f-a}v_{0}}{%
k_{0}}+1>\frac{2}{3}p^{f}+1$ being $k_{0}>2$. Thus, we obtain (3).
\end{proof}

\bigskip

On the basis of Lemma \ref{Uredu}, in the sequel we may assume that $k_{0}>2$. 

Set $N=X_{\Delta }^{\Delta }\cong Z_{v_{0}}$ and $K=G_{x}^{\Delta }\cong
Z_{d}$. Write $\left\vert N\right\vert =\prod_{i=1}^{h}u_{i}^{n_{i}}$ where $%
\left( u_{1},...,u_{h}\right) $ is an ordering of all the pairwise distinct
primes factors of $v_{0}$. Since $K$ normalizes the cyclic group $X_{\Delta
}^{\Delta }$, it follows that $K$ normalizes each of the subgroups $N_{j}$
of $N$ of order $v_{0}^{(j)}=\prod_{i=1}^{j}u_{i}^{n_{i}}$. Then $%
K<N_{1}K<\cdots <N_{h-1}K<N_{h}K=NK$. 

Set $\Theta _{j}=x^{N_{j}K}=x^{N_{j}}$
for $j=1,...,h$. Clearly, $\Theta _{h}=\Delta $. The aforementioned chain of
subgroups leads to this chain of $NK$-invariant partitions of $\Delta $, say 
$\Theta _{1}^{NK}<\cdots <\Theta _{h-1}^{NK}$, by \cite[Theorem 1.5A]{DM}.
Moreover, $(NK)_{\Theta _{j}}=N_{j}K$ for each $j=1,...,h$ since $N$ acts point-regularly on $\Delta$ and $K_{x} \leq (NK)_{x}$. Therefore, $(NK)_{\Theta _{j}}^{\Theta _{j}}\cong Z_{v_{0}^{(j)}}:Z_{d^{(j)}}$ for some divisor $d^{(j)}$ of $d$. Set $K_{j}=(NK)_{x}^{\Theta _{j}}$, then $K_{j}\cong Z_{d^{(j)}}$. 
 
Since $NK$ acts flag-transitively on $\mathcal{D}_{0}$, the set $%
\Theta _{h-1}$ is the point set of a (possibly trivial) $2$-design admitting the group $%
(N_{h-1}K)_{\Theta _{h-1}}^{\Theta _{h-1}}$ as a flag-transitive automorphism group by Theorem \ref{CamZie}(1), say $\mathcal{D}_{0}^{(h-1)}$. By induction, we obtain
a chain $\mathcal{D}_{0}^{(1)}<\cdots <\mathcal{D}_{0}^{(h-1)}<\mathcal{D}%
_{0}<\mathcal{D}$ of flag-transitive $2$-designs. Denote by $%
(v_{0}^{(j)},b_{0}^{(j)},k_{0}^{(j)}, r_{0}^{(j)},\lambda _{0}^{(j)})$ the
parameters tuple $\mathcal{D}_{0}^{(j)}$ for $j=1,...h-1$. It should be
stressed out that different orderings of primes factors of $v_{0}$ lead to
distinct chains of flag-transitive $2$-designs.

\bigskip

\begin{proposition}
\label{v0prime}$v_{0}$ is an odd prime distinct from $p$, and $k_{0}=p^{e}$ with $1\leq e\leq
\min (f-a-1,a)$.
\end{proposition}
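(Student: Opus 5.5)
The plan is to exploit the structure $G_{\Delta}^{\Delta}=NK\cong Z_{v_0}:Z_d$ from Lemma \ref{OndaDobro}, with $N$ regular on $\Delta$ and $d\mid f$, together with the chain of flag-transitive designs $\mathcal{D}_{0}^{(1)}<\cdots<\mathcal{D}_{0}^{(h-1)}<\mathcal{D}_0$ just constructed. First I show that $h=1$, i.e.\ $v_0$ is a prime power $u^{n}$, and then that $n=1$. Suppose $h\geq 2$. Then $\Theta_{h-1}^{NK}$ is a non-trivial $NK$-invariant partition of $\Delta$. Applying Theorem \ref{CamZie} to $(\mathcal{D}_0,NK,\Theta_{h-1}^{NK})$ gives (\ref{rel1}) in the form $\frac{v_0-1}{k_0-1}=\frac{v_0^{(h-1)}-1}{k_0^{(h-1)}-1}$, and the quotient design on the $u_h^{n_h}$ classes is acted on flag-transitively by a quotient of $NK$. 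The argument of Lemma \ref{OndaDobro} then applies verbatim to $\mathcal{D}_0$ with $NK$ in the role of $G$: the point stabilizer $K\cong Z_d$ has $r_0\mid |K|$, and every nontrivial $K$-orbit length $\ell$ satisfies $r_0\mid \lambda_0\ell$. The orbit $x^{N_{h-1}}\setminus\{x\}$ is a union of $K$-orbits, and so is each coset of $N_{h-1}$. Counting as in that proof gives $v_0-1\mid (v_0^{(h-1)}-1)(k_0-1)$ and $v_0-1\mid v_0^{(h-1)}(k_0-1)$. Together these force $v_0-1\mid k_0-1$, which is impossible. Hence $h=1$ and $v_0=u^{n}$.

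Next I rule out $n\geq 2$ by the same device, using the subgroup chain $Z_u<Z_{u^2}<\cdots$ of the cyclic $N$, which $K$ normalizes. This gives $v_0=u$ prime. Since $(v_0,p)=1$ by Lemma \ref{SigmaMax1}(1), we have $u\neq p$. Moreover $v_0=p^{a}\theta+1$ with $a\geq 1$. If $p$ is odd this makes $v_0$ even, so $v_0=2$, which contradicts $k_0>2$. If $p=2$, then $v_0$ is odd. Either way $v_0$ is an odd prime distinct from $p$ (for $p$ odd the even case is excluded directly).

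For $k_0$: because $v_0$ is prime and $k_0<v_0$, we get $(k_0,v_0)=1$. Lemma \ref{SigmaMax1}(3) gives $k_0\mid v_0\,(p^{f-a},\cdot)$, hence $k_0\mid p^{f-a}$, i.e.\ $k_0=p^{e}$ with $e\leq f-a$. Lemma \ref{Uredu}(2b) gives $p\mid p^{f-a}v_0/k_0$, so $e\leq f-a-1$. For $e\leq a$, note that $k_0=p^e$ is coprime to $p$ only if $e=0$, so $p\nmid k_0-1$ and case (i) of Lemma \ref{Uredu} holds, giving $p^{a}\mid d\mid f$. Flag-transitivity also gives $k_0\mid v_0r_0$, and since $(k_0,v_0)=1$ this means $k_0\mid r_0=\frac{p^{a}\lambda_0}{(k_0-1)/\theta}$. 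I would then combine $r_0\mid d$ (so $\lambda_0\le d$) with $\theta=(p^e-1,p^a\theta)$ to bound $e\leq a$; failing that, I would count $K$-orbits on $\Delta\setminus\{x\}$, which have lengths dividing $d$.

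I expect the main obstacle to be the first step. One has to check carefully that the orbit-counting argument of Lemma \ref{OndaDobro} transfers to a cyclic-by-cyclic group whose kernel on the blocks of the intermediate partition is nontrivial. The divisibility $v_0-1\mid v_0^{(h-1)}(k_0-1)$ must come out cleanly there, including the degenerate possibility that $\mathcal{D}_0^{(h-1)}$ is a symmetric $1$-design with $k_0^{(h-1)}=v_0^{(h-1)}-1$.
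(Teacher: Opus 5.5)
Your reduction to $v_0$ prime does not go through, and this is the step you flagged yourself. The claim that every coset of $N_{h-1}$ (every class of $\Theta_{h-1}^{NK}$) is a union of $K$-orbits is false in general.

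In Lemma \ref{OndaDobro}, the invariance of all $X_{\Delta}^{\Delta}$-orbits came from $X_{\Delta}^{\Delta}G_{x}^{\Delta}\trianglelefteq G_{\Delta}^{\Delta}$, which holds because $G_{\Delta}^{\Delta}/X_{\Delta}^{\Delta}$ is cyclic. Here $N_{h-1}K\trianglelefteq NK$ holds only if $K$ centralizes $N/N_{h-1}$, and it need not. The action of $K$ on the cyclic group $N$ is induced by field automorphisms, i.e.\ by power maps $n\mapsto n^{p^{j}}$. For example, in $\PGaL(2,2^4)$ with $N\cong Z_{15}$ and $K$ acting as multiplication by $2$, the coset $\{1,6,11\}$ of the subgroup of order $3$ goes to $\{2,7,12\}$.

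Only $x^{N_{h-1}}$ is guaranteed to be $K$-invariant. Counting on it gives $(v_0-1)\,|B\cap (x^{N_{h-1}}\setminus\{x\})|=(k_0-1)(v_0^{(h-1)}-1)$. That is just (\ref{rel1}) for $\mathcal{D}_0^{(h-1)}<\mathcal{D}_0$ and yields no contradiction. The ``same device'' for $n\geq 2$ fails for the same reason, so neither square-freeness nor $h=1$ is established.

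What is missing is the paper's argument at the bottom of the chain.
\begin{itemize}
\item Since $K_1$ embeds in $\mathrm{Aut}(Z_{u_1^{n_1}})$, every nontrivial $K_1$-orbit has length dividing $u_1^{n_1-1}(u_1-1)$. Lemma \ref{lem:basic-params}(iii) then gives $\frac{u_1^{n_1}-1}{u_1-1}\mid k_0^{(1)}-1$.
\item If $u_1=2$, this forces $k_0^{(1)}=v_0^{(1)}$, hence $v=k$ through the chain of ratios (\ref{rel1}).
\item Otherwise, combining with $k_0^{(1)}\mid v_0^{(1)}d^{(1)}\mid u_1^{2n_1-1}(u_1-1)$ forces $n_1=1$. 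Reordering the primes shows $v_0$ is squarefree.
\item Excluding two distinct primes needs an outside input. Apply \cite[Theorem 3.6]{CZ} to $\mathcal{D}_0^{(2)}$, where $v_0^{(2)}=u_1u_2$ and there is a point-regular normal cyclic subgroup. This gives $k_0^{(2)}\in\{2,v_0^{(2)}-1\}$, so a solvable group is $2$-transitive on a non-prime-power number of points, contradicting \cite[Theorem 4.1B]{DM}.
\end{itemize}

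Your derivation of $k_0=p^{e}$ with $e\leq f-a-1$ from Lemma \ref{SigmaMax1}(3) and Lemma \ref{Uredu} is fine once $v_0$ is prime. Your bound $e\leq a$, however, is left unfinished. It is immediate from $K\hookrightarrow \mathrm{Aut}(Z_{v_0})\cong Z_{v_0-1}$: then $k_0\mid r_0\mid d\mid v_0-1=p^{a}\theta$, and $(k_0,\theta)=1$ because $\theta\mid k_0-1$.

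Finally, $v_0=p^{a}\theta+1$ need not be even for odd $p$, since $\theta$ may be even. The oddness of the prime $v_0$ simply follows from $v_0>k_0>2$.
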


\begin{proof}
Let $\left( u_{1},...,u_{h}\right) $ be an ordering of all the pairwise
distinct primes factors of $v_{0}$, and hence let $\mathcal{D}%
_{0}^{(1)}<\cdots <\mathcal{D}_{0}^{(h-1)}<\mathcal{D}_{0}<\mathcal{D}$ be
the corresponding aforementioned chain of flag-transitive $2$-designs obtained. Now, applying Lemma \ref{lem:basic-params}(iii) with to $\mathcal{D}_{0}^{(1)}$ and $(NK)_{\Theta _{1}}^{\Theta _{1}}$ in the role of $\mathcal{D}$ and $G$, one obtains $r_{0}^{(1)}\mid \left\vert
y^{K_{1}}\right\vert \lambda _{0}^{(1)}$, and hence $v_{0}^{(1)}-1\mid \left\vert y^{K_{1}}\right\vert (k_{0}^{(1)}-1)$. Since $(NK)_{\Theta _{1}}^{\Theta _{1}}\cong Z_{v_{0}^{(1)}}:Z_{d^{(1)}}$ and $K_{1}=(NK)_{x}^{\Theta _{1}}\cong Z_{d^{(1)}}$, it follows that $\left\vert y^{K_{1}}\right\vert \mid d^{(1)}$ and $ 
K_{_{1}}$ is isomorphic to a subgroup of $ \mathrm{Aut}(Z_{v_{0}^{(1)}})\cong Z_{u_{1}^{n_{1}-1}}\times
Z_{u_{1}-1}$ (recall that $v_{0}^{(1)}=u_{1}^{n_{1}}$). Therefore $%
v_{0}^{(1)}-1\mid d^{(1)} (k_{0}^{(1)}-1)$  with $d^{(1)}\mid u_{1}^{n_{1}-1}(u_{1}-1)$, and hence
\[
u_{1}^{n_{1}}-1\mid u_{1}^{n_{1}-1}(u_{1}-1)(k_{0}^{(1)}-1) 
\]%
and hence there is positive integer $t_{1}$ such that 
\begin{equation}
k_{0}^{(1)}=\frac{u_{1}^{n_{1}}-1}{u_{1}-1}t_{1}+1  \label{K01}
\end{equation}%
If $u_{1}=2$, then $t_{1}=1$ and $k_{0}^{(1)}=u_{1}^{n_{1}}=v_{0}^{(1)}$.
Then%
\[
1=\frac{v_{0}^{(1)}-1}{k_{0}^{(1)}-1}=\cdots =\frac{v_{0}^{(h-1)}-1}{%
k_{0}^{(h-1)}-1}=\frac{v_{0}-1}{k_{0}-1}=\frac{v-1}{k-1} 
\]%
by Theorem \ref{CamZie}(1)(i) applied to each member the chain $\mathcal{D}%
_{0}^{(1)}<\cdots <\mathcal{D}_{0}^{(h-1)}<\mathcal{D}_{0}<\mathcal{D}$.
Thus $v=k$, a contradiction. Therefore, $u_{1}>2$.

Since $(NK)_{\Theta _{1}}^{\Theta _{1}}$ is a group of order $v_{0}^{(1)}d^{(1)}$ acting flag-transitively on $\mathcal{D}_{0}^{(1)} $, it follows that $r_{0}^{(1)} \mid d^{(1)}$. Then $k_{0}^{(1)}\mid v_{0}^{(1)}d^{(1)}$ since $k_{0}^{(1)}\mid v_{0}^{(1)}r_{0}^{(1)}$, then $%
k_{0}^{(1)}\mid u_{1}^{2n_{1}-1}(u_{1}-1)$ since $v_{0}^{(1)}=u_{1}^{n_{1}}$
and $d^{(1)}\mid u_{1}^{n_{1}-1}(u_{1}-1)$. Therefore, $%
k_{0}^{(1)}=u_{1}^{m_{1}}\frac{u_{1}-1}{\rho _{1}}$ for some $0\leq
m_{1}\leq 2n_{1}-1$ and $\rho _{1} $ a divisor of $u_{1}-1$. Hence, we obtain
\begin{equation}\label{DubiozaKolektiv}
u_{1}^{m_{1}}\frac{u_{1}-1}{\rho _{1}}=\frac{u_{1}^{n_{1}}-1}{u_{1}-1}%
t_{1}+1 
\end{equation}%
by (\ref{K01}). If $m_{1}>0$, then $u_{1}\mid \frac{u_{1}^{n_{1}}-1}{%
u_{1}-1}t_{1}+1$ and hence $u_{1}\mid t_{1}+1$. Then $t_{1}\geq u_{1}-1$ and
hence $k_{0}^{(1)}\geq u_{1}^{n_{1}}=v_{0}^{(1)}$, a contradiction. Thus $%
m_{1}=0$, and hence (\ref{DubiozaKolektiv}) becomes
\[
\frac{\left( u_{1}-1\right) ^{2}}{\rho _{1}}=\left( u_{1}^{n_{1}}-1\right)
t_{1}+\left( u_{1}-1\right) \text{.} 
\]%
and so $n_{1}=1$, that is to say $v_{0}^{(1)}$ is a prime number. Now, repeating the
previous argument with respect the ordering of the primes factors of $%
v_{0}$ via the transposition $(1j)$, namely $\left( u_{1},...,u_{h}\right)
^{(1j)}$, we see that $n_{j}=1$. Therefore, $v_{0}^{j}$ is a prime. Hence, $v_{0}=\prod_{i=1}^{h}u_{i}$ with $u_{i}$ pairwise distinct prime numbers.

If $h>1$, then $\mathcal{D}_{0}^{(2)}$ is a $2$-$%
(v_{0}^{(2)},k_{0}^{(2)},\lambda _{0}^{(2)})$ design, with $%
v_{0}^{(2)}=u_{1}u_{2}$ admitting $(N_{2}K)_{\Theta _{2}}^{\Theta _{2}}\cong Z_{v_{0}^{(2)}}:Z_{d^{(2)}}$ as a
flag-transitive automorphism group. In particular, $(N_{2}K)_{\Theta _{2}}^{\Theta _{2}}$ contains a normal subgroup isomorphic to $ Z_{v_{0}^{(2)}}$ and acting point-regularly on $\mathcal{D}_{0}^{(2)}$. Then either $k_{0}^{(2)}=2$ or $k_{0}^{(2)}=v_{0}^{(2)}-1=u_{1}u_{2}-1$ by \cite[Theorem 3.6]{CZ} (note that, $2$-design is assumed non
trivial in \cite[Theorem 3.6]{CZ}). This forces $(N_{2}K)_{\Theta _{2}}^{\Theta _{2}}$ to act point-$2$-transitively on $\mathcal{D}_{0}^{(2)}$ since it acts flag-transitively on $\mathcal{D}_{0}^{(2)}$. However, this is impossible by \cite[Theorem 4.1B]{DM} since $%
v_{0}^{(2)}=u_{1}u_{2}$, with $u_{1}\neq u_{2}$, and $(N_{2}K)_{\Theta _{2}}^{\Theta _{2}}\cong Z_{v_{0}^{(2)}}:Z_{d^{(2)}}$ is solvable. Therefore $h=1$ and hence $v_{0}$ is
a prime number. 

Since $NK$ is group of order $v_{0}d$ inducing a flag-transitive group on $\mathcal{D}_{0}$, it follows $k_{0}\mid v_{0}d$. Moreover, $d\mid v_{0}-1$ since $v_{0}$ is a prime number, and $k_{0} \neq v_{0}$ by (\ref{rel1}) since $k \neq v$. Then $k_{0} \nmid v_{0}$ again since $v_{0}$ is a prime number, and hence $k_{0}\mid v_{0}-1$ since $k_{0}\mid v_{0}d$ and $d\mid v_{0}-1$. Then $k_{0}\mid
p^{a}$ since $v_{0}=p^{a}\theta +1$ with $1\leq a <f$ by Lemma \ref{OndaDobro}(2). Therefore, $k_{0}=p^{e}$ with $1 \leq e\leq a$. On the
other hand \thinspace $e\leq f-a-1$ by Lemma \ref{Uredu}(2)(b). Thus $1\leq
e\leq \min (f-a-1,a)$. This completes the proof.
\end{proof}

\begin{corollary}
\label{fdosta} $p^{a} \mid f$ and $f\geq 2a+2\geq 4$.
\end{corollary}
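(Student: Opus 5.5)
The plan is to extract both assertions directly from Proposition~\ref{v0prime} and Lemma~\ref{Uredu}, and then finish with a small case analysis on $a$ and $p$.

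\textbf{Divisibility.} By Proposition~\ref{v0prime}, $k_{0}=p^{e}$ with $e\geq 1$. Hence $p\mid k_{0}$, so $p\nmid k_{0}-1$, and a fortiori $p\nmid \frac{k_{0}-1}{\theta}$. We are assuming $k_{0}>2$, so Lemma~\ref{Uredu}(2)(b) applies. Its alternative (ii) requires $p\mid\frac{k_{0}-1}{\theta}$, which we have just excluded, so alternative (i) holds and $p^{a}\mid d$. By Lemma~\ref{OndaDobro}(1), $d\mid f$, and therefore $p^{a}\mid f$. This is the first assertion.

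\textbf{First bounds.} Proposition~\ref{v0prime} gives $1\leq e\leq \min(f-a-1,a)$. In particular $a\geq 1$ and $f\geq a+2$. If $p=2$, then $k_{0}=2^{e}>2$ forces $e\geq 2$, and so $f\geq a+3$. We also have $f\geq p^{a}$ from the first part. The claim $f\geq 2a+2$ now follows by cases.

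\textbf{Cases on $a$.}
\begin{itemize}
\item If $a\geq 3$, then $f\geq p^{a}\geq 2^{a}\geq 2a+2$.
\item If $a=2$ and $p=2$, then $4\mid f$ and $f\geq a+3=5$, so $f\geq 8$. If $a=2$ and $p\geq 3$, then $f\geq p^{2}\geq 9$. In both subcases $f\geq 6=2a+2$.
\item If $a=1$, we need $f\geq 4$. For $p=2$ this is $f\geq a+3=4$. For $p\geq 5$, $p\mid f$ gives $f\geq p\geq 5$.
\end{itemize}

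\textbf{The obstacle: $a=1$, $p=3$, $f=3$.} Here the general bounds only give $f\geq 3$, so this configuration must be excluded by hand. In this case $e=1$, so $k_{0}=3$. By Lemma~\ref{OndaDobro}(1), $v_{0}$ divides $\frac{3^{3}-1}{2}=13$. By Proposition~\ref{v0prime}, $v_{0}$ is an odd prime, so $v_{0}=13$. Then $v_{0}=p^{a}\theta+1=3\theta+1$ gives $\theta=4$. However, $\theta=(k_{0}-1,v_{0}-1)=(2,12)=2$, a contradiction. Hence $f\geq 4$ also when $a=1$, and $f\geq 2a+2\geq 4$ holds in every case.
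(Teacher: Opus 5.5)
Your proof is correct. The divisibility $p^{a}\mid f$ is obtained exactly as in the paper: $k_0=p^e$ gives $p\nmid \frac{k_0-1}{\theta}$, so alternative (i) of Lemma~\ref{Uredu}(2)(b) holds, and $p^a\mid d\mid f$.

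For the inequality your route differs. The paper argues by contradiction. It assumes $f\le 2a+1$ and combines this with $p^a\le f$ to get $p^{f-1}\le p^{2a}\le f^{2}$, which leaves a short list of values of $p^f$. It then sieves with $p^a\mid f$, $\frac{f-1}{2}\le a\le f-1$, $e\le f-a-1$ and $k_0>2$ down to $p^f=3^3$, $a=1$, $k_0=3$. You go directly instead. From $f\ge p^a$ and $f\ge a+2$ (or $f\ge a+3$ when $p=2$), a case split on $a$ isolates the same configuration without any enumeration; the two arguments are equally elementary.

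The more substantive difference is in that residual case. The paper uses $\theta\mid 2$ to get $v_0=7$, and then claims $G_\Delta^\Delta\le Z_7:Z_4$ has order prime to $k_0=3$. But Lemma~\ref{OndaDobro}(1) only gives $d\mid f=3$. Moreover $Z_7:Z_3$ acts flag-transitively on a $2$-$(7,3,1)$ design (the Fano plane), so that step does not work as written. Your observation that Lemma~\ref{OndaDobro}(1) forces $v_0\mid\frac{3^3-1}{2}=13$ closes the case correctly: it gives $v_0=13$ and $\theta=4$, contradicting $\theta=(2,12)=2$.

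One small imprecision: $k_0>2$ alone does not place you in Lemma~\ref{Uredu}(2)(b), because (2)(a) is also allowed. That case is the $2$-$(85,64,48)$ design, with $a=2$ and $f=4$, and the corollary actually fails for it. The standing hypothesis must be that (2)(a) has been set aside. Proposition~\ref{v0prime} already presupposes this when it invokes $e\le f-a-1$, so you should state the hypothesis that way.
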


\begin{proof}
It follows from Proposition \ref{v0prime} that $p\nmid k_{0}-1$, hence $p^{a} \mid f$ by Lemmas \ref{OndaDobro}(1) and \ref{Uredu}(3).

If $f-a-1\leq a$, then $f-1\leq 2a$. Now, since $p^{a}\mid f$, it
follows that $p^{f-1}\leq p^{2a}\leq f^{2}$. Since $1\leq e\leq f-a$, it
follows that $f\geq a+1\geq 2$. Thus either or $%
p^{f}=2^{2},2^{3},2^{4},2^{5},2^{6},3^{2},3^{3}$. Actually, $%
p^{f}=2^{2},3^{3}$ and $a=1$, or $p^f=2^4$ and $a=2$ since $p^{a}\mid f$ with $f-1\geq a\geq \max
\left( \frac{f-1}{2},1\right) $. 
%
Then $k_{0}=p^e$ with $1\leq e \leq f-a-1$ by Proposition \ref%
{v0prime}, and only $p^f=3^3$ is admissible since $k_0>2$ by our assumption. Then $k_0=3$ and $v_0=3\theta+1$ with $\theta\mid k_0-1=2$. Since $v_0$ is an odd prime by Proposition \ref{v0prime}, we have that $v_0=7$. Then $G_{\Delta}^{\Delta} \leq Z_{7}:Z_{4}$, which is not flag-transitive on $\mathcal{D}_{0}$ as it is not divisible by $k_{0}=3$. Thus $f>2a+1$, and hence $f\geq 2a+2$.

\end{proof}

\begin{lemma}
\label{Boom}$k_{1}=p^{f}+1-p^{f-e}+\frac{p^{e}-1}{\theta }p^{f-a-e}$.
Moreover the following holds:

\begin{enumerate}
\item $\left( k_{1},p^{f}\right) =1$ and $(k_{1},f)\mid \frac{f}{p^{a}}$;

\item $\left( k_{1},p^{f}-1\right) \mid \frac{p^{e}-1}{\theta }+p^{a}\left(
2p^{e}-1\right) $;

\item $\left( k_{1},p^{f}+1\right) \mid p^{a}-\frac{p^{e}-1}{\theta }$
\end{enumerate}
\end{lemma}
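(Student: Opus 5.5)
The plan is to first obtain the closed form of $k_{1}$ by substituting the data of Proposition \ref{v0prime} into Lemma \ref{SigmaMax1}(4), and then to read off the three gcd statements by reducing $k_{1}$ modulo $p$, $p^{f}-1$ and $p^{f}+1$. To simplify notation I set $c=\frac{p^{e}-1}{\theta}$. This is an integer, since $\theta=(k_{0}-1,v_{0}-1)$ divides $k_{0}-1=p^{e}-1$.

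\textbf{The formula for $k_{1}$.} By Proposition \ref{v0prime} we have $k_{0}=p^{e}$, and by Lemma \ref{OndaDobro}(2) we have $v_{0}=p^{a}\theta+1$. Lemma \ref{SigmaMax1}(4) then gives
$$k_{1}=c\,p^{f-a-e}(p^{a}\theta+1)+1=(p^{e}-1)p^{f-e}+c\,p^{f-a-e}+1=p^{f}+1-p^{f-e}+c\,p^{f-a-e}.$$
All exponents here are non-negative. Indeed, Proposition \ref{v0prime} gives $e\leq f-a-1$, so $f-a-e\geq 1$ and a fortiori $f-e\geq 1$.

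\textbf{Assertion (1).} Because both $f-e\geq 1$ and $f-a-e\geq 1$, the formula shows $k_{1}\equiv 1 \pmod p$. Hence $(k_{1},p^{f})=1$. It follows that $(k_{1},f)$ is coprime to $p$, so it divides the $p'$-part of $f$. By Corollary \ref{fdosta} we have $p^{a}\mid f$, and the $p'$-part of $f$ divides $f/p^{a}$. Therefore $(k_{1},f)\mid f/p^{a}$.

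\textbf{Assertions (2) and (3).} Since $(k_{1},p)=1$, for $N=p^{f}\mp 1$ we have $(k_{1},N)=(p^{a+e}k_{1},N)$. So it suffices to reduce $p^{a+e}k_{1}$ modulo $N$. The expansion is
$$p^{a+e}k_{1}=p^{a+e}p^{f}+p^{a+e}-p^{a}p^{f}+c\,p^{f}.$$
Modulo $p^{f}-1$ we replace $p^{f}$ by $1$, which gives $p^{a+e}k_{1}\equiv 2p^{a+e}-p^{a}+c=p^{a}(2p^{e}-1)+c$. This yields (2). Modulo $p^{f}+1$ we replace $p^{f}$ by $-1$; the first two terms cancel and we get $p^{a+e}k_{1}\equiv p^{a}-c$. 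This yields (3).

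The only point needing care is the first step. One must check that the exponents $f-e$ and $f-a-e$ are at least $1$, using $e\leq f-a-1$ from Proposition \ref{v0prime}. This is what makes $k_{1}\equiv1\pmod p$, and that congruence is what later allows multiplying by the power $p^{a+e}$ without changing the gcds. Everything after that is direct modular arithmetic.
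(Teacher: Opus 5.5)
Your proof is correct and follows the paper's argument essentially verbatim: substitute $k_0=p^e$ and $v_0=p^a\theta+1$ into Lemma \ref{SigmaMax1}(4), then reduce $p^{a+e}k_1$ modulo $p^f\mp1$. The only difference is that you get $(k_1,p)=1$ directly from the closed form using $e\le f-a-1$, instead of citing Lemma \ref{Uredu}(2). One small correction: $(k_1,p^f\mp1)=(p^{a+e}k_1,p^f\mp1)$ holds because $p$ is coprime to $p^f\mp1$, not because $(k_1,p)=1$; the congruence $k_1\equiv 1\pmod p$ is needed only for assertion (1).
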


\begin{proof}
It follows from Lemmas \ref{SigmaMax1}(4), \ref{OndaDobro}(2) and Proposition \ref{v0prime} that 
\[
k_{1}=\frac{k_{0}-1}{\theta }\cdot \frac{p^{f-a}v_{0}}{k_{0}}+1=p^{f}+1-p^{f-e}+\frac{p^{e}-1}{\theta }p^{f-a-e}
\]
Moreover, since $(k_{1},p)=1$ and $p^{a}\mid f$ by Lemma %
\ref{Uredu}(2) and Corollary \ref{fdosta}, the assertion (1) follows.

Now, since $p^{a+e}k_{1} \equiv 2p^{e+a}-p^{a}+\frac{p^{e}-1}{\theta } \pmod{p^{f}-1}$, it follows that
\begin{equation*}
\left( k_{1},p^{f}-1\right)=\left( p^{a+e}k_{1},p^{f}-1\right)=\left( 2p^{e+a}-p^{a}+\frac{p^{e}-1}{\theta },p^{f}-1\right)\text{,}
\end{equation*}%
which is the assertion (2).

Finally, since $p^{a+e}k_{1} \equiv p^{a}-\frac{p^{e}-1}{\theta } \pmod{p^{f}+1}$, it follows that
\begin{equation*}
\left( k_{1},p^{f}+1\right)=\left( p^{a+e}k_{1},p^{f}+1\right)=\left( p^{a}-\frac{p^{e}-1}{\theta },p^{f}+1\right)\text{,}
\end{equation*}
which is the assertion (3).
\end{proof}

\bigskip

Now, we are in position to prove Theorem \ref{FTPI}.

\bigskip

\begin{proof}[Proof of Theorem \ref{FTPI}]
It follows from Lemmas \ref{tris} and \ref{Uredu} that either the assertions (1), (2) or (3) hold, or $G$ acts flag-transitively and point-$2$-transitively on the $2$-$(p^{f}+1,k_{1},\lambda_{1})$ design $\mathcal{D}_{1}$, $(k_1,p)=1$, $k_{0}>2$ and $p\mid \frac{p^{f-a}v_0}{k_0}$. Hence, we assume that the latter case occurs.

Since $G$ acts flag-transitively on the $2$-$(p^{f}+1,k_{1},\lambda_{1})$ design, then $k_{1}\mid \left\vert G_{B}\right\vert $, where $B$ is any block of $\mathcal{D}$. Hence $k_{1}\mid \left\vert M \right\vert \cdot (2,p^{f}-1)f$ with $M$ a maximal subgroup of $X$ containing $X_{B}$. Furthermore, $M$ is of the groups listed in Table \ref{MaxDickson}. We are going to show that the previous divisibility conditions leads to no cases. 

If $M\cong A_{4}$, $S_4$ or $A_5$, then $f=1,2$ by Table \ref{MaxDickson}, which is a contradiction because $f\geq 4$ by Corollary \ref{fdosta}.

Assume that $M \cong D_{\frac{2(p^{f}+1)}{(2,p^{f}-1)}}$. Then $k_{1} \mid 2(p^{f}+1)f$, and hence 
\[
k_{1}\mid 2\left( p^{a}-\frac{p^{e}-1}{\theta }\right) \frac{f}{p^{a}} 
\]%
by Lemma \ref{Boom}(1),(3). Then $\frac{2}{3}p^{f}<k_{1}<2f\leq 2p^{f/2}$ by
Lemma \ref{SigmaMax1}(4), and so $p^{f/2}<3$. Thus $p^{f}<9$ with $f\geq 4$
by Corollary \ref{fdosta}, a contradiction.

Assume that $M\cong D_{\frac{2(p^{f}-1)}{(2,p^{f}-1)}}$. Then $k_{1} \mid 2(p^{f}-1)f$, and hence
\[
k_{1}\mid 2\left( \frac{p^{e}-1}{\theta }+p^{a}\left( 2p^{e}-1\right)
\right) \frac{f}{p^{a}} 
\]%
by Lemma \ref{Boom}(1)--(2). 
Then $\frac{2}{3}p^{f}<k_{1}<4p^{e}f+2f\leq
4p^{f/2+e}+2p^{f/2}$ by Lemma \ref{SigmaMax1}(4), 
then $p^{f/2-e}<9$ and so $p\leq p^{f/2-a}\leq p^{f/2-e}<9$ by Corollary \ref%
{fdosta} since $e\leq a$ by Proposition \ref{v0prime}. Hence, we obtain either $e=a=f/2-1$
and $p=2,3,5,7$ or $e=f/2-2$ and $p=2$.\\
In the former case, one has $f=2a+2$
and hence $p^{a}\mid 2a+2$, so $p=2$ and $\left( a,f\right)
=(1,4),(3,8)$. If $(a,f)=(1,4)$ then $k_0=2^e=2^a=2$, a contradiction. If $(a,f)=(3,8)$ then $v_0=2^3\theta+1$ with $\theta\mid k_0-1=2^3-1=7$. Hence, $v_0=9$ or $57$, which contradicts Proposition \ref{v0prime}.
Thus $e=f/2-2$ and $p=2$. If $a=f/2-1$, from $2^a\mid f=2a+2$ we get $(a,f)=(1,4),(3,8)$. The former gives $e=0$, a contradiction. The latter gives $e=2$, $k_0=4$ and then $\theta\mid 3$. Hence $v_0=8\theta+1=25$ or $9$, which is impossible by Proposition \ref{v0prime}. If $a=e=f/2-2$, from $2^a\mid 2a+4$ we get $(a,f)=(1,6),(2,8)$. The former gives $k_0=2$, a contradiction. The latter gives $k_0=4$, $\theta\mid 3$ and hence $v_0=2^4\theta+1=5$ or $13$, and by Lemma \ref{SigmaMax1} $k_1=241$ or $209$, respectively. However, in any case we have that $k_1\nmid \lvert\PGaL(2,2^8)\rvert$, a contradiction.

Assume that $M\cong \left( Z_{p}\right) ^{f}:Z_{\frac{p^{f}-1}{(2,p-1)}}$. Then $k_{1} \mid p^{f}(p^{f}-1)f$, and hence $%
k_{1}\mid (p^{f}-1)f$ by Lemma \ref{Boom}(1). Then $k_{1}\mid 2(p^{f}-1)f$, and no cases occur by the
previous argument.

Assume that either $M$ is is isomorphic to one of the groups $PSL(2,p^{f/t})$ or $ 
PGL(2,p^{f/t})$ with $t\mid f$ and $t$ prime, or $PGL(2,p^{f/t})$ with $t=2$ and $p$ odd. Then $k_{1} \mid p^{f/t}(p^{2f/t}-1)(2,p^{f}-1)f$ and hence $k_{1}\mid
(p^{2f/t}-1)(2,p^{f}-1)\frac{f}{p^{a}}$ by Lemma \ref{Boom}(1). Then 
\[
\frac{2}{3}p^{f}<k_{1}\leq
2(p^{2f/t}-1)\frac{f}{p^{a}} 
\]%
and so $p^{f}<3(p^{2f/t}-1)\frac{f}{p^{a}}$. Then $p^{f/2+a}<3p^{2f/t}$ and
hence either $t=2,3$ as $t$ is prime. If $t=2$, then $%
k_{1}\mid (p^{f}-1)(2,p^{f}-1)\frac{f}{p^{a}}$ and hence $k_{1}\mid
2(p^{f}-1)f$, and we have seen that this case cannot occur. Finally, if $t=3$ then $p^{f/3+1}\leq p^{f/3+a}<3f$ and hence $p=2$ and $%
f=3,6,9,12$ and hence $p=2$ and $f=6,9,12$ as $3\mid f$ and $f\geq 4$. Then $k_{1}\mid (2^{2f/3}-1)\frac{f}{2^{a}}$ and so $f=6$ or $12$. Then $k_{1}\mid (2^{2f/3}-1)\frac{f}{2^{a}}$ and so $\frac{2}{3}2^f<k_1\leq (2^{2f/3}-1)\frac{f}{2^{a}}$.  This gives $(f,a)=(6,1)$, hence $e=1$ and $k_0=2$, a contradiction. This completes the proof.
\end{proof}

\section{Completion of the proof of Theorem \ref{main}}\label{FT-PP-Geometry-Section}
In this section we settle the cases (3) of Theorem \ref{FT-PP-reduction}, thus completing the proof of Theorem \ref{main}, and provide some reductions for cases (4) and (5).

Before proceeding, define any $2$-design as in (3)--(5) of Theorem \ref{FT-PP-reduction} being of type I--III, respectively. 

\bigskip
Recall that $X \cong \PSL(2,q)$, $q=p^f$ and let $\theta =\left\vert G:X\right\vert $, where $\theta \mid f$. Let $B$ be any block of $\mathcal{D}$ and $\alpha ,x$ be any two distinct
points of $\mathcal{D}$ such that $\alpha \notin B$ and $x\in B$. Finally,
let $\theta =\left\vert G:X\right\vert $,  $\theta _{1}=\left\vert
G_{B}:X_{B}\right\vert $, $\theta _{2}=\left\vert G_{\alpha ,B}:X_{\alpha
,B}\right\vert $ and $\theta _{3}(x)=\left\vert G_{\alpha ,x}:X_{\alpha
,x}\right\vert $. Since $G$ is flag-transitive on $\mathcal{D}$, then $G$ is
both block-transitive on $\mathcal{D}$. Thus $\theta _{2}$, $\theta _{1}$
and do not depend on the flag $(\alpha ,B)$ or $B$, respectively. On the contrary $%
\theta _{3}(x)$ depends on $x$, once $\alpha $ is fixed. Since $X\unlhd G$ and $G$ acts point-primitively on $\mathcal{D}$, it follows that
$\left\vert G:G_{\alpha }\right\vert =\left\vert
X:X_{\alpha }\right\vert $, hence $\left\vert G_{\alpha }:X_{\alpha }\right\vert =\left\vert
G:X\right\vert =\theta $. Moreover, $G_{x,B}\leq G_{B}X\leq G$ and $G_{\alpha ,x}X\leq G$ imply $\theta _{i}\mid \theta $ for each $i=1,2$, $\theta _{3}(x)\mid \theta $ and $\theta _{2}\mid \theta _{1}$. In particular, 
\begin{equation}\label{abroad}
\left\vert x^{G_{\alpha
}}\right\vert=\frac{\left\vert
G_{\alpha }\right\vert }{\left\vert X_{\alpha }\right\vert }\cdot \frac{%
\left\vert X_{\alpha ,x}\right\vert }{\left\vert G_{\alpha ,x}\right\vert }%
\cdot \left\vert x^{X_{\alpha }}\right\vert=\frac{\theta}{\theta_{3}(x)}\left\vert x^{X_{\alpha }}\right\vert\text{.}    
\end{equation}

\bigskip
The symbols of $\alpha$, $B$, $\theta, \theta_{1}, \theta _{2}$ and $\theta _{3}(x)$ will have these fixed meaning throughout this section. Moreover, unless differently specified, $\theta _{3}(x)$ will simply be denoted by $\theta _{3}$.

\bigskip

$\PGL(3,q)$ has a unique conjugacy class of subgroups isomorphic
to $\PSL(2,q)$ by \cite[Table 8.3]{BHRD} (note that $\PSL(2,q) \cong \Omega (3,q)$ by \cite[Corollary 7.14]{Hir} is reducible
and not maximal in $\PGL(3,q)$ when $q$ is even. Further, the irreducible conics do not arise from
polarities in this case), and each of these groups is the stabilizer in $%
\PGL(3,q)$ of a suitable regular hyperoval of $\PG(2,q)$. The converse is also
true as a consequence of \cite[Theorem 7.4]{Hir}. Therefore, we may choose $X$ to be the copy of$\ PSL(2,2^{f})$ preserving the conic 
\begin{equation*}
\mathcal{C}:Y_{0}Y_{2}-Y_{1}^{2}=0\text{,}
\end{equation*}%
namely $X$ consists of the collineations of the plane represented, up to a
nonzero scalar, by the matrices 
\begin{equation*}
\left( 
\begin{array}{ccc}
\begin{array}{ccc}
a_{11}^{2} & a_{11}a_{13} & a_{13}^{2} \\ 
0 & a_{11}a_{33}+a_{13}a_{31} & 0 \\ 
a_{31}^{2} & a_{31}a_{33} & a_{33}^{2}%
\end{array}
\end{array}%
\right) 
\end{equation*}%
with coefficients in $\mathbb{F}_{p^{f}}$ such that $%
a_{11}a_{33}+a_{13}a_{31}\neq 0$. Then $G=X:\left\langle \varsigma
^{f/\theta }\right\rangle $, where $\varsigma
:(Y_{0}:Y_{1}:Y_{2})\longrightarrow (Y_{0}^{p}:Y_{1}^{p}:Y_{2}^{p})$ is the
collineation of $\PG(2,q)$ induced by the Frobenius automorphism of $%
\mathbb{F}_{q}$.

Assume that $q$ is even. Then the nucleus of $\mathcal{C}$ is $Y_{\infty}=(0:1:0)$ by \cite[Corollary 7.12]{Hir}, and in this case we denote by $\ \mathcal{H}$ the (regular) hyperoval $\mathcal{C}\cup \left\{
Y_{\infty}\right\}$. Moreover, a line with coordinates $%
[1:1:x]$ is external to $\mathcal{C}$ if and only if $T_{\mathbb{F}_{2^{f}}/%
\mathbb{F}_{2}}(x)=1$ (e.g. see \cite[Section 1.4(ii.b)]{Hir}), where 
\begin{equation*}
T_{\mathbb{F}_{2^{f}}/\mathbb{F}_{2}}(x)=\sum_{j=0}^{f-1}x^{2^{i}}
\end{equation*}%
denotes the absolute trace of $x$.

As the actions of $G$ on the set $\mathcal{E}$ of the external lines to $\mathcal{H}$ and on
the conjugacy class of the subgroups of $G$ isomorphic to $G_{\alpha }\cong
D_{2(2^{f}+1)}:Z_{\theta }$ are equivalent, we may identify the points of $\mathcal{D%
}$ with the set $\mathcal{E}$. Therefore, a block of $\mathcal{D}$ is a suitable $k$-subset of $\mathcal{E}$. 
\bigskip

\begin{lemma}\label{I.1}
If $\mathcal{D}$ is a $2$-design as in case (3) of Theorem \ref{FT-PP-reduction}, then the following hold:
\end{lemma}

\begin{enumerate}
\item 
$\left\vert G_{\alpha }\right\vert =2(q+1)\theta$, $\left\vert
G_{\alpha ,B}\right\vert =\frac{2(q+1)\theta }{3(q+1)\frac{\lambda }{2}}=%
\frac{4\theta }{3\lambda }$ and $\left\vert G_{B}\right\vert =\frac{q+1%
}{3}\frac{4\theta }{3\lambda }$; 

\item Let $x$ be any point of $\mathcal{D}$ distinct from $\alpha$, then $\theta_{3}(x) \mid \frac{\theta}{3}$ and $\left\vert B\cap x^{G_{\alpha }}\right\vert =2\frac{\theta }{%
3\theta _{3}(x)}$;

\item $\left\vert G_{\alpha ,x}\right\vert =2\theta_{3}(x)$.
\end{enumerate}

\begin{proof}
Since $\left\vert G_{\alpha }:X_{\alpha }\right\vert =\left\vert
G:X\right\vert =\theta $ and $X_{\alpha}\cong \D_{2(2^f+1)}$, it follows that $\left\vert G_{\alpha }\right\vert=2(2^f+1)\theta $. Now, since $r=3(2^f+1)\frac{%
\lambda }{2}$ and $k=\frac{2^f+1}{3}$, we obtain
\[
\left\vert G_{\alpha ,B}\right\vert =\frac{2(2^f+1)\theta }{3(2^f+1)\frac{%
\lambda }{2}}=\frac{4\theta }{3\lambda }\text{ and }\left\vert
G_{B}\right\vert =\frac{2^f+1}{3}\cdot \frac{4\theta }{3\lambda} \text{.} 
\]%
Let $x$ be any point of $\mathcal{D}$ distinct from $\alpha$. Since $r\left\vert B\cap x^{G_{\alpha }}\right\vert  =\left\vert x^{G_{\alpha
}}\right\vert \lambda$ by Lemma \ref{lem:basic-params}(3), and $\left\vert x^{G_{\alpha
}}\right\vert=\frac{\theta}{\theta_{3}(x)}(2^f+1)$ by (\ref{abroad}) and Table \ref{tab:subdegrees}, one obtains
\begin{equation*}
3\left\vert B\cap x^{G_{\alpha }}\right\vert  =2\frac{\theta }{\theta _{3}(x)}\text{,}    
\end{equation*}
and hence $\left\vert B\cap x^{G_{\alpha }}\right\vert =2\frac{\theta }{%
3\theta _{3}}$ with $3\theta_{3}(x) \mid \theta$. Again by Table \ref{tab:subdegrees}, it results that $\left\vert X_{\alpha}\right\vert=2$. Therefore, $\left\vert G_{\alpha ,x}\right\vert =2\theta_{3}(x)$.
\end{proof}

\begin{proposition}\label{Ifin}    
If $\mathcal{D}$ is a $2$-design as in case (3) of Theorem \ref{FT-PP-reduction}, then  $G=\PGaL(2,2^{3})$ and one of the following holds:

\begin{enumerate}
\item $\mathcal{D}$ is one of the two $2$-$(28,3,2)$ designs as in Lines 19, 20 of Table \ref{tab:alldesigns};
\item  $\mathcal{D}$ is one of the $2$-$(28,3,4)$ design as in Line 21 of Table \ref{tab:alldesigns}.
\end{enumerate}
\end{proposition}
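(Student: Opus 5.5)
The plan is to use the arithmetic of Lemma \ref{I.1} to force $f$ to be small, and then to finish with \texttt{GAP}. Put $q=2^{f}$ with $f$ odd, $k=(q+1)/3$, $\theta=|G:X|$ dividing $f$, and $\lambda$ even.

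\textbf{Step 1: arithmetic restrictions.} Lemma \ref{I.1}(1) says $|G_{\alpha,B}|=\frac{4\theta}{3\lambda}$ is a positive integer. Hence $3\lambda\mid 4\theta$. Since $\lambda$ is even, this gives $3\mid\theta$, so $3\mid f$, and also $\lambda\le \frac{4\theta}{3}\le\frac{4f}{3}$. Consequently $G_B$ has order $k\cdot\frac{4\theta}{3\lambda}$. Because $|G_B:X_B|$ divides $\theta$, we get
\[
|X_B|\ \ge\ \frac{4k}{3\lambda}\ \ge\ \frac{k}{f}.
\]
We also know $X_B\le D_{2(q+1)}$ by case (3) of Theorem \ref{FT-PP-reduction}. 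So $Y:=X_B\cap Z_{q+1}$ is cyclic of order at least $k/(2f)$. This order is large as soon as $f\ge 9$.

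\textbf{Step 2: geometry in $\PG(2,q)$.} Points of $\mathcal{D}$ are identified with the external lines $\mathcal{E}$ of the hyperoval $\mathcal{H}$. For this step I use the standard hyperoval/conic geometry for $q$ even, not stated in the paper: a cyclic subgroup $Z_{q+1}\le X$ is the kernel part of the stabilizer $D_{2(q+1)}$ of a unique external line $\ell_0$. Each nontrivial element of $Z_{q+1}$ whose order is coprime to $3$ fixes a unique point $P_0$ off $\mathcal{H}$, fixes $\ell_0$ and no other external line. It permutes the remaining external lines in orbits of full length. Apply this to $Y$. Then $B$, apart from possibly $\ell_0$, is a union of $Y$-orbits of length $|Y|$. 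Since $|B|=k\equiv 0$ or $1 \pmod{|Y|}$ and $|Y|\ge k/(2f)$, $B$ consists of a bounded number (at most $2f$) of $Y$-orbits, plus possibly $\ell_0$.

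\textbf{Step 3: contradiction for large $f$.} Take $\alpha,x\in B$ in the same $Y$-orbit, so that $x=\alpha^{y}$ with $y\in Y$. Lemma \ref{I.1}(2) gives $|B\cap x^{G_\alpha}|=\frac{2\theta}{3\theta_3(x)}\le\frac{2f}{3}$, and Lemma \ref{I.1}(3) gives $|G_{\alpha,x}|=2\theta_3(x)$. The subdegrees of $G$ are $\frac{\theta}{\theta_3}(q+1)$ by Table \ref{tab:subdegrees} and (\ref{abroad}). So $B$ meets each $G_\alpha$-suborbit in at most $2f/3$ points. On the other hand, a single $Y$-orbit inside $B$ contributes $|Y|\ge k/(2f)$ points. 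These points are distributed among suborbits according to how $Y$ sits relative to $G_\alpha\cong D_{2(q+1)}:Z_\theta$. The two dihedral groups meet in a subgroup of order at most $2$, because two external lines have a unique common point. From this I expect to show that a single $Y$-orbit is spread over at most $2\theta$ suborbits. That would give
\[
\frac{k}{2f}\le 2\theta\cdot\frac{2f}{3},
\]
that is, $2^{f}<8f^{3}$, so $f\le 13$. Combined with $f$ odd and $3\mid f$, only $f=3$ and $f=9$ survive.

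\textbf{Step 4: small cases.} For $f=9$, I will check the divisibility conditions directly. These are $3\lambda\mid 4\theta$ with $\theta\mid 9$, so $\lambda\in\{2,6\}$, $k=171$, and $k$ must divide $|G_B|$, which divides $|D_{2\cdot 513}|\cdot 9$. I will either exclude this numerically or by a \texttt{GAP} search over subgroups $G_B\le N_G(D_{2(q+1)})$ with an orbit of length $171$. For $f=3$ we have $v=28$, $k=3$, $\theta=3$, $G=\PGaL(2,8)$, and $\lambda\in\{2,4\}$. A \texttt{GAP} enumeration of $G_B$ of order $4\theta k/(3\lambda)$ and of their orbits of length $3$ yields exactly the two $2$-$(28,3,2)$ designs and the $2$-$(28,3,4)$ design in Lines 19--21 of Table \ref{tab:alldesigns}.

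I expect the main obstacle to be Step 3: controlling precisely how a $Y$-orbit inside $B$ distributes among the $G_\alpha$-suborbits. If the clean geometric bound fails, my fallback is the crude bound $|X_B|\mid 2(q+1)$ together with $k\mid |G_B|$ and Lemma \ref{I.1}(2) summed over all suborbits meeting $B$, which gives $k-1=\sum |B\cap x^{G_\alpha}|$. That would yield a weaker but still finite bound on $f$, to be finished by computer.
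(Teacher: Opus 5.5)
\textbf{The gap is in Step 3.} Your reduction to $f\in\{3,9\}$ rests entirely on that step, and the lemma you hope to prove there is false as a statement about subgroups $Y\le Z_{q+1}$: a $Y$-orbit is spread over \emph{many} $G_\alpha$-orbits, not over at most $2\theta$.

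To see this, identify $\mathcal{E}$ with the conjugate pairs $\{\xi,\xi^{q}\}\subset\PG(1,q^{2})\setminus\PG(1,q)$. Normalise over $\mathbb{F}_{q^2}$ so that $\PG(1,q)$ becomes the circle $z^{q+1}=1$, $\ell_0=\{0,\infty\}$, $Y$ acts as $z\mapsto \mu z$, and $\alpha=\{\xi,\bar\xi^{-1}\}$ with $a=\xi\bar\xi\neq 1$. The cross-ratio $\frac{(x_1-y_1)(x_2-y_2)}{(x_1-y_2)(x_2-y_1)}$ of $\alpha=\{x_1,x_2\}$ and $\alpha^{\mu}=\{\mu x_1,\mu x_2\}$ is then $w/(w+a+a^{-1})$ with $w=\mu+\mu^{-1}$. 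This is an injective function of $w$, whereas the $X$-orbital of $(\alpha,\alpha^{\mu})$ determines it only up to inversion. Hence $y\mapsto$ ($X_\alpha$-orbit of $\alpha^{y}$) is at most $4$-to-$1$ on $Y\setminus\{1\}$. So $\alpha^{Y}$ meets at least $(|Y|-1)/(4\theta)$ distinct $G_\alpha$-orbits, which exceeds $2\theta$ once $|Y|>8\theta^2+1$. The fact that $|X_\alpha\cap X_{\ell_0}|\le 2$ only gives semiregularity of $Y$; it says nothing about concentration. Thus $k/(2f)\le 2\theta\cdot 2f/3$ is unsupported and no bound on $f$ results.

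Your fallback is vacuous. The relation $k-1=\sum_O|B\cap O|$ is just the partition of $B\setminus\{\alpha\}$ by $G_\alpha$-orbits. The values $|B\cap O|=\lambda|O|/r$ from Lemma \ref{I.1}(2) satisfy it identically because $r(k-1)=\lambda(v-1)$. Likewise $k\mid |G_B|$ is automatic from $|G_B|=k\cdot 4\theta/(3\lambda)$. So Steps 1--2 plus the fallback put no restriction on $f$. Two smaller slips: a nontrivial element of $Z_{q+1}$ fixes only the nucleus, which lies on $\mathcal{H}$; and for $f=9$ the condition $3\lambda\mid 4\theta$ allows $\lambda\in\{2,4,6,12\}$, not just $\{2,6\}$.

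\textbf{The missing idea, and the paper's route.} The paper produces elements of $G_\alpha\setminus X$ with a second fixed point and plays them against $|G_{\alpha,x}|=2\theta_3(x)$ with $\theta_3(x)\mid\theta/3$ (Lemma \ref{I.1}). The collineation $\varsigma^{f/\theta}$ has order $\theta$ and fixes $\alpha=[1:1:1]$. If $\theta<f$, it also fixes the $2^{f/\theta-1}(2^{f/\theta}-1)>1$ lines of the subplane $\PG(2,2^{f/\theta})$ external to its sub-hyperoval. These stay external in $\PG(2,q)$ because $\theta$ is odd. So $\theta\mid 2\theta_3(x)$ for some $x\neq\alpha$, which is impossible. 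Hence $\theta=f$, and the same argument with $\varsigma^{f/3^{u}}$ gives $f=3^{u}$.

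That is as far as this device goes: for instance $\varsigma^{3}$ fixes $28$ external lines, which is compatible with $\theta_3=3^{u-1}$.

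\textbf{A caution about the paper's own final step.} The paper's exclusion of $u\geq 2$ via (\ref{fond}) has the same defect as your fallback, since (\ref{fond}) is an identity. Take $u=2$. The three trace-one elements of $\mathbb{F}_8\setminus\{1\}$ form one $\langle\varsigma\rangle$-orbit, and the remaining $252$ trace-one elements of $\mathbb{F}_{2^9}$ form $28$ orbits of length $9$. So $n_1=1$, $n_2=28$, and indeed $1+2(1+3\cdot 28)=171=k$. The congruence $1+2n_1\equiv 0\pmod 9$ therefore does not follow.

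Consequently, on either route, $f=3^{u}$ with $u\geq 2$ still needs a genuine argument. Your planned computer check of $f=9$ would settle only $u=2$. For $f=3$ both approaches work: your \texttt{GAP} enumeration, or the paper's use of the two classes of subgroups of order $3$ in $\PGaL(2,8)\cong{}^{2}G_{2}(3)$ acting on the Ree unital.
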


\begin{proof}
The collineation $\varsigma^{f/\theta }$ fixes the points $Y_{\infty}=(0:1:0)$, $O=(0:0:1)$ and $P=(1:0:1)$ on the line $Y_{\infty}O$ tangent to $\mathcal{C}$. Moreover, $\varsigma^{f/\theta }$ fixes the line $\alpha=[1:1:1]$, which is external to $\mathcal{C}$ since $T_{\mathbb{F}_{2^{f}}/\mathbb{F}_{2}}(1)=1$, being $f$ odd. On the other hand, $Fix(\sigma ^{f/\theta })$ fixes a subplane of $\PG(2,2^f)$ isomorphic to $\PG(2,2^{f/\theta})$, hence $\left \vert Fix(\sigma ^{f/\theta })\cap Y_{\infty}O \right\vert=2^{f/\theta}+1$. Moreover, $\left \vert Fix(\varsigma ^{f/\theta })\cap \mathcal{H} \right\vert$ is the sub-hyperoval of $\mathcal{H}$ consisting of the points of $\mathcal{H}$ with projective coordinates, up to a nonzero scalar, in $\mathbb{F}_{2^{f/\theta}}$. Then there are $2^{f/\theta-1}(2^{f/\theta}-1)$ lines of $Fix(\varsigma ^{f/\theta })$ which are external to $Fix(\varsigma ^{f/\theta })\cap \mathcal{H}$. These lines of $Fix(\varsigma ^{f/\theta })$ are the intersections of lines of $\PG(2,2^{f})$ external to $\mathcal{H}$ since $f$ is odd. 
If $\theta <f$, then $2^{f/\theta-1}(2^{f/\theta}-1)>1$, then $\varsigma ^{f/\theta
}\leq G_{\alpha ,x}$ for some suitable line $x$ external to $\mathcal{H}$, hence $\theta \mid 2\theta _{3}$ by Lemma \ref{I.1}(3), and so $\theta
\mid \theta _{3}$ since $\theta $ is odd, whereas $\theta _{3}\mid \frac{\theta }{%
3}$. Thus $\theta =f$, and so $G=\PGaL(2,2^{f})$.

Let $3^{u}$ be the maximum power of $3$ dividing $f$. If $f\neq 3^{u}$, then 
$\varsigma ^{f/3^{u}}\neq 1$ and hence repeating the previous argument with $%
\varsigma ^{f/3^{u}}$ in the role of $\varsigma ^{f/\theta }$ we see that $%
3^{u}\mid 2\theta _{3}$ and hence $3^{u}\mid \theta _{3}$ since $\theta _{3}$
is odd, and so $3^{u+1}\mid f $ since $3\theta _{3}\mid f $ by by Lemma \ref{I.1}(2), a
contradiction. Thus $f=3^{u}$ and $G=\PGaL(2,2^{3^{u}})$.

As the actions of $G$ on the set $\mathcal{E}$ of the external lines to $\mathcal{H}$ and on
the conjugacy class of the subgroups of $G$ isomorphic to $G_{\alpha }\cong
D_{2(2^{f}+1)}:Z_{\theta }$ are equivalent, we may identify the points of $\mathcal{D%
}$ with the set $\mathcal{E}$.  `

The $2^{3^u-1}$ lines of $\mathcal{E}$ incident with $P=(0:1:1)$ are $\ell_{c}=[1:1:c^2+c+1]$ with $c\in \mathbb{F}_{2^{3^u}}$ since $T_{\mathbb{F}_{2^{3^{u}}}/%
\mathbb{F}_{2}}(c^2+c+1)=T_{\mathbb{F}_{2^{3^{u}}}/%
\mathbb{F}_{2}}(1)=1$ (note that $\ell_{c}=\ell_{c+1}$).
Assume $\ell_{c_{1}}, \ell_{c_{2}}$, with $c\neq 0,1$ are two distinct lines $\mathcal{E}$ incident with $P=(1:1:0)$ that lie in the same $X_{\alpha}$-orbit. However, $\ell_{c_{1}}, \ell_{c_{2}}$ and $\alpha$ are three distinct lines concurrent in $P$, whereas they should lie in line-hyperoval of $\PG(2,2^{3^{u}})$ by the dual of \cite[Lemma 3.2]{OKP} ($\PG(2,2^{3^{u}})$ is self-dual). Thus distinct lines $\ell_{c}$'s, with $c\neq 0,1$ are representatives of the $2^{3^u-1}$ orbits under $X_{\alpha} \cong D_{2(2^{3^u}-1)}$ on $\mathcal{E}$ by Table \ref{tab:subdegrees}. 

Now, $G_{\alpha}=\left\langle \gamma \right\rangle: \left( \left\langle \tau \right\rangle \times \left\langle \varsigma \right\rangle \right)$, where $\left\langle \gamma \right\rangle \cong Z_{2^{3^u}-1}$, $\left\langle \tau \right\rangle \cong Z_{2}$ and $\left\langle \varsigma \right\rangle \cong Z_{3^{u}}$, with $\tau$ an involutory $\left(P,\ell_{\infty}\right)$-elation of $\PG(2,2^{3^{u}})$. Let $\ell=\ell_{c}$ with $c\neq 0,1$ and $\eta \in G_{\alpha,\ell}$, then $\eta=\tau^{i}\varsigma^{3^{j}}\gamma^{h}$ for some $i=0,1$ and $j,h \geq 0$. Then $P \in \ell$, $\ell \neq \alpha$ and $\ell^\eta=\ell$. Hence, $\ell^{\tau^{i}\varsigma^{3^{j}}}=\ell^{\gamma^{-h}}$. Since both $\tau$ and $\varsigma$ fix $P$, then $P\in \ell^{\tau^{i}\varsigma^{3^{j}}}$, then $P \in \ell^{\gamma^{-h}}$, and so $h=0$ since distinct lines $\ell_{c}$'s, with $c\neq 0,1$ are representatives of the distinct orbits under $X_{\alpha}$. Thus $\eta=\tau^{i}\varsigma^{3^{j}}$ for some $i=0,1$ and $0 \leq j \leq u$, and hence $\left\langle \tau \right\rangle \unlhd G_{\alpha, \ell} \leq  \left\langle \tau \right\rangle \times \left\langle \varsigma \right\rangle $. Suppose that $G_{\alpha, \ell_{c_{0}}} = \left\langle \tau \right\rangle \times \left\langle \varsigma \right\rangle$ for some $\ell_{c_{0}}=[1:1:c_{0}^2+c_{0}+1]$. Then $(c_{0}^2+c_{0}+1)^{2}=c_{0}^2+c_{0}+1$ with $T_{\mathbb{F}_{2^{3^{u}}}/\mathbb{F}_{2}}(c_{0}^2+c_{0}+1)=1$, and hence $c_{0}^2+c_{0}+1=1$. Thus $c_{0}=0,1$, and so $\ell_{c_{0}}=\alpha$. Therefore, for each $\ell_{c}=[1:1:c^2+c+1]$, with $\ell_{c}\neq \alpha$, the group $G_{\alpha, \ell_{c}}=\left\langle \tau \right\rangle \times \left\langle \varsigma^{3^{s_{c}}} \right\rangle$ for some $1\leq s_{c} \leq u$.

It is worth noting that, $s$ is the minimal positive integer such that $c^2+c+1$ lies in the subfield $\mathbb{F}_{2^{3^s}}$ of $\mathbb{F}_{2^{3^u}}$. Since $\left\vert \ell_{c}^{G_{\alpha }}\right\vert=(2^{3^u}+1)3^{u-s_{c}}$ and $\theta _{3}(\ell_{c})=3^{u-s_{c}}$, it follows that $$\left\vert B\cap \ell_{c}^{G_{\alpha }}\right\vert =2\frac{\theta }{3\theta _{3}(\ell_{c})}=2 \cdot 3^{u-1-(u-s_{c})}=2 \cdot 3^{s_{c}-1}$$ by Lemma \ref{I.1}(2). For each $1\leq s \leq u$, let $n_{s}\geq 0$ be the number of $G_{\alpha}$-orbits of the form $\ell_{c}^{G_{\alpha }}$, with $c \neq 0,1$, and length $(2^{3^u}+1)3^{u-s_{c}}$. Then $n_{1}+\cdots + n_{u}=2^{3^{u}-1}$ and
\begin{equation}\label{fond}
\frac{2^{3^{u}}+1}{3}=k=1+2\sum_{s=1}^{u}n_{i}\cdot 3^{s-1}\text{.}    
\end{equation}
Since the polynomial $Z^{2}+Z+m \in \mathbb{F}_{2^{3}}[Z]$ is completely reducible in $\mathbb{F}_{2^{3^{u}}}$ if and only if it so in $\mathbb{F}_{2^{3}}$ (otherwise the roots lie a quadratic extension of $\mathbb{F}_{2^{3}}$), it follows that $c^{2}+c+1 \in \mathbb{F}_{2^{3}}$, with $c \in \mathbb{F}_{2^{3^{u}}}$, if and only if $c \in \mathbb{F}_{2^{3}}$. Hence, the number of such elements $c^{2}+c+1 \in \mathbb{F}_{2^{3}} \setminus \{0,1 \}$, is $3$, and consequently $1 \leq n_{1}\leq 3$. 

If $u>1$, both $\frac{2^{3^{u}}+1}{3}$ and $\sum_{i=2}^{u}n_{i}\cdot 3^{i-1}$ are divisible $3^{2}$, and hence (\ref{fond}) implies $1+2n_{1} \equiv 0 \pmod{3^{2}}$, which is not the case since $1 \leq n_{1}\leq 3$. Thus $u=1$. Therefore, $q=8$, then $(v,k,r,\lambda ,b)=(28,3,27\lambda /2,\lambda
,\ 126\lambda )$ with $\lambda =2,4$ and $G= P\Gamma
L(2,8)$. Note that $G\cong $ $%
^{2}G_{2}(3)$ is the smallest Ree group, and $G$ has a unique permutation
representation of degree $28$, namely the one on the points of $\mathcal{R}%
(3)$, the Ree unital of order $3$. In particular, $G$ acts $2$-transitvely
on the points of $\mathcal{R}(3)$. We may identify the point set of $%
\mathcal{D}$ with that $\mathcal{R}(3)$. The $2$-transitivity implies that
any $G$-orbit of length $3$ leads to flag-transitive $2$-designs with $v=28$
and $k=3$. By \cite{At}, there are exactly $2$ conjugacy $G$-classes of
subgroups of order $3$, namely one represented by subgroups of order $3$ of 
$PSL(2,8)$ and one generated by $3$-elements induced by automorphisms of $%
\mathbb{F}_{2^{3}}$.  In the former case orbits are contained in the blocks
of $\mathcal{R}(3)$, and hence $\lambda =4$, in the latter triangles of $\mathcal{R}(3)$ in the latter $%
G_{B}\cong Z_{3}$ and hence $\lambda =2$.
\end{proof}

\bigskip

\begin{proof}[Proof of Theorem \ref{main}.]
The proof Theorem \ref{main} is the immediate consequence of the combination of Theorem \ref{FT-PP-reduction} and Proposition \ref{Ifin} or by Theorem \ref{FTPI} according as when $G$ acts point-primitively or point-imprimitively on $\mathcal{D}$, respectively.  
\end{proof}

\section{Appendix}\label{Appendix}
In this final section, for each of the $2$-design and corresponding group $G$ recorded in Table \ref{tab:alldesigns} we provide a base block of $\mathcal{D}$ and generators of $G$. In this way a potential reader can quickly construct the design by using the package \texttt{Design} of \texttt{GAP}. We generated an hypertext in the table so that by clicking on the line of Table \ref{tab:alldesigns} a reader is redirected here to the corresponding base block of $\mathcal{D}$ and generators of $G$. 

\begin{enumerate}
    \item\label{533} $(v,k,\lambda)=(5,3,3)$, $G=\PSL(2,5)$\\
    \texttt{g1:=(1,2,3,4,5);}\\
    \texttt{g2:=(3,4,5);}\\
    \texttt{B:=[1,2,3];}\\

    \item\label{632} $(v,k,\lambda)=(6,3,2)$, $G=\PSL(2,2^2)$ \\
    \texttt{g1:=(1,6,4)(2,5,3);}\\
    \texttt{g2:=(1,5,4)(2,6,3);}\\
    \texttt{g3:=(1,2,4)(3,6,5);}\\
    \texttt{B:=[1,2,3];}\\

    \item\label{634_1} $(v,k,\lambda)=(6,3,4)$, $G=\PGaL(2,2^2)$ \\
    \texttt{g1:=(1,6,5,4);}\\
    \texttt{g2:=(1,5,6)(2,3,4);}\\
    \texttt{B:=[1,2,3];}\\

     \item\label{634_2}$(v,k,\lambda)=(6,3,4)$, $G=\PSL(2,3^2)$\\
    \texttt{g1:=(1,2,3,4,5);}\\
    \texttt{g2:=(4,5,6);}\\
    \texttt{B:=[1,2,3];}\\

    \item\label{646_1}$(v,k,\lambda)=(6,4,6)$, $G=\PGaL(2,2^2)$, \\
    \texttt{g1:=(1,6,5,4);}\\
    \texttt{g2:=(1,5,6)(2,3,4);}\\
    \texttt{B:=[1,2,3,4];}\\

    \item\label{646_2}$(v,k,\lambda)=(6,4,6)$,  $G=\PSL(2,3^2)$ \\
    \texttt{g1:=(1,2,3,4,5);}\\
    \texttt{g2:=(4,5,6);}\\
    \texttt{B:=[1,2,3,4];}\\

    \item\label{731} $(v,k,\lambda)=(7,3,1)$, $G=\PSL(2,7)$ \\
    \texttt{g1:=(1,3,2)(4,5,7);}\\
    \texttt{g2:(1,2,4)(3,7,6);}\\
    \texttt{B:=[1,2,7];}\\

 \item\label{742} $(v,k,\lambda)=(7,4,2)$, $G=\PSL(2,7)$\\
    \texttt{g1:=(1,3,2)(4,5,7);}\\
    \texttt{g2:(1,2,4)(3,7,6);}\\
    \texttt{B:=[1,2,3];}\\   

    \item\label{734}$(v,k,\lambda)=(7,3,4)$, $G=\PSL(2,7)$\\
    \texttt{g1:= (1,3,2)(5,7,6);}\\
    \texttt{g2:=(1,5,3)(2,4,7);}\\
    \texttt{B:=[1,2,3];}\\

    \item\label{1042}$(v,k,\lambda)=(10,4,2)$, $G=\PGaL(2,2^4)$\\
    \texttt{g1:=(1,2)(3,9,7,5)(4,10,8,6);}\\
    \texttt{g2:=(1,9,8)(2,4,7)(3,10,5);}\\
    \texttt{B:=[1,2,4,8];}\\

     \item\label{1133}$(v,k,\lambda)=(11,3,3)$, $G=\PSL(2,11)$\\
    \texttt{g1:= (2,10,8,6,4)(3,11,9,7,5);}\\
    \texttt{g2:=(1,8,7)(2,11,5)(3,10,6);}\\
    \texttt{B:=[1,2,4];}\\

    \item\label{1136}$(v,k,\lambda)=(11,3,6)$, $G=\PSL(2,11)$\\
    \texttt{g1:= (2,10,8,6,4)(3,11,9,7,5);}\\
    \texttt{g2:=(1,8,7)(2,11,5)(3,10,6);}\\
    \texttt{B:=[1,2,3];}\\

    \item\label{1146}$(v,k,\lambda)=(11,4,6)$, $G=\PSL(2,11)$\\
    \texttt{g1:= (2,10,8,6,4)(3,11,9,7,5);}\\
    \texttt{g2:=(1,8,7)(2,11,5)(3,10,6);}\\
    \texttt{B:=[1,2,3,8];}\\

    \item \label{1152}$(v,k,\lambda)=(11,5,2)$ $G=\PSL(2,11)$\\
    \texttt{g1:= (2,10,8,6,4)(3,11,9,7,5);}\\
    \texttt{g2:=(1,8,7)(2,11,5)(3,10,6);}\\
    \texttt{B:=[1,2,3,8,11];}\\

    \item \label{11512}$(v,k,\lambda)=(11,5,12)$, $G=\PSL(2,11)$\\
    \texttt{g1:= (2,10,8,6,4)(3,11,9,7,5);}\\
    \texttt{g2:=(1,8,7)(2,11,5)(3,10,6);}\\
    \texttt{B:=[1,2,3,4,6];}\\

    \item\label{1163}$(v,k,\lambda)=(11,6,3)$,$G=\PSL(2,11)$, \\
    \texttt{g1:= (2,10,8,6,4)(3,11,9,7,5);}\\
    \texttt{g2:=(1,8,7)(2,11,5)(3,10,6);}\\
    \texttt{B:=[1,2,3,4,6,9];}\\

    \item\label{11615}$(v,k,\lambda)=(11,6,15)$, $G=\PSL(2,11)$\\
    \texttt{g1:= (2,10,8,6,4)(3,11,9,7,5);}\\
    \texttt{g2:=(1,8,7)(2,11,5)(3,10,6);}\\
    \texttt{B:=[1,2,3,4,5,11];}\\


    \item\label{1584_2}$(v,k,\lambda)=(15,8,4)$, $G=\PSL(2,3^2)$\\
    \texttt{g1:=(1,12)(2,3)(4,11)(7,14)(9,15)(10,13);}\\
    \texttt{g2:=(1,10)(2,11,5,6)(3,14,12,8)(4,13,9,7);}\\
    \texttt{B:=[1,2,3,4,10,11,12,13];}\\

    \item\label{2832_1} $(v,k,\lambda)=(28,3,2)$, $G=\PGaL(2,2^3)$\\
    {\scriptsize \texttt{g1:=(1,2,5,3,7,6,4)(8,18,13,26,24,21,15)(9,12,28,27,23,19,16)(10,25,22,17,11,20,14);}\\
    \texttt{g2:=(1,26,11)(2,14,12)(3,19,13)(5,18,9)(6,22,8)(7,23,10)(15,24,17)(16,20,25)(21,27,28);}\\
    \texttt{g3:=(1,3,2)(5,7,6)(8,9,10)(11,13,12)(14,26,19)(15,27,20)(16,17,21)(18,23,22)(24,28,25);}}\\
    \texttt{B:=[1,2,16];}\\

\item\label{2832_2} $(v,k,\lambda)=(28,3,2)$, $G=\PGaL(2,2^3)$\\
    {\scriptsize \texttt{g1:=(1,2,5,3,7,6,4)(8,18,13,26,24,21,15)(9,12,28,27,23,19,16)(10,25,22,17,11,20,14);}\\
    \texttt{g2:=(1,26,11)(2,14,12)(3,19,13)(5,18,9)(6,22,8)(7,23,10)(15,24,17)(16,20,25)(21,27,28);}\\
    \texttt{g3:=(1,3,2)(5,7,6)(8,9,10)(11,13,12)(14,26,19)(15,27,20)(16,17,21)(18,23,22)(24,28,25);}}\\
    \texttt{B:=[1,2,14];}\\

    \item\label{2834} $(v,k,\lambda)=(28,3,4)$, $G=\PGaL(2,2^3)$\\
    {\scriptsize \texttt{g1:=(1,2,5,3,7,6,4)(8,18,13,26,24,21,15)(9,12,28,27,23,19,16)(10,25,22,17,11,20,14);}\\
    \texttt{g2:=(1,26,11)(2,14,12)(3,19,13)(5,18,9)(6,22,8)(7,23,10)(15,24,17)(16,20,25)(21,27,28);}\\
    \texttt{g3:=(1,3,2)(5,7,6)(8,9,10)(11,13,12)(14,26,19)(15,27,20)(16,17,21)(18,23,22)(24,28,25);}}\\
    \texttt{B:=[1,2,3];}\\

    \item\label{2865} $(v,k,\lambda)=(28,6,5)$, $G=\PGaL(2,2^3)$\\
    {\scriptsize \texttt{g1:=(1,2,5,3,7,6,4)(8,18,13,26,24,21,15)(9,12,28,27,23,19,16)(10,25,22,17,11,20,14);}\\
    \texttt{g2:=(1,26,11)(2,14,12)(3,19,13)(5,18,9)(6,22,8)(7,23,10)(15,24,17)(16,20,25)(21,27,28);}\\
    \texttt{g3:=(1,3,2)(5,7,6)(8,9,10)(11,13,12)(14,26,19)(15,27,20)(16,17,21)(18,23,22)(24,28,25);}}\\
    \texttt{B:=[1,2,3,16,17,21];}\\

    \item\label{28610_1} $(v,k,\lambda)=(28,6,10)$, $G=\PGaL(2,2^3)$\\
    {\scriptsize \texttt{g1:=(1,2,5,3,7,6,4)(8,18,13,26,24,21,15)(9,12,28,27,23,19,16)(10,25,22,17,11,20,14);}\\
    \texttt{g2:=(1,26,11)(2,14,12)(3,19,13)(5,18,9)(6,22,8)(7,23,10)(15,24,17)(16,20,25)(21,27,28);}\\
    \texttt{g3:=(1,3,2)(5,7,6)(8,9,10)(11,13,12)(14,26,19)(15,27,20)(16,17,21)(18,23,22)(24,28,25);}}\\
    \texttt{B:=[1,2,3,5,14,20];}\\

\item\label{28610_2} $(v,k,\lambda)=(28,6,10)$, $G=\PGaL(2,2^3)$\\
    {\scriptsize \texttt{g1:=(1,2,5,3,7,6,4)(8,18,13,26,24,21,15)(9,12,28,27,23,19,16)(10,25,22,17,11,20,14);}\\
    \texttt{g2:=(1,26,11)(2,14,12)(3,19,13)(5,18,9)(6,22,8)(7,23,10)(15,24,17)(16,20,25)(21,27,28);}\\
    \texttt{g3:=(1,3,2)(5,7,6)(8,9,10)(11,13,12)(14,26,19)(15,27,20)(16,17,21)(18,23,22)(24,28,25);}}\\
    \texttt{B:=[1,2,3,4,5,6];}\\

\item\label{28610_3} $(v,k,\lambda)=(28,6,10)$, $G=\PGaL(2,2^3)$\\
    {\scriptsize \texttt{g1:=(1,2,5,3,7,6,4)(8,18,13,26,24,21,15)(9,12,28,27,23,19,16)(10,25,22,17,11,20,14);}\\
    \texttt{g2:=(1,26,11)(2,14,12)(3,19,13)(5,18,9)(6,22,8)(7,23,10)(15,24,17)(16,20,25)(21,27,28);}\\
    \texttt{g3:=(1,3,2)(5,7,6)(8,9,10)(11,13,12)(14,26,19)(15,27,20)(16,17,21)(18,23,22)(24,28,25);}}\\
    \texttt{B:=[1,2,3,5,8,18];}\\

\item\label{28610_4} $(v,k,\lambda)=(28,6,10)$, $G=\PGaL(2,2^3)$\\
    {\scriptsize \texttt{g1:=(1,2,5,3,7,6,4)(8,18,13,26,24,21,15)(9,12,28,27,23,19,16)(10,25,22,17,11,20,14);}\\
    \texttt{g2:=(1,26,11)(2,14,12)(3,19,13)(5,18,9)(6,22,8)(7,23,10)(15,24,17)(16,20,25)(21,27,28);}\\
    \texttt{g3:=(1,3,2)(5,7,6)(8,9,10)(11,13,12)(14,26,19)(15,27,20)(16,17,21)(18,23,22)(24,28,25);}}\\
    \texttt{B:=[1,2,3,18,22,23];}\\

\item\label{28610_5} $(v,k,\lambda)=(28,6,10)$, $G=\PGaL(2,2^3)$\\
    {\scriptsize \texttt{g1:=(1,2,5,3,7,6,4)(8,18,13,26,24,21,15)(9,12,28,27,23,19,16)(10,25,22,17,11,20,14);}\\
    \texttt{g2:=(1,26,11)(2,14,12)(3,19,13)(5,18,9)(6,22,8)(7,23,10)(15,24,17)(16,20,25)(21,27,28);}\\
    \texttt{g3:=(1,3,2)(5,7,6)(8,9,10)(11,13,12)(14,26,19)(15,27,20)(16,17,21)(18,23,22)(24,28,25);}}\\
    \texttt{B:=[1,2,4,10,12,18];}\\

\item\label{2872} $(v,k,\lambda)=(28,7,2)$, $G=\PSL(2,2^3)$\\
    {\scriptsize 
    \texttt{g1:=(1,26,14,16,8,3,23)(2,12,9,24,10,4,28)(5,15,21,19,6,25,13)(7,27,22,11,17,18,20);}\\
    \texttt{g2:=(1,16,9,20,15,10,5)(2,17,7,21,14,12,6)(3,18,8,19,13,11,4)(22,26,27,24,28,23,25);}\\
    \texttt{g3:=(1,5,12,6,10,15,8)(2,14,4,27,9,24,11)(3,21,7,17,13,23,22)(16,18,25,19,20,26,28);}}\\
    \texttt{B:=[1,2,8,9,19,21,26];}\\ 

\item\label{2876} $(v,k,\lambda)=(28,7,6)$, $G=\PGaL(2,2^3)$\\
    {\scriptsize 
    \texttt{g1:=(1,5,10,15,20,9,16)(2,6,12,14,21,7,17)(3,4,11,13,19,8,18)(22,25,23,28,24,27,26);}\\
    \texttt{g2:=(1,19,13)(2,5,16)(3,12,7)(4,26,15)(6,24,9)(8,21,25)(10,23,17)(11,27,14)(18,20,22);}\\
    \texttt{g3:=(1,2,3)(4,10,21)(5,12,19)(6,11,20)(7,13,16)(8,15,17)(9,14,18)(22,24,27)(23,25,26);}}\\
    \texttt{B:=[1,2,4,12,20,24,27];}\\

    \item\label{2898_1} $(v,k,\lambda)=(28,9,8)$, $G=\PGaL(2,2^3)$\\
    {\scriptsize \texttt{g1:=(1,2,5,3,7,6,4)(8,18,13,26,24,21,15)(9,12,28,27,23,19,16)(10,25,22,17,11,20,14);}\\
    \texttt{g2:=(1,26,11)(2,14,12)(3,19,13)(5,18,9)(6,22,8)(7,23,10)(15,24,17)(16,20,25)(21,27,28);}\\
    \texttt{g3:=(1,3,2)(5,7,6)(8,9,10)(11,13,12)(14,26,19)(15,27,20)(16,17,21)(18,23,22)(24,28,25);}}\\
    \texttt{B:=[1,2,3,5,13,14,15,20,22];}\\

    \item\label{2898_2} $(v,k,\lambda)=(28,9,8)$, $G=\PGaL(2,2^3)$\\
    {\scriptsize \texttt{g1:=(1,2,5,3,7,6,4)(8,18,13,26,24,21,15)(9,12,28,27,23,19,16)(10,25,22,17,11,20,14);}\\
    \texttt{g2:=(1,26,11)(2,14,12)(3,19,13)(5,18,9)(6,22,8)(7,23,10)(15,24,17)(16,20,25)(21,27,28);}\\
    \texttt{g3:=(1,3,2)(5,7,6)(8,9,10)(11,13,12)(14,26,19)(15,27,20)(16,17,21)(18,23,22)(24,28,25);}}\\
    \texttt{B:=[1,2,3,11,12,13,14,19,26];}\\

    \item\label{28916} $(v,k,\lambda)=(28,9,16)$, $G=\PGaL(2,2^3)$\\
    {\scriptsize \texttt{g1:=(1,2,5,3,7,6,4)(8,18,13,26,24,21,15)(9,12,28,27,23,19,16)(10,25,22,17,11,20,14);}\\
    \texttt{g2:=(1,26,11)(2,14,12)(3,19,13)(5,18,9)(6,22,8)(7,23,10)(15,24,17)(16,20,25)(21,27,28);}\\
    \texttt{g3:=(1,3,2)(5,7,6)(8,9,10)(11,13,12)(14,26,19)(15,27,20)(16,17,21)(18,23,22)(24,28,25);}}\\
    \texttt{B:=[1,2,3,5,8,9,15,23,24];}\\

    \item\label{281211_1} $(v,k,\lambda)=(28,12,11)$, $G=\PGaL(2,2^3)$\\
    {\scriptsize \texttt{g1:=(1,2,5,3,7,6,4)(8,18,13,26,24,21,15)(9,12,28,27,23,19,16)(10,25,22,17,11,20,14);}\\
    \texttt{g2:=(1,26,11)(2,14,12)(3,19,13)(5,18,9)(6,22,8)(7,23,10)(15,24,17)(16,20,25)(21,27,28);}\\
    \texttt{g3:=(1,3,2)(5,7,6)(8,9,10)(11,13,12)(14,26,19)(15,27,20)(16,17,21)(18,23,22)(24,28,25);}\\}
    \texttt{B:=[1,2,3,4,5,6,8,10,15,16,22,28];}

    \item\label{281211_2} $(v,k,\lambda)=(28,12,11)$, $G=\PGaL(2,2^3)$\\
    {\scriptsize \texttt{g1:=(1,2,5,3,7,6,4)(8,18,13,26,24,21,15)(9,12,28,27,23,19,16)(10,25,22,17,11,20,14);}\\
    \texttt{g2:=(1,26,11)(2,14,12)(3,19,13)(5,18,9)(6,22,8)(7,23,10)(15,24,17)(16,20,25)(21,27,28);}\\
    \texttt{g3:=(1,3,2)(5,7,6)(8,9,10)(11,13,12)(14,26,19)(15,27,20)(16,17,21)(18,23,22)(24,28,25);}}\\
    \texttt{B:=[1,2,3,5,9,12,13,15,17,25,27,28];}\\

    \item\label{281834} $(v,k,\lambda)=(28,18,34)$, $G=\PGaL(2,2^3)$\\
    {\scriptsize \texttt{g1:=(1,2,5,3,7,6,4)(8,18,13,26,24,21,15)(9,12,28,27,23,19,16)(10,25,22,17,11,20,14);}\\
    \texttt{g2:=(1,26,11)(2,14,12)(3,19,13)(5,18,9)(6,22,8)(7,23,10)(15,24,17)(16,20,25)(21,27,28);}\\
    \texttt{g3:=(1,3,2)(5,7,6)(8,9,10)(11,13,12)(14,26,19)(15,27,20)(16,17,21)(18,23,22)(24,28,25);}}\\
    \texttt{B:=[1,2,3,4,5,6,8,9,10,11,12,14,15,16,17,20,22,28];}\\

    \item\label{3662}$(v,k,\lambda)=(36,6,2)$, $G=\PSL(2,2^3)$\\
    {\scriptsize \texttt{g1:=(1,6,4,2,7,5,3)(8,28,23,14,10,26,19)(9,11,25,16,17,27,21)(12,15,24,18,20,13,22)\\
      (29,34,32,30,35,33,31);}\\
    \texttt{g2:=(1,29,10)(2,25,12)(3,13,5)(4,16,9)(6,18,11)(7,30,8)(14,15,33)(17,34,28)(19,21,31)\\(
      20,22,27)(23,24,35)(26,32,36);}\\}
    \texttt{B:=[1,2,14,15,22,23];}\\

    \item\label{3666_1} $(v,k,\lambda)=(36,6,6)$, $G=\PGaL(2,2^3)$\\
    {\scriptsize \texttt{g1:=(1,7,6,5,4,3,2)(9,15,14,13,12,11,10)(16,21,36,34,31,27,
22)(17,26,20,35,32,28,23)\\(18,30,25,19,33,29,24);}\\
    \texttt{g2:=(1,8,9)(2,13,17)(3,10,19)(4,15,20)(5,11,16)(6,12,21)(7,14,
18)(22,24,28)(23,35,29)\\(25,31,30)(26,34,27)(32,33,36);}\\
    \texttt{g3:=(2,5,3)(4,6,7)(10,13,11)(12,14,15)(16,19,17)(18,20,21)(22,
24,28)(23,34,30)(25,35,27)\\(26,31,29)(32,36,33);}\\}
    \texttt{B:=[1,2,3,12,31,33];}\\

     \item\label{3666_2} $(v,k,\lambda)=(36,6,6)$, $G=\PGaL(2,2^3)$\\
     {\scriptsize \texttt{g1:=(1,7,6,5,4,3,2)(9,15,14,13,12,11,10)(16,21,36,34,31,27,
22)(17,26,20,35,32,28,23)\\(18,30,25,19,33,29,24);}\\
    \texttt{g2:=(1,8,9)(2,13,17)(3,10,19)(4,15,20)(5,11,16)(6,12,21)(7,14,
18)(22,24,28)(23,35,29)\\(25,31,30)(26,34,27)(32,33,36);}\\
    \texttt{g3:=(2,5,3)(4,6,7)(10,13,11)(12,14,15)(16,19,17)(18,20,21)(22,
24,28)(23,34,30)(25,35,27)\\(26,31,29)(32,36,33);}\\}
    \texttt{B:=[1,2,3,14,34,36];}\\

     \item\label{3666_3} $(v,k,\lambda)=(36,6,6)$, $G=\PGaL(2,2^3)$\\
      {\scriptsize \texttt{g1:=(1,7,6,5,4,3,2)(9,15,14,13,12,11,10)(16,21,36,34,31,27,
22)(17,26,20,35,32,28,23)\\(18,30,25,19,33,29,24);}\\
    \texttt{g2:=(1,8,9)(2,13,17)(3,10,19)(4,15,20)(5,11,16)(6,12,21)(7,14,
18)(22,24,28)(23,35,29)\\(25,31,30)(26,34,27)(32,33,36);}\\
    \texttt{g3:=(2,5,3)(4,6,7)(10,13,11)(12,14,15)(16,19,17)(18,20,21)(22,
24,28)(23,34,30)(25,35,27)\\(26,31,29)(32,36,33);}\\}
    \texttt{B:=[1,2,10,12,21,33];}\\

    \item\label{3688} $(v,k,\lambda)=(36,8,8)$,$G=\PGaL(2,3^2)$\\
    {\scriptsize \texttt{g1:=(1,15,14,12,3,9,7,6)(2,16,13,11,4,10,8,5)(17,18,36,33)(19,26,31,20,28,25,35,29)\\(21,22,24,34,23,30,27,32);}\\
    \texttt{g2:=(1,33,28)(2,36,30)(3,9,27)(4,5,29)(6,21,16)(7,26,8)(10,25,
12)(11,32,18)(13,23,14)\\(15,31,17)(19,24,35)(20,34,22);}\\
    \texttt{g3:=(1,2)(3,4)(5,9)(6,10)(7,13)(8,14)(11,15)(12,16)(17,18)(19,
22)(20,24)(21,25)(23,26)\\(27,29)(28,30)(31,32)(33,36)(34,35);}\\}
    \texttt{B:=[1,2,3,4,6,10,12,16];}\\

    \item\label{12084_1} $(v,k,\lambda)=(120,8,4)$, $G=\PSL(2,2^4):Z_2$\\
    {\tiny \texttt{g1:=(3,15,28,4)(5,35,78,49)(6,21,68,53)(7,97,93,8)(9,104,45,27)(10,16,57,96)(11,79,47,64)(12,63,59,72)\\
    (13,84,107,48)(14,24,65,111)(18,43,92,109)(19,30,42,37)(20,62,90,86)(22,50,105,95)(23,41,120,98)\\
    (25,60,100,69)(26,118,70,52)(29,51,102,117)(31,39,112,83)(32,103,110,116)(33,67,73,54)(34,94,66,99)\\
    (36,85,77,55)(38,74,81,61)(44,113,115,108)(46,89,82,75)(56,106,71,101)(58,87,91,80)(76,114)(88,119);}\\
    \texttt{g2:=(1,90,8,72,3,16,60,69,68,50,64,15,101,76,78)(2,25,56,94,54,119,6,112,111,74,10,34,65,23,96)\\(4,17,107,41,27,43,13,114,97,67,102,82,87,18,100)(5,84,39,19,99,45,37,115,89,22,35,71,93,51,85)\\
    (7,21,42,40,38,26,113,59,109,98,47,110,118,62,88)(9,116,83,52,73,92,29,33,70,117,58,49,63,57,79)\\
    (11,31,30,104,105,120,12,44,81,46,20,106,80,86,75)(14,103,61,77,28,32,53,108,95,36,66,91,24,55,48);}\\}
    \texttt{B:=[1,2,11,12,13,25,99,106];}\\

 \item\label{12084_2} $(v,k,\lambda)=(120,8,4)$, $G=\PGaL(2,2^4)$\\
    {\tiny \texttt{g1:=(1,57,54,50,46,9,43,39,35,32,7,25,21,17,14)(2,58,53,49,45,10,44,40,36,31,8,26,22,18,13)\\
    (3,59,55,52,48,12,42,37,34,29,5,27,23,20,15)(4,60,56,51,47,11,41,38,33,30,6,28,24,19,16)\\
    (61,119,87,70,113,88,101,71,120,93,73,69,95,102,81)(62,116,117,105,114,76,97,98,75,94,85,90,65,66,84)\\
    (63,107,103,110,82,89,77,72,104,106,83,111,109,74,78)(64,91,96,112,100,80,99,86,108,67,92,68,115,79,118);}\\
    \texttt{g2:=(1,24,79)(2,52,72)(3,32,69)(4,44,84)(5,51,74)(6,23,86)(7,108,77)(8,103,67)\\(9,31,81)(10,43,90)(11,114,70)(12,119,65)(13,93,80)(14,59,88)(15,58,55)(16,102,62)\\
    (17,111,64)(18,56,45)(19,110,73)(20,53,83)(21,109,66)(22,75,54)(25,97,89)(26,47,76)\\
    (27,98,61)(28,46,33)(29,94,68)(30,91,48)(34,57,39)(35,92,38)(36,50,87)(37,115,85)\\
    (40,118,63)(41,101,78)(42,82,60)(49,112,71)(95,113,107)(96,120,99)(100,105,116)(104,117,106);}\\
    \texttt{g3:=(1,4,2,3)(5,9,6,10)(7,11,8,12)(13,37,25,19)(14,38,26,20)(15,39,28,18)(16,40,27,17)(21,41,49,29)\\
(22,42,50,30)(23,43,51,31)(24,44,52,32)(33,45,55,57)(34,46,56,58)(35,47,53,59)(36,48,54,60)(61,64,62,63)\\
(65,77,70,67)(66,78,71,68)(69,79,84,72)(73,80,85,89)(74,81,86,90)(75,82,87,91)(76,83,88,92)(93,115,97,110)\\
(94,109,101,112)(95,99,105,106)(96,116,104,113)(98,111,102,118)(100,117,107,120)(103,119,108,114);}\\}
    \texttt{B:=[1,2,11,12,17,40,47,59];}\\

    \item\label{12088} $(v,k,\lambda)=(120,8,8)$, $G=\PGaL(2,2^4)$\\
    {\tiny \texttt{g1:=(1,57,54,50,46,9,43,39,35,32,7,25,21,17,14)(2,58,53,49,45,10,44,40,36,31,8,26,22,18,13)\\
    (3,59,55,52,48,12,42,37,34,29,5,27,23,20,15)(4,60,56,51,47,11,41,38,33,30,6,28,24,19,16)\\
    (61,119,87,70,113,88,101,71,120,93,73,69,95,102,81)(62,116,117,105,114,76,97,98,75,94,85,90,65,66,84)\\
    (63,107,103,110,82,89,77,72,104,106,83,111,109,74,78)(64,91,96,112,100,80,99,86,108,67,92,68,115,79,118);}\\
    \texttt{g2:=(1,24,79)(2,52,72)(3,32,69)(4,44,84)(5,51,74)(6,23,86)(7,108,77)(8,103,67)(9,31,81)(10,43,90)\\(11,114,70)(12,119,65)(13,93,80)(14,59,88)(15,58,55)(16,102,62)(17,111,64)(18,56,45)(19,110,73)\\(20,53,83)(21,109,66)(22,75,54)(25,97,89)(26,47,76)(27,98,61)(28,46,33)(29,94,68)(30,91,48)\\(34,57,39)(35,92,38)(36,50,87)(37,115,85)(40,118,63)(41,101,78)(42,82,60)(49,112,71)(95,113,107)\\(96,120,99)(100,105,116)(104,117,106);}\\
    \texttt{g3:=(1,4,2,3)(5,9,6,10)(7,11,8,12)(13,37,25,19)(14,38,26,20)(15,39,28,18)(16,40,27,17)(21,41,49,29)\\
(22,42,50,30)(23,43,51,31)(24,44,52,32)(33,45,55,57)(34,46,56,58)(35,47,53,59)(36,48,54,60)\\(61,64,62,63)
(65,77,70,67)(66,78,71,68)(69,79,84,72)(73,80,85,89)(74,81,86,90)(75,82,87,91)\\(76,83,88,92)
(93,115,97,110)(94,109,101,112)(95,99,105,106)(96,116,104,113)
(98,111,102,118)\\(100,117,107,120)(103,119,108,114);}\\}
    \texttt{B:=[1,2,11,12,13,25,99,106];}\\

    \item\label{49641} $(v,k,\lambda)=(496,4,1)$, $G=\PGaL(2,2^5)$\\
    {\tiny
\texttt{g1:=(1,6,16,39,76,42,88,64,129,143,93,75,149,101,11,27,57,119,124,135,154,111,33,70,139,83,55,107,23,50,96)\\
(2,7,17,36,77,41,90,65,127,141,95,72,146,102,13,29,58,120,125,134,152,113,35,67,138,85,51,108,24,46,97)\\
(3,9,20,40,80,44,87,62,128,142,92,73,147,103,14,26,56,116,122,132,153,115,32,69,140,84,53,110,25,47,98)\\
(4,8,18,37,78,45,89,63,130,144,91,71,148,105,15,30,60,118,121,131,155,114,31,68,137,82,54,106,21,48,99)\\
(5,10,19,38,79,43,86,61,126,145,94,74,150,104,12,28,59,117,123,133,151,112,34,66,136,81,52,109,22,49,100)\\
(156,266,158,160,268,395,477,185,308,405,220,300,426,452,343,454,260,318,399,159,267,238,328,427,393,447,\\
488,157,254,310,362)(161,275,381,350,438,491,345,245,338,378,462,359,228,272,355,460,444,441,494,230,168,\\
282,407,237,217,249,232,278,181,302,428)(162,277,219,211,291,391,202,331,429,483,166,281,406,475,178,298,\\
394,234,284,411,235,214,195,321,352,458,492,487,361,464,437)(163,274,403,481,471,377,439,448,257,223,206,\\
212,290,218,299,425,486,224,306,430,484,496,204,335,373,386,248,264,322,402,469)(164,276,404,482,188,244,\\
339,221,304,392,369,468,435,433,354,242,239,240,333,357,440,390,463,192,319,400,191,174,292,346,344)(165,\\
280,199,326,241,332,431,446,367,366,253,311,380,456,351,226,270,397,348,455,213,187,313,409,365,258,255,\\
314,413,227,271)(167,236,285,410,485,375,364,465,175,293,419,374,459,349,353,372,225,269,396,169,287,415,\\
207,261,316,246,337,417,208,341,251)(170,183,309,189,312,408,387,445,222,305,196,325,424,388,262,233,283,\\
389,451,489,436,231,279,215,296,422,479,434,358,371,186)(171,288,414,263,324,356,368,467,490,363,453,474,\\
473,177,297,423,243,334,265,323,179,200,247,256,315,412,259,317,398,480,347)(172,286,382,457,376,461,442,\\
193,320,173,294,418,432,493,198,184,307,250,342,201,330,205,336,370,194,229,273,401,379,384,466)(176,190,
\\209,340,420,478,472,210,289,416,449,495,383,182,303,252,216,295,421,197,327,385,450,476,470,180,301,360,\\
443,203,329);}\\
\texttt{g2:=(1,308,225)(2,426,190)(3,477,178)(4,267,232)(5,395,205)(6,380,264)(7,417,213)(8,162,146)(9,228,108)\\
(10, 94, 73)(11,275,227)(12, 41,170)(13,166,155)(14,373,250)(15,239, 55)(16,194, 68)(17,440,256)\\
(18,476,244)(19,169,137)(20,153, 33)(21,175,145)(22,183,113)(23,334,202)(24,382,192)(25, 91,212)\\
(26,398,197)(27,203, 52)(28, 77,235)(29,438,204)(30,449,223)(31,324,229)(32,487,158)(34,135,259)\\
(35,109,214)(36,309,207)(37,313,237)(38,435,248)(39,371,220)(40,129,245)(42,132,249)(43,342,221)\\
(44,266,240)(45,112, 60)(46,319,172)(47,273,258)(48,173, 75)(49,411,226)(50, 59,180)(51,450,157)\\
(53,390,236)(54,127,125)(56,317,222)(57,291,171)(58,198, 88)(61,279,217)(62,420,179)(63,124,164)\\
(64,201, 95)(65,253,103)(66,299,246)(67,375,186)(69,251,105)(70,385,231)(71, 84,218)(72,403,181)\\
(74,432,160)(76,143,102)(78,305,241)(79,338,196)(80,210,117)(81,216,122)(82,331,233)(83,151,247)\\
(85,409,167)(86,392,184)(87,296,200)(89,425,182)(90,448,260)(92,429,262)(93,310,189)(96,351,161)\\
(97,168,115)(98,130,208)(99,320,177)(100,272,265)(101,374,156)(104,413,195)(106,389,211)(107,352,242)\\
(110,337,191)(111,325,209)(114,345,159)(116,344,185)(118,367,215)(119,276,234)(120,288,252)(121,378,187)\\
(123,399,255)(126,402,188)(128,307,163)(131,468,176)(133,285,206)(134,142,199)(136,397,238)(138,412,174)\\
(139,486,261)(140,348,193)(141,289,243)(144,300,263)(147,451,219)(148,297,254)(149,322,165)(150,347,230)\\
(152,427,224)(154,485,257)(269,447,445)(270,328,376)(271,333,470)(274,302,339)(277,349,491)(278,353,483)\\
(280,473,372)(281,292,405)(282,359,286)(283,404,354)(284,494,383)(287,360,465)(290,312,358)(293,490,433)\\
(294,415,301)(295,475,457)(298,455,484)(303,416,444)(304,326,366)(306,474,406)(311,459,356)(314,384,318)\\
(315,430,377)(316,461,441)(321,495,355)(323,428,488)(327,469,393)(329,346,456)(330,489,496)(332,479,421)\\
(335,369,350)(336,391,418)(340,454,386)(341,458,394)(343,419,422)(357,437,362)(361,472,466)(363,396,482)\\
(364,370,443)(365,492,471)(368,439,464)(379,410,460)(381,480,452)(387,446,478)(388,493,481)(400,408,462)\\
(401,424,414)(407,434,463)(423,436,442)(431,467,453);}\\
\texttt{g3:=(1,2,4,5,3)(6,17,78,126,56)(7,18,79,128,57)(8,19,80,129,58)(9,16,77,130,59)(10,20,76,127,60)\\
(11,24,82,133,62)(12,25,83,134,63)(13,21,81,132,64)(14,23,85,131,61)(15,22,84,135,65)(26,96,46,106,66)\\
(27,97,48,109,69)(28,98,50,108,68)(29,99,49,110,70)(30,100,47,107,67)(31,104,53,111,72)(32,101,51,114,74)\\
(33,102,54,112,73)(34,103,55,113,71)(35,105,52,115,75)(36,89,150,140,119)(37,86,147,139,120)\\
(38,87,149,138,118)(39,90,148,136,116)(40,88,146,137,117)(41,91,151,142,124)(42,95,155,145,122)\\
(43,92,154,141,121)(44,93,152,144,123)(45,94,153,143,125)(156,157,159,160,158)(161,172,211,246,197)\\
(162,169,210,245,198)(163,171,213,244,196)(164,170,212,247,199)(165,174,215,248,200)(166,175,216,249,201)\\
(167,176,217,250,202)(168,173,214,251,203)(177,226,191,231,204)(178,225,190,232,205)(179,227,192,233,206)\\
(180,228,194,235,208)(181,229,195,236,209)(182,230,193,234,207)(183,218,259,253,239)(184,219,261,252,237)\\
(185,220,260,254,238)(186,223,265,258,242)(187,221,262,257,243)(188,222,264,256,241)(189,224,263,255,240)\\
(266,310,427,300,399)(267,395,477,308,426)(269,340,428,384,437)(270,292,422,469,480)(271,400,489,306,356)\\
(272,273,352,375,449)(274,414,365,433,479)(275,382,331,285,420)(276,309,425,347,348)(277,415,495,460,466)\\
(278,370,284,316,295)(279,373,334,409,468)(280,346,434,481,368)(281,419,327,381,376)(282,418,458,465,303)\\
(283,290,315,431,304)(286,391,341,360,444)(287,416,359,336,394)(288,313,392,451,486)(289,378,342,429,485)\\
(291,417,476,338,307)(293,421,302,401,492)(294,321,410,472,491)(296,322,412,367,435)(297,397,344,371,448)\\
(298,396,478,350,442)(299,398,351,319,389)(301,355,379,361,349)(305,402,317,380,440)(311,333,408,496,474)\\
(312,430,467,326,404)(314,357,445,386,323)(318,362,447,454,488)(320,411,337,385,438)(324,413,390,325,403)\\
(328,405,343,393,452)(329,407,493,464,372)(330,406,459,470,462)(332,339,424,471,490)(335,423,455,482,387)\\
(345,432,487,374,450)(353,443,494,461,475)(354,358,377,453,366)(363,446,369,436,484)(364,383,441,457,483)\\
(388,439,473,456,463);\\}
}
 \texttt{B:=[1,2,116,329];}\\

 \item\label{4962214} $(v,k,\lambda)=(496,22,14)$, $G=\PGaL(2,2^5)$\\
    {\tiny
\texttt{g1:=(1,6,16,39,76,42,88,64,129,143,93,75,149,101,11,27,57,119,124,135,154,111,33,70,139,83,55,107,23,50,96)\\
(2,7,17,36,77,41,90,65,127,141,95,72,146,102,13,29,58,120,125,134,152,113,35,67,138,85,51,108,24,46,97)\\
(3,9,20,40,80,44,87,62,128,142,92,73,147,103,14,26,56,116,122,132,153,115,32,69,140,84,53,110,25,47,98)\\
(4,8,18,37,78,45,89,63,130,144,91,71,148,105,15,30,60,118,121,131,155,114,31,68,137,82,54,106,21,48,99)\\
(5,10,19,38,79,43,86,61,126,145,94,74,150,104,12,28,59,117,123,133,151,112,34,66,136,81,52,109,22,49,100)\\
(156,266,158,160,268,395,477,185,308,405,220,300,426,452,343,454,260,318,399,159,267,238,328,427,393,447,\\
488,157,254,310,362)(161,275,381,350,438,491,345,245,338,378,462,359,228,272,355,460,444,441,494,230,168,\\
282,407,237,217,249,232,278,181,302,428)(162,277,219,211,291,391,202,331,429,483,166,281,406,475,178,298,\\
394,234,284,411,235,214,195,321,352,458,492,487,361,464,437)(163,274,403,481,471,377,439,448,257,223,206,\\
212,290,218,299,425,486,224,306,430,484,496,204,335,373,386,248,264,322,402,469)(164,276,404,482,188,244,\\
339,221,304,392,369,468,435,433,354,242,239,240,333,357,440,390,463,192,319,400,191,174,292,346,344)(165,\\
280,199,326,241,332,431,446,367,366,253,311,380,456,351,226,270,397,348,455,213,187,313,409,365,258,255,\\
314,413,227,271)(167,236,285,410,485,375,364,465,175,293,419,374,459,349,353,372,225,269,396,169,287,415,\\
207,261,316,246,337,417,208,341,251)(170,183,309,189,312,408,387,445,222,305,196,325,424,388,262,233,283,\\
389,451,489,436,231,279,215,296,422,479,434,358,371,186)(171,288,414,263,324,356,368,467,490,363,453,474,\\
473,177,297,423,243,334,265,323,179,200,247,256,315,412,259,317,398,480,347)(172,286,382,457,376,461,442,\\
193,320,173,294,418,432,493,198,184,307,250,342,201,330,205,336,370,194,229,273,401,379,384,466)(176,190,
\\209,340,420,478,472,210,289,416,449,495,383,182,303,252,216,295,421,197,327,385,450,476,470,180,301,360,\\
443,203,329);}\\
\texttt{g2:=(1,308,225)(2,426,190)(3,477,178)(4,267,232)(5,395,205)(6,380,264)(7,417,213)(8,162,146)(9,228,108)\\
(10,94,73)(11,275,227)(12,41,170)(13,166,155)(14,373,250)(15,239,55)(16,194,68)(17,440,256)\\
(18,476,244)(19,169,137)(20,153,33)(21,175,145)(22,183,113)(23,334,202)(24,382,192)(25,91,212)\\
(26,398,197)(27,203,52)(28,77,235)(29,438,204)(30,449,223)(31,324,229)(32,487,158)(34,135,259)\\
(35,109,214)(36,309,207)(37,313,237)(38,435,248)(39,371,220)(40,129,245)(42,132,249)(43,342,221)\\
(44,266,240)(45,112,60)(46,319,172)(47,273,258)(48,173,75)(49,411,226)(50,59,180)(51,450,157)\\
(53,390,236)(54,127,125)(56,317,222)(57,291,171)(58,198,88)(61,279,217)(62,420,179)(63,124,164)\\
(64,201,95)(65,253,103)(66,299,246)(67,375,186)(69,251,105)(70,385,231)(71,84,218)(72,403,181)\\
(74,432,160)(76,143,102)(78,305,241)(79,338,196)(80,210,117)(81,216,122)(82,331,233)(83,151,247)\\
(85,409,167)(86,392,184)(87,296,200)(89,425,182)(90,448,260)(92,429,262)(93,310,189)(96,351,161)\\
(97,168,115)(98,130,208)(99,320,177)(100,272,265)(101,374,156)(104,413,195)(106,389,211)(107,352,242)\\
(110,337,191)(111,325,209)(114,345,159)(116,344,185)(118,367,215)(119,276,234)(120,288,252)(121,378,187)\\
(123,399,255)(126,402,188)(128,307,163)(131,468,176)(133,285,206)(134,142,199)(136,397,238)(138,412,174)\\
(139,486,261)(140,348,193)(141,289,243)(144,300,263)(147,451,219)(148,297,254)(149,322,165)(150,347,230)\\
(152,427,224)(154,485,257)(269,447,445)(270,328,376)(271,333,470)(274,302,339)(277,349,491)(278,353,483)\\
(280,473,372)(281,292,405)(282,359,286)(283,404,354)(284,494,383)(287,360,465)(290,312,358)(293,490,433)\\
(294,415,301)(295,475,457)(298,455,484)(303,416,444)(304,326,366)(306,474,406)(311,459,356)(314,384,318)\\
(315,430,377)(316,461,441)(321,495,355)(323,428,488)(327,469,393)(329,346,456)(330,489,496)(332,479,421)\\
(335,369,350)(336,391,418)(340,454,386)(341,458,394)(343,419,422)(357,437,362)(361,472,466)(363,396,482)\\
(364,370,443)(365,492,471)(368,439,464)(379,410,460)(381,480,452)(387,446,478)(388,493,481)(400,408,462)\\
(401,424,414)(407,434,463)(423,436,442)(431,467,453);}\\
\texttt{g3:=(1,2,4,5,3)(6,17,78,126,56)(7,18,79,128,57)(8,19,80,129,58)(9,16,77,130,59)(10,20,76,127,60)\\
(11,24,82,133,62)(12,25,83,134,63)(13,21,81,132,64)(14,23,85,131,61)(15,22,84,135,65)(26,96,46,106,66)\\
(27,97,48,109,69)(28,98,50,108,68)(29,99,49,110,70)(30,100,47,107,67)(31,104,53,111,72)(32,101,51,114,74)\\
(33,102,54,112,73)(34,103,55,113,71)(35,105,52,115,75)(36,89,150,140,119)(37,86,147,139,120)\\
(38,87,149,138,118)(39,90,148,136,116)(40,88,146,137,117)(41,91,151,142,124)(42,95,155,145,122)\\
(43,92,154,141,121)(44,93,152,144,123)(45,94,153,143,125)(156,157,159,160,158)(161,172,211,246,197)\\
(162,169,210,245,198)(163,171,213,244,196)(164,170,212,247,199)(165,174,215,248,200)(166,175,216,249,201)\\
(167,176,217,250,202)(168,173,214,251,203)(177,226,191,231,204)(178,225,190,232,205)(179,227,192,233,206)\\
(180,228,194,235,208)(181,229,195,236,209)(182,230,193,234,207)(183,218,259,253,239)(184,219,261,252,237)\\
(185,220,260,254,238)(186,223,265,258,242)(187,221,262,257,243)(188,222,264,256,241)(189,224,263,255,240)\\
(266,310,427,300,399)(267,395,477,308,426)(269,340,428,384,437)(270,292,422,469,480)(271,400,489,306,356)\\
(272,273,352,375,449)(274,414,365,433,479)(275,382,331,285,420)(276,309,425,347,348)(277,415,495,460,466)\\
(278,370,284,316,295)(279,373,334,409,468)(280,346,434,481,368)(281,419,327,381,376)(282,418,458,465,303)\\
(283,290,315,431,304)(286,391,341,360,444)(287,416,359,336,394)(288,313,392,451,486)(289,378,342,429,485)\\
(291,417,476,338,307)(293,421,302,401,492)(294,321,410,472,491)(296,322,412,367,435)(297,397,344,371,448)\\
(298,396,478,350,442)(299,398,351,319,389)(301,355,379,361,349)(305,402,317,380,440)(311,333,408,496,474)\\
(312,430,467,326,404)(314,357,445,386,323)(318,362,447,454,488)(320,411,337,385,438)(324,413,390,325,403)\\
(328,405,343,393,452)(329,407,493,464,372)(330,406,459,470,462)(332,339,424,471,490)(335,423,455,482,387)\\
(345,432,487,374,450)(353,443,494,461,475)(354,358,377,453,366)(363,446,369,436,484)(364,383,441,457,483)\\
(388,439,473,456,463);\\}
}
\texttt{B:=[1,2,3,7,29,73,121,170,175,177,187,223,237,259,267,284,308,311,315,\\422,427,486];}\\

\item\label{2016324_1} $(v,k,\lambda)=(2016,32,4)$, $G=\PSL(2,2^6):Z_{2}$\\
    The group $G$ here can be obtained from the copy $\PGaL(2,64)$ generated by as \ref{2016324_2}, say \texttt{H}. Indeed,\\ \texttt{G:=IntermediateSubgroups(H,DerivedSubgroup(H)).subgroups[1];} \\ 
     {\tiny    
\texttt{B:=[1,2,11,108,142,168,209,220,389,427,452,472,520,563,897,924,1052,1074,1117,1123,1158,1416,1421,1456,\\1459,1521,1595,1639,1670,1841,1854,1897];}}\\
 
 \item\label{2016324_2} $(v,k,\lambda)=(2016,32,4)$, $G=\PGaL(2,2^6)$\\
    {\tiny
\texttt{g1:=(1,1946,752,93,1353,1155,733,1157,36,1908,323,1010,526,1614,1239,513,2000,475,312,495,959,1188,\\
1089,640,1988,1532,1849,1502,805,1277,73,753,591,260,413,257,882,68,925,1991,1425,328,1482,1005,\\
1269,683,790,1917,656,1659,274,989,9,561,1710,594,1712,1900,818,1631,1386,782,1803)\\
(2,1881,1422,1860,1652,983,418,1644,550,531,1950,537,1887,915,1764,795,1037,367,1148,1551,1021,1558,\\
804,1913,58,844,266,123,427,169,440,2013,677,1083,1998,300,1680,1313,1295,681,1303,1586,541,1703,\\
1862,684,155,1429,249,1543,336,1905,888,1783,854,823,1404,5,1672,953,220,1281,601)\\
(3,1382,1379,1845,1103,577,1501,45,1468,1341,857,1287,1046,1584,229,227,179,112,1232,1686,1621,1084,\\
1894,128,487,1208,1178,401,803,1583,1378,1936,1756,682,25,4,1727,1349,1907,1825,1125,1585,350,911,\\
902,294,1288,1118,1113,1066,984,6,651,1222,1685,1542,998,1667,1612,630,584,578,460)\\
(7,1335,1853,241,608,791,1604,757,1640,1357,434,1331,528,1743,1310,1951,481,907,1632,1419,811,1837,\\ 
38,1944,499,1132,1506,1036,202,650,125,851,1534,1088,306,1602,1135,686,1996,430,308,1296,117,1994,\\ 
1471,414,1889,46,398,1464,943,1278,203,1243,1846,54,908,153,824,1732,1391,1091,727)\\
(8,1527,364,1726,788,239,1371,1857,1512,237,1638,1330,235,1327,522,1972,374,848,1888,699,1262,516,\\
1137,1266,1006,1600,900,1226,879,422,876,1034,77,774,395,1059,617,773,1130,309,778,1597,1129,431,\\
741,415,1345,108,1822,1078,668,1714,246,1467,136,1570,1961,671,1167,2004,815,151,1383)\\(10,469,1717,1365,1760,1836,1082,1665,713,765,1457,574,654,1121,20,1830,940,934,664,158,1767,1115,\\
1070,1955,1663,626,663,1920,567,130,96,1977,1823,606,600,1500,191,1882,1575,1948,1171,1240,744,233,\\
139,412,84,703,1733,1453,1992,39,369,1759,1205,1248,1960,897,754,491,1355,1833,996)\\
(11,1692,2001,226,635,1186,291,850,1730,540,534,1328,1134, 792,1180,332,1898,760,988,1360,1556,1832,\\
279,1637,396,1724,1282,222,174,1962,517,913,1801,1090,500,1605,1108,1545,1306,443,1535,1595,244,\\
796,1516,26,1694,1591,307,1483,721,403,115,1309,1816,842,834,299,1397,1279,680,1057,1959)\\
(12,159,1061,1935,881,109,14,1691,511,1107,105,1329,178,1928,557,1974,337,816,1366,1385,1746,560,\\
948,1176,1569,1687,262,1324,878,701,1410,1450,366,505,527,1446,1432,1270,892,35,904,1596,1418,980,\\
871,411,467,1610,1820,1029,462,215,74,1668,783,424,1368,580,1696,1159,874,409,1257)\\
(13,1068,1112,293,1618,1190,1854,856,48,806,1528,282,952,602,586,1938,777,2011,236,1234,1651,1127,\\
930,1491,549,1896,304,1042,939,1264,1433,368,225,670,1564,1608,747,1217,1350,447,1826,1124,265,\\
1177,27,454,776,1780,400,1487,1681,238,1578,649,728,1289,633,347,1565,935,910,1428,1265)\\
(15,150,1758,2015,1511,341,861,1223,864,56,642,994,378,992,1883,1748,1321,1851,726,1653,1162,1655,\\
756,553,738,555,1343,716,325,830,1430,1394,1117,867,1025,480,1945,887,1033,655,740,315,1141,314,\\
628,453,385,1739,1980,1514,406,1509,1317,305,421,1493,734, 94,1338,349,1567,1011,232)\\
(16,272,1796,264,1562,168,849,31,839,1765,1400,1136,196,1085,1268,722,1251,1332,390,69,969,1589,695,\\
399,1447,392,1695,1008,938,921,1237,781,632,767,961,884,197,1411,1051,618,838,242,1399,193,1629,452,\\
593,749,510,1352,281,1827,564,1587,1304,1494,1481,135,1531,1856,1519,1214,1572)\\
(17,1484,1071,690,808,1271,1381,917,1790,295,1549,652,1144,1546,598,243,709,1636,99,1688,1187,50,784,\\
1997,439,1949,429,1160,937,44,1431,1544,371,471,1478,1630,1466,924,2003,30,1235,1609,250,846,154,451,\\
819,1776,1138,812,1290,210,1978,1181,446,1174,1590,259,1693,1351,397,1706,1123)\\
(18,394,1505,972,1300,1307,1267,1489,1492,1800,280,944,674,1435,1713,344,370,1902,1890,245,1778,832,\\
843,780,1402,1367,1058,529,607,1721,285,1987,1843,283,1643,1648,1628,1552,536,544,509,1272,1817,932,\\
936,126,37,847,1040,1067,1530,1579,1139,1784,1937,634,171,218,263,1970,450,599,1840)\\
(19,547,1116,1286,993,1074,1762,862,889,905,872,1231,995,957,603,1062,34,625,1892,268,801,1664,1376,\\
1656,551,569,582,558,745,1147,1314,1921,41,1098,318,182,1470,521,1715,1922,1916,438,317,81,175,363,\\
1203,855,1824,1105,1850,1476,951,1318,1163,356,1325,1660,1965,252,941,562,863)\\
(21,723,1049,1009,302,1815,968,659,1999,1577,102,1228,1745,2005,502,615,877,103,637,1690,1522,702,\\
1771,1452,1520,143,1044,1976,189,1657,1347,1979,166,137,1336,1868,278,987,465,1253,1388,1624,1805,\\
976,1197,490,1792,1215,1498,483,979,361,1738,762,1865,380,162,1754,1018,1507,769,696,425)\\
(22,183,1097,1031,1274,1810,1043,785,1989,1369,298,1230,810,1969,79,407,1847,1133,468,1689,974,786,\\
1798,1370,672,1166,973,1967,116,685,1348,1454,207,342,1075,1957,157,1786,675,1244,1424,1623,288,981,\\
1322,489,623,53,1533,1958,287,1606,1741,1273,1794,444,284,1953,1291,570,708,868,1456)\\
(23,669,1020,408,163,1941,1210,1973,711,704,895,1229,110,1095,679,1000,1939,1901,883,240,840,339,\\
1401,717,1539,472,466,572,743,1927,1263,1662,331,1099,1364,565,1499,542,1701,1880,476,1622,1445,208,\\
160,131,1947,775,1802,504,1503,1407,1128,32,1032,219,1170,463,258,1521,798,1891,1374)\\
(24,359,977,75,459,121,2008,1581,971,419,1592,660,568,1354,405,1829,1156,1763,1893,1573,653,62,61,\\
1871,1566,1903,1926,57,493,144,82,814,290,827,1054,1392,950,1807, 95,1861,1246,1165,1633,1315,1639,\\
488,70,1875,525,946,532,1438,1439,1761,1150,885,1540,763,891,1361,858,1835,1016)\\
(28,1744,1859,1705,497,1645,729,1742,514,538,929,986,381,1812,1087,918,899,1143,1417,1557,1145,836,\\
120,482,1932,1598,311,170,809,416,100,1755,441,432,
1588,1779,1560,1276,391,1574,1775,301,1479,\\
1465,198,345,1752,59,1559,1723,247,152,1218,1785,789,835,195,1398,821,990,1056,1182,1173)\\
(29,354,822,692,1390,1441,1055,442,213,1736,212,384,1221,1100,1642,970,1674,1477,535,382,610,1259,\\
1841,1004,1986,248,1793,657,807,149,1380,1340,1844,1104,1047,597,172,97,90,2016,1298,1102,1620,214,\\
1707,91,484,1302,1294,1195,1925,1627,2007,321,1260,508,1242, 829,1146,1393,1910,1773,1027)\\
(33,127,1874,72,714,437,1458,1389,1791,1119,975,596,764,1774,286,1475,1834,1015,113,255,1734,627,\\
383,251,1981,1275,1924,914,880,322,1209,751,1109,1106,997,1828,1158,1729,1709,234,98,1661,787,2002,\\
587,563,576,140,492,1460,1206,916,1154,1421,1474,1571,1149,927,1247,673,1616,1842,223)\\
(40,296,1731,1161,852,1838,1101,859,636,955,1858,758, 51,1358,1626,1831,47,1650,506,1697,886,548,\\
1412,710,402,1142,277,1233,496,923,1434,1423,1753,507,982,1700,216,417,1515,267,474,1673,292,1526,\\
746,605,119,1258,1797,1126,1311,211,1911,1316,724, 60,1333,688,206,967,1867,648,1261)\\
(42,1954,539,725,423,1334,1437,1818,373,706,1436,1192,134,86,1072,1918,1619,1611,271,1227,221,455,\\
106,1199,1196,458,622,1683,942,1185,114,1305,1720,1114,1795,1495,742,1718,303,65,698,693,1448,63,184,\\
604,922,1995,841,1216,893,960,1415,647,1193,978,1870,620,614,1671,1444,1646,1708)\\
(43,1725,1568,1704,386,1877,1442,1254,503,964,365,1236,486,1971,1413,794,1769,1140,1517,1699,1022,611,\\
148,1601,771,1599,165,426,748,420,351,1064,676,1122,1983,1768,1919,1249,873, 88,1409,667,1537,579,\\
1885,104,2006,55,1884,1749,335,1524,1396,1469,766,556,1168,1864,735,188,1339,180,1821)\\
(49,2012,1869,141,1654,1990,1319,132,554,1486,1198,533,501,1872,1245,1929,273,320,346,1702,1770,1553,\\
1518,865,1984,1405,313,697,813,122,167,761,1007,1131,1863,896,1017,1320,1909,1576,552,1250,1963,1964,\\
616,689,933,107,662,962,1002,1038,573,1787,866,1547,1406,825,316,991,1363,592,138)\\
(52,355,1594,730,1003,209,410,1819,719,393,1462,694,678,1152,1384,644,595,949,1855,1561,641,1615,1536,\\
1757,338,477,1485,194,613,523,333,1377,1848,377,324,1323,1789,1788,327,1548,1912,1280,1635,256,1225,\\
687,85,78,1876,1219,470,1886,869,1735,1212,837,1541,1030,793,1811,1086,739,926)\\
(64,1895,177,1039,464,1617,1207,828,1496,478,1508,1194,1026,1175,1497,449, 66,1372,353,1504,164,1679,\\
161,715,387,1459,1403,253,947,954,901,1252,515,343,1555,1065,1678,1649,1213,1750,631,546,147,1669,945,\\
457,736,581,768,1151,310,1048,1451,1050,1809,720,1914,1463,230,270,1153,456,1933)\\
(67,1342,376,1120,1045,205,1523,1956,894,853,590,833,658,999,192,629,485,2014,1777,1711,1480,1666,\\
1513,1799,1641,575,545,1899,276,1982,92,1930,1346,231,352,1077,1563,1813,1096,1184,1923,1241,770,129,\\
269,494,173,645,1866,1722,473,1852,1698,1747,1169,1312,920,1301,755,665,1356,712,737)\\
(71,1737,348,1052,1344,190,156,1079,919,707,700,1238,1449,963,1073,519,1255,1172,1580,646,1110,1670,\\
643,1675,1684,589,624,619,750,1510,931,1968,588,358,898,1041,1416,375,1080,1035,326,1625,1473,275,181,\\
133,1582,448,1224,2010,224,428,1751,1766,1183,1204,1200,1488,1772,1740,1525,802,1719)\\
(76,1952,1658,1293,1593,228,559,1993,89,1202,1443,1292,1806,985,906,1878,1179,1682,319,1094,820,1613,\\
1387,860,362,461,340,1395,817,1189,1024,435,1804,912,1966,1326,691,379,1111,1677,404,966,389,111,\\
1879,524,928,1550,1426,201,585,1001,875,609,870,101,1408,1060,289,1028,146,186,1728)\\
(80,732,1529,1337,831,826,1012,1285,1808,1023,261,772,705,759,1455,1359,1081,1076,639,1716,1440,388,\\
530,965,800,1676,903,2009,666,621,498,1063,1554,956,1985,612,330,1634,779,145,124,518,1839,583,1934,\\
571,187,185,118,1781,1299,1297,1942,87,1873,512,1931,1256,1201,360,1915,176,1904)\\
(83,909,1420,433,661,1093,1092,1943,479,1607,1538,1164,217,445,357,731,1427,436,334,1069,1013,204,142,\\
1308,1940,1284,254,1472,890,1283,1414,297,638,958,1906,200,199,1211,1975,1782,845,1490,799,1897,1375,\\
566,797,1053,1373,372,329,1019,1362,1647,718,1014,1461,543,1814,520,1191,1220,1603);}\\
\texttt{g2:=(1,76,364)(2,971,1321)(3,517,1661)(4,216,218)(5,368,1347)(6,1584,135)(7,784,1792)(8,1626,1658)\\
(9,433,497)(10,489,1964)(11,244,1290)(12,1021,199)(13,391,461)(14,425,1762)(15,184,1511)(16,48,983)\\(17,864,1759)(18,943,1239)(19,403,342)(20,1471,1175)(21,879,901)(22,546,1993)(23,1785,1380)\\
(24,591,1035)(25,1869,462)(26,726,1259)(27,1162,382)(28,1131,451)(29,625,33)(30,569,1079)\\
(31,404,1513)(32,533,1357)(34,127,171)(35,1425,496)(36,1912,656)(37,867,261)(38,1488,1056)\\
(39,1789,1667)(40,1908,708)(41,1053,1896)(42,800,1538)(43,775,663)(44,1832,1790)(45,1225,96)\\
(46,1427,574)(47,335,1778)(49,74,1966)(50,470,1335)
(51,258,838)(52,1804,949)(53,734,1913)\\
(54,1798,1022)(55,230,576)(56,1402,552)(57,1588,1957)(58,1198,1359)(59,1826,1677)(60,588,1149)\\
(61,1758,1566)(62,179,585)(63,1254,155)(64,173,1906)(65,1508,153)(66,2007,394)(67,1462,931)\\
(68,993,820)(69,1013,632)(70,1737,1988)(71,1861,549)(72,732,1186)(73,615,468)(75,603,1339)(77,426,713)\\(78,1432,284)(79,1295,994)(80,293,1534)(81,1955,124)(82,780,1250)(83,1123,814)(84,1373,220)\\
(85,1192,563)(86,1714,1630)(87,1525,1326)(88,629,270)(89,1280,631)(90,1514,680)(91,274,303)\\
(92,823,1755)(93,1612,1554)(94,128,727)(95,1281,1509)(97,1837,1429)(98,788,499)(99,782,1396)\\
(100,654,1974)(101,1108,716)(102,109,252)(103,1564,1970)(104,1257,1707)(105,1011,392)(106,452,1048)\\
(107,1544,1537)(108,544,1611)(110,1833,253)(111,1009,1558)(112,444,610)(113,1939,1210)(114,1246,953)\\
(115,487,681)(116,1816,1116)(117,649,207)(118,899,1822)(119,321,645)(120,1174,2012)(121,1051,752)\\
(122,466,228)(123,1209,1332)(125,701,1542)(126,1482,308)(129,1663,897)(130,1831,174)(131,558,396)\\
(132,1227,826)(133,1249,1164)(134,1071,1137)(136,686,1473)(137,1261,227)(138,304,212)(139,813,1646)\\
(140,1549,1950)(141,1161,1163)(142,694,307)(143,1989,494)(144,650,389)(145,1469,2009)(146,1317,276)\\
(147,1354,484)(148,2013,1936)(149,441,1401)(150,710,917)(151,1893,571)(152,1573,973)(154,947,1774)\\
(156,1026,1655)(157,2014,1114)(158,1614,1866)(159,1695,861)(160,763,505)(161,898,385)(162,914,192)\\
(163,959,200)(164,337,416)(165,1807,1932)(166,289,1220)(167,1059,756)(168,194,556)(169,1623,356)\\
(170,1915,1709)(172,1592,1228)(175,613,1386)(176,1437,697)(177,1285,1959)(178,180,970)(181,213,427)\\
(182,473,488)(183,1917,548)(185,977,940)(186,412,236)
(187,1674,1815)(188,1421,1750)(189,1546,1420)\\(190,1723,1697)(191,1293,930)(193,483,634)(195,1696,1497)(196,1444,636)(197,616,430)(198,1262,688)\\
(201,1878,1270)(202,1585,642)(203,764,435)(204,1415,1020)(205,1591,1870)(206,1967,1942)(208,705,1925)\\
(209,1753,1862)(210,1398,1400)(211,1392,1616)(214,1440,302)(215,1846,1987)(217,322,232)(219,1813,757)\\
(221,1364,1127)(222,1553,749)(223,1751,1958)(224,1818,871)(225,1827,1404)(226,1327,1610)(229,1977,1255)\\
(231,1405,746)(233,1624,787)(234,660,507)(235,388,493)(237,1151,594)(238,794,1825)(239,1986,597)\\
(240,1567,1475)(241,1371,673)(242,809,812)(243,1036,1044)(245,531,600)(246,750,939)(247,1689,691)\\
(248,1960,534)(249,881,526)(250,565,1344)(251,1207,1590)(254,1496,605)(255,1236,1345)(256,1336,1556)\\
(257,592,1498)(259,266,1904)(260,1218,1306)(262,1477,469)
(263,352,1418)(264,1730,1864)(265,1062,1956)\\
(267,1403,357)(268,1368,1034)(269,350,475)(271,1766,1410)(272,551,744)(273,1794,1219)(275,759,1189)\\
(277,1455,1075)(278,667,1568)(279,465,1788)(280,1510,1451)(281,1948,438)(282,1120,1466)(283,1796,524)\\
(285,482,638)(286,767,1061)(287,1765,1146)(288,1779,817)(290,1603,724)(291,1202,740)(292,908,306)\\
(294,445,508)(295,658,522)(296,886,1651)(297,1370,774)(298,1499,945)(299,410,334)(300,846,831)\\
(301,1233,1467)(305,816,758)(309,1539,1738)(310,1672,1803)(311,1708,460)(312,540,1576)(313,751,857)\\
(314,448,564)(315,1112,1157)(316,1090,925)(317,911,1968)
(318,609,550)(319,902,1095)(320,343,1895)\\
(323,1823,519)(324,1474,1072)(325,1570,491)(326,777,1581)(327,1167,537)(328,776,1256)(329,1383,1468)\\
(330,798,995)(331,1632,865)(332,1562,1872)(333,709,1506)
(336,1972,1635)(338,807,714)(339,1644,348)\\
(340,1978,1121)(341,1389,890)(344,479,1173)(345,596,633)(346,1377,785)(347,513,1023)(349,924,567)\\
(351,1299,980)(353,936,442)(354,1399,1230)(355,1289,1000)(358,1916,1706)(359,1563,1314)(360,515,796)\\
(361,1954,584)(362,1629,1342)(363,501,1375)(365,1183,966)(366,1698,1530)(367,1911,678)(369,1180,1101)\\
(370,1721,601)(371,1531,683)(372,1491,1460)(373,1267,900)(374,1456,671)(375,1800,1483)(376,1834,1363)\\
(377,1810,892)(378,1969,420)(379,1609,822)(380,1067,1854)(381,1494,987)(383,1107,390)(384,647,1443)\\
(386,1213,894)(387,1216,1088)(393,679,1574)(395,1522,1891)(397,557,670)(398,685,1681)(399,1365,910)\\
(400,595,1973)(401,1181,1504)(402,1947,684)(405,1434,1490)(406,1532,1927)(407,1662,598)(408,627,1223)\\
(409,1194,1536)(411,1391,1341)(413,1193,1874)(414,1559,1276)
(415,1577,464)(417,1027,446)(418,920,789)\\
(419,1140,1182)(421,1178,1892)(422,532,972)(423,1097,858)(424,2005,779)(428,1729,810)(429,459,641)\\
(431,1899,1128)(432,640,635)(434,530,1142)(436,1786,695)(437,554,1685)(439,1197,1975)(440,611,682)\\
(443,1324,815)(447,1937,1068)(449,1776,1349)(450,1502,1242)(453,1130,1654)(454,1057,1448)(455,946,570)\\
(456,772,884)(457,1238,1648)(458,1414,1839)(463,1692,607)(467,1875,492)(471,1300,1074)(472,498,1318)\\
(474,1486,1747)(476,988,1715)(477,657,851)(478,1680,842)(480,706,1812)(481,1360,1019)(485,1850,1428)\\
(486,1126,728)(490,1430,1640)(495,1879,562)(500,510,1749)(502,653,1694)(503,582,535)(504,1606,1050)\\
(506,761,1231)(509,651,1941)(511,1935,1229)(512,1678,1251)(514,1222,1933)(516,952,1449)(518,737,1237)\\
(520,1518,801)(521,1304,528)(523,1814,1033)(525,606,1212)
(527,602,1369)(529,1185,1561)(536,1076,1454)\\
(538,1619,1117)(539,976,1204)(541,1266,1702)(542,1214,1540)(543,1719,560)(545,965,1003)(547,841,698)\\
(553,1301,1517)(555,1669,979)(559,1388,799)(561,1985,1501)(566,1886,830)(568,575,578)(572,955,1136)\\
(573,747,954)(577,1144,612)(579,1248,1701)(580,1883,1858)(581,1900,1860)(583,1356,1152)(586,896,1195)\\
(587,1240,1274)(589,1453,790)(590,999,1110)(593,1524,1735)(599,1924,1682)(604,795,1919)(608,913,1150)\\
(614,1422,2008)(617,1615,951)(618,1358,942)(619,1909,1999)(620,1187,783)(621,985,1897)(622,990,1379)\\
(623,1997,1032)(624,1793,933)(626,1047,1801)(628,1599,1982)(630,1063,1949)(637,1030,1526)(639,1600,729)\\
(643,1268,1855)(644,1124,1727)(646,1535,1322)(648,2015,1431)(652,1086,1633)(655,1070,1888)(659,1479,863)\\
(661,1586,1329)(662,1871,962)(664,866,1847)(665,876,1099)
(666,1507,1385)(668,1551,1929)(669,1080,854)\\
(672,1308,921)(674,1094,1352)(675,1607,778)(676,1015,1226)(677,872,974)(687,1770,1705)(689,1617,1618)\\
(690,1643,1031)(692,1277,1040)(693,1296,889)(696,950,1004)(699,802,1243)(700,1819,1799)(702,1103,1196)\\
(703,888,1058)(704,1621,928)(707,1084,1656)(711,719,1752)(712,1054,1845)(715,1928,793)(717,762,771)\\
(718,1206,2011)(720,1100,827)(721,844,1459)(722,1278,1787)(723,991,963)(725,736,849)(730,932,873)\\
(731,1930,1492)(733,1642,893)(735,1145,850)(738,1572,1200)(739,1671,1890)(741,1817,1361)(742,1069,840)\\
(743,1902,1595)(745,1951,1519)(748,1578,1208)(753,1851,1170)(754,1376,903)(755,1748,1593)(760,1029,1077)\\
(765,1745,769)(766,818,1688)(768,1527,1946)(770,1991,1894)(773,957,1811)(781,848,1221)(786,1756,1424)\\
(791,1598,1582)(792,1726,1433)(797,1668,1625)(803,1265,1594)
(804,1125,1309)(805,880,1791)\\
(806,1652,1805)(808,941,1272)(811,1722,1263)(819,1910,1258)(821,1842,1012)(824,1217,1337)(825,832,1338)\\
(828,1767,895)(829,1351,1395)(833,937,2004)(834,1628,1052)(835,1159,1082)(836,1115,1843)(837,1732,1844)\\(839,1784,1452)(843,1156,1406)(845,1829,2003)(847,1550,975)(852,1806,1898)(853,1119,877)(855,1907,992)\\
(856,874,1235)(859,1638,1728)(860,2010,1122)(862,1039,1311)(868,1060,1914)(869,1664,1631)(870,1495,1294)\\(875,1650,1282)(878,916,1016)(882,883,1093)(885,1328,1944)(887,1976,1419)(891,1190,1153)(904,1741,1042)\\(905,1325,1372)(906,1113,1903)(907,967,1934)(909,1983,1547)(912,1981,1996)(915,1269,1691)(918,1441,1346)\\
(919,923,2002)(922,1516,1565)(926,1411,1580)(927,1979,1575)
(929,1868,1856)(934,1620,1820)\\
(935,1298,1739)(938,1177,1830)(944,1478,1291)(948,1863,1024)(956,1172,1675)(958,1943,1773)\\
(960,1205,1316)(961,1458,1998)(964,1782,1963)(968,1613,1303)(969,1302,1961)(978,1523,1166)\\
(981,1754,1849)(982,1252,1965)(984,1684,1992)(986,1543,1887)(989,1821,1743)(996,1096,1005)\\
(997,1795,1089)(998,1330,1647)(1001,1634,1191)(1002,1725,1381)(1006,1288,1447)(1007,1378,1569)\\(1008,1232,1078)(1010,1481,1271)(1014,1500,1881)(1017,1188,1520)(1018,1279,1926)(1025,1840,1931)\\
(1028,1521,1990)(1037,1880,1867)(1038,1104,1073)(1041,1731,1560)(1043,1118,2016)(1045,1148,1853)\\(1046,1307,1901)(1049,1286,1465)(1055,1515,1184)(1064,1487,1700)(1065,1287,1087)(1066,1412,1253)\\(1081,1971,1450)(1083,1627,1740)(1085,1165,1710)(1091,1763,1387)(1092,1923,1555)(1098,1938,1461)\\(1102,1922,1768)(1105,1139,1828)(1106,1545,1529)(1109,1340,1541)(1111,1836,1138)(1129,1366,1203)\\
(1132,1703,1711)(1133,2001,1670)(1134,1382,1247)(1135,1176,1713)(1141,1771,1809)(1143,1394,1147)\\(1154,1905,1160)(1155,1310,1885)(1158,1824,1393)(1168,1838,1367)(1169,1260,1742)(1171,1571,1953)\\(1179,1436,1409)(1199,1224,1657)(1201,1848,1980)(1211,1353,1417)(1215,1241,1273)(1234,1407,1334)\\(1244,1602,1601)(1245,1323,1557)(1264,1484,1480)(1275,1775,1780)(1283,1442,1772)(1284,1552,1312)\\
(1292,1921,1783)(1297,1984,1313)(1305,1746,1636)(1315,2006,1476)(1319,1882,1683)(1320,1734,1666)\\(1331,1362,1637)(1333,2000,1446)(1343,1659,1350)(1348,1781,1489)(1355,1744,1435)(1374,1699,1690)\\(1384,1877,1505)(1390,1472,1686)(1397,1777,1720)(1408,1463,1548)(1413,1533,1493)(1416,1503,1693)\\(1423,1920,1718)(1426,1704,1918)(1438,1649,1608)(1439,1622,1596)(1445,1736,1873)(1457,1769,1597)\\
(1464,1995,1660)(1470,1945,1645)(1485,1733,1835)(1512,1952,1712)(1528,1857,1962)(1579,1797,1583)\\(1587,1802,1761)(1589,1641,1676)(1604,1841,1764)(1605,1859,1884)(1639,1716,1717)(1653,1876,1808)\\(1665,1940,1994)(1673,1757,1865)(1679,1889,1760)(1687,1724,1852);}\\
\texttt{g3:=(1,714,163,1061,928,78)(2,780,747,1311,1464,1812)(3,623,815,829,334,1481)(4,207,1467,1841,638,16)
(5,1778,293,1731,1132,1744)(6,1424,671,1986,958,632)(7,1145,300,1800,1264,1831)(8,657,433,1332,1046,1741)
(9,1209,1901,904,1658,477)(10,21,1768,800,448,575)(11,1790)(12,524,644,1917,673,23)(13,1753,308,1775,684,
1713)(14,1426,1876,591,1734,110)(15,61,662,1895,1325,698)(17,1279,1381,1959,30,1591)(18,633,1867,1846,59,367)
(19,134,1514,1829,1872,449)(20,659,1254,261,326,1747)(22,773,1642,890,1214,25)(24,1320,253,1762,1918,1653)
(26,1123,222,1181,1816,1630)(27,724,1278,1087,677,544)(28,844,1300,1565,648,1944)(29,204,1695,1113,785,237)
(31,45,468,1345,1925,797)(32,1696,1292,739,1157,234)(33,1020,159,906,1377,594)(34,221,305,1763,1245,581)
(35,912,393,1269,1924,1000)(36,1791,1210,1935,1878,52)(37,777,474,1471,1479,155)(38,1755,531,1970,728,967)
(39,1754,676,1873,1719,1342)(40,499,441,1586,674,1265)(41,942,94,1573,1929,66)(42,314,568,1964,901,1922)
(43,779,1183,1356,1453,162)(44,244)(46,381,2013,1489,939,119)(47,1534,1560,1703,1435,368)(48,216,117,301,1764,
285)(49,631,175,1795,1567,1439)(50,1306,1160,1595,397,1282)(51,125,1779,537,450,1491)(53,422,212,254,1519,1379)
(54,899,1083,1492,910,1261)(55,498,224,755,1733,1520)(56,891,141,1508,1147,614)(57,1702,353,1098,1444,642)
(58,1648,265,1316,1419,1182)(60,811,100,1303,932,776)(62,273,768,1824,742,1011)(63,867,1581,1486,1175,863)
(64,551,1196,1493,1893,933)(65,1394,2008,552,1153,1163)(67,158,1624,771,1299,1525)(68,1474,1927,467,228,793)
(69,1584,786,374,597,1943)(70,1363,828,252,1436,1980)(71,231,1760,1049,1919,1931)(72,565,1029,1111,256,782)
(73,255,208,1668,404,1225)(74,89,613,1089,1475,1622)(75,132,1026,1314,1683,555)(76,338,959,286,704,1432)
(77,214,1164,1589,229,1273)(79,741,1195,357,1304,460)(80,1041,1711,1355,1347,335)(81,539,628,1639,991,1213)
(82,573,1039,1660,706,1321)(83,749,401,116,1714,1259)(84,1452,794,1455,1473,1169)(85,753,916,1521,409,1879)
(86,726,814,1553,1504,1116)(87,931,1866,233,762,351)(88,903,1224,1301,940,1999)(90,845,1562,1349,342,1137)
(91,217,1587,1378,868,617)(92,600,103,486,1934,1488)(93,1729,1128,262,289,1735)(95,1406,1617,1965,693,1117)
(96,615,1236,705,1625,1513)(97,479,1827,1583,685,1262)(98,1401,1329,1094,1594,1908)(99,1186,250,760,1138,1556)
(101,641,312,1774,476,1746)(102,1409,1201,1737,376,1767)(104,621,1580,770,1171,1522)\\(105,585,1219,1946,1158,840)
(106,421,946,167,720,801)(107,515,1916,1720,349,653)\\(108,382,1603,722,1287,974)(109,1179,1848,1988,1834,895)
(111,1384,513,2002,219,1159)\\(112,1370,848,1390,1308,1008)(113,1229,1270,1804,1912,805)(114,1338,1438,761,387,1470)\\
(115,1478,792,819,1360,812)(118,1740,1930,1365,137,1022)(120,1680,1272,454,710,608)\\(121,554,945,1203,1495,830)
(122,715,993,1072,406,1875)(123,1307,1042,1626,851,497)\\(124,1204,1982,1717,723,386)(126,1780,402,1135,391,1862)
(127,1364,816,1387,1323,1900)\\(128,973,668,2007,1053,618)(129,765,278,1469,956,1110)(130,1792,735,831,375,1480)\\
(131,783,1202,1086,925,251)(133,1641,1977,1215,426,1440)(135,1756,1456,239,1773,1013)\\(136,1260,1373,839,1825,1075)
(138,1496,745,622,734,532)(139,1771,2006,176,898,473)\\(140,463,874,1806,333,640)(142,1531,1382,407,1327,822)
(143,1769,187,1772,737,1992)\\(144,1770,310,1105,1334,1883)(145,1675,999,897,769,1568)(146,1212,257,1421,798,1410)\\
(147,363,725,992,858,1654)(148,1781,750,645,574,987)(149,661,1352,803,708,788)\\
(150,1150,2012,478,941,1448)
(151,1793,1906,272,1103,1533)(152,1551,394,1896,1358,727)\\
(153,1143,1422,1402,1190,1161)(154,988,1636,1108,429,540)
(156,1563,1882,979,1339,826)\\(157,1570,1004,909,1251,911)(160,948,1408,869,413,627)(161,182,1646,1141,1315,697)\\
(164,318,1185,1343,1392,865)(165,1297,1510,485,1920,490)(166,1524,1554,428,1899,469)\\(168,1501,1958,876,1620,799)
(169,1267,935,1434,430,514)(170,1644,263,930,1526,1310)\\(171,1127,758,1091,1557,1860)(172,1782,884,1084,1166,1888)
(173,744,702,1413,571,624)\\(174,784,850,846,1483,1484)(177,569,1870,864,1926,1002)
(178,1001,1855,475,587,1539)\\
(179,1794,1527,1393,1069,392)(180,1634,643,269,1240,1738)(181,1312,1830,1253,1601,639)\\(183,1371,1380,199,1796,1727)
(184,1758,1871,320,1372,547)(185,619,629,754,1657,1517)\\(186,1757,989,1842,669,366)(188,330,1766,276,606,1498)
(189,1749,1063,646,833,1960)\\(190,1045,1115,1805,1168,1337)(191,637,1885,1915,348,352)(192,663,2005,365,583,589)\\
(193,1208,1967,699,1441,1014)(194,561,223,1099,1820,559)(195,1783,1890,1112,507,1357)\\(196,1585,1957,1266,442,1461)
(197,1222,1623,900,1298,1490)(198,1429,344,225,1797,398)\\(200,767,1621,284,364,807)(201,949,656,880,1939,511)
(202,1588,541,936,1938,636)\\(203,1752,249,847,400,1333)(205,1759,1018,1725,1285,1079)(206,1243,918,1881,634,1578)\\
(209,1482,1275,1947,1176,461)(210,1309,690,1962,446,1637)(211,1632,416,550,529,1350)\\(213,1284,938,1066,1244,1600)
(215,1677,678,1239,596,1880)(218,649,1233,1604,482,983)\\(220,1784,238,277,686,1742)(226,1609,307,371,1134,598)
(227,1798,1822,1627,731,399)\\(230,1318,1818,1748,1835,1017)(232,1761,1007,1914,951,303)(235,1104,1092,281,584,1969)\\
(236,1673,1743,821,854,1530)(240,1596,1966,1280,1386,997)(241,1785,1021,283,1124,548)\\(242,487,1291,1726,1910,1019)
(243,1605,259,834,1466,1180)(245,1564,1126,907,1056,823)\\(246,321,1427,1494,1936,1348)(247,336,1040,2011,955,1391)
(248,1420,510,630,1322,2004)\\(258,701,817,327,1502,1460)(260,1154,743,557,875,470)(264,577,1133,522,1047,1975)\\
(266,1628,282,1101,1036,1859)(267,528,311,1295,1817,586)(268,647,1033,1165,313,1050)\\(270,1376,1199,553,977,1909)
(271,1317,525,592,1750,995)(274,322,1802,1569,1060,1615)\\(275,1799,996,1979,1699,1081)(279,1688,1545,1693,1724,1978)
(280,1428,1423,1640,1932,1313)\\(287,1383,1146,329,1765,1468)(288,1167,1242,372,838,1542)(290,1787,1065,872,1611,1162)\\
(291,652,1090,1590,396,1187)(292,414,986,601,1937,1651)(294,1786,1006,2016,1607,969\\)(295,1801,439,1730,1144,2001)
(296,908,1559,1543,370,1068)(297,781,1686,672,1078,610)\\(298,1129,1294,566,1051,1685)(299,1271,517,1997,443,1351)
(302,1249,1676,1255,1241,713)\\(304,605,1951,990,1913,972)(306,1276,1887,1721,447,886)(309,970,1191,1268,350,1274)\\
(315,660,533,1497,1921,1671)(316,1207,558,620,740,1633)(317,1114,325,950,1547,1678)\\(319,377,1712,1109,883,1107)
(323,1119,1701,1385,860,1003)(324,1532,492,466,1928,820)\\(328,1149,1263,1974,1613,730)(331,337,609,1561,1659,1616)
(332,1776,403,1071,680,2003)\\(339,1418,1293,1030,1155,1709)(340,1788,818,1106,1503,1687)(341,1540,1869,546,957,893)\\
(343,905,841,1511,1566,962)(345,1037,1987,1826,1911,943)(346,1151,625,1415,861,1903)\\(347,923,757,835,1558,1505)
(354,1647,1399,998,1230,1330)(355,1425,1981,679,1691,1682)\\(356,373,1739,405,616,1933)(358,1777,567,502,1537,360)
(359,1319,1669,603,1227,1655)\\(361,420,388,1073,1923,1948)(362,1789,1849,1015,1445,560)(369,1044,1884,1904,1052,1120)\\
(378,493,1038,1555,438,1954)(379,694,683,1247,1662,1610)(380,1064,530,1582,920,703)\\(383,1095,892,1952,1541,882)
(384,1814,1085,857,1606,1034)(385,488,813,1809,1476,1437)\\(389,687,1803,1828,1407,580)(390,902,1810,1512,1340,1093)
(395,1707,1897,1411,1894,1953)\\(408,505,1024,1548,1631,1874)(410,1010,1661,1032,1324,1395)(411,691,1635,1277,1206,572)\\
(412,1865,1821,1012,1080,1698)(415,535,1414,1572,1845,1847)(417,1331,1398,804,1643,1528)\\(418,1058,1854,1700,434,1598)
(419,1198,457,1062,960,887)(423,453,1354,501,736,855)\\(424,966,1152,790,914,775)(425,1704,1808,1035,1666,1833)
(427,1552,1177,1412,1602,729)\\(431,1477,1220,593,1612,810)(432,1950,607,856,852,1506)(435,719,526,975,1973,1446)\\
(436,1447,1118,675,1961,508)(437,1499,1366,870,1886,752)(440,536,952,1515,1994,929)\\(444,774,1221,1472,921,1232)
(445,695,1288,1043,778,1302)(451,721,471,1516,1431,1328)\\(452,1178,570,1059,1100,520)(455,756,827,1518,1048,1892)
(456,1656,978,655,1592,1963)\\(458,738,1054,866,464,582)(459,1576,947,1715,1305,716)(462,1993,1811,733,1389,542)\\
(465,766,1529,1416,1852,1121)(472,871,1593,1485,1188,576)(480,971,1250,954,862,1995)\\(481,809,681,944,1433,1258)
(483,748,1942,802,1346,1500)(484,1375,849,1907,1097,1130)\\(489,879,1102,1538,564,578)(491,696,1983,512,1968,2014)
(494,1457,1815,1442,518,1684)\\(495,563,1170,878,1028,1536)(496,1996,1574,915,599,1289)(500,1949,1535,937,534,1546)\\
(503,1839,1200,712,664,1577)(504,1368,1443,523,1710,751)(506,1088,1645,953,832,1608)\\(509,602,859,1732,1417,1998)
(516,1055,1940,1856,682,1454)(519,853,1248,1507,1122,965)\\(521,1708,994,1361,896,1403)(527,1728,837,1353,1458,1941)
(538,1281,843,1618,982,1296)\\(543,1629,1667,981,1226,1736)(545,1823,877,579,666,2010)(549,746,1889,1465,795,1840)\\
(556,1985,1751,665,934,1388)(562,1192,1851,1016,1990,1194)(588,1722,654,968,873,1256)\\(590,1663,1745,667,2009,1172)
(595,2000,764,711,881,1550)(604,1025,1861,1405,1679,1286)\\(611,1076,1449,1184,1665,1868)(612,1670,658,626,1197,1864)
(635,1549,1692,1235,1898,709)\\(650,1705,1672,1139,1234,1858)(651,1369,1638,1844,1211,961)(670,1650,1335,1218,1905,1902)\\
(688,1837,1173,1404,1579,1681)(689,1252,889,1619,1509,1156)(692,718,1136,1341,1689,1972)\\(700,1096,1575,1690,1971,759)
(707,1523,1070,1228,964,772)(717,980,1326,1462,1614,787)\\(732,1344,1077,1836,1336,1396)(763,1863,1463,1664,1193,1223)
(789,888,1067,1487,1142,791)\\(796,1706,1397,924,1694,1544)(806,1838,824,1723,1148,1843)(808,1057,917,913,1174,842)\\
(825,1649,1231,1216,1945,1246)(836,1652,1367,1217,1697,1853)(885,1131,1459,1074,922,2015)\\(894,1955,976,1599,1716,963)
(919,1813,1082,1009,1877,1023)(926,1991,1571,1891,1257,985)\\(927,1374,1450,1189,1819,1005)(984,1989,1597,1674,1283,1237)
(1027,1362,1400,1125,1031,1857)\\(1140,1359,1238,1956,1205,1976)
(1290,1832)(1430,1807,1984,1451,1850,1718);}}\\
\texttt{B:=[1,2,11,108,142,168,209,220,389,427,452,472,520,
563,897,924,1052,1074,\\1117,1123,1158,1416,1421,
1456,1459,1521,1595,1639,1670,1841,1854,1897];}\\

\item\label{20163212_1} $(v,k,\lambda)=(2016,32,1)$, $G=\PGaL(2,2^6)$\\
The generators of $G$ are the same as in case (\ref{2016324_2}). \\
\texttt{ B:=[1,2,7,9,12,182,349,480,492,502,556,597,731,753,
794,813,844,892,959,\\1131,1328,1349,1362,1397,1468,
1475,1621,1627,1660,1685,1771,1846];}\\

\item\label{20163212_2} $(v,k,\lambda)=(2016,32,1)$, $G=\PGaL(2,2^6)$\\
The generators of $G$ are the same as in case (\ref{2016324_2}). \\
\texttt{ B:=[1,2,15,154,179,489,496,558,568,600,685,714,753,
827,1205,1217,1243,\\1256,1352,1373,1390,1402,1443,
1527,1607,1719,1750,1751,1770,1789,1945,1961];}\\
\end{enumerate}

\textbf{Acknowledgements.}
The first author's work is supported by the National Natural Science Foundation of China (No:12101120).

All of the authors thank the Italian National Group for Algebraic and Geometric Structures and their Applications (GNSAGA–INdAM) for its support to their research.

The work was completed when the fifth author was visiting the first author at Foshan University in Foshan and Prof. Shenglin Zhou at the South China University of Technology in Guangzhou. The author is grateful to both colleagues for financing and supporting his visit.

\end{document}